\tikzset{->-/.style={decoration={ markings, mark=at position #1 with {\arrow[scale=1.4]{>}}},postaction={decorate}}}
\tikzset{-<-/.style={decoration={ markings, mark=at position #1 with {\arrow[scale=1.4]{<}}},postaction={decorate}}}
\numberwithin{equation}{section}
\newcommand{\dd}{\mathrm{d}}
\newcommand{\ee}{\mathrm{e}}
\newcommand{\ii}{\mathrm{i}}
\newcommand{\SP}{\lambda}
\newcommand{\phaseint}{\Phi}
\newcommand{\tailint}{\Xi}
\newcommand{\evdensity}{\varrho}
\newcommand{\C}{\mathbb{C}} 
\newcommand{\R}{\mathbb{R}}
\newcommand{\Z}{\mathbb{Z}}
\newcommand{\eps}{\epsilon}
\newcommand{\rr}{r}
\newcommand{\bigo}[1]{ \mathcal{O}\! \left( #1 \right) }
\def\eq{\begin{equation}}
\def\endeq{\end{equation}}
\newcommand{\triu}[2][1]{\begin{pmatrix} #1 & #2 \\ 0 & #1 \end{pmatrix}}
\newcommand{\tril}[2][1]{\begin{pmatrix} #1 & 0 \\ #2 & #1 \end{pmatrix}}
\newcommand{\diag}[2]{\begin{pmatrix} #1 & 0 \\ 0 & #2 \end{pmatrix}}
\DeclareMathOperator{\imag}{Im}
\DeclareMathOperator{\re}{Re}
\DeclareMathOperator{\Ai}{ {Ai} }
\DeclareMathOperator{\sgn}{sgn}
\renewcommand{\Re}{\operatorname{Re}}
\renewcommand{\Im}{\operatorname{Im}}
\declaretheorem[style=plain, name=Theorem, numberwithin=section]{thm}
[style=plain, name=Theorem, numbered = no]
\declaretheorem{cor}[style=plain, name=Corollary, numberwithin=section]
\declaretheorem{lem}[style=plain, name=Lemma, numberwithin=section]
\declaretheorem{prop}[style=plain, name=Proposition, numberwithin=section]
\declaretheorem{myrhp}[style=plain, name=Riemann-Hilbert Problem, numberwithin=section]
\declaretheorem{definition}[style=definition, name=Definition, numberwithin=section]
[style=definition, name=Definition, numbered=no]
[style=definition, name=Hypothesis, numberwithin=section]
[style=definition, name=Claim, numberwithin=section]
\declaretheorem{note}[style=definition, name=Note, numbered = yes]
\declaretheorem{rem}[style=remark, name=Remark, numberwithin=section]
\title{Suleimanov-Talanov Self-Focusing and the Hierarchy of the Focusing Nonlinear Schr\"odinger Equation}
\author{R. J. Buckingham}
\address{Department of Mathematical Sciences, University of Cincinnati}
\email{buckinrt@uc.edu}
\author{R. M. Jenkins}
\address{Department of Mathematics, University of Central Florida}
\email{robert.jenkins@ucf.edu}
\author{P. D. Miller}
\address{Department of Mathematics, University of Michigan}
\email{millerpd@umich.edu}
\begin{document}
\begin{abstract}
We study the self-focusing of wave packets from the point of view of the semiclassical focusing nonlinear Schr\"odinger 
equation.  A type of finite-time collapse/blowup of the solution of the associated dispersionless limit was investigated by 
Talanov in the 1960's, and recently Suleimanov identified a special solution of the dispersive problem that formally 
regularizes the blowup and is related to the hierarchy of the Painlev\'e-III equation.  In this paper we approximate 
the Talanov solutions in the full dispersive equation using a semiclassical soliton ensemble, a sequence of exact 
reflectionless solutions for a corresponding sequence of values of the semiclassical parameter $\epsilon$ tending to zero, 
approximating the Talanov initial data more and more accurately in the limit $\epsilon\to 0$.  In this setting, we 
rigorously establish the validity of the dispersive saturation of the Talanov blowup obtained by Suleimanov.  We extend the 
result to the full hierarchy of higher focusing nonlinear Schr\"odinger equations, exhibiting new generalizations of the 
Talanov initial data that produce such dispersively regularized extreme focusing in both mixed and pure flows.  We also 
argue that generic perturbations of the Talanov initial data lead to a different singularity of the dispersionless limit, 
namely a gradient catastrophe for which the dispersive regularization is instead based on the tritronqu\'ee solution of the 
Painlev\'e-I equation and the Peregrine breather solution which appears near points in space time corresponding to the 
poles of the former transcendental function as shown by Bertola and Tovbis.  
\vspace{-.4in}
\end{abstract}
\maketitle

{\small\tableofcontents}

\section{Introduction}
\label{sec-intro}
\label{sec:ASK}
The focusing nonlinear Schr\"odinger (NLS) equation 
\eq
\label{nls}
\ii\epsilon \psi_{t_2} + \frac{1}{2}\epsilon^2\psi_{xx} + |\psi|^2\psi = 0
\endeq
is a standard model for time-dependent complex-valued fields in one-dimensional physical 
systems exhibiting both dispersion and cubic nonlinearity.  It is this 
competition between dispersion and nonlinearity in the NLS equation that leads 
mathematically to a host of phenomena including solitons, wave breaking and 
dispersive shock waves, and rogue waves.  Classically, one way to attempt to 
tease out the separate effects of dispersion and nonlinearity is to first 
consider the \emph{dispersionless focusing NLS system}.  
In particular, if we rewrite the focusing NLS equation \eqref{nls} with 
$t:=t_2$ in Madelung coordinates by introducing 
$\psi=\rho^{1/2}\ee^{\ii S/\epsilon}$ for a density $\rho\ge 0$ and phase 
$S\in\mathbb{R}$, then with the momentum defined by $\mu:=\rho S_x$ the 
equation \eqref{nls} becomes exactly the coupled system
\eq
\rho_t + \mu_x=0,\quad \mu_t + \left(\frac{\mu^2}{\rho}-\frac{\rho^2}{2}\right)_x= \frac{\epsilon^2}{4}\left(\rho\left(\frac{\rho_x}{\rho}\right)_x\right)_x.
\endeq
Thinking of $\epsilon$ as a measure of the strength of dispersive effects, 
setting $\epsilon=0$ leads to the dispersionless focusing NLS system 
\begin{equation}
\rho_t + \mu_x=0,\quad \mu_t + \left(\frac{\mu^2}{\rho}-\frac{\rho^2}{2}\right)_x= 0.
\label{eq:dispersionless-focusing}
\end{equation}
The reason behind the ``focusing'' nomenclature is easily seen by considering 
the \emph{Akhmanov-Sukhorukov-Khokhlov solution} \cite{AkhmanovSK66} 
of \eqref{eq:dispersionless-focusing} with the natural initial conditions 
\begin{equation}
\rho(x,0)=A_\mathrm{max}^2\mathrm{sech}^2(x), \quad \mu(x,0)=0.
\end{equation}
Here $A_\mathrm{max}>0$ a fixed parameter.  The solution is determined 
implicitly by the equations
\begin{equation}
\mu=-2t\rho^2\tanh\left(x-\frac{\mu}{\rho}t\right),\quad \rho=(A_\mathrm{max}^2+t^2\rho^2)\mathrm{sech}^2\left(x-\frac{\mu}{\rho}t\right).
\label{eq:ASK-implicit}
\end{equation}
The equations can be solved explicitly for $x=0$, giving 
\begin{equation}
\rho(0,t)=\frac{1-\sqrt{1-4A_\mathrm{max}^2t^2}}{2t^2},\quad |t|<\frac{1}{2A_\mathrm{max}}.  
\end{equation}
The solution exhibits a 
finite-amplitude gradient catastrophe at $(x,t)=(0,\pm 1/(2A_\mathrm{max}))$ 
with value $\rho=2A_\mathrm{max}^2$ and cannot be continued in 
any smooth way outside the indicated interval.  See 
Figure~\ref{fig:Talanov-ASK}.  On the other hand, the solution to the full 
NLS equation \eqref{nls} with corresponding initial condition 
$\psi(x,0)=A_\mathrm{max}\mathrm{sech}(x)$ is known as the 
\emph{Satsuma-Yajima solution} \cite{SatsumaY74}, which exists for all time.  
To compare to the dispersionless focusing NLS system, it makes sense to 
consider the so-called \emph{zero-dispersion} or \emph{semiclassical limit} 
$\epsilon\downarrow 0$.  For the sequence 
$\{\epsilon=\frac{A_\mathrm{max}}{N},\ N=1,2,3,...\}$, it has been proven 
\cite{Kamvissis:2003} that the Satsuma-Yajima solution indeed converges to the 
Akhmanov-Sukhorukov-Khokhlov solution in the semiclassical limit for 
$|t|<1/(2A_\mathrm{max})$.  However, starting at $t=1/(2A_\mathrm{max})$, for 
small $\epsilon$ the Satsuma-Yajima solution displays a marked phase transition beyond 
a certain well-defined caustic curve.  Across this curve the solution's 
amplitude suddenly switches from behavior that is (asymptotically) 
independent of $\epsilon$ to rapid oscillations of wavelength and frequency proportional 
to $\epsilon$ \cite{Kamvissis:2003}.  

In this work we are especially interested in the solution near the first 
point of wave breaking.  The dispersive regularization under the full 
focusing NLS dynamics of the type of singularity appearing in the 
Akhmanov-Sukhorukov-Khokhlov solution (known in catastrophe theory as 
an \emph{elliptic umbilic catastrophe}) was studied by Bertola and Tovbis 
\cite{BertolaT13}. (For generalizations to other similar problems, see 
\cite{DubrovinGKM15} and \cite{LuM22}).  They found that in a neighborhood of 
the elliptic umbilic catastrophe point 
the dispersive solution behaves like a multiscale structure consisting of a 
mesoscale (spacetime scales proportional to $\epsilon^{4/5}$) background profile described by the tritronqu\'ee solution of the Painlev\'e-I equation except near certain points corresponding to its poles that form a 
curvilinear spacetime lattice; near each lattice point one has instead an 
approximation on the microscale (spacetime scales proportional to $\epsilon$) by a copy of the Peregrine breather (rogue 
wave) solution \cite{Peregrine83}.  It is generally understood from numerical 
and analytical studies that this Bertola-Tovbis regularization is, in some 
sense, the ``generic'' breaking behavior for the focusing NLS equation.  
However, it is not the only type of breaking behavior, and it is exactly such 
non-generic breaking behavior that interests us here.  Returning to the 
dispersionless focusing NLS system \eqref{eq:dispersionless-focusing}, we turn our attention to the 
\emph{Talanov solutions} \cite{Talanov65}, which were actually 
discovered slightly before the Akhmanov-Sukhorukov-Khokhlov solution.  Let 
$E\in\mathbb{R}$ and $F>0$ be fixed constants and choose $w(t)$ to be a 
solution of 
\eq
\frac{1}{2}w'(t)^2 -\frac{2F}{w(t)}=E.  
\endeq
Then the Talanov solution is 
\eq
\begin{split}
& \rho(x,t)=Fw(t)^{-3}(w(t)^2-x^2)\chi_{[-w(t),w(t)]}(x), \\
& \mu(x,t)=Fw(t)^{-4}w'(t)x(w(t)^2-x^2)\chi_{[-w(t),w(t)]}(x),
\end{split}
\label{eq:Talanov-solution}
\endeq
where $\chi_{[a,b]}$ is the standard indicator function on $[a,b]$.  In 
particular, the density $\rho(x,t)$ has the form of a 
cutoff parabolic profile corresponding to a semicircular amplitude profile of 
(half) width $w(t)>0$ and maximum value $Fw(t)^{-1}$.  See Section~\ref{sec:Talanov} 
for more details.  The behavior of the solution depends on $E$ as follows.
\begin{itemize}
\item If $E=0$, then $w(t)=(9F)^\frac{1}{3}(t^\circ-t)^\frac{2}{3}$ for an 
arbitrary integration constant $t^\circ$.  In particular, if $F>0$ then 
as $t\to -\infty$ the width grows without bound and the amplitude 
decays to zero monotonically.  On the other hand, the solution only exists 
for $t<t^\circ$ and collapses to zero width ($w(t)\downarrow 0$) and infinite 
amplitude ($Fw(t)^{-1}\uparrow \infty$) as $t\uparrow t^\circ$.  
\item If $E>0$ then it is not possible to solve for $w(t)$ explicitly.  
However, the qualitative behavior is similar to the $E=0$ case.
\item If $E<0$ then the width no longer changes monotonically.  In this case, 
there is a unique time at which $w'(t)=0$ and $w(t)$ is maximized;  we choose this time to be $t=0$ 
by choice of the integration constant.  The solution now only exists on the 
time interval $-t^\circ < t < t^\circ$, where
$2t^\circ = \sqrt{2}\pi F(-E)^{-\frac{3}{2}}$.  The solution 
collapses to zero width and infinite amplitude as $t\downarrow-t^\circ$ or 
$t\uparrow t^\circ$.   
\end{itemize}
The case $E=0$ was recently considered by Suleimanov \cite{Suleimanov17}, 
where he proposed that the dispersive terms in the focusing NLS equation 
serve to arrest the collapse in a specific fashion related to solutions of 
the third Painlev\'e equation and its hierarchy.  For more details see Section~\ref{sec:extreme}.  

In this work we consider the dispersive regularization of a class of 
functions we call \emph{semicircular Klaus-Shaw potentials} (see 
Section~\ref{sec:semiclassical}) that include the Talanov solutions with $E<0$ as 
a special case.  To take advantage of the integrable structure of the 
focusing NLS equation, we use the \emph{semiclassical soliton ensemble} 
approach \cite{Kamvissis:2003}.  Rather than studying a semicircular 
Klaus-Shaw potential directly, we first approximate it by a sequence of 
reflectionless (pure soliton) solutions that converges when $t=0$ to the 
desired initial condition as $\epsilon\downarrow 0$ (see 
Theorem~\ref{thm-accuracy-t=0}).  Then, through the 
formulation of a Riemann-Hilbert problem and the use of the Deift-Zhou 
nonlinear steepest-descent method, we prove rigorously that the local behavior 
in the semiclassical limit at the focusing point is that proposed by 
Suleimanov (see Theorem~\ref{thm:multi-time}).  The following table 
summarizes the relationship of our results on the focusing NLS equation 
to the existing literature.  
\begin{center}
\begin{tabular}{|c|c|c|}
\hline
\parbox{0.4\linewidth}{\small\textbf{Motivating Solution of the \\Dispersionless NLS System}} &
\parbox{0.34\linewidth}{\small\textbf{Solution of the Full NLS \\Equation}} &
\parbox{0.19\linewidth}{\small\textbf{Regularization}}\\
\hline
\parbox{0.4\linewidth}{\small Akhmanov-Sukhorukov-Khokhlov \cite{AkhmanovSK66}} &
\parbox{0.34\linewidth}{\small Satsuma-Yajima \cite{SatsumaY74}} & 
\parbox{0.19\linewidth}{\small Bertola-Tovbis \cite{BertolaT13}}\\
\hline
\parbox{0.4\linewidth}{\small Talanov ($E<0$) \cite{Talanov65}} &
\parbox{0.34\linewidth}{\small Semiclassical soliton ensemble approx.\@ of semicircular Klaus-Shaw potentials (this work)} &
\parbox{0.19\linewidth}{\small Suleimanov \\(this work)}\\
\hline
\end{tabular}
\end{center}
In addition, we extend our results to other equations in the focusing NLS 
hierarchy.  See Theorems~\ref{thm:multi-time}, \ref{thm:mixture}, and 
\ref{thm:pure-flow} below.  We also discuss the non-genericity 
of Suleimanov-Talanov focusing in Section~\ref{sec-interpolate}, where we show 
that such behavior can be easily perturbed into the type of dispersive 
regularization studied by Bertola and Tovbis.  

\begin{note}
Another recent study of the Talanov solution and its perturbations is the paper \cite{DemontisORS23}, in which the authors 
review the Talanov theory and then consider the fully dispersive NLS equation numerically with a version of the Talanov 
initial data that is artificially smoothed at the corner points where the field meets the vacuum.  
\end{note}

\begin{note}
An interesting open question is the behavior of Talanov-type initial data for 
pulses with chirp (a linear phase gradient).  We will investigate the image 
in the scattering transform domain of such functions in future work.  
\end{note}

\begin{note}
The semicircular Klaus-Shaw initial conditions we study here are 
also important in a 
study of the \emph{three-wave resonant interaction equations} in the 
semiclassical limit.  This is a coupled system of three equations with a 
$3\times 3$ Lax pair.  It happens that if the three fields are initially 
disjointly supported then the $x$-equation in the Lax pair reduces at each $x$-value to a 
$2\times 2$ Zakharov-Shabat eigenvalue problem (tensored with a scalar).  
This is the well-studied $x$-equation in the Lax pair for the NLS hierarchy.  
Therefore, if the three-wave resonant interaction equations are posed with 
disjointly supported initial data of the form \eqref{cauchy-data} in each channel, then the 
scattering data can be determined by (i) performing a local analysis on each 
Zakharov-Shabat operator to determine the \emph{exceptional points}, and 
(ii) a global analysis to determine the associated \emph{connection 
coefficients} \cite{Buckingham:2016}.  It is then possible to show using existing theory \cite{Kamvissis:2003} that the semiclassical soliton 
ensemble converges to the original initial data as $\epsilon\downarrow 0$ 
at $t=0$, provided that the amplitude of each packet vanishes to sufficiently high order at the support endpoints.  One aim of this paper is to address the technical challenges in the forward-scattering 
and inverse-scattering steps when the initial data vanishes at the support endpoints like a 
square root.  
The results of our work in this direction are described in Theorem~\ref{thm-accuracy-t=0} and Corollary~\ref{cor-L2-convergence} below.
In future work, we will apply Theorem~\ref{thm-accuracy-t=0} to the 
three-wave semiclassical soliton ensembles defined in \cite{Buckingham:2016} 
to prove convergence at $t=0$ for ensembles corresponding to certain 
disjointly supported initial packets.  
\end{note}

\noindent
{\bf Acknowledgements.}
R. J. Buckingham was supported by the National Science Foundation under grant DMS 2108019. R. M. Jenkins was supported by the National Science Foundation under grant DMS-2307142 and by the Simons Foundation under grant 853620.  P. D. Miller was supported by the National Science Foundation under grants DMS-1513054, DMS-1812625, and DMS-2204896.

The authors would also like to thank the Isaac Newton Institute for Mathematical Sciences, Cambridge, for support and hospitality during the programmes``Dispersive hydrodynamics: mathematics, simulation and experiments, with applications in nonlinear waves'' (2022) and ``Emergent phenomena in nonlinear dispersive waves'' (2024), where work on this paper was undertaken. This work was supported by EPSRC grant EP/R014604/1. 

\section{Preliminary material and results}

\subsection{The NLS hierarchy}
\label{sec-hierarchy}
 The $m^\mathrm{th}$ flow in the focusing NLS hierarchy---which we will generically refer to as the $\mathrm{NLS}_m$ equation---can be written in compact form as $\epsilon\psi_{t_m}=N_m[\psi,\psi^*]$ where $N_m$ is a polynomial in its arguments and their scaled $x$-derivatives ($\epsilon\partial_x$), normalized so that the coefficient of $(\epsilon\partial_x)^m\psi$ (the highest derivative) is $(\frac{1}{2}\ii)^{m-1}$. The $\mathrm{NLS}_3$ equation is better known as the complex modified Korteweg-de Vries (mKdV) equation
\begin{equation}
\epsilon\psi_{t_3} + \frac{3}{2}\epsilon|\psi|^2\psi_x +\frac{1}{4}\epsilon^3\psi_{xxx}=0,
\label{eq:mKdV}
\end{equation}
while $\mathrm{NLS}_4$ takes the form
\begin{equation}
\ii\epsilon\psi_{t_4}-\frac{1}{8}\epsilon^4\psi_{xxxx}-\epsilon^2 |\psi|^2\psi_{xx}-\frac{1}{4}\epsilon^2\psi^2\psi^*_{xx}-\frac{1}{2}\epsilon^2\psi\psi_x\psi^*_x-\frac{3}{4}\epsilon^2\psi^*\psi_x^2 -\frac{3}{4}|\psi|^4\psi=0.
\label{eq:LPD}
\end{equation}
With index omitted NLS will always refer to \eqref{nls}, the $\mathrm{NLS}_2$ equation.

It is well known that all of the equations in the focusing NLS hierarchy can be simultaneously solved, that is, there is a well-defined function $\psi(x,t_2,t_3,\dots,t_M)$ with suitable given initial condition $\psi(x,0,0,\dots,0)=\psi_0(x)$ such that $\epsilon\psi_{t_m}=N_m[\psi,\psi^*]$ holds for each $m=2,3,\dots,M$.  By restricting the times to be proportional by given constants to a single independent variable $t$, i.e. $t_m=a_mt$, we see that as a function of $(x,t)$, $\psi$ satisfies a mixture of the flows:  $\epsilon\psi_t = a_2N_2[\psi,\psi^*] + a_3N_3[\psi,\psi^*] + \cdots + a_MN_M[\psi,\psi^*]$. Some of these mixtures have their own names in the literature. For instance, the combination $\epsilon\psi_t=a_2N_2[\psi,\psi^*]+a_3N_3[\psi,\psi^*]$ is frequently called the Hirota equation and is written in the form
\begin{equation}
\ii\epsilon\psi_t +a_2\left[\frac{1}{2}\epsilon^2\psi_{xx}+|\psi|^2\psi\right] + a_3\left[\ii \frac{3}{2}\epsilon|\psi|^2\psi_x +\ii \frac{1}{4} \eps^3\psi_{xxx}\right]=0.
\label{eq:Hirota}
\end{equation}
Similarly, the mixture $\epsilon\psi_t = a_2N_2[\psi,\psi^*]+a_4 N_4[\psi,\psi^*]$ yields the  Lakshmanan-Porsezian-Daniel (LPD) equation
\begin{multline}
\ii\epsilon\psi_t 
+ a_2\left[ \frac{1}{2}\epsilon^2\psi_{xx}+|\psi|^2\psi \right]  \\
+ a_4\left[ -\frac{1}{8}\epsilon^4\psi_{xxxx}-\epsilon^2 |\psi|^2\psi_{xx}-\frac{1}{4}\epsilon^2\psi^2\psi^*_{xx}-\frac{1}{2}\epsilon^2\psi\psi_x\psi^*_x-\frac{3}{4}\epsilon^2\psi^*\psi_x^2 -\frac{3}{4}|\psi|^4\psi \right]=0.
\label{eq:gLPD}
\end{multline}
Like \eqref{eq:mKdV} and \eqref{eq:LPD}, these equations can be viewed as models for ultrashort pulses propagating in optical fibers.

\subsection{Singularities of solutions of dispersionless focusing NLS}
\label{sec-dispersionless}
\subsubsection{The Talanov solutions}
\label{sec:Talanov}
Following Talanov \cite{Talanov65} (see also \cite[Section II]{DemontisORS23} for a recent review), we seek a solution of the dispersionless system \eqref{eq:dispersionless-focusing} for which 
the density (squared amplitude) $\rho(x,t)$ has the form of a cutoff parabolic profile corresponding to a semicircular amplitude profile of width $w(t)>0$ 
and maximum value $f(t)w(t)^2>0$:
\eq
\rho(x,t)=f(t)(w(t)^2-x^2)\chi(x,t),\quad\chi(x,t):=\chi_{[-w(t),w(t)]}(x).
\label{eq:rho-ansatz}
\endeq
Since the conservation law on $\rho$ in the system \eqref{eq:dispersionless-focusing} implies that the integral over $x$ of $\rho$ is conserved, we compute:
\eq
\int_{\mathbb{R}}\rho(x,t)\,\dd x=f(t)\int_{-w(t)}^{w(t)}(w(t)^2-x^2)\,\dd x =\frac{4}{3}f(t)w(t)^3
\endeq
so for conservation we require that $f(t)=Fw(t)^{-3}$ for some constant $F>0$.  Thus we have
\eq
\rho(x,t)=Fw(t)^{-3}(w(t)^2-x^2)\chi(x,t).
\label{eq:rho-semicircle}
\endeq
From this formula, we see that  
\eq
\rho_t(x,t)=-Fw(t)^{-4}w'(t)(w(t)^2-3x^2)\chi(x,t),\quad |x|\neq w(t)
\endeq
(note that $\rho_t$ has jump discontinuities at $|x|=w(t)$).
Then, using again $\rho_t+\mu_x=0$ we can find $\mu(x,t)$ by integration in $x$ of $-\rho_t$:
\eq
\mu(x,t)=Fw(t)^{-4}w'(t)x(w(t)^2-x^2)\chi(x,t).
\label{eq:mu-semicircle}
\endeq
Note that this is the unique antiderivative of $-\rho_t$ that decays to zero as $x\to\pm\infty$ for all $t$.  
On the support interval $-w(t)<x<w(t)$ it makes sense to calculate the phase derivative 
\eq
S_x(x,t)=\frac{\mu(x,t)}{\rho(x,t)} = \frac{w'(t)}{w(t)}x,\quad -w(t)<x<w(t),
\label{eq:phase-derivative}
\endeq
which shows that at time instants where $w'(t)\neq 0$, the phase profile is quadratic as a function of $x$ (called a \emph{phase chirp} in the physical literature).

Now we turn to the equation governing the momentum $\mu$ in \eqref{eq:dispersionless-focusing} to see how it determines $w(t)$.  After substituting for $\rho$ and $\mu$ from \eqref{eq:rho-semicircle} and \eqref{eq:mu-semicircle} respectively, this equation has terms proportional to both $x$ and $x^3$ and no other dependence on $x$ within the support of $\chi(x,t)$.  An apparent coincidence that ultimately lies behind the fact that the ansatz \eqref{eq:rho-ansatz} is consistent with the dispersionless NLS system is that equating the coefficients of $x$ and of $x^3$ separately amounts to exactly the same equation on $w(t)$, namely
\eq
w''(t)+\frac{2F}{w(t)^2}=0.
\endeq
This autonomous second-order nonlinear differential equation for $w(t)$ can be multiplied by $w'(t)$ and integrated once to yield
\eq
\frac{1}{2}w'(t)^2 -\frac{2F}{w(t)}=E
\label{eq:g-first-order}
\endeq
where $E$ is another integration constant.

The case $E=0$  corresponds to the self-similar collapse considered by Suleimanov \cite{Suleimanov17}.  Assuming $w'(t)<0$ (corresponding to a collapse instead of an expansion), we get for $E=0$:
\eq
w(t)^{\frac{1}{2}}w'(t)=-2\sqrt{F}\implies \frac{2}{3}w(t)^{\frac{3}{2}}=2\sqrt{F}(t^\circ-t)\implies w(t)=(9F)^{\frac{1}{3}}(t^\circ-t)^{\frac{2}{3}}
\endeq
where $t^\circ\in\mathbb{R}$ is a third integration constant with the interpretation of the focusing time, and the solution is defined only for $t<t^\circ$.  
The solution collapses to zero width ($w(t)\downarrow 0$) and infinite 
amplitude ($f(t)w(t)^2=Fw(t)^{-1}\uparrow \infty$) as $t\uparrow t^\circ$.  
However, as $t\to -\infty$ the width grows without bound and the amplitude 
decays to zero monotonically.  
If instead we assume that $w'(t)>0$, we get the same solution with $t^\circ-t$ replaced by $t-t^\circ$ and the solution exists for $t>t^\circ$ evolving from a collapsed state to a spreading and decaying state as $t\to +\infty$.  Either way, the monotonicity of $w(t)$ implies that when $E=0$, the phase derivative $x\mapsto S_x(x,t)$ given by \eqref{eq:phase-derivative} is nonconstant for every time $t<t^\circ$.  

If $E\neq 0$, it is no longer possible to solve \eqref{eq:g-first-order} explicitly for $w(t)$, although one can explicitly find the inverse function by integration.  To analyze this more general case, it is convenient to rescale the variables.  Let $t=\sqrt{2}F |E|^{-\frac{3}{2}}T$ and $w=2F|E|^{-1}W$.  Then \eqref{eq:g-first-order} implies an equivalent equation on $W(T)$ provided that $E\neq 0$:
\eq
W'(T)^2-\frac{1}{W(T)}=\mathrm{sgn}(E)=\pm 1.
\label{eq:G-first-order}
\endeq
Assuming that $W(T)>0$, in the case $E>0$ we easily see that $W'(T)^2\ge 1$ so $W(T)$ is strictly monotone and, just as for the $E=0$ case, the map $x\mapsto S_x(x,t)$ is nonconstant for every $t$.  

Now assume that $E<0$.  In this case, it is obvious from a phase portrait (see the left-hand panel of Figure~\ref{fig:phase-portrait}) that $W'(T)$ changes sign at exactly one point, which we may take without loss of generality to be $T=0$.
\begin{figure}[h]
\begin{center}
\includegraphics[height=0.5\linewidth]{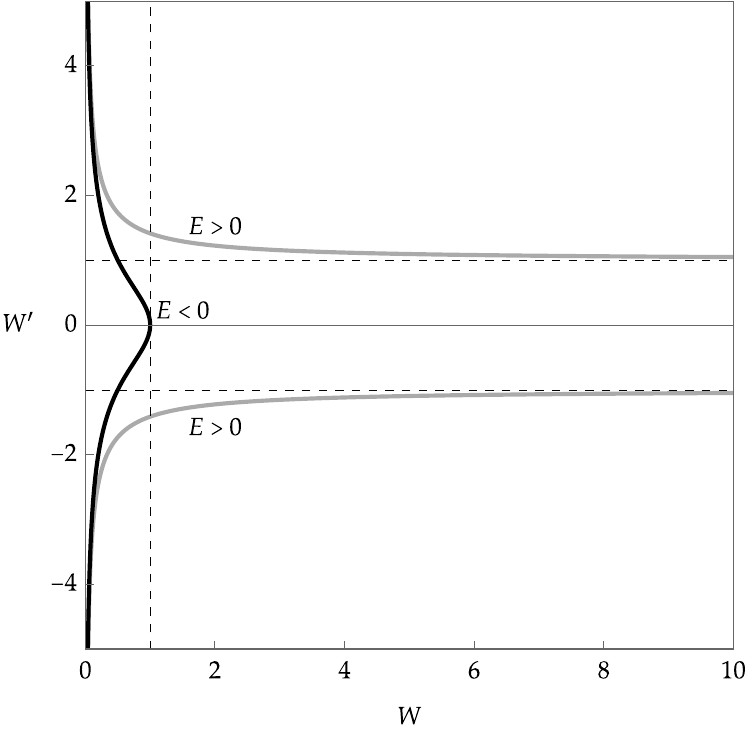}\hspace{0.2\linewidth}%
\includegraphics[height=0.5\linewidth]{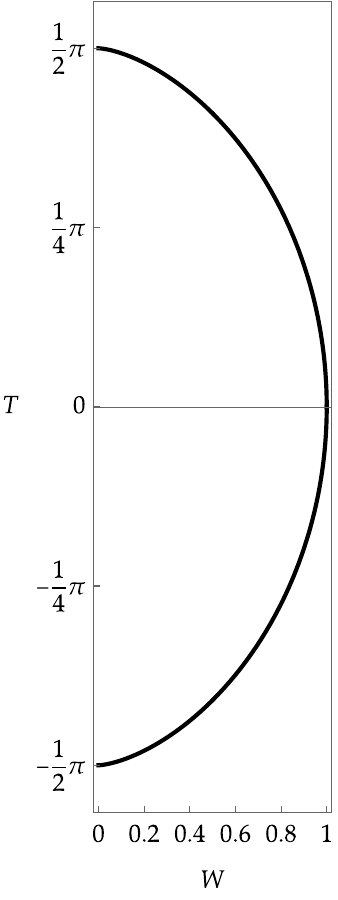}
\end{center}
\caption{Left:  the phase portrait for \eqref{eq:G-first-order} in the part of the $(W,W')$ plane with $W>0$.  Gray:  $E>0$.  Black:  $E<0$.  Compare with \cite[Fig.\@ 3(b)]{Talanov65}.  Right:  the inverse function $\pm T(W)$ in the case $E<0$.}
\label{fig:phase-portrait}
\end{figure}
The orbit of \eqref{eq:G-first-order} for $E<0$ is traversed in the downward direction in finite time $\Delta T$ given by (integrating $|\dd T/\dd W|$ over half of the orbit and doubling the result): 
\eq
\Delta T=2\int_0^1\left(\frac{1}{W}-1\right)^{-1/2}\,\dd W=\pi.
\endeq
The corresponding duration of the solution in the $t$-variable is
\eq
\Delta t = \sqrt{2}F(-E)^{-\frac{3}{2}}\Delta T= \frac{\sqrt{2}\pi F}{(-E)^{\frac{3}{2}}},\quad E<0,\quad F>0.
\label{eq:Talanov-Delta-t}
\endeq
The function $W(T)$ is well-defined for $E<0$ from \eqref{eq:G-first-order} with the initial condition $W(0)=1$, and $W(T)\downarrow 0$ as $|T|\uparrow\tfrac{1}{2}\pi$.  We can give an explicit formula for its inverse function for $T>0$ as a function of $W\in (0,1)$:
\eq
T = \int_W^1\left(\frac{1}{y}-1\right)^{-\frac{1}{2}}\,\dd y = \sqrt{(1-W)W}+\frac{\pi}{4} + \frac{1}{2}\mathrm{Arctan}\left(\frac{1-2W}{2\sqrt{(1-W)W}}\right)
\endeq
which is plotted in the right-hand panel of Figure~\ref{fig:phase-portrait}.
When $t=0$, we have $T=0$ and $W=1$.  Therefore the support of $\chi(x,0)$ and hence also the initial support of $\psi(x,t)$ is $|x|\le w = 2F(-E)^{-1}$.  The maximum amplitude $A_\mathrm{max}:=\max_{x\in\mathbb{R}}|\psi(x,0)|$ is $A_\mathrm{max}=\sqrt{Fw^{-1}}=\sqrt{-E/2}$.  Therefore, the value of $E<0$ is determined directly from $A_\mathrm{max}$  by
\begin{equation}
E:=-2A_\mathrm{max}^2<0
\label{eq:E-determine}
\end{equation}
and then the value of $F>0$ is determined directly from $E$ and the distance between the support endpoints by
\begin{equation}
F:=A_\mathrm{max}^2w(0).
\label{eq:F-determine}
\end{equation}
Hence the duration of the solution becomes 
\eq
\Delta t=\frac{\pi w(0)}{2A_\mathrm{max}}.
\endeq

The Talanov solutions were rediscovered in the setting of shallow water equations by Ovsjannikov \cite{Ovsjannikov79}, where it was shown that a similar approach also applies for the defocusing version of the dispersionless NLS equation.  The latter is obtained from \eqref{eq:dispersionless-focusing} by replacing $-\frac{1}{2}\rho^2$ with $\frac{1}{2}\rho^2$ in the second equation, making the system equivalent to a shallow water model.

\begin{figure}[h]
\begin{center}
\includegraphics[width=0.49\linewidth]{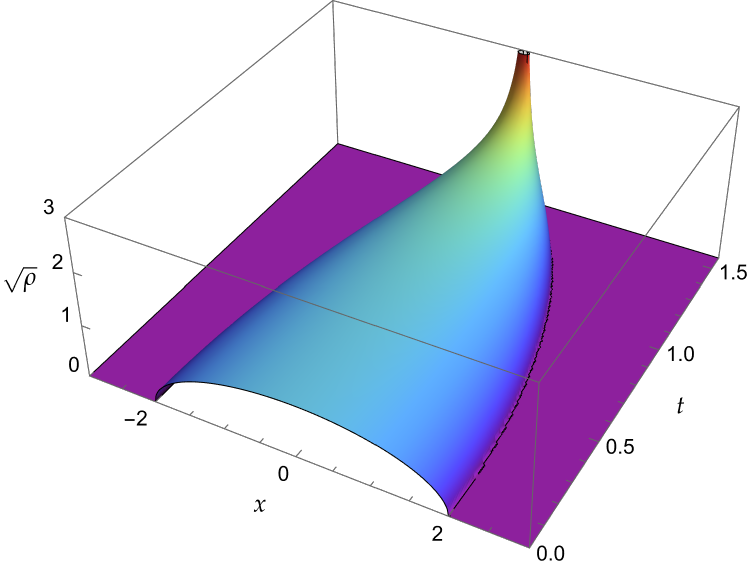}\hfill%
\includegraphics[width=0.49\linewidth]{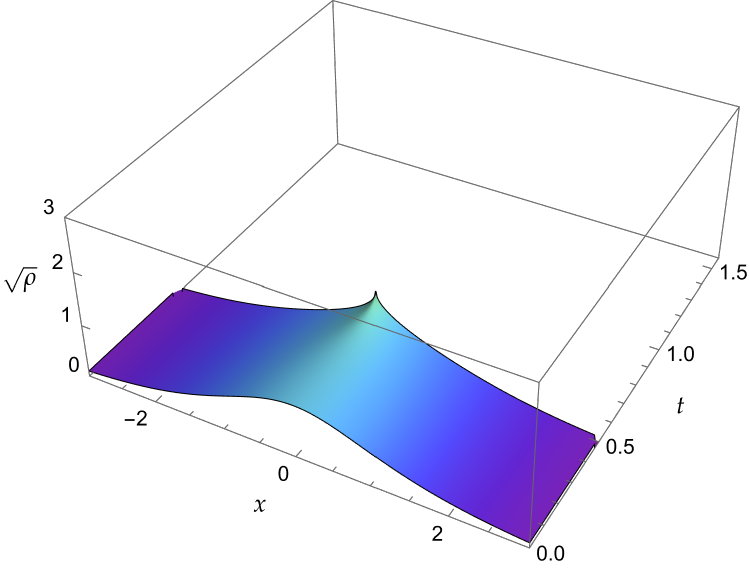}
\end{center}
\caption{Left:  the amplitude $\rho^{1/2}$ for the $E<0$ Talanov solution \eqref{eq:Talanov-solution} of \eqref{eq:dispersionless-focusing} with initial width $w(0)=2$ and amplitude $A_\mathrm{max}=1$, exhibiting the infinite-amplitude collapse and blowup in finite time (at $(x,t)=(0,\pi/2)$).  Right:  the amplitude $\rho^{1/2}$ for the Akhmanov-Sukhorukov-Khokhlov solution \eqref{eq:ASK-implicit} of the same system with $A_\mathrm{max}=1$, exhibiting instead a finite-amplitude gradient catastrophe at $(x,t)=(0,1/2)$ with finite value $\rho^{1/2}=\sqrt{2}$. }
\label{fig:Talanov-ASK}
\end{figure}

\subsection{Semiclassical soliton ensembles for initial data generalizing Talanov profiles}
\label{sec-setup}
We consider initial data $\psi_0(x)$ of the form
\eq
\label{cauchy-data}
\psi_0(x) = \ee^{\ii\theta}A(x)
\ee^{\ii\kappa x/\epsilon},
\endeq
where $\theta$ and $\kappa$ are real constants.  By gauge invariance $\psi\mapsto \ee^{\ii\theta}\psi$, Galilean invariance, 
and mixing the flows,
we can and will assume without loss of generality that $\theta=\kappa=0$.  

\subsubsection{Semiclassical direct scattering for semicircular Klaus-Shaw potentials}
\label{sec:semiclassical}
We further assume that $A:\mathbb{R}\to\mathbb{R}_{\ge 0}$ is a real-valued function independent of $\eps>0$ that we call a \emph{semicircular Klaus-Shaw potential}.  Such potentials are intended to generalize the initial data for Talanov-type solutions of the dispersionless focusing NLS system \eqref{eq:dispersionless-focusing} in the $E<0$ case.  They are defined as follows.
\begin{definition}[Semicircular Klaus-Shaw potentials]
Let $X_-<X_+$ and $c>0$ be real constants.  A function $A:\mathbb{R}\to\mathbb{R}_{\ge 0}$ is called a \emph{semicircular Klaus-Shaw potential} if 
\begin{itemize}
\item
$A$ has compact support $\mathrm{spt}(A)=[X_-,X_+]$
on which it can be written in the form $A(x)=u(x)\sqrt{(X_+-x)(x-X_-)}$, where $u(x)\ge c$ on $[X_-,X_+]$ and $u$ has an analytic continuation to a complex neighborhood of $[X_-,X_+]\subset\mathbb{C}$; and
\item $A$ has the Klaus-Shaw property:  $A:\mathbb{R}\to\mathbb{R}_{\ge 0}$ is of class $L^1(\mathbb{R}) \cap C^2(\mathbb{R})$, and $A$ has a unique maximizer $x_0 \in \mathbb{R}$.
\item The generic condition $A''(x_0) < 0$ holds at the maximizer. 
\end{itemize}
We denote the maximum value of $A(x)$ by $A_\mathrm{max}:=A(x_0)>0$.
\label{def:semicircularKS}
\end{definition}
Of course if $u(x)\equiv c\ge 0$, then the graph of $A(x)$ is a scaled semicircle that can be written in the form
\begin{equation}
A(x)=\frac{2A_\mathrm{max}\chi_{[X_-,X_+]}(x)}{X_+-X_-}\sqrt{(X_+-x)(x-X_-)},\quad x_0=\frac{1}{2}(X_++X_-),
\label{eq:ExactSemicircle}
\end{equation}
hence the name.  After an irrelevant translation by $x_0$ to recenter $A(x)$ at the origin, this also matches the initial condition of a Talanov-type solution of the dispersionless NLS system with parameters $E=-2A_\mathrm{max}^2<0$ and $F=\frac{1}{2}(X_+-X_-)A_\mathrm{max}^2>0$.  

By inverse-spectral theory \cite{ZakharovS72}, the solution of any equation in the focusing NLS hierarchy with initial data $\psi_0:\mathbb{R}\to\mathbb{C}$ is based on the spectral analysis of the non-selfadjoint Zakharov-Shabat problem:
\eq
\epsilon \frac{\dd{\bf w}}{\dd x} = \begin{pmatrix} -\ii\SP & \psi_0(x) \\ -\psi_0(x)^* & \ii\SP \end{pmatrix}{\bf w}, \quad {\bf w}:\mathbb{R}\to\mathbb{C}^2.
\label{eq:ZS-problem}
\endeq
If $\psi_0(x)=A(x)$ with $A:\mathbb{R}\to\mathbb{R}$ being a semicircular Klaus-Shaw potential, then according to \cite{KlausS02}, the number of eigenvalues is 
finite, and all eigenvalues are simple and lie on the imaginary axis.  We 
choose
\eq
\label{eq:ZS-epsilon-assumption}
\epsilon = \epsilon_N:=\frac{1}{N\pi}\int_\mathbb{R}A(x)\,\dd x
\endeq
for some nonnegative integer $N$.  This ensures there are exactly 
$N$ strictly positive (and simple) imaginary eigenvalues $\SP_n=\ii s_n$, 
$0<s_{N-1}<\cdots<s_1<s_0$.  When $\SP=\SP_n$, the unique solutions $\mathbf{w}=\mathbf{w}_n^\pm(x)$ of \eqref{eq:ZS-problem} given by $\mathbf{w}=\mathbf{w}^-_n(x)=\ee^{-\ii\SP_n x/\epsilon}(1,0)^\top$ for $x\le X_-$ and $\mathbf{w}^+_n(x)=\ee^{\ii\SP_n x/\epsilon}(0,1)^\top$ for $x\ge X_+$ are necessarily proportional; there exists a nonzero connection coefficient $\tau_n\neq 0$ such that $\mathbf{w}_n^-(x)=\tau_n\mathbf{w}_n^+(x)$. Together with the reflection coefficient that may be defined for $\SP\in\mathbb{R}$ (its precise definition is not relevant in this paper), this comprises the scattering data associated with $\psi_0$.

Considering the limit $N\to\infty$ equivalent to $\epsilon\to 0$, we apply the WKB method to \eqref{eq:ZS-problem} in order to approximate the scattering data for $\psi_0(x)=A(x)$ a semicircular Klaus-Shaw potential.  One finds that the reflection coefficient tends to zero as $\epsilon\to 0$ and obtains 
approximate eigenvalues and connection coefficients defined as follows.  
\begin{itemize}
\item Let $\evdensity(s)$ be defined by 
\eq
\begin{split}
\evdensity(s):=&\frac{s}{\pi}\int_{x_-(s)}^{x_+(s)}\frac{\dd x}{\sqrt{A(x)^2-s^2}}\\
  =&-\frac{1}{\pi}\frac{\dd}{\dd s}
\int_{x_-(s)}^{x_+(s)}\sqrt{A(x)^2-s^2}\,\dd x,\quad 0<s<A_\mathrm{max},
\label{eq:ZS-density}
\end{split}
\endeq
where $x_-(s)<x_+(s)$ are the two roots $x$ of $A(x)^2-s^2$, in other words they are the two branches of the inverse function to $s=A(x)$ defined for $0<s<A_\mathrm{max}$.
Then, the \emph{approximate eigenvalues} in the upper half-plane are $\SP=\ii\widetilde{s}_{n}$, $n=0,\dots,N-1$, where $\widetilde{s}_{0},\dots,\widetilde{s}_{N-1}$ are determined uniquely by the Bohr-Sommerfeld quantization rule
\begin{equation}
\phaseint(\ii\widetilde{s}_{n}) = (n+\tfrac{1}{2})\epsilon\pi = \frac{2n+1}{2N}\int_\mathbb{R}A(x)\,\dd x,\quad n=0,\dots,N-1
\label{eq:ZS-approximate-eigenvalues}
\end{equation}
with the \emph{phase integral} $\phaseint$ being defined by
\begin{equation}
\phaseint(\ii s):=\pi\int_{s}^{A_\mathrm{max}}\evdensity(s')\,\dd s'= \int_{x_-(s)}^{x_+(s)}\sqrt{A(x)^2-s^2}\,\dd x,\quad 0<s<A_\mathrm{max}.
\label{eq:ZS-phase-integral}
\end{equation}
Note that $s\mapsto\phaseint(\ii s)$ is real-valued and strictly decreasing on its interval $(0,A_\mathrm{max})$ of definition, and $\phaseint(\ii A_\mathrm{max})=0$, implying that the definition \eqref{eq:ZS-approximate-eigenvalues} is sensible.
\item The connection coefficient $\tau_n$ associated with the eigenvalue best approximated by $\SP=\ii\widetilde{s}_{n}$ is itself approximated by 
\begin{equation}
\tau_n\approx\widetilde{\tau}_{n} :=  (-1)^{n+1} \ee^{\tailint(\ii\widetilde{s}_{n})/\epsilon} 
\label{eq:ZS-proportionality-constant-no-interpolant}
\end{equation}
where the \emph{tail integral} $\tailint$ is defined by
\begin{multline}
\tailint(\ii s):=(x_+(s)+x_-(s))s + \int_{-\infty}^{x_-(s)}\left(\sqrt{s^2-A(x)^2}-s\right)\,\dd x \\
   - \int_{x_+(s)}^{+\infty}\left(\sqrt{s^2-A(x)^2}-s\right)\,\dd x,\quad 0<s<A_\mathrm{max}.
\label{eq:mu-define}
\end{multline}
This is also real-valued on its interval of definition.  Observe that, using the quantization rule \eqref{eq:ZS-approximate-eigenvalues}, the approximate connection coefficients defined in \eqref{eq:ZS-proportionality-constant-no-interpolant} can equivalently be expressed as
\begin{equation}
\widetilde{\tau}_n= \ii(-1)^K\ee^{\ii(2K+1)\phaseint(\ii\widetilde{s}_{n})/\epsilon}\ee^{\tailint(\ii\widetilde{s}_{n})/\epsilon},\quad K\in\mathbb{Z},
\label{eq:ZS-proportionality-constant}
\end{equation}
\end{itemize}
for any arbitrary integer $K$. The freedom to select $K\in\mathbb{Z}$ is essential to our subsequent analysis. 

The functions $\phaseint(\lambda)$ and $\tailint(\lambda)$ have the following properties.
\begin{prop}
Let $A$ be a semicircular Klaus-Shaw potential.  Then the phase integral $\phaseint(\SP)$ defined by \eqref{eq:ZS-phase-integral}
is positive for $s=-\ii\lambda\in (0,A_\mathrm{max})$ with $\phaseint(\ii A_\mathrm{max})=0$.  Also, $\phaseint(\SP)$ is an even analytic function at $\SP=0$, and in particular the following series is convergent:
\begin{equation}
\phaseint(\lambda)=\sum_{k=0}^\infty\phaseint_{k}\lambda^{2k}, 
\label{eq:Psi-series}
\end{equation}
with real coefficients $\phaseint_k$, and the strict inequalities $\phaseint_0>0$ and $\phaseint_1>0$ both hold.
\label{prop:Psi-even-analytic}
\end{prop}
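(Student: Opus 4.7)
The phase integral can equivalently be written in the form
\begin{equation*}
\phaseint(\lambda) = \int_{x_-(\lambda^2)}^{x_+(\lambda^2)} \sqrt{A(x)^2 + \lambda^2}\, \dd x,
\end{equation*}
where $x_-(\lambda^2) < x_+(\lambda^2)$ are the two real roots of $A(x)^2 + \lambda^2 = 0$ that bifurcate from $X_-,X_+$ at $\lambda^2 = 0$. The first two conclusions---that $\phaseint(\ii s) > 0$ for $s \in (0, A_\mathrm{max})$ and that $\phaseint(\ii A_\mathrm{max}) = 0$---are immediate from this representation: the integrand is nonnegative on $[x_-(s), x_+(s)]$ and strictly positive in its interior, which is a nonempty interval for $s < A_\mathrm{max}$ but collapses to the single point $\{x_0\}$ as $s \uparrow A_\mathrm{max}$.

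For the evenness and analyticity at $\lambda = 0$, the strategy is to absorb all $\lambda^2$-dependence into the integrand over a fixed contour. Because $A(x)^2 = u(x)^2(X_+ - x)(x - X_-)$ is analytic in $x$ near $[X_-, X_+]$ with simple zeros at $X_\pm$ (using $u(X_\pm) \geq c > 0$), the implicit function theorem yields $x_\pm(\lambda^2) = X_\pm + \bigo{\lambda^2}$ analytically in $\lambda^2$ near $0$, together with a factorization $A(x)^2 + \lambda^2 = v(x, \lambda^2) (x - x_-(\lambda^2))(x_+(\lambda^2) - x)$ in which $v$ is jointly analytic and $v(x, 0) = u(x)^2 \geq c^2 > 0$ on $[X_-, X_+]$. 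The trigonometric substitution $x = \tfrac{1}{2}(x_+ + x_-) + \tfrac{1}{2}(x_+ - x_-)\sin\theta$ then produces
\begin{equation*}
\phaseint(\lambda) = \left(\frac{x_+(\lambda^2) - x_-(\lambda^2)}{2}\right)^{\!2} \int_{-\pi/2}^{\pi/2} \sqrt{v(x(\theta, \lambda^2), \lambda^2)}\, \cos^2\theta\, \dd\theta,
\end{equation*}
whose integration contour is fixed in $\theta$ and whose integrand is jointly analytic in $(\theta, \lambda^2)$ on a complex neighborhood of $[-\pi/2, \pi/2] \times \{0\}$, since the lower bound on $v$ there permits a single-valued analytic branch of $\sqrt{v}$. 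Hence $\phaseint$ is an analytic function of $\lambda^2$ near $0$, equivalently an even analytic function of $\lambda$ at $\lambda = 0$; its Taylor expansion takes the form \eqref{eq:Psi-series}, and the coefficients $\phaseint_k$ are real because $\phaseint(\lambda) \in \R$ on the purely imaginary $\lambda$-axis.

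Finally, $\phaseint_0 = \phaseint(0) = \int_{X_-}^{X_+} A(x)\, \dd x > 0$ since $A \geq 0$ is not identically zero. For $\phaseint_1$, differentiating $\phaseint$ under the integral sign gives vanishing endpoint contributions---the integrand $\sqrt{A(x)^2 + \lambda^2}$ vanishes at $x = x_\pm(\lambda^2)$ by construction---so
\begin{equation*}
\phaseint_1 = \frac{1}{2}\int_{X_-}^{X_+} \frac{\dd x}{A(x)} = \frac{1}{2}\int_{-\pi/2}^{\pi/2} \frac{\dd\theta}{u\bigl(\tfrac{1}{2}(X_++X_-) + \tfrac{1}{2}(X_+-X_-)\sin\theta\bigr)},
\end{equation*}
where the second form follows from the same $\theta$-substitution (noting the cancellation $\dd x/\sqrt{A(x)^2+\lambda^2} = \dd\theta/\sqrt{v}$). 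Both forms are manifestly strictly positive: the first is an improper but convergent integral whose only singularities are the integrable $(X_\pm - x)^{-1/2}$-type ones at the endpoints, and the second is a proper integral of a strictly positive continuous function. The only step requiring any real care is securing joint analyticity in a complex $\lambda^2$-neighborhood of $0$ when both the endpoints and the integrand depend on $\lambda^2$; the trigonometric substitution is designed precisely to overcome this.
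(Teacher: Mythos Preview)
Your proof is correct and proceeds by a genuinely different route from the paper's. The paper rewrites $\phaseint(\ii s)$ as a complex contour integral $\frac{1}{2}\oint_L R(x;s^2)\,\dd x$, where $R(x;s^2)^2=A(x)^2-s^2$ is cut along $[x_-(s),x_+(s)]$ and $L$ is a fixed loop enclosing $[X_-,X_+]$. Because $L$ does not move with $s$, analyticity in $s^2$ is immediate, and expanding the integrand via the binomial series $R(x;0)(1-s^2/R(x;0)^2)^{1/2}$ gives closed contour-integral formul\ae\ for \emph{all} coefficients $\phaseint_k$ simultaneously; collapsing the loop for $k=0,1$ recovers exactly your expressions for $\phaseint_0$ and $\phaseint_1$. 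Your approach instead stays on the real line: you absorb the moving endpoints into the analytic factorization $A(x)^2+\lambda^2=v(x,\lambda^2)(x-x_-)(x_+-x)$ and then a trigonometric substitution, producing a proper integral over a fixed $\theta$-interval with a jointly analytic integrand. This is more elementary---no contour manipulation or branch-cut bookkeeping in the complex $x$-plane---and makes the positivity of $\phaseint_1$ especially transparent via the proper $\theta$-integral form $\frac{1}{2}\int_{-\pi/2}^{\pi/2}\dd\theta/u$. The paper's method, on the other hand, yields the whole Taylor series in one stroke and generalizes more readily to higher coefficients. One small remark: your Leibniz step for $\phaseint_1$ has an integrand $\partial_{\lambda^2}\sqrt{A(x)^2+\lambda^2}$ that is singular at the endpoints, so the cleanest justification is to differentiate the $\theta$-integral representation directly (which you effectively do via the cancellation $\dd x/\sqrt{A(x)^2+\lambda^2}=\dd\theta/\sqrt{v}$) rather than to invoke Leibniz on the $x$-integral.
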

\begin{proof}
That $\phaseint(\ii s)>0$ for $0< s<A_\mathrm{max}$ and that $\phaseint(\ii A_\mathrm{max})=0$ are direct consequences of the definition \eqref{eq:ZS-phase-integral}.
If $A$ is a semicircular Klaus-Shaw potential, then for $0<s<A_\mathrm{max}$ the phase integral \eqref{eq:ZS-phase-integral} can be written as a contour integral:
\begin{equation}
\phaseint(\ii s)=\frac{1}{2}\oint_L R(x;s^2)\,\dd x,\quad 0<s<A_\mathrm{max},
\label{eq:phase-integral-loop}
\end{equation}
where $R(x;s^2)^2=A(x)^2-s^2=u(x)^2(X_+-x)(x-X_-)-s^2$ and $R(x;s^2)$ is analytic in a deleted neighborhood of $[x_-(s),x_+(s)]$ with positive (resp., negative) boundary values on the bottom (resp., top) edge of the cut $[x_-(s),x_+(s)]$, and where $L$ is a positively-oriented loop surrounding the cut.  Since $u$ is analytic in a complex neighborhood of the support interval $[X_-,X_+]$ containing the cut $[x_-(s),x_+(s)]$, we can expand the loop $L$ to enclose the larger interval $[X_-,X_+]$ without changing the value of the integral.  Then as $L$ is independent of $s$, even in a neighborhood of $s=0$, it is clear that the integral is an analytic function of $s^2$ near $s=0$.  Indeed, writing the integrand in the form $R(x;s^2)=R(x;0)(1-s^2/R(x;0)^2)^{\frac{1}{2}}$, where the square root factor converges uniformly to $1$ as $s\to 0$ for $x\in L$, we may expand the integrand in a series of even powers of $s$ and integrate term-by-term:
\begin{equation}
\phaseint(\ii s)=\frac{1}{2}\sum_{k=0}^\infty s^{2k}(-1)^k\binom{1/2}{k}\oint_LR(x;0)^{1-2k}\,\dd x
\end{equation}
for $|s|$ sufficiently small.  Using $\lambda=\ii s$ then proves the claimed series expansion.  The reality of the coefficients $\phaseint_k$  comes from Schwarz symmetry of the integrand and integration contour.  If $k=0,1$, the integrand is integrable at $x=X_\pm$ and by the generalized Cauchy integral theorem we obtain
\begin{equation}
\begin{split}
\phaseint_0&=\binom{1/2}{0}\int_{X_-}^{X_+}u(x)\sqrt{(X_+-x)(x-X_-)}\,\dd x=\int_{X_-}^{X_+}A(x)\,\dd x,\\ \phaseint_1&=\binom{1/2}{1}\int_{X_-}^{X_+}\frac{\dd x}{u(x)\sqrt{(X_+-x)(x-X_-)}},
\end{split}
\label{eq:Psi0-Psi1}
\end{equation}
both of which are obviously positive.
\end{proof}

\begin{prop}
Let $A$ be a semicircular Klaus-Shaw potential.  Then the tail integral $\tailint(\lambda)$ defined by \eqref{eq:mu-define} is an odd analytic function at $\lambda=0$, and in particular the following series is convergent:
\begin{equation}
\tailint(\lambda)=\ii\sum_{k=1}^\infty\tailint_k\lambda^{2k-1},
\end{equation}
with real coefficients $\tailint_k$, and $\tailint_1=-(X_++X_-)$.
\label{prop:mu-odd-analytic}
\end{prop}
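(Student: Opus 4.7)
\noindent\emph{Proof plan.} The plan is to reduce $\tailint(\ii s)$ to a manifestly odd analytic function of $s$. First, since $A$ vanishes outside $[X_-,X_+]$, the integrand $\sqrt{s^2-A(x)^2}-s$ vanishes there for $s>0$, so both tail integrals in \eqref{eq:mu-define} reduce to the finite intervals $[X_-,x_-(s)]$ and $[x_+(s),X_+]$. Combining the algebraic term $(x_+(s)+x_-(s))s$ with the $-s$ contributions from those integrals produces the key cancellation
\[
\tailint(\ii s)=(X_++X_-)s+\int_{X_-}^{x_-(s)}\sqrt{s^2-A(x)^2}\,\dd x-\int_{x_+(s)}^{X_+}\sqrt{s^2-A(x)^2}\,\dd x.
\]

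Second, I will show each remaining integral equals $s^3$ times a function analytic in $s^2$. On $[X_-,x_0]$, $A$ is strictly increasing, so I will change variables by $A(x)=sy$ with $y\in[0,1]$. The semicircular Klaus-Shaw structure makes $\phi(x):=A(x)^2=u(x)^2(X_+-x)(x-X_-)$ analytic in a neighborhood of $[X_-,X_+]$ with $\phi'(X_-)=u(X_-)^2(X_+-X_-)>0$, so by the implicit function theorem the inverse map $x_{X_-}$ sending $A^2$ to $x\in[X_-,x_0]$ is analytic at $A^2=0$. Writing $1/A'(x)=2A/\phi'(x)$ and substituting $A=sy$ then gives $1/A'(x_{X_-}(sy))=2sy\,g_-(s^2y^2)$ for some $g_-$ analytic at the origin. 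Since $\sqrt{s^2-A^2}=s\sqrt{1-y^2}$, the substitution yields
\[
\int_{X_-}^{x_-(s)}\sqrt{s^2-A(x)^2}\,\dd x=2s^3\int_0^1 y\sqrt{1-y^2}\,g_-(s^2y^2)\,\dd y,
\]
and termwise integration of the Taylor series of $g_-$ establishes analyticity in $s^2$ for the resulting integral. The analogous argument near $X_+$, using $\phi'(X_+)=-u(X_+)^2(X_+-X_-)<0$, handles the second tail integral.

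Combining these steps gives $\tailint(\ii s)=s\cdot H(s^2)$ with $H$ analytic at $s^2=0$, $H(0)=X_++X_-$, and with real Taylor coefficients since all ingredients are real on the real axis. Substituting $\lambda=\ii s$, so that $s^2=-\lambda^2$, produces $\tailint(\lambda)=-\ii\lambda\,H(-\lambda^2)$, which is $\ii$ times an odd analytic function of $\lambda$. Expanding $H$ and matching to $\ii\sum_{k\ge 1}\tailint_k\lambda^{2k-1}$ recovers the asserted series with real $\tailint_k$, and reading off the coefficient of $\lambda$ gives $\tailint_1=-H(0)=-(X_++X_-)$. The main substance of the argument lies in the analyticity of the inverse functions $x_{X_\pm}(A)$ as functions of $A^2$; this is precisely where the square-root vanishing of $A$ at the endpoints, built into the semicircular Klaus-Shaw definition, is essential, since without this structure the putative expansions would contain odd powers of $s$ arising from fractional powers of the endpoint distances.
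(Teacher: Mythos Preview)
Your proof is correct and reaches the same conclusion as the paper, but by a genuinely different mechanism at the key step. After the common reduction to
\[
\tailint(\ii s)=(X_++X_-)s+\int_{X_-}^{x_-(s)}\sqrt{s^2-A(x)^2}\,\dd x-\int_{x_+(s)}^{X_+}\sqrt{s^2-A(x)^2}\,\dd x,
\]
the paper substitutes $x=X_-+s^2w$, which keeps a moving branch point in the integrand; it then rewrites the integral as a loop integral over a teardrop contour encircling the cut and expands $R(w;s^2)$ in even powers of $s$ term by term. You instead substitute $A(x)=sy$, which factors the square root as $s\sqrt{1-y^2}$ and pushes all the $s$-dependence into the Jacobian $1/A'(x)=2sy\,g_-(s^2y^2)$, where $g_-$ is analytic because $\phi(x)=A(x)^2$ has a simple zero at $X_-$. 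Your route is more elementary---it stays entirely on the real line and avoids the contour argument---at the cost of invoking the local invertibility of $\phi$ near the endpoint via the implicit function theorem. The paper's contour method, by contrast, makes the analytic dependence on $s^2$ visible without inverting anything, and its structure parallels the loop-integral proof of Proposition~\ref{prop:Psi-even-analytic}. Both arguments use the semicircular Klaus-Shaw hypothesis in exactly the same place: the linear vanishing of $A(x)^2$ at $X_\pm$ is what makes the relevant object (your $g_-$, the paper's $R(w;s^2)$) analytic in $s^2$ rather than merely in $s$.
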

\begin{proof}
Since $A$ has compact support in $[X_-,X_+]$, for $0<s<A_\mathrm{max}$ we can rewrite \eqref{eq:mu-define} in the form
\begin{align}
\tailint(\ii s)&=(x_+(s)+x_-(s))s + \int_{X_-}^{x_-(s)}\left(\sqrt{s^2-A(x)^2}-s\right)\,\dd x -\int_{x_+(s)}^{X_+}\left(\sqrt{s^2-A(x)^2}-s\right)\,\dd x\nonumber \\
&=(X_++X_-)s + \int_{X_-}^{x_-(s)}\sqrt{s^2-A(x)^2}\,\dd x -\int_{x_+(s)}^{X_+}\sqrt{s^2-A(x)^2}\,\dd x.
\label{eq:mu-rewrite}
\end{align}
Each integral on the right-hand side makes a similar contribution of order $\bigo{s^3}$, so we just consider the first integral in detail.  Making the change of variables $x=X_-+s^2w$ gives
\begin{equation}
\int_{X_-}^{x_-(s)}\sqrt{s^2-A(x)^2}\,\dd x = s^3\int_0^{(x_-(s)-X_-)/s^2}\sqrt{1-u(X_-+s^2w)^2(X_+-X_--s^2w)w}\,\dd w.
\end{equation}
The upper limit of integration is the positive value $w_-(s)$ of $w$ that makes the integrand vanish and that satisfies $w_-(s)=1/[u(X_-)^2(X_+-X_-)] + \bigo{s^2}$.  We introduce a function $R(w;s^2)$ with a branch cut extending from $w=w_-(s)$ to the left through $w=0$ and satisfying $R(w;s^2)^2=1-u(X_-+s^2w)^2(X_+-X_--s^2w)w$ with $R(w;s^2)>0$ (resp., $R(w;s^2)<0$) on the bottom edge (resp., top edge) of the cut, and then let $L$ denote a teardrop-shaped contour from $w=0$ on the bottom edge to $w=0$ on the top edge and encircling the branch cut between $w=0$ and the branch point $w=w_-(s)$  once in the counterclockwise sense.  We take $L$ to be independent of $s$ for $s>0$ sufficiently small.  Then we have
\begin{equation}
\int_{X_-}^{x_-(s)}\sqrt{s^2-A(x)^2}\,\dd x = \frac{1}{2}s^3\int_L R(w;s^2)\,\dd w.
\end{equation}
Now, $R(w;s^2)$ has a Taylor expansion about $s=0$ in even powers of $s$, and this series is uniformly convergent for $x\in L$, so we may integrate term-by-term.   Thus,
\begin{equation}
\int_{X_-}^{x_-(s)}\sqrt{s^2-A(x)^2}\,\dd x = \frac{1}{2}s^3\sum_{n=0}^\infty\frac{s^{2n}}{n!}\int_L\left.\frac{\dd^nR}{\dd\sigma^n}(w;\sigma)\right|_{\sigma=0}\,\dd w.
\end{equation}
It is easy to check that each Taylor coefficient is an integral that evaluates to a real number by Schwarz symmetry.  Since the second integral in \eqref{eq:mu-rewrite} has a similar expansion, using $\lambda=\ii s$ completes the proof.
\end{proof}

Finally, we note that a semicircular Klaus-Shaw potential $A$ can be recovered from the corresponding phase integral $\phaseint$ and tail integral $\tailint$.
\begin{prop}
Let $A$ be a semicircular Klaus-Shaw potential, and let $\phaseint(\lambda)$ and $\tailint(\lambda)$ be defined by \eqref{eq:ZS-phase-integral} and \eqref{eq:mu-define}, respectively. Then, for $0<s<A_\mathrm{max}$, the inverse functions $x_-(s)<x_+(s)$ of $x\mapsto A(x)$ satisfy
\begin{equation}
x_\pm(s)=\frac{1}{\pi}\int_0^s\frac{\displaystyle\frac{\dd}{\dd m}\tailint(\ii m)\,\dd m}{\sqrt{s^2-m^2}}\mp\frac{1}{\pi}\int_s^{A_\mathrm{max}}\frac{\displaystyle\frac{\dd}{\dd m}\phaseint(\ii m)\,\dd m}{\sqrt{m^2-s^2}}.
\label{eq:xpm-recover}
\end{equation}
\label{prop:Psi-mu-invert}
\end{prop}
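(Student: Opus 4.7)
The plan is to reduce the problem to a pair of Abel-type integral equations for the inverse branches $x_\pm(s)$ and then invert them. First I would differentiate both $\phaseint(\ii s)$ and $\tailint(\ii s)$ with respect to $s$; because $A(x_\pm(s))=s$, every boundary term generated by Leibniz's rule vanishes, and one obtains
\begin{equation*}
\frac{\dd}{\dd s}\phaseint(\ii s)=-s\int_{x_-(s)}^{x_+(s)}\frac{\dd x}{\sqrt{A(x)^2-s^2}},
\end{equation*}
and, starting from the compact-support rewriting \eqref{eq:mu-rewrite} of $\tailint$,
\begin{equation*}
\frac{\dd}{\dd s}\tailint(\ii s)=(X_++X_-)+s\int_{X_-}^{x_-(s)}\frac{\dd x}{\sqrt{s^2-A(x)^2}}-s\int_{x_+(s)}^{X_+}\frac{\dd x}{\sqrt{s^2-A(x)^2}}.
\end{equation*}

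Next I would change variables via $y=A(x)$. Since $A$ is monotone on $[X_-,x_0]$ and on $[x_0,X_+]$ (which is tacit in the convention that $x_\pm(s)$ are the only two roots of $A(x)=s$), writing $x_-'(y)>0$ and $x_+'(y)<0$ for the derivatives of the two branches of the inverse, the integrals become Abel-type kernels:
\begin{equation*}
\frac{\dd}{\dd s}\phaseint(\ii s)=s\int_s^{A_\mathrm{max}}\frac{(x_+'(y)-x_-'(y))\,\dd y}{\sqrt{y^2-s^2}},\qquad \frac{\dd}{\dd s}\tailint(\ii s)=(X_++X_-)+s\int_0^s\frac{(x_-'(y)+x_+'(y))\,\dd y}{\sqrt{s^2-y^2}}.
\end{equation*}

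I would then substitute these expressions into the right-hand side of \eqref{eq:xpm-recover}, apply Fubini's theorem to interchange the order of the double integrals, and evaluate the inner $m$-integral via the substitution $u=m^2$ together with the Dirichlet identity $\int_a^b \dd u/\sqrt{(b-u)(u-a)}=\pi$. In both cases the inner integral collapses uniformly to $\pi/2$. The $\tailint$-term then contributes $\tfrac{1}{2}(X_++X_-)+\tfrac{1}{2}\bigl[(x_-(s)+x_+(s))-(X_-+X_+)\bigr]=\tfrac{1}{2}(x_-(s)+x_+(s))$, while the $\phaseint$-term contributes $\pm\tfrac{1}{2}(x_+(s)-x_-(s))$ once we use $x_\pm(A_\mathrm{max})=x_0$ at the outer integration limit. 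Summing these gives exactly $x_\pm(s)$ as in \eqref{eq:xpm-recover}.

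The main technical point is to justify the Fubini swap, which requires absolute convergence of the iterated integrals. The semicircular structure $A(x)=u(x)\sqrt{(X_+-x)(x-X_-)}$ with $u\ge c>0$ gives $x_\pm'(y)=\bigo{y}$ as $y\downarrow 0$, and the nondegeneracy $A''(x_0)<0$ yields the integrable singular behavior $x_\pm'(y)=\bigo{(A_\mathrm{max}-y)^{-1/2}}$ as $y\uparrow A_\mathrm{max}$. Because the inner integrals evaluate to the constant $\pi/2$ uniformly in $y$, these integrability bounds suffice for absolute convergence of the swapped integrals; if one prefers to avoid improper integrals, an equivalent approach is to truncate the $y$-integration slightly away from $0$ and $A_\mathrm{max}$, swap on the truncated region, and pass to the limit by dominated convergence.
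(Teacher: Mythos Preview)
Your argument is correct and follows the same core mechanism as the paper: differentiate $\phaseint(\ii m)$ and $\tailint(\ii m)$, substitute into the right-hand side of \eqref{eq:xpm-recover}, swap the order of the resulting double integrals, and collapse the inner integral via the Dirichlet identity $\int_a^b\dd u/\sqrt{(b-u)(u-a)}=\pi$.

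The one genuine difference is your choice of outer variable. You first substitute $y=A(x)$ on each monotone branch, converting the spatial integrals into Abel-type integrals of $x_\pm'(y)$, and then swap the $(m,y)$ integration. The paper instead keeps $x$ as the spatial variable, substitutes $v=m^2$, and swaps the $(v,x)$ integration directly; the inner $v$-integral $\int_{s^2}^{A(x)^2}\dd v/\sqrt{(A(x)^2-v)(v-s^2)}=\pi$ then leaves a trivial outer integral in $x$ that evaluates immediately to $x_+(s)-x_-(s)$ or $x_+(s)+x_-(s)$. Your route makes the Abel-inversion structure more transparent and recovers $x_\pm(s)$ via the fundamental theorem of calculus, at the cost of needing the endpoint estimates $x_\pm'(y)=\bigo{y}$ and $x_\pm'(y)=\bigo{(A_\mathrm{max}-y)^{-1/2}}$ to justify Fubini. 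The paper's version is slightly more economical because no such derivative bounds are invoked: the integrands in the $(v,x)$ double integral are manifestly nonnegative, so Tonelli applies without further comment. Both approaches are valid and essentially equivalent.
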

\begin{proof}
Directly from the definition of $\phaseint$ in \eqref{eq:ZS-phase-integral}, 
we can compute
\begin{equation}
\frac{\dd}{\dd m}\phaseint(\ii m) = -m\int_{x_-(m)}^{x_+(m)}\frac{\dd x}{\sqrt{A(x)^2-m^2}}.
\end{equation}
Therefore, integrating and changing the integration variable from $m$ to $v=m^2$,
\begin{equation}
\int_s^{A_\mathrm{max}}\frac{\displaystyle\frac{\dd}{\dd m}\phaseint(\ii m)\,\dd m}{\sqrt{m^2-s^2}} = 
-\frac{1}{2}\int_{s^2}^{A_\mathrm{max}^2}\int_{x_-(\sqrt{v})}^{x_+(\sqrt{v})}\frac{\dd x}{\sqrt{A(x)^2-v}}\frac{\dd v}{\sqrt{v-s^2}}.
\end{equation}
Exchanging the order of integration,
\begin{equation}
\int_s^{A_\mathrm{max}}\frac{\displaystyle\frac{\dd}{\dd m}\phaseint(\ii m)\,\dd m}{\sqrt{m^2-s^2}} = -\frac{1}{2}\int_{x_-(s)}^{x_+(s)}\int_{s^2}^{A(x)^2}\frac{\dd v}{\sqrt{A(x)^2-v}\sqrt{v-s^2}}\,\dd x.
\end{equation}
By an affine transformation taking the integration endpoints $v=s^2$ and $v=A(x)^2$ to $\pm 1$, one sees that the inner integral over $v$ is actually independent of $s^2$ and $A(x)^2$, and evaluates to $\pi$. 
Therefore by evaluating the outer integral we obtain
\begin{equation}
\int_s^{A_\mathrm{max}}\frac{\displaystyle\frac{\dd}{\dd m}\phaseint(\ii m)\,\dd m}{\sqrt{m^2-s^2}} = -\frac{\pi}{2}(x_+(s)-x_-(s)).
\label{eq:Psi-integral}
\end{equation}
Similarly, since for $A(x)$ a semicircular Klaus-Shaw potential we can express $\tailint$ using \eqref{eq:mu-rewrite},
we take a derivative to obtain
\begin{align}
\frac{\dd}{\dd m}\tailint(\ii m) &= \frac{\dd}{\dd m}\left[(X_++X_-)m + \int_{X_-}^{x_-(m)}\sqrt{m^2-A(x)^2}\,\dd x -\int_{x_+(m)}^{X_+}\sqrt{m^2-A(x)^2}\,\dd x\right] \nonumber \\
&= X_++X_- + m\int_{X_-}^{x_-(m)}\frac{\dd x}{\sqrt{m^2-A(x)^2}} -m\int_{x_+(m)}^{X_+}\frac{\dd x}{\sqrt{m^2-A(x)^2}}.
\end{align}
So, integrating and changing the integration variable from $m$ to $v=m^2$,
\begin{multline}
\int_0^s\frac{\displaystyle\frac{\dd}{\dd m}\tailint(\ii m)\,\dd m}{\sqrt{s^2-m^2}} = (X_++X_-)\int_0^s\frac{\dd m}{\sqrt{s^2-m^2}} \\
{}+\frac{1}{2}\int_0^{s^2}\int_{X_-}^{x_-(\sqrt{v})}\frac{\dd x}{\sqrt{v-A(x)^2}}\frac{\dd v}{\sqrt{s^2-v}} -\frac{1}{2}\int_0^{s^2}\int_{x_+(\sqrt{v})}^{X_+}\frac{\dd x}{\sqrt{v-A(x)^2}}\frac{\dd v}{\sqrt{s^2-v}}.
\end{multline}
The integral on the first line evaluates to $\pi/2$ independent of $m$, and on the second line we exchange the order of integration in each integral to obtain
\begin{multline}
\int_0^s\frac{\displaystyle\frac{\dd}{\dd m}\tailint(\ii m)\,\dd m}{\sqrt{s^2-m^2}} =\frac{\pi}{2}(X_++X_-)\\ +
\frac{1}{2}\int_{X_-}^{x_-(s)}\int_{A(x)^2}^{s^2}\frac{\dd v}{\sqrt{v-A(x)^2}\sqrt{s^2-v}}\,\dd x -\frac{1}{2}\int_{x_+(s)}^{X_+}\int_{A(x)^2}^{s^2}\frac{\dd v}{\sqrt{v-A(x)^2}\sqrt{s^2-v}}\,\dd x.
\end{multline}
Again the inner integral evaluates to $\pi$ in each case, so we obtain simply
\begin{equation}
\int_0^s\frac{\displaystyle\frac{\dd}{\dd m}\tailint(\ii m)\,\dd m}{\sqrt{s^2-m^2}} =\frac{\pi}{2}(x_+(s)+x_-(s)).
\label{eq:mu-integral}
\end{equation}
Combining \eqref{eq:Psi-integral} and \eqref{eq:mu-integral} yields \eqref{eq:xpm-recover}.
\end{proof}

\begin{cor}
A semicircular Klaus-Shaw potential $A$ is even about $x=\frac{1}{2}(X_++X_-)$ if and only if $\tailint(\lambda)=-\ii (X_++X_-)\lambda$.  
\label{cor:even}
\end{cor}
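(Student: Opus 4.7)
The plan is to read the corollary off the inversion formula \eqref{eq:xpm-recover} of Proposition \ref{prop:Psi-mu-invert}. Adding the two expressions for $x_\pm(s)$ in that formula yields
\begin{equation}
x_+(s) + x_-(s) = \frac{2}{\pi}\int_0^s\frac{\displaystyle\frac{\dd}{\dd m}\tailint(\ii m)\,\dd m}{\sqrt{s^2-m^2}}, \quad s\in(0,A_\mathrm{max}),
\end{equation}
so the whole task reduces to understanding when this sum equals $X_++X_-$ identically. I would first argue that $A$ is even about $x_c := \frac{1}{2}(X_++X_-)$ if and only if $x_+(s) + x_-(s) = 2x_c$ for every $s\in(0,A_\mathrm{max})$. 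The ``only if'' direction is clear from the mirror symmetry of the two inverse branches. For the converse, letting $s\uparrow A_\mathrm{max}$ forces the unique maximizer of $A$ to coincide with $x_c$ by continuity; then for each $y\in(0, X_+-x_c]$ the choice $s := A(x_c+y)$ gives $x_+(s)=x_c+y$ by monotonicity of $A$ on $[x_c,X_+]$, hence $x_-(s) = 2x_c - x_+(s) = x_c - y$, and monotonicity on $[X_-,x_c]$ forces $A(x_c-y) = s = A(x_c+y)$.

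This reduces the corollary to the claim that the Abel-type integral equation
\begin{equation}
\int_0^s\frac{g(m)\,\dd m}{\sqrt{s^2-m^2}} = \frac{\pi}{2}(X_++X_-), \quad s\in(0,A_\mathrm{max}),
\end{equation}
with $g(m) := \frac{\dd}{\dd m}\tailint(\ii m)$, admits $g \equiv X_++X_-$ as its unique continuous solution. The sufficiency direction is immediate from $\int_0^s(s^2-m^2)^{-1/2}\,\dd m = \pi/2$. For necessity I would invoke the classical Abel inversion theorem; equivalently, I would apply $\int_0^r \frac{s\,(\cdot)\,\dd s}{\sqrt{r^2-s^2}}$ to both sides, exchange the order of integration by Fubini, and differentiate in $r$, using the elementary identity $\int_0^m s(m^2-s^2)^{-1/2}\,\dd s = m$ to extract $g(r) = X_++X_-$.

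With $g$ identified as the constant $X_++X_-$, integrating in $m$ and using $\tailint(0) = 0$ (from the oddness established in Proposition \ref{prop:mu-odd-analytic}) gives $\tailint(\ii m) = (X_++X_-)m$ on $(0,A_\mathrm{max})$. By the identity theorem applied to the analytic extension of $\tailint$ at $\lambda=0$ provided by Proposition \ref{prop:mu-odd-analytic}, this is equivalent to the global identity $\tailint(\lambda) = -\ii(X_++X_-)\lambda$. I do not expect any serious obstacle: the only mildly delicate point is the Abel inversion, which in this constant-right-hand-side case reduces to an elementary computation, and the Klaus-Shaw monotonicity on each side of the maximizer, already implicit in the well-definedness of the branches $x_\pm(s)$, is used freely.
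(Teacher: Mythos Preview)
Your argument is correct. The reduction to the identity $x_+(s)+x_-(s)\equiv X_++X_-$ and its equivalence with evenness of $A$ is exactly what the paper uses. For the direction $\tailint(\lambda)=-\ii(X_++X_-)\lambda\Rightarrow A$ even, you and the paper both plug the linear tail integral into the inversion formula \eqref{eq:xpm-recover} and read off $x_+(s)+x_-(s)=X_++X_-$.

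The only difference is in the forward direction $A$ even $\Rightarrow \tailint$ linear. You deduce it by Abel-inverting the integral identity $\int_0^s g(m)(s^2-m^2)^{-1/2}\,\dd m=\tfrac{\pi}{2}(X_++X_-)$, which is a perfectly sound route. The paper instead goes back to the explicit representation \eqref{eq:mu-rewrite} of $\tailint(\ii s)$: once $x_\pm(s)$ are symmetric about $\tfrac12(X_++X_-)$, the two integrals in \eqref{eq:mu-rewrite} cancel by the substitution $x\mapsto X_++X_--x$, leaving $\tailint(\ii s)=(X_++X_-)s$ immediately. This is shorter and avoids the inversion step; your route has the small advantage of using only Proposition~\ref{prop:Psi-mu-invert} without revisiting the definition of $\tailint$. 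Either way the content is the same.

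One small wording issue: in your Abel inversion sketch, the identity you actually need for the inner integral after Fubini is $\int_m^r s\,[(r^2-s^2)(s^2-m^2)]^{-1/2}\,\dd s=\pi/2$ (the same one used in the proof of Proposition~\ref{prop:Psi-mu-invert}), not the identity $\int_0^m s(m^2-s^2)^{-1/2}\,\dd s=m$ you cite; the latter handles the right-hand side. This is cosmetic and does not affect the validity of the argument.
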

\begin{proof}
If $A$ is a semicircular Klaus-Shaw potential for which $A(\frac{1}{2}(X_++X_-)+y)$ is an even function of $y$, then $x_-(s)-\frac{1}{2}(X_++X_-)=-(x_+(s)-\frac{1}{2}(X_++X_-))$ holds identically for $0<s<A_\mathrm{max}$, and it then follows from \eqref{eq:mu-rewrite} that $\tailint(\ii s)=(X_++X_-)s$.  On the other hand, if $\tailint(\ii s)=(X_++X_-)s$, then it follows from \eqref{eq:xpm-recover} that $x_-(s)-\frac{1}{2}(X_++X_-)=-(x_+(s)-\frac{1}{2}(X_++X_-))$, which implies that $A$ is even about $x=\frac{1}{2}(X_++X_-)$.
\end{proof}

\begin{rem}
Proposition~\ref{prop:Psi-mu-invert} and Corollary~\ref{cor:even} are also valid for more general Klaus-Shaw potentials.  However Propositions~\ref{prop:Psi-even-analytic} and \ref{prop:mu-odd-analytic} require the more restrictive properties of semicircular Klaus-Shaw potentials.
\label{rem:general-KS}
\end{rem}

\subsubsection{Semiclassical soliton ensembles for semicircular Klaus-Shaw potentials}
We now discard the original initial data $\psi_0(x)=A(x)$ and replace it with the initial 
data corresponding to the pure-soliton solution with discrete spectral data defined in 
\eqref{eq:ZS-density}--\eqref{eq:mu-define}.  This is the semiclassical 
soliton ensemble associated to the initial condition $A(x)$.  Here is the precise definition.
\begin{definition}[Semiclassical soliton ensemble]
Let $A$ be a semicircular Klaus-Shaw potential and let $N>0$ be an integer.  The \emph{semiclassical soliton ensemble} associated with the initial condition $\psi_0(x)=A(x)$ and the index $N$ is the exact solution $\widetilde{\psi}(x,t_2,t_3,\dots,t_M)$ of the first $M-1$ flows in the focusing NLS hierarchy for parameter $\epsilon=\epsilon_N$ given by \eqref{eq:ZS-epsilon-assumption} with initial condition $\widetilde{\psi}(x,0,0,\dots,0)$ that is reflectionless (reflection coefficient vanishing identically) and has eigenvalues $\lambda=\ii\widetilde{s}_n$ defined by \eqref{eq:ZS-approximate-eigenvalues}--\eqref{eq:ZS-phase-integral} with corresponding connection coefficients $\widetilde{\tau}_n$ defined by \eqref{eq:ZS-proportionality-constant-no-interpolant}--\eqref{eq:mu-define}.  Given real constants $a_2,a_3,\dots,a_M$, the semiclassical soliton ensemble for the mixture of the flows corresponding to these constants is the function $\widetilde{\psi}(x,t):=\widetilde{\psi}(x,a_2t,a_3t,\dots,a_Mt)$; it is an exact solution of the prescribed mixture of the flows.
\label{def:SSE}
\end{definition}

We introduce the compact notation $\mathbf{t}=(t_2,t_3,\dots,t_M)\in\mathbb{R}^{M-1}$ to denote the vector of time coordinates, and write $\widetilde{\psi}(x,\mathbf{t})=\widetilde{\psi}(x,t_2,t_3,\dots,t_M)$.
Because $\widetilde{\psi}(x,\mathbf{t})$ is a reflectionless solution of the focusing NLS hierarchy, it can be characterized in terms of the solution of a Riemann-Hilbert problem with purely discrete data.  To formulate this problem, first define the 
set of poles in $\mathbb{C}_+$ by
\eq
P:=\{\ii\widetilde{s}_n\}_{n=0}^{N-1}.
\endeq
\begin{myrhp}[Semiclassical soliton ensemble for the focusing NLS hierarchy]
\label{rhp-meromorphic}
Given $\epsilon>0$ and values of the independent variables $(x,t_2,t_3,\dots,t_M)\in\mathbb{R}^M$, seek a $2\times 2$ matrix function $\widetilde{\bf M}(\SP)=\widetilde{\bf M}(\SP;x,\mathbf{t})$ with the following properties:
\begin{itemize}
\item[]\textit{\textbf{Meromorphicity:}} $\widetilde{\bf M}(\SP)$ is 
analytic for $\SP\in\mathbb{C}\setminus (P\cup P^*)$, with simple poles 
in $P\cup P^*$.
\item[]\textit{\textbf{Residues:}}  
We have the residue conditions
\begin{equation}
\begin{split}
\mathop{\mathrm{Res}}_{\SP=\ii\widetilde{s}_n}\widetilde{\bf M}(\SP)=\lim_{\SP\to \ii\widetilde{s}_n}\widetilde{\bf M}(\SP) \begin{pmatrix} 0 & 0 \\ c_n(x,\mathbf{t}) & 0 \end{pmatrix}, \quad n=0,\dots,N-1,
\label{eq:RHP-pole-C-plus}
\end{split}
\end{equation}
and
\begin{equation}
\begin{split}
\mathop{\mathrm{Res}}_{\SP=-\ii\widetilde{s}_n}\widetilde{\bf M}(\SP)=\lim_{\SP\to-\ii\widetilde{s}_n}\widetilde{\bf M}(\SP) \begin{pmatrix} 0 & -c_n(x,\mathbf{t})^* \\ 0 & 0 \end{pmatrix}, \quad n=0,\dots,N-1,
\end{split}
\label{eq:RHP-pole-C-minus}
\end{equation}
where
\eq
c_n(x,\mathbf{t}):=c_n^0 \ee^{2\ii Q(\ii\widetilde{s}_n;x,\mathbf{t})/\epsilon},
\label{eq:cn}
\endeq
with
\eq
c_n^0 := \widetilde{\tau}_n\frac{\prod_{j=0}^{N-1}\left(\ii\widetilde{s}_n+\ii\widetilde{s}_j\right)}{\prod_{j=0,j\neq n}^{N-1}\left(\ii\widetilde{s}_n-\ii\widetilde{s}_j\right)}, \quad Q(\SP;x,\mathbf{t}):=\SP x+\sum_{m=2}^M\SP^m t_m.
\label{eq:cn0}
\endeq
\item[]\textit{\textbf{Normalization:}} $\widetilde{\bf M}(\SP)\to\mathbb{I}$ as $\SP\to\infty$.
\end{itemize}
\label{rhp:M-sigma}
\end{myrhp}
It can be shown that for each $\epsilon>0$, this problem has a unique solution that is defined and real analytic on the parameter space $(x,t_2,t_3,\dots,t_M)\in\mathbb{R}^M$.  Indeed, one can see this by following the standard method of removing the poles in favor of jumps across small circles surrounding each of them (see for instance \cite{BorgheseJM18}), and doing so in a way that preserves Schwarz symmetry of the conditions as one uses to prove Proposition~\ref{prop:Schwarz} below.  Then one applies Zhou's vanishing lemma \cite{Zhou89}. 

An equivalent Riemann-Hilbert problem useful in some situations arises from the transformation
\begin{equation}
\widetilde{\mathbf{M}}^\updownarrow(\SP;x,\mathbf{t}):=\widetilde{\mathbf{M}}(\SP;x,\mathbf{t})\prod_{j=0}^{N-1}\left(\frac{\SP-\ii\widetilde{s}_j}{\SP+\ii\widetilde{s}_j}\right)^{\sigma_3}.
\label{eq:pole-swap}
\end{equation}
Clearly, this modified matrix is also meromorphic with simple poles only in $P\cup P^*$, and it tends to the identity as $\SP\to\infty$.  
The main effect of \eqref{eq:pole-swap} is to move the simple poles from the first column to the second and vice-versa.  Therefore, \eqref{eq:RHP-pole-C-plus}--\eqref{eq:RHP-pole-C-minus} imply that
$\widetilde{\mathbf{M}}^\updownarrow(\SP;x,\mathbf{t})$ satisfies the conditions of the following Riemann-Hilbert problem.
\begin{myrhp}[Renormalized soliton ensemble for the focusing NLS hierarchy]
\label{rhp-meromorphic_renorm}
Given $\epsilon>0$ and values of the independent variables $(x,t_2,t_3,\dots,t_M)\in\mathbb{R}^M$, seek a $2\times 2$ matrix function $\widetilde{\mathbf{M}}^\updownarrow(\SP) =\widetilde{\mathbf{M}}^\updownarrow(\SP;x,\mathbf{t})$ with the following properties:
\begin{itemize}
\item[]\textit{\textbf{Meromorphicity:}} $\widetilde{\mathbf{M}}^\updownarrow(\SP) $ is 
analytic for $\SP\in\mathbb{C}\setminus (P\cup P^*)$, with simple poles 
in $P\cup P^*$.
\item[]\textit{\textbf{Residues:}}  
We have the residue conditions
\begin{equation}
\begin{split}
\mathop{\mathrm{Res}}_{\SP=\ii\widetilde{s}_n}\widetilde{\mathbf{M}}^\updownarrow(\SP)=\lim_{\SP\to \ii\widetilde{s}_n}\widetilde{\mathbf{M}}^\updownarrow(\SP) \begin{pmatrix} 0 & c^\updownarrow_n(x,\mathbf{t}) \\ 0 & 0 \end{pmatrix}, \quad n=0,\dots,N-1,
\label{eq:RHP-pole-C-plus-renorm}
\end{split}
\end{equation}
and
\begin{equation}
\begin{split}
\mathop{\mathrm{Res}}_{\SP=-\ii\widetilde{s}_n}\widetilde{\mathbf{M}}^\updownarrow(\SP)=\lim_{\SP\to-\ii\widetilde{s}_n}\widetilde{\mathbf{M}}^\updownarrow(\SP) \begin{pmatrix} 0 & 0 \\-c^\updownarrow_n(x,\mathbf{t})^* & 0 \end{pmatrix}, \quad n=0,\dots,N-1,
\end{split}
\label{eq:RHP-pole-C-minus-renorm}
\end{equation}
where
\eq
c^\updownarrow_n(x,\mathbf{t}):=\frac{1}{c_n(x,\mathbf{t})}\frac{\prod_{j=0}^{N-1}(\ii\widetilde{s}_n+\ii\widetilde{s}_j)^2}{\prod_{j=0,j\neq n}^{N-1}(\ii \widetilde{s}_n-\ii\widetilde{s}_j)^2},\quad n=0,\dots,N-1.
\label{eq:cn-updown}
\endeq
\item[]\textit{\textbf{Normalization:}} $\mathbf{M}^\updownarrow(\SP) \to\mathbb{I}$ as $\SP\to\infty$.
\end{itemize}
\label{rhp:M-sigma-renorm}
\end{myrhp}

Once the unique solution of either Riemann-Hilbert Problem \ref{rhp-meromorphic} or \ref{rhp-meromorphic_renorm} is known, the semiclassical soliton ensemble corresponding to the semicircular Klaus-Shaw initial data $\psi_0(x)=A(x)$ is the exact solution to 
the focusing NLS hierarchy given by 
\eq
\widetilde{\psi}(x,\mathbf{t}):=2\ii\lim_{\SP\to\infty}\SP\widetilde{M}_{12}(\SP;x,\mathbf{t}) = 2\ii\lim_{\SP\to\infty}\SP\widetilde{M}^\updownarrow_{12}(\SP;x,\mathbf{t}).
\label{eq:psitilde-reconstruct}
\endeq 
Indeed, it is a consequence of \eqref{eq:pole-swap} that the two formul\ae\ in \eqref{eq:psitilde-reconstruct} are consistent. Then the fact that $\widetilde{\psi}(x,\mathbf{t})$ solves the first $M-1$ equations in the focusing NLS hierarchy follows by a standard dressing calculation that proceeds as follows.  First, one checks that the substitution $\mathbf{L}(\SP)=\widetilde{\mathbf{M}}(\SP)\ee^{-\ii Q(\SP;x,\mathbf{t})\sigma_3}$ yields residue conditions for $\mathbf{L}(\SP)$ that do not depend on the coordinates $(x,t_2,t_3,\dots,t_M)$ and hence $\mathbf{X}(\SP):=\epsilon\mathbf{L}_x(\SP)\mathbf{L}(\SP)^{-1}$ and $\mathbf{T}^{(m)}(\SP):=\epsilon \mathbf{L}_{t_m}(\SP)\mathbf{L}(\SP)^{-1}$ are polynomials in $\SP$ of degree $1$ and $m=2,\dots,M$ respectively.  Expressing the coefficients of these polynomials in terms of the expansion of $\widetilde{\mathbf{M}}(\SP)$ as $\SP\to\infty$ then shows that $\mathbf{L}(\SP)$ is a fundamental simultaneous solution matrix of the Lax equations $\epsilon\mathbf{L}_x=\mathbf{XL}$ and $\epsilon\mathbf{L}_{t_m}=\mathbf{T}^{(m)}\mathbf{L}$ for $m=2,3,\dots,M$.  The compatibility condition $\epsilon\mathbf{X}_{t_m}-\epsilon\mathbf{T}^{(m)}_x + [\mathbf{X},\mathbf{T}^{(m)}]=\mathbf{0}$ is then equivalent to the equation $\epsilon\widetilde{\psi}_{t_m}=N_m[\widetilde{\psi},\widetilde{\psi}^*]$ and hence taking  $m=2,3,\dots,M$ shows that $\widetilde{\psi}(x,\mathbf{t})$ is a simultaneous solution of the first $M-1$ flows of the focusing NLS hierarchy, where $\widetilde{\psi}(x,\mathbf{t})$ is given by \eqref{eq:psitilde-reconstruct}.

\begin{prop}
For each $\epsilon>0$ and $(x,t_2,t_3,\dots,t_M)\in\mathbb{R}^M$, the solution $\widetilde{\mathbf{M}}(\SP;x,\mathbf{t})$ of Riemann-Hilbert Problem~\ref{rhp-meromorphic}, satisfies the Schwarz symmetric property
\begin{equation}
	\widetilde{\mathbf{M}}(\SP;x,\mathbf{t})
	= \widetilde{\mathbf{M}}(\SP^*;x,\mathbf{t})^{-\dagger}, 
	\qquad \lambda \in \C \setminus (P \cup P^*).
\end{equation}
The renormalized function $\widetilde{\mathbf{M}}^{\updownarrow}(\SP;x,\mathbf{t})$ defined by \eqref{eq:pole-swap} inherits the same symmetry. 
\label{prop:Schwarz}
\end{prop}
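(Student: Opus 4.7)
The plan is to leverage the uniqueness of the solution of Riemann-Hilbert Problem~\ref{rhp-meromorphic} (asserted following its statement) by showing that the candidate matrix
\begin{equation*}
\mathbf{N}(\SP) := \widetilde{\mathbf{M}}(\SP^*;x,\mathbf{t})^{-\dagger}
\end{equation*}
itself satisfies all the conditions of that RHP; uniqueness will then force $\mathbf{N}(\SP) \equiv \widetilde{\mathbf{M}}(\SP;x,\mathbf{t})$, which is precisely the stated Schwarz symmetry.

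First I would establish that $\det\widetilde{\mathbf{M}} \equiv 1$, so that $\mathbf{N}$ is well-defined as a meromorphic function with simple poles only in $P \cup P^*$. The residue condition \eqref{eq:RHP-pole-C-plus} places the pole of $\widetilde{\mathbf{M}}$ at $\ii\widetilde{s}_n$ entirely in the first column, with residue equal to $c_n$ times the (analytic) second column evaluated at $\ii\widetilde{s}_n$, so that the would-be polar part of $\det\widetilde{\mathbf{M}}$ at $\ii\widetilde{s}_n$ is a $2\times 2$ determinant whose two columns coincide in the limit and therefore vanishes. An identical argument at $-\ii\widetilde{s}_n$ using \eqref{eq:RHP-pole-C-minus} shows that $\det\widetilde{\mathbf{M}}$ is entire; since it approaches $1$ at infinity by normalization, Liouville's theorem gives $\det\widetilde{\mathbf{M}} \equiv 1$. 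Consequently $\mathbf{N}$ exists, tends to $\mathbb{I}$ at infinity, and has at worst simple poles in $P \cup P^*$ because $\SP \mapsto \SP^*$ exchanges these two sets.

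To verify the residue conditions for $\mathbf{N}$, I would expand $\widetilde{\mathbf{M}}(\SP^*)$ near $\SP = \ii\widetilde{s}_n$ (where $\SP^* \to -\ii\widetilde{s}_n$) using \eqref{eq:RHP-pole-C-minus}; taking the conjugate transpose yields
\begin{equation*}
\widetilde{\mathbf{M}}(\SP^*)^{\dagger} \;=\; \frac{-c_n(x,\mathbf{t})}{\SP - \ii\widetilde{s}_n}\begin{pmatrix} 0 & 0 \\ \bar{u}_1 & \bar{u}_2 \end{pmatrix} + O(1), \qquad \mathbf{u} := \widetilde{\mathbf{M}}_{\cdot,1}(-\ii\widetilde{s}_n;x,\mathbf{t}),
\end{equation*}
so the pole is confined to the second row. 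Since $\det \widetilde{\mathbf{M}}(\SP^*)^{\dagger} \equiv 1$, the inverse coincides with the adjugate, and the polar behavior relocates to the first column of $\mathbf{N}$. A short adjugate computation shows that the residue of $\mathbf{N}$ at $\ii\widetilde{s}_n$ has exactly the form required by \eqref{eq:RHP-pole-C-plus} provided that $\bar{u}_i = [\widetilde{\mathbf{M}}(\SP^*)^{\dagger}]_{1i}(\ii\widetilde{s}_n)$ for $i=1,2$, and these identities hold by inspection once $\SP^* = -\ii\widetilde{s}_n$ is substituted into the definition of the dagger. The check at $-\ii\widetilde{s}_n$ is strictly analogous, using \eqref{eq:RHP-pole-C-plus} near $\SP^* = \ii\widetilde{s}_n$; the prefactor $-c_n^*$ required by \eqref{eq:RHP-pole-C-minus} reappears automatically from the conjugation of $c_n$. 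Uniqueness of the RHP solution then gives $\mathbf{N} \equiv \widetilde{\mathbf{M}}$. The symmetry of $\widetilde{\mathbf{M}}^{\updownarrow}$ is then immediate from \eqref{eq:pole-swap}: setting $\Pi(\SP) := \prod_{j=0}^{N-1}(\SP-\ii\widetilde{s}_j)/(\SP+\ii\widetilde{s}_j)$, one verifies at once that $\overline{\Pi(\SP^*)} = \Pi(\SP)^{-1}$, so $[\Pi(\SP^*)^{\sigma_3}]^{-\dagger} = \Pi(\SP)^{\sigma_3}$, and combining with the just-proved symmetry of $\widetilde{\mathbf{M}}$ yields $\widetilde{\mathbf{M}}^{\updownarrow}(\SP^*)^{-\dagger} = \widetilde{\mathbf{M}}^{\updownarrow}(\SP)$.

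The main obstacle is the combinatorial residue bookkeeping in the previous step: the three operations (Schwarz reflection $\SP \mapsto \SP^*$, the dagger, and matrix inversion) each redistribute the singular column or row of $\widetilde{\mathbf{M}}$, and one must verify that they compose so that the coefficient $c_n$ lands in the entry demanded by \eqref{eq:RHP-pole-C-plus} with precisely the correct sign and conjugation pattern. Once the local expansion at each pole is organized so that the analytic part is identified with the appropriate column of $\widetilde{\mathbf{M}}$ evaluated at the reflected pole, the matching is essentially automatic.
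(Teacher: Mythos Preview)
Your proposal is correct and follows essentially the same approach as the paper: both establish $\det\widetilde{\mathbf{M}}\equiv 1$ via Liouville, define $\mathbf{N}(\SP):=\widetilde{\mathbf{M}}(\SP^*)^{-\dagger}$, verify that $\mathbf{N}$ satisfies the same Riemann--Hilbert problem, and conclude by uniqueness. You supply more detail on the residue bookkeeping and on the $\widetilde{\mathbf{M}}^{\updownarrow}$ symmetry (via $\overline{\Pi(\SP^*)}=\Pi(\SP)^{-1}$), both of which the paper summarizes as ``a straightforward calculation'' and ``inherits the same symmetry''.
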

\begin{proof}
If $\widetilde{\mathbf{M}}(\SP)=\widetilde{\mathbf{M}}(\SP;x,\mathbf{t})$ solves Riemann-Hilbert Problem~\ref{rhp-meromorphic}, first notice that $\det (\widetilde{\mathbf{M}}(\SP))$ is entire, bounded and goes to 1 for $\SP \to \infty$. A Liouville argument then implies that $\det (\widetilde{\mathbf{M}}(\SP))\equiv 1$. 
A straightforward calculation shows that the function $\mathbf{N}(\lambda) := \widetilde{\mathbf{M}}(\SP^*)^{-\dagger}$ also solves Riemann-Hilbert Problem~\ref{rhp-meromorphic}. 
The ratio $\widetilde{\mathbf{M}}(\SP) \mathbf{N}(\lambda)^{-1}$ is then an entire, bounded function which approaches $\mathbb{I}$ as $\SP \to \infty$. 
The result then follows from Liouville's theorem.
\end{proof}

\begin{prop}
Suppose that $c_n(x,\mathbf{t})\in\mathbb{R}$ for $n=0,\dots,N-1$.  Then $\widetilde{\psi}(x,\mathbf{t})\in\ii\mathbb{R}$.  Likewise if $c_n(x,\mathbf{t})\in\ii\mathbb{R}$ for $n=0,\dots,N-1$, then $\widetilde{\psi}(x,\mathbf{t})\in\mathbb{R}$.
\label{prop:real-imag}
\end{prop}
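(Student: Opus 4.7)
The plan is to exhibit an additional discrete symmetry of Riemann--Hilbert Problem~\ref{rhp-meromorphic} that becomes available precisely under the reality (resp.\@ imaginary) hypothesis on the $c_n$, and then combine it with Schwarz symmetry (Proposition~\ref{prop:Schwarz}) in the large-$\SP$ expansion of $\widetilde{\mathbf{M}}$. Writing $\widetilde{\mathbf{M}}(\SP)=\mathbb{I}+\SP^{-1}\widetilde{\mathbf{M}}^{(1)}+\bigo{\SP^{-2}}$, the reconstruction formula \eqref{eq:psitilde-reconstruct} becomes $\widetilde{\psi}=2\ii\widetilde{M}^{(1)}_{12}$, while the Schwarz identity $\widetilde{\mathbf{M}}(\SP)^{\dagger}\widetilde{\mathbf{M}}(\SP^*)=\mathbb{I}$ read at order $(\SP^*)^{-1}$ forces $\widetilde{\mathbf{M}}^{(1)}$ to be anti-Hermitian; in particular $\widetilde{M}^{(1)}_{21}=-(\widetilde{M}^{(1)}_{12})^*$. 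The conclusion will follow once we have a second relation of the form $\widetilde{M}^{(1)}_{21}=\pm\widetilde{M}^{(1)}_{12}$.

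For the first case, assume $c_n\in\mathbb{R}$ and set $\mathbf{N}(\SP):=\sigma_1\widetilde{\mathbf{M}}(-\SP)\sigma_1$ with $\sigma_1:=\tbyt{0}{1}{1}{0}$. The involution $\SP\mapsto-\SP$ exchanges the upper and lower pole sets $\pm\ii\widetilde{s}_n$, so $\mathbf{N}$ is meromorphic on $\mathbb{C}\setminus(P\cup P^*)$ with simple poles at the correct locations, and conjugation by $\sigma_1$ preserves the normalization at infinity. Using the sign produced by $\SP\mapsto-\SP$ in the residue together with the identities $\sigma_1\tbyt{0}{0}{\xi}{0}\sigma_1=\tbyt{0}{\xi}{0}{0}$ and $\sigma_1\tbyt{0}{\xi}{0}{0}\sigma_1=\tbyt{0}{0}{\xi}{0}$, a short calculation shows $\mathbf{N}$ satisfies \eqref{eq:RHP-pole-C-plus}--\eqref{eq:RHP-pole-C-minus} with $c_n$ replaced by $c_n^*$; since $c_n\in\mathbb{R}$ these coincide with the original residue conditions, so uniqueness forces $\widetilde{\mathbf{M}}(\SP)=\sigma_1\widetilde{\mathbf{M}}(-\SP)\sigma_1$. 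Matching the $\SP^{-1}$ coefficient yields $\widetilde{\mathbf{M}}^{(1)}=-\sigma_1\widetilde{\mathbf{M}}^{(1)}\sigma_1$, whose $(1,2)$ entry reads $\widetilde{M}^{(1)}_{12}=-\widetilde{M}^{(1)}_{21}$. Combined with anti-Hermiticity this gives $\widetilde{M}^{(1)}_{12}=(\widetilde{M}^{(1)}_{12})^*\in\mathbb{R}$, and hence $\widetilde{\psi}\in\ii\mathbb{R}$.

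For the second case, assume $c_n\in\ii\mathbb{R}$ and replace $\sigma_1$ by $\sigma_2:=\tbyt{0}{-\ii}{\ii}{0}$, setting $\mathbf{N}(\SP):=\sigma_2\widetilde{\mathbf{M}}(-\SP)\sigma_2$. The analogous residue calculation, using $\sigma_2\tbyt{0}{0}{\xi}{0}\sigma_2=\tbyt{0}{-\xi}{0}{0}$, shows that $\mathbf{N}$ satisfies the residue conditions with $c_n$ replaced by $-c_n^*$; this matches the original conditions exactly when $c_n\in\ii\mathbb{R}$, so uniqueness yields $\widetilde{\mathbf{M}}(\SP)=\sigma_2\widetilde{\mathbf{M}}(-\SP)\sigma_2$ and hence $\widetilde{\mathbf{M}}^{(1)}=-\sigma_2\widetilde{\mathbf{M}}^{(1)}\sigma_2$, whose $(1,2)$ entry this time reads $\widetilde{M}^{(1)}_{12}=\widetilde{M}^{(1)}_{21}$. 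Combined with anti-Hermiticity this gives $\widetilde{M}^{(1)}_{12}=-(\widetilde{M}^{(1)}_{12})^*\in\ii\mathbb{R}$, and hence $\widetilde{\psi}\in\mathbb{R}$. The only real care required is in the sign bookkeeping for residues under $\SP\mapsto-\SP$ combined with the Pauli conjugation, but this is a routine verification; the substance of the argument is the symmetry-and-uniqueness step.
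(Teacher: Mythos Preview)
Your proof is correct and follows essentially the same approach as the paper's: define $\mathbf{N}(\SP):=\sigma_1\widetilde{\mathbf{M}}(-\SP)\sigma_1$ (resp.\ $\sigma_2\widetilde{\mathbf{M}}(-\SP)\sigma_2$), check that it solves Riemann--Hilbert Problem~\ref{rhp-meromorphic} under the hypothesis on the $c_n$, invoke uniqueness to get $\mathbf{N}=\widetilde{\mathbf{M}}$, then combine with Schwarz symmetry and compare Laurent coefficients at $\SP=\infty$. The paper is more terse and leaves the residue bookkeeping implicit, but the argument is the same.
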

\begin{proof}
In the former (respectively, latter) case, $\mathbf{N}(\SP):=\sigma_1\widetilde{\mathbf{M}}(-\SP)\sigma_1$ (respectively, $\mathbf{N}(\SP):=\sigma_2\widetilde{\mathbf{M}}(-\SP)\sigma_2$) solves Riemann-Hilbert Problem~\ref{rhp-meromorphic} whenever $\widetilde{\mathbf{M}}(\SP)$ does.  Arguing as in the proof of Proposition~\ref{prop:Schwarz},  uniqueness implies that  $\mathbf{N}(\SP)=\widetilde{\mathbf{M}}(\SP)$.  By Proposition~\ref{prop:Schwarz}, we also have $\mathbf{N}(\SP)=\widetilde{\mathbf{M}}(\SP)=\widetilde{\mathbf{M}}(\SP^*)^{-\dagger}=\sigma_2\widetilde{\mathbf{M}}(\SP^*)^{*\top}\sigma_2$.  Equating the Laurent expansions in descending powers of $\SP$ as $\SP\to\infty$ and using \eqref{eq:psitilde-reconstruct} completes the proof.
\end{proof}

Since the connection coefficients $\widetilde{\tau}_n$ defined in \eqref{eq:ZS-proportionality-constant-no-interpolant} are all real numbers, it follows that $c_n^0\in\ii\mathbb{R}$ for all $n=0,\dots,N-1$, and the same holds for $c_n(x,\mathbf{t})$ provided that $t_2=t_4=\dots=0$ (i.e., all even-indexed time coordinates vanish).  Therefore by Proposition~\ref{prop:real-imag}, this condition implies that $\widetilde{\psi}(x,\mathbf{t})$ is real.  In particular, $\widetilde{\psi}(x,\mathbf{0})$ is real for all $x\in\mathbb{R}$.

\subsection{Results}
\label{sec-results}
\subsubsection{Semiclassical soliton ensembles versus semicircular Klaus-Shaw potentials at $t=0$}
Our first result establishes the accuracy of approximating Cauchy data of semicircular Klaus-Shaw type \eqref{cauchy-data} by its semiclassical soliton ensemble approximation.   Recall that the semiclassical soliton ensemble $\widetilde{\psi}(x,\mathbf{t})$ depends on $N\in\mathbb{N}$ (or $\eps=\eps_N$ via \eqref{eq:ZS-epsilon-assumption}), but the semicircular Klaus-Shaw potential $\psi_0(x)=A(x)$ that generates it is fixed.
\begin{thm}[Initial accuracy of semiclassical soliton ensembles]

Let Cauchy data $\psi_0(x) =A(x)$ be given, where $A$ is a semicircular Klaus-Shaw potential  supported on $[X_-, X_+]$ (see Definition~\ref{def:semicircularKS}), and for each $N \in \mathbb{N}$ let $\widetilde{\psi}(x,\mathbf{t})$ be the corresponding semiclassical soliton ensemble (see Definition~\ref{def:SSE}). Then  
\begin{equation}
	\widetilde{\psi}(x,\mathbf{0}) = \psi_0(x) + 
	\begin{cases}  
	    \bigo{\eps^{1/2}} & x \in (X_-,x_0)\cup(x_0, X_+) \\
	    \bigo{\eps^{2}} & x \in [X_-, X_+]^c \\
	\end{cases}
	, \qquad \eps=\eps_N \downarrow 0.
\end{equation} 
These estimates are uniform for $x$ in compact subsets of $\R \setminus \{x_0, X_-, X_+\}$. 
\label{thm-accuracy-t=0}
\end{thm}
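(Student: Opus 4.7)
The plan is to apply the Deift--Zhou nonlinear steepest descent method to Riemann--Hilbert Problem~\ref{rhp:M-sigma-renorm} at $\mathbf{t}=\mathbf{0}$, so that $Q(\lambda;x,\mathbf{0})=\lambda x$. The first task is to replace the discrete pole data at $\{\ii\widetilde{s}_n\}$ by jumps on a single continuous contour enclosing the imaginary segment $[0,\ii A_{\max}]$. Combining the Bohr--Sommerfeld quantization \eqref{eq:ZS-approximate-eigenvalues} with the form \eqref{eq:ZS-proportionality-constant} of the approximate connection coefficients shows that each residue coefficient $c_n^\updownarrow(x,\mathbf{0})$ is the boundary value of a function of the form $\exp((2\ii\lambda x + \tailint(\lambda))/\epsilon)$ divided by a trigonometric factor $\sin(\phaseint(\lambda)/\epsilon)$ (or cognate) whose zeros lie precisely at $\lambda=\ii\widetilde{s}_n$. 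The analyticity and parity of $\phaseint$ and $\tailint$ at $\lambda=0$ provided by Propositions~\ref{prop:Psi-even-analytic}--\ref{prop:mu-odd-analytic} are essential to close the interpolating contour through the origin without a spurious singularity, and the free integer $K$ in \eqref{eq:ZS-proportionality-constant} will be chosen differently in different spatial regions so that the dominant exponentials in the resulting jumps decay in the appropriate directions.

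Next, I would introduce a scalar $g$-function $g(\lambda;x)$ supported on a band $\Sigma(x)=[\ii a(x),\ii A_{\max}]$ together with its Schwarz reflection. For $x\in(X_-,x_0)\cup(x_0,X_+)$, the lower endpoint $a(x)\in(0,A_{\max})$ is determined by the variational equations defining the equilibrium measure and satisfies $a(x)=A(x)$ on the appropriate branch, so that $a(x)\uparrow A_{\max}$ as $x\to x_0$ and $a(x)\downarrow 0$ as $x\to X_\pm$. After conjugating by $e^{g\sig/\epsilon}$, opening lenses along sign contours of $\re(g - \ii\lambda x)$, and constructing an outer parametrix on $\Sigma(x)\cup\Sigma(x)^*$ together with standard Airy parametrices at the soft endpoints, the leading-order value of the reconstructed $\widetilde{\psi}(x,\mathbf{0})$ extracted via \eqref{eq:psitilde-reconstruct} is precisely $A(x)$. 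The identification uses Proposition~\ref{prop:Psi-mu-invert}, which inverts the forward map $A\mapsto(\phaseint,\tailint)$ and certifies that the amplitude produced by the outer parametrix indeed coincides with the original semicircular Klaus-Shaw potential. The $\bigo{\eps^{1/2}}$ error is then the standard Airy contribution from the soft band endpoint at $\lambda=\ii a(x)$.

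For $x\in[X_-,X_+]^c$, the equilibrium measure is supported on the empty set, no $g$-function conjugation is needed, and the interpolated jumps factor through $e^{2\ii\lambda x/\epsilon}$ which decays exponentially in the appropriate half-plane. The outer parametrix is the identity and $\widetilde{\psi}(x,\mathbf{0})$ is reconstructed from a small-norm perturbation whose leading contribution is algebraic of order $\eps^2$, arising from the subleading terms in the matching of the interpolant on a shrinking neighborhood of $\lambda=0$ (where the coefficient $\phaseint_1>0$ of Proposition~\ref{prop:Psi-even-analytic} controls the scale). The higher-order decay here is consistent with $A(x)\equiv 0$ outside the support.

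The main obstacle is maintaining uniformity of the local parametrix analysis over compact subsets of $\R\setminus\{x_0,X_-,X_+\}$. Near $x=x_0$ the band shrinks ($a(x)\uparrow A_{\max}$) and the two Airy parametrices collide; near $x=X_\pm$ the band grows to its maximal extent ($a(x)\downarrow 0$), and the semicircular square-root endpoint of $A$ must be reconciled with the quantization spacing of eigenvalues near $\lambda=0$ -- this is precisely where the semicircular structure (as opposed to a general Klaus-Shaw potential, cf.\ Remark~\ref{rem:general-KS}) is indispensable, via the analytic/series properties of $\phaseint$ and $\tailint$ from Propositions~\ref{prop:Psi-even-analytic} and \ref{prop:mu-odd-analytic}. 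Both degenerations are excluded from the statement by the compactness hypothesis, but the constants in the error estimates must be controlled uniformly in terms of the distance to these singular points. The Schwarz and reality symmetries (Propositions~\ref{prop:Schwarz} and \ref{prop:real-imag}) are used throughout to halve the work and to verify that the reconstructed $\widetilde{\psi}(x,\mathbf{0})$ is real-valued, consistent with $\psi_0=A$ being real.
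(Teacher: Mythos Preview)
Your outline has the right overall shape (pole interpolation, $g$-function, lens opening, parametrices, small-norm argument), but there are two genuine gaps that would cause the proof to fail as written.

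First, the band structure is inverted. In the paper the $g$-function $g^\pm(\lambda;x)$ has its branch cut on the segment $[-\ii A(x),\ii A(x)]$, so the band in the upper half-plane is $B(x)=[0,\ii A(x)]$ and the complementary arc $[\ii A(x),\ii A_{\max}]$ is where the jump is exponentially close to identity. Thus as $x\to x_0$ the band \emph{grows} to $[0,\ii A_{\max}]$ and as $x\to X_\pm$ it \emph{shrinks} to a point, exactly the opposite of what you describe. This matters: with your band $[\ii A(x),\ii A_{\max}]$ the outer parametrix would have its branch points at the wrong places and the Airy analysis would be set up at $\ii A_{\max}$ rather than at $\ii A(x)$, which does not produce $A(x)$ as the leading amplitude. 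The identification of the leading term with $A(x)$ in the paper comes directly from the outer parametrix built on $[-\ii A(x),\ii A(x)]$, not via Proposition~\ref{prop:Psi-mu-invert}.

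Second, and more importantly, you misidentify the source of the $\bigo{\eps^{1/2}}$ error and miss the principal technical novelty. A standard Airy parametrix contributes $\bigo{\eps}$, not $\bigo{\eps^{1/2}}$. The $\eps^{1/2}$ arises because the pole-removal step replaces the Blaschke product $\widetilde{a}(\lambda)$ by continuous quantities $Y_\epsilon(\lambda)$ and $T_\epsilon(\lambda)$, and for semicircular Klaus--Shaw potentials the eigenvalue density $\evdensity(s)$ vanishes \emph{linearly} at $s=0$ (a direct consequence of the square-root vanishing of $A$ at $X_\pm$). This linear vanishing causes $Y_\epsilon,T_\epsilon=1+\bigo{\eps^{1/2}}$ rather than the $1+\bigo{\eps}$ one gets when $\evdensity(0)>0$; the $\eps^{1/2}$ then propagates through the Airy match, the band jump, and the error analysis. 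Moreover, this same mechanism forces a genuinely new local parametrix at $\lambda=0$: the model functions there are the infinite products $\mathcal{Y}_0(Z)$ and $\mathcal{T}_0(Z)$, not reducible to Airy or any classical special function, and the construction of this origin parametrix (with its own auxiliary Riemann--Hilbert problem and scalar function $D(Z)$) is the main new ingredient of the proof. Your proposal omits it entirely. For $x\notin[X_-,X_+]$, the $\bigo{\eps^2}$ likewise comes from careful $L^1$ estimates near $\lambda=0$ using the same $\mathcal{Y}_0$ asymptotics, together with a Cauchy-theorem cancellation of the explicit $\eps^{1/2}$ term, not from a generic ``shrinking neighborhood'' argument.
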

The proof will be given in Sections~\ref{sec-zakharov-shabat} and \ref{sec-proof-Thm1-outside} below.  We can apply this result to obtain convergence in the mean-square sense.
\begin{cor}
Under the assumptions of Theorem~\ref{thm-accuracy-t=0}, with $\eps=\eps_N$,
\begin{equation}
\lim_{N\to\infty} \|\widetilde{\psi}(\diamond,\mathbf{0})-\psi_0(\diamond)\|_{L^2(\mathbb{R})}=0.
\label{eq:L2-convergence}
\end{equation}
\label{cor-L2-convergence}
\end{cor}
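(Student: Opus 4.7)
The plan is to upgrade the almost-everywhere pointwise convergence furnished by Theorem~\ref{thm-accuracy-t=0} to strong $L^2$ convergence by combining it with a convergence of the $L^2$-norms obtained from the classical trace identity for reflectionless potentials, and then applying the Brezis-Lieb lemma. Since the exceptional set $\{X_-, x_0, X_+\}$ in Theorem~\ref{thm-accuracy-t=0} has Lebesgue measure zero, that theorem already gives $\widetilde{\psi}(x,\mathbf{0}) \to \psi_0(x)$ pointwise almost everywhere on $\R$.

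Next I would record the trace identity
\begin{equation}
	\|\widetilde{\psi}(\diamond,\mathbf{0})\|_{L^2(\R)}^2 = 4\eps\sum_{n=0}^{N-1}\widetilde{s}_n,
	\label{eq:cor-trace}
\end{equation}
which in the $\eps$-dependent normalization used here follows (say via the large-$\SP$ expansion $\widetilde{\mathbf{M}}(\SP;x,\mathbf{0}) = \mathbb{I}+\SP^{-1}\mathbf{M}^{(1)}(x) + \SP^{-2}\mathbf{M}^{(2)}(x)+\cdots$ of the unique solution of Riemann-Hilbert Problem~\ref{rhp-meromorphic} combined with \eqref{eq:psitilde-reconstruct} and $\det\widetilde{\mathbf{M}}\equiv 1$, as recorded in the proof of Proposition~\ref{prop:Schwarz}) from the standard first conserved density of the focusing NLS hierarchy for a pure $N$-soliton. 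To evaluate the right-hand side of \eqref{eq:cor-trace} in the limit $N\to\infty$, I would parametrize $\widetilde{s}_n = g\bigl((n+\tfrac12)\eps\pi\bigr)$, where $g\colon[0,\phaseint(0)]\to[0,A_{\max}]$ is the continuous strictly-decreasing inverse of $s \mapsto \phaseint(\ii s)$ (well defined by Proposition~\ref{prop:Psi-even-analytic}). Since $N\eps\pi = \phaseint(0)$ by \eqref{eq:ZS-epsilon-assumption} and \eqref{eq:Psi0-Psi1}, the right-hand side of \eqref{eq:cor-trace} equals $\frac{4}{\pi}\sum_{n=0}^{N-1}g\bigl((n+\tfrac12)\eps\pi\bigr)\cdot\eps\pi$, which is a midpoint Riemann sum over the partition of $[0,\phaseint(0)]$ into $N$ equal subintervals. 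Because $g$ is continuous on the closed interval, this converges as $N\to\infty$ to
\begin{equation}
	\frac{4}{\pi}\int_0^{\phaseint(0)}g(u)\,\dd u = 4\int_0^{A_{\max}}s\,\evdensity(s)\,\dd s = \int_{X_-}^{X_+}A(x)^2\,\dd x = \|\psi_0\|_{L^2(\R)}^2,
\end{equation}
where the first equality uses the substitution $u = \phaseint(\ii s)$ together with the identity $\frac{\dd}{\dd s}\phaseint(\ii s) = -\pi\evdensity(s)$ obtained by comparing \eqref{eq:ZS-phase-integral} and \eqref{eq:ZS-density}, while the second follows by Fubini's theorem and the elementary integral $\int_0^A s^2(A^2-s^2)^{-1/2}\,\dd s = \pi A^2/4$.

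With pointwise convergence almost everywhere together with $\|\widetilde{\psi}(\diamond,\mathbf{0})\|_{L^2(\R)}\to\|\psi_0\|_{L^2(\R)}$ (which in particular yields $\sup_N\|\widetilde{\psi}(\diamond,\mathbf{0})\|_{L^2(\R)}<\infty$) in hand, the Brezis-Lieb lemma applied with $p=2$ immediately produces $\|\widetilde{\psi}(\diamond,\mathbf{0})-\psi_0\|_{L^2(\R)}\to 0$, as claimed. The main obstacle I anticipate is a careful derivation of the trace identity \eqref{eq:cor-trace} in the present $\eps$-dependent Riemann-Hilbert normalization---either by rescaling $x \mapsto \eps\widetilde{x}$ to reduce to the standard Zakharov-Shabat problem and invoking the classical reflectionless trace identity $\int|\psi|^2\,\dd x=4\sum\Im\SP_n$, or by a direct computation from the large-$\SP$ expansion of $\widetilde{\mathbf{M}}(\SP;x,\mathbf{0})$; apart from this, everything is either immediate from the earlier propositions or follows from elementary analysis, and in particular midpoint-Riemann-sum convergence at the endpoint $u=\phaseint(0)$ (where $g$ has a vertical tangent since $\evdensity(s) = O(s)$ as $s\downarrow 0$) requires only continuity of $g$ on the closed interval.
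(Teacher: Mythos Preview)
Your proof is correct and shares with the paper's argument the key ingredients: the trace identity $\|\widetilde{\psi}(\diamond,\mathbf{0})\|_{L^2}^2 = 4\eps\sum_n\widetilde{s}_n$ and its evaluation as a midpoint Riemann sum converging to $\|\psi_0\|_{L^2}^2$ (the paper writes the integral as $\frac{4}{\pi}\int_0^{A_{\max}}\phaseint(\ii s)\,\dd s$ via an integration by parts, but your substitution gives the same value). The genuine difference is in the final step. The paper expands $\|\widetilde{\psi}-\psi_0\|_{L^2}^2$ via polarization to reduce matters to showing $\mathrm{Re}\langle\psi_0-\widetilde{\psi},\psi_0\rangle\to 0$, and controls this inner product by a density argument: approximating $\psi_0$ in $L^2$ by $f\in C_0^\infty(\mathbb{R}\setminus\{X_-,x_0,X_+\})$ and invoking the \emph{uniform} $L^\infty$ bounds of Theorem~\ref{thm-accuracy-t=0} on the compact set $\supp f$. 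Your route---pointwise a.e.\ convergence plus norm convergence plus Brezis--Lieb---is cleaner and avoids this density argument entirely; it exploits only the pointwise (not uniform) content of Theorem~\ref{thm-accuracy-t=0}. The paper's approach, by contrast, is more self-contained (no named lemma from real analysis) and keeps explicit track of how the uniform-on-compacta estimates feed into the conclusion, but at the cost of a longer epsilon--delta argument.
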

\begin{proof}
Since $\widetilde{\psi}(x,\mathbf{0})$ is a reflectionless potential, its $L^2$-norm is expressed in terms of the discrete eigenvalues $\lambda=\ii\widetilde{s}_n$, $n=0,\dots,N-1$ by a standard trace formula: 
\begin{equation}
\|\widetilde{\psi}(\diamond,\mathbf{t})\|_{L^2(\mathbb{R})}^2=\|\widetilde{\psi}(\diamond,\mathbf{0})\|_{L^2(\mathbb{R})}^2= 4\sum_{n=0}^{N-1}\epsilon\widetilde{s}_n.
\end{equation}
Now the sum is a Riemann sum approximation of an integral; indeed, combining 
\eqref{eq:ZS-epsilon-assumption} with \eqref{eq:ZS-approximate-eigenvalues} shows that
\begin{equation}
	\sum_{j=0}^{N-1} \eps \widetilde{s}_j = \frac{ \|A \|_1}{\ii \pi N} \sum_{j=0}^{N-1} \phaseint^{-1} \left( (j+\tfrac{1}{2}) \frac{\|A \|_{1}}{ N} \right) 
	= \frac{1}{\ii \pi} \int_0^{\|A\|_{1}} \phaseint^{-1}(\xi) \dd \xi + \bigo{N^{-2}}
	\label{eq:Riemann-Sum}
\end{equation}
where $\|A\|_1 := \int_\R A(x) \dd x$ and the error estimate is standard for the midpoint rule. 
The integral can be evaluated by integration by parts as
\begin{equation}
\frac{1}{\ii \pi} \int_0^{\|A\|_{1}} \phaseint^{-1}(\xi) \dd \xi = \frac{1}{\ii\pi}\xi \phaseint^{-1}(\xi) \Big\vert_{0}^{\|A\|_1} - \frac{1}{\ii\pi}\int_{\phaseint^{-1}(0)}^{\phaseint^{-1}(\|A\|_1)} \phaseint(\SP) \dd \SP = \frac{1}{\pi} \int_0^{A_\text{max}} \phaseint(\ii s) \dd s,
\label{eq:Rewrite-L2-norm}
\end{equation}
where we have used that fact, following from \eqref{eq:ZS-approximate-eigenvalues}, that $\phaseint^{-1}(0) = \ii A_\text{max}$ and $\phaseint^{-1}(\|A\|_1) = 0$. 
Note that upon using \eqref{eq:ZS-phase-integral}, exchanging the integration order and using the substitution $s=A(x)t$ in the (new) inner integral, one gets
\begin{equation}
\frac{1}{\pi}\int_0^{A_\mathrm{max}}\phaseint(\ii s)\,\dd s = \frac{1}{4}\int_{\mathbb{R}}A(x)^2\,\dd x = \frac{1}{4}\|\psi_0\|_{L^2(\mathbb{R})}^2.
\end{equation}
Then we see that
\begin{equation}
\begin{split}
\|\widetilde{\psi}(\diamond,\mathbf{0})-\psi_0(\diamond)\|_{L^2(\mathbb{R})}^2 &= \|\widetilde{\psi}(\diamond,\mathbf{0})\|_{L^2(\mathbb{R})}^2 + \|\psi_0(\diamond)\|_{L^2(\mathbb{R})}^2 -2\mathrm{Re}\left(\langle \widetilde{\psi}(\diamond,\mathbf{0}),\psi_0(\diamond)\rangle\right)\\
&=2\|\psi_0(\diamond)\|_{L^2(\mathbb{R})}^2 + \mathcal{O}(N^{-2}) -2\mathrm{Re}\left(\langle \widetilde{\psi}(\diamond,\mathbf{0}),\psi_0(\diamond)\rangle\right)\\
&=2\mathrm{Re}\left(\langle\psi_0(\diamond)-\widetilde{\psi}(\diamond,\mathbf{0}),\psi_0(\diamond)\rangle\right) + \mathcal{O}(N^{-2}).
\end{split}
\end{equation}
Clearly for any given $\delta>0$ there exists $N_0$ sufficiently large that $N>N_0$ implies
\begin{equation}
\|\widetilde{\psi}(\diamond,\mathbf{0})-\psi_0(\diamond)\|_{L^2(\mathbb{R})}^2\le 2\left|\langle\psi_0(\diamond)-\widetilde{\psi}(\diamond,\mathbf{0}),\psi_0(\diamond)\rangle\right| + \frac{1}{3}\delta.
\end{equation}
Suppose $f\in C_0^\infty(\mathbb{R}\setminus\{X_-,x_0,X_+\})$. Then 
\begin{equation}
\begin{split}
\left|\langle\psi_0(\diamond)-\widetilde{\psi}(\diamond,\mathbf{0}),\psi_0(\diamond)\rangle\right|&\le
\left|\langle\psi_0(\diamond)-\widetilde{\psi}(\diamond,\mathbf{0}),f(\diamond)\rangle\right| + \left|\langle\psi_0(\diamond)-\widetilde{\psi}(\diamond,\mathbf{0}),\psi_0(\diamond)-f(\diamond)\rangle\right|\\ &\le \|\psi_0(\diamond)-\widetilde{\psi}(\diamond,\mathbf{0})\|_{L^\infty(\mathrm{spt}(f))}\|f(\diamond)\|_{L^1(\mathbb{R})} \\
&\qquad{}+ \|\psi_0(\diamond)-\widetilde{\psi}(\diamond,\mathbf{0})\|_{L^2(\mathbb{R})}\|\psi_0(\diamond)-f(\diamond)\|_{L^2(\mathbb{R})}\\
&\le \|\psi_0(\diamond)-\widetilde{\psi}(\diamond,\mathbf{0})\|_{L^\infty(\mathrm{spt}(f))}\|f(\diamond)\|_{L^1(\mathbb{R})} \\
&\qquad{}+\left(\|\psi_0(\diamond)\|_{L^2(\mathbb{R})}+\|\widetilde{\psi}(\diamond,\mathbf{0})\|_{L^2(\mathbb{R})}\right)\|\psi_0(\diamond)-f(\diamond)\|_{L^2(\mathbb{R})}.
\end{split}
\end{equation}
Since $\|\widetilde{\psi}(\diamond,\mathbf{0})\|_{L^2(\mathbb{R})}\to\|\psi_0(\diamond)\|_{L^2(\mathbb{R})}$ as $N\to\infty$, there is some $N_1$ so large that $N>N_1$ guarantees $\|\widetilde{\psi}(\diamond,\mathbf{0})\|_{L^2(\mathbb{R})}\le 2\|\psi_0(\diamond)\|_{L^2(\mathbb{R})}$.  
By a density argument, for each given $\delta>0$ there is a $f=f_\delta\in C_0^\infty(\mathbb{R}\setminus\{X_-,x_0,X_+\})$ such that 
\begin{equation}
6\|\psi_0(\diamond)\|_{L^2(\mathbb{R})}\|\psi_0(\diamond)-f_\delta(\diamond)\|_{L^2(\mathbb{R})}\le\frac{1}{3}\delta.
\end{equation}
Thus, if $N>\max\{N_0,N_1\}$, 
\begin{equation}
\|\widetilde{\psi}(\diamond,\mathbf{0})-\psi_0(\diamond)\|_{L^2(\mathbb{R})}^2\le 2\|\psi_0(\diamond)-\widetilde{\psi}(\diamond,\mathbf{0})\|_{L^\infty(\mathrm{spt}(f_\delta))}\|f_\delta(\diamond)\|_{L^1(\mathbb{R})} + \frac{2}{3}\delta.
\end{equation}
Now by Theorem~\ref{thm-accuracy-t=0} there exists $N_2>0$ such that $N>\max\{N_0,N_1,N_2\}$ implies that
\begin{equation}
\|\widetilde{\psi}(\diamond,\mathbf{0})-\psi_0(\diamond)\|_{L^2(\mathbb{R})}^2\le \delta
\end{equation}
because the support of $f_\delta$ is a compact subset of $\mathbb{R}\setminus\{X_-,x_0,X_+\}$.
\end{proof}

\begin{figure}[htb]
\centering
\begin{minipage}[b]{.45\textwidth}
\centering
\vspace{0pt}
\begin{overpic}[width=\textwidth]{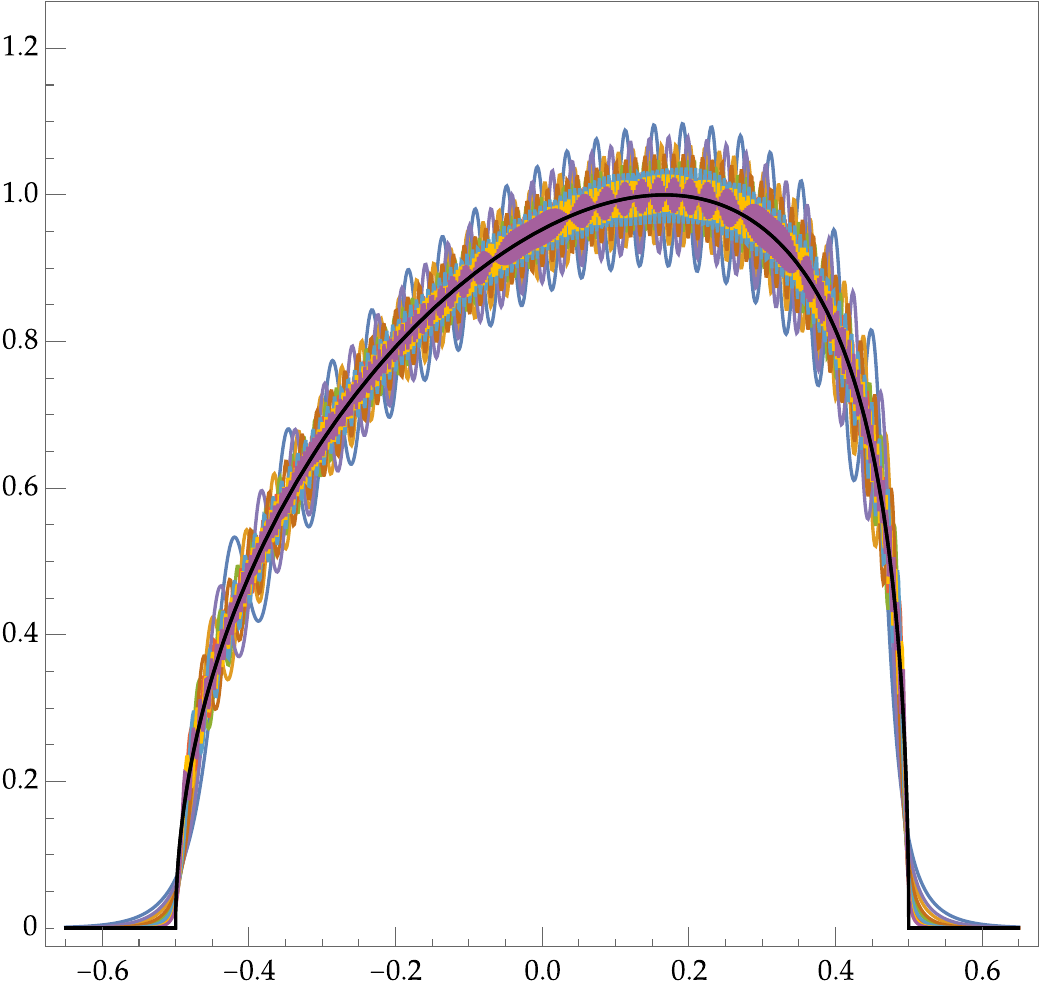}
\put(-10,95){a)}
\put(101,2){\scalebox{0.7}{$x$}}
\put(-10,50){ \scalebox{0.7}{ \rotatebox[origin=c]{90}{$\widetilde \psi(x,\mathbf{0})$}} } 
\end{overpic}
\end{minipage}
\hspace*{\stretch{1}}
\begin{minipage}[b]{.45\textwidth}
\centering
\vspace{0pt}
\begin{overpic}[width=\textwidth]{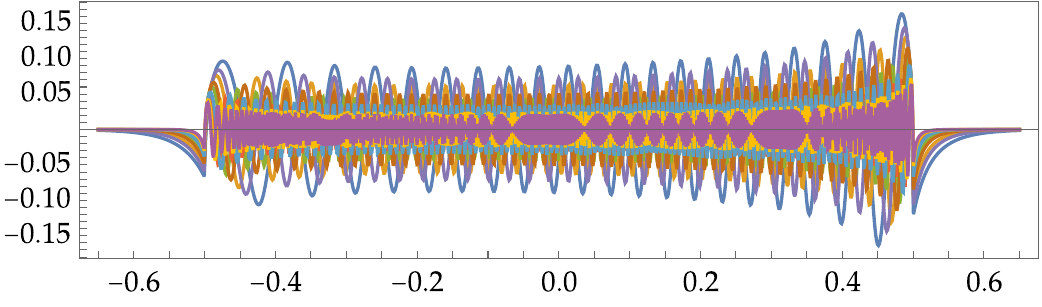}
\put(-15,27){b)}
\put(101,2){\scalebox{0.7}{$x$} }
\put(-10,15){ \scalebox{0.7}{ \rotatebox[origin=c]{90}{$\psi_0(x) - \widetilde \psi(x,\mathbf{0}) $}} } 
\end{overpic}
\vspace*{.1cm}

\begin{overpic}[width=\textwidth]{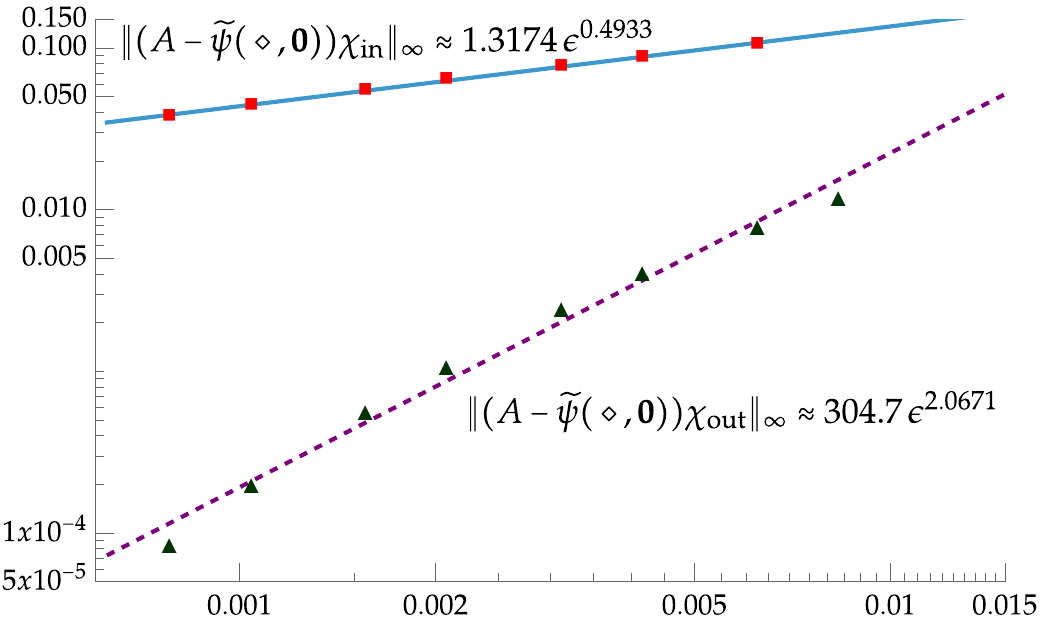}
\put(-15,57){c)}
\put(101,2){\scalebox{0.7}{$\epsilon$} }
\put(-10,35){ \scalebox{0.7}{ \rotatebox[origin=c]{90}{$\| (\psi_0 - \widetilde \psi(\diamond,\mathbf{0})) \chi \|_\infty$}} } 
\end{overpic}
\end{minipage}
\caption{
a) Plots of the numerically computed semiclassical soliton ensemble approximations $\widetilde{\psi}(x,\mathbf{0})$ (shown in colors) of the Klaus-Shaw initial data $\psi_0(x)=A(x)$ (shown in black) given by \eqref{eq:HirotaIC} (with $A_\mathrm{max} = 1$, $X_\pm=\pm \tfrac{1}{2}$, and $\xi = \frac{2}{3}$) for $\eps = \tfrac{1}{80}, \tfrac{1}{120}, \tfrac{1}{160}, \tfrac{1}{240}, \tfrac{1}{320}, \frac{1}{480}, \tfrac{1}{640}, \tfrac{1}{960}, \tfrac{1}{1280}$; 
b) Pointwise plot of the errors $A(x) - \widetilde{\psi}(x,\mathbf{0})$ for each value of $\epsilon$ in part a);
c) Points (red squares/green triangles) show the sup-norm error $\| (A - \widetilde \psi(\diamond,\mathbf{0}))\chi_{\mathrm{in/out}} \|_\infty$ over compact subsets of the interior/exterior of the support ($[-0.45,0.45]$ and $\{ 0.55 \leq |x| \leq 0.65 \}$ respectively) for each value of $\epsilon$ in part a). The lines show the least squares fit of a power law to each data set. The computed powers $0.4933$ and $2.0671$ are in good agreement with the result of Theorem~\ref{thm-accuracy-t=0}.
}
\label{fig:NLS_ICs}
\end{figure}

\begin{rem} Both $\widetilde{\psi}(\diamond,\mathbf{t})$ and the solution $\psi(\diamond,\mathbf{t})$ of the focusing NLS hierarchy for fixed initial data $\psi(\diamond,\mathbf{0})=\psi_0(\diamond)$ satisfy the same $\epsilon$-dependent system of partial differential equations, and both solutions are evidently close when $\mathbf{t}=\mathbf{0}$ and $\eps>0$ is small.  However, despite convergence of the initial data asserted in Theorem~\ref{thm-accuracy-t=0} and Corollary~\ref{cor-L2-convergence}, we cannot guarantee that $\widetilde{\psi}(\diamond,\mathbf{t})$ and $\psi(\diamond,\mathbf{t})$ remain close for nonzero $\mathbf{t}$ in the limit $\eps\downarrow 0$, because there are no known stability results for the Cauchy problem of the focusing NLS hierarchy that are uniform in $\epsilon$.  Indeed, the maximum exponential growth rate for the well-known modulational instability is inversely proportional to $\eps$.  Nonetheless, we will demonstrate below that some predictions about the dynamics for initial data $\psi_0(\diamond)$ carry over also to the semiclassical soliton ensemble initial data $\widetilde{\psi}(\diamond,\mathbf{0})$.
\label{rem-WatchOut}
\end{rem}

\subsubsection{Extreme focusing in the NLS hierarchy}
\label{sec:extreme}
To formulate the next results, we first introduce a certain function $\Psi(X,T_2,T_3,\dots,T_M)$ by means of an auxiliary Riemann-Hilbert problem.
\begin{myrhp}[Rogue wave of infinite order for the NLS hierarchy]
Fix an integer $M\ge 2$.  Given $(X,T_2,T_3,\dots,T_M)\in\mathbb{R}^M$, seek a $2\times 2$ matrix function $\mathbf{R}(\Lambda)=\mathbf{R}(\Lambda;X,T_2,T_3,\dots,T_M)$ with the following properties:
\begin{itemize}
\item[]\textit{\textbf{Analyticity:}} $\mathbf{R}(\Lambda)$ is an analytic function of $\Lambda$ in the domains $|\Lambda|< 1$ and $|\Lambda|>1$. 
\item[]\textit{\textbf{Jump condition:}} $\mathbf{R}(\Lambda)$ takes continuous boundary values on the unit circle from the interior (denoted $\mathbf{R}_-(\Lambda)$) and exterior (denoted $\mathbf{R}_+(\Lambda)$), and these are related by 
\begin{multline}
\mathbf{R}_+(\Lambda)=\mathbf{R}_-(\Lambda)\exp\left(-\ii \left(\Lambda X+\sum_{m=2}^M\Lambda^mT_m + 2\Lambda^{-1}\right)\sigma_3\right)\\
{}\cdot\mathbf{Q}^{-1}\exp\left(\ii \left(\Lambda X+\sum_{m=2}^M\Lambda^mT_m+ 2\Lambda^{-1}\right)\sigma_3\right), \quad|\Lambda|=1,\quad
\mathbf{Q}:=\frac{1}{\sqrt{2}}\begin{pmatrix}1&-1\\1 & 1\end{pmatrix}.
\label{eq:RWIO-jump}
\end{multline}
\item[]\textit{\textbf{Normalization:}} $\mathbf{R}(\Lambda)\to\mathbb{I}$ as $\Lambda\to\infty$.
\end{itemize}
\label{rhp:RWIO}
\end{myrhp}
This problem has a unique solution for each choice of $(X,T_2,T_3,\dots,T_M)\in\mathbb{R}^M$, as follows from Zhou's vanishing lemma \cite{Zhou89}, and the function $\Psi(X,T_2,T_3,\dots,T_M)$ defined by
\begin{equation}
\Psi(X,T_2,T_3,\dots,T_M):=2\ii\lim_{\Lambda\to\infty}\Lambda R_{12}(\Lambda;X,T_2,T_3,\dots,T_M)
\label{eq:RWIO-extract}
\end{equation}
is a complex-valued function whose real and imaginary parts are real-analytic 
function of the $M$ arguments.  In the case $M=2$, this function first 
appeared in the paper of Suleimanov \cite{Suleimanov17}, where it was formally 
proposed as a dispersive regularization of the blowup/collapse predicted by 
Talanov's analysis \cite{Talanov65} of the dispersionless focusing NLS system 
reviewed above in Section~\ref{sec-dispersionless}.  Later, in 
\cite{BilmanLM20}, the same function appeared as a near-field/high-order limit 
of fundamental rogue-wave solutions of the focusing NLS equation with nonzero 
boundary conditions, where it was called the \emph{rogue wave of infinite 
order}.  This function has also been shown to arise in the study of 
boundary layers for the sharp-line Maxwell-Bloch system in characteristic 
coordinates \cite{LiM24}, multiple-pole solutions of the focusing NLS equation 
\cite{BilmanB19}, 
and more general rogue-wave solutions of the focusing NLS equation arising from 
iterated B\"acklund transformations \cite{BilmanM22,BilmanM24b}.
In particular $\Psi(\diamond,0,\dots,0)$ is a real-valued function that is not 
in $L^1(\mathbb{R})$ but is square integrable with 
$\|\Psi(\diamond,0,\dots,0)\|_{L^2(\mathbb{R})}=\sqrt{8}$, and 
$\Psi(0,0,\dots,0)=4$.  In \cite{BilmanLM20} it was proved that this function 
for $M=2$ is an exact solution of the focusing NLS equation in the form 
$\ii \Psi_{T_2} + \frac{1}{2}\Psi_{XX}+|\Psi|^2\Psi=0$, and that it also 
solves equations in the Painlev\'e-III hierarchy of Sakka \cite{Sakka09} as a 
function of $X$ for each fixed $T_2$.  See \cite{BilmanM24} for further 
information about the $M=2$ case.  Similar arguments based on the dressing 
method show that for arbitrary $M=2,3,4,\dots$, the function 
$\Psi(X,T_2,T_3,\dots,T_M)$ is a simultaneous solution of the first $M-1$ 
flows, suitably rescaled by setting $\epsilon=1$ and replacing 
$(x,t_2,t_3,\dots,t_M)$ with $(X,T_2,T_3,\dots,T_M)$, of the focusing NLS 
hierarchy.  See \cite{BilmanBMY25} for further details about 
$\Psi(X,T_2,T_3,\dots,T_M)$ and its generalizations.

A key point is that the solution $\Psi(X,T_2,T_3,\dots,T_M)$ describes the semiclassical asymptotic behavior of solutions of the focusing NLS hierarchy whenever $\phaseint(\SP)$ and $\tailint(\SP)$ are polynomials:
\begin{thm}[Suleimanov-Talanov focusing of the hierarchy and dispersive regularization]
Suppose $\psi_0(x)=A(x)$ is a semicircular Klaus-Shaw potential for which $\phaseint(\SP)$ and $\tailint(\SP)$ are polynomials of exact degree $2\mathcal{P}$ and $2\mathcal{Q}-1$ respectively:
\begin{equation}
\phaseint(\SP)=\sum_{p=0}^\mathcal{P} \phaseint_{p}\lambda^{2p},\quad \tailint(\SP)=\ii\sum_{q=1}^\mathcal{Q} \tailint_q\lambda^{2q-1}, 
\label{eq:polynomial-forms}
\end{equation}
and let $M:=\max\{2\mathcal{P},2\mathcal{Q}-1\}$.  Fixing an arbitrary integer $K$, define a point in $\mathbb{R}^M$ by
\begin{equation}
(x^\circ,t_2^\circ,t_3^\circ,\dots,t_M^\circ):=\begin{cases}
-\frac{1}{2}\left(\tailint_1,(2K+1)\phaseint_1,\tailint_2,(2K+1)\phaseint_2,\dots,(2K+1)\phaseint_\mathcal{P}\right),& M=2\mathcal{P},\\
-\frac{1}{2}\left(\tailint_1,(2K+1)\phaseint_1,\tailint_2,(2K+1)\phaseint_2,\dots,\tailint_\mathcal{Q}\right),&M=2\mathcal{Q}-1,
\end{cases}
\label{eq:focus-K}
\end{equation}
and denote the corresponding times by $\mathbf{t}^\circ:=(t_2^\circ,t_3^\circ,\dots,t_M^\circ)$.
Then the semiclassical soliton ensemble $\widetilde{\psi}(x,\mathbf{t})$ associated with $\psi_0(x)=A(x)$ satisfies 
\begin{multline}
\widetilde{\psi}\left(x^\circ+\frac{\epsilon^2}{\nu}X,\mathbf{t}^\circ + \left(\frac{\epsilon^3}{\nu^{2}}T_2,\frac{\epsilon^4}{\nu^{3}}T_3,\dots,\frac{\epsilon^{M+1}}{\nu^{M}}T_M\right)\right)\\
{}=\ii (-1)^{K+N}  \frac{\nu}{\eps}\Psi(X,T_2,T_3,\dots,T_M) + \bigo{1}
\label{eq:hierarchy-focus-approx}
\end{multline}
as $\eps=\eps_N\downarrow 0$, uniformly for $(X,T_2,T_3,\dots,T_M)\in\mathbb{R}^M$ bounded, where
\begin{equation}
\nu:=\frac{1}{2\pi} \int_0^{A_\mathrm{max}} \phaseint(\ii s) \dd s.
\label{eq:nu-formula}
\end{equation}
\label{thm:multi-time}
\end{thm}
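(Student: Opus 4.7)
The plan is to perform a small-$\epsilon$ asymptotic analysis of Riemann-Hilbert Problem~\ref{rhp-meromorphic} (or equivalently its renormalized version, Problem~\ref{rhp-meromorphic_renorm}) at the shifted and zoomed coordinates $x=x^\circ+\epsilon^2X/\nu$ and $t_m=t_m^\circ+\epsilon^{m+1}T_m/\nu^m$, and to show that in the limit the problem collapses to Riemann-Hilbert Problem~\ref{rhp:RWIO} that defines $\Psi$. The point $(x^\circ,\mathbf{t}^\circ)$ in \eqref{eq:focus-K} is chosen precisely so that the phase of the residue coefficient $c_n(x,\mathbf{t})$ is as flat as possible in $\lambda_n=\ii\widetilde s_n$ at this point, allowing a nontrivial limit after rescaling $\lambda$.

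\emph{Step 1: phase cancellation at the focusing point.} Substituting the alternate form \eqref{eq:ZS-proportionality-constant} of $\widetilde\tau_n$ into $c_n(x,\mathbf{t})$, the exponent in the exponential factor is
\[
\tfrac{1}{\epsilon}\bigl[2\ii Q(\lambda_n;x,\mathbf{t})+\ii(2K+1)\phaseint(\lambda_n)+\tailint(\lambda_n)\bigr].
\]
Under the polynomial hypothesis \eqref{eq:polynomial-forms}, $\phaseint$ is even of degree $2\mathcal P$ and $\tailint$ is odd of degree $2\mathcal Q-1$, while $Q(\lambda;x,\mathbf{t})=\lambda x+\sum_m\lambda^m t_m$ supplies all powers $\lambda^j$ with coefficients $x$ and $t_m$. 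The prescription \eqref{eq:focus-K} equates, monomial by monomial, the coefficients of each positive power of $\lambda_n$ in the bracket at $(x,\mathbf{t})=(x^\circ,\mathbf{t}^\circ)$, leaving only the constant term $\ii(2K+1)\phaseint_0$; by Proposition~\ref{prop:Psi-even-analytic} and \eqref{eq:ZS-epsilon-assumption} this equals $\ii(2K+1)N\epsilon\pi$, so dividing by $\epsilon$ and exponentiating produces the global factor $(-1)^N$. Combined with the prefactor $\ii(-1)^K$ from $\widetilde\tau_n$, this accounts for the $\ii(-1)^{K+N}$ on the right side of \eqref{eq:hierarchy-focus-approx}.

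\emph{Step 2: spectral rescaling and continuum limit.} After Step 1, only the $(X,T_2,\dots,T_M)$-dependent perturbation $\frac{2\ii}{\epsilon}[\lambda_n\epsilon^2X/\nu+\sum_m\lambda_n^m\epsilon^{m+1}T_m/\nu^m]$ remains in the phase. Introducing the rescaled spectral variable $\Lambda=c(\epsilon)\lambda$ with scale $c(\epsilon)\asymp\epsilon/\nu$ converts this into $\ii\Lambda_nX+\ii\sum_m\Lambda_n^mT_m$ up to universal normalizations absorbed in $\nu$, where $\Lambda_n:=c(\epsilon)\ii\widetilde s_n$. This matches the polynomial part of the jump in \eqref{eq:RWIO-jump}. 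To produce the extra $2\ii/\Lambda$ in the exponent, one converts the residues to jumps on small circles around each $\Lambda_n$ (cf.\ \cite{BorgheseJM18}) and observes that the eigenvalue product in $c_n^0$, after taking logarithms, becomes a Riemann sum approximating $\int\log(\Lambda-\Lambda')\,\dd\mu(\Lambda')$ for the limiting density $\dd\mu$ determined by Bohr-Sommerfeld and the density $\evdensity(s)$ of \eqref{eq:ZS-density}. The normalization $\nu$ in \eqref{eq:nu-formula} is selected precisely so that the $\Lambda^{-1}$ coefficient in the large-$\Lambda$ expansion of this discrete g-function equals $2$; this is consistent with the identification $\nu=\tfrac{1}{8}\|A\|_{L^2(\mathbb{R})}^2$ that follows from the computation in Corollary~\ref{cor-L2-convergence}.

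\emph{Step 3: convergence and extraction.} After g-function dressing, the transformed Riemann-Hilbert problem for $\mathbf{R}^\epsilon(\Lambda):=\widetilde{\mathbf{M}}(\Lambda/c(\epsilon);x,\mathbf{t})$ is a small-norm perturbation of Riemann-Hilbert Problem~\ref{rhp:RWIO}. Unique solvability of the latter from Zhou's vanishing lemma \cite{Zhou89}, combined with a standard small-norm argument, gives $\mathbf{R}^\epsilon(\Lambda)=\mathbf{R}(\Lambda;X,T_2,\dots,T_M)+\bigo{\epsilon}$ uniformly for bounded $(X,T_2,\dots,T_M)$. Extracting the $\Lambda^{-1}$ coefficient of the $(1,2)$-entry in both expansions via \eqref{eq:psitilde-reconstruct} and \eqref{eq:RWIO-extract}, and multiplying by the Jacobian $\nu/\epsilon$ from the spectral rescaling, yields \eqref{eq:hierarchy-focus-approx}. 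The main obstacle I expect is the justification of the continuum limit in Step 2: namely, proving rigorously that the collection of discrete Bohr-Sommerfeld-distributed pole contributions converges, after rescaling and g-function dressing, to the jump on $|\Lambda|=1$ with coefficient $2$ of $\Lambda^{-1}$, uniformly in the zoomed coordinates. This is the step where the polynomial hypothesis \eqref{eq:polynomial-forms} is essential: without it, there would be higher-degree terms in $\lambda_n$ surviving after Step 1 that would obstruct the collapse to \eqref{eq:RWIO-jump}.
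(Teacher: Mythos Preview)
Your overall strategy is right and matches the paper: phase cancellation at $(x^\circ,\mathbf{t}^\circ)$, spectral rescaling $\lambda=\nu\Lambda/\epsilon$, and small-norm comparison with Riemann-Hilbert Problem~\ref{rhp:RWIO}. Step~1 and Step~3 are essentially what the paper does.

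Step~2, however, proposes a different and more cumbersome mechanism than the paper's. You suggest removing poles via small circles around each $\Lambda_n$ and then passing to a continuum limit of a discrete log-potential to produce the $2\Lambda^{-1}$ term. This would require controlling a growing number of jump contours accumulating at the origin (since $\Lambda_n=\mathcal{O}(\epsilon)$), which is delicate. The paper instead interpolates \emph{all} poles simultaneously by the single meromorphic factor $\widetilde{a}(\lambda)^{-1}\ee^{2f_K(\lambda;x,\mathbf{t})/\epsilon}$, producing a jump on the boundary of a half-disk of radius $L=\nu/\epsilon$; a short ``LU'' refactorization of the real-axis jump (using $f_K(\lambda;x,\mathbf{t})+f_K(\lambda;x,\mathbf{t})^*=0$ for real $\lambda$) then closes this to a full circle $|\Lambda|=1$. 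On that circle the Blaschke product is evaluated directly:
\[
\widetilde{a}(\nu\epsilon^{-1}\Lambda)=\exp\Bigl(-2\ii\nu^{-1}\Lambda^{-1}\sum_{j=0}^{N-1}\epsilon\widetilde{s}_j\Bigr)\bigl(1+\mathcal{O}(\epsilon^2)\bigr)=\ee^{-4\ii\Lambda^{-1}}\bigl(1+\mathcal{O}(\epsilon^2)\bigr),
\]
the last step using the midpoint-rule Riemann sum for $\sum\epsilon\widetilde{s}_j$ and the definition \eqref{eq:nu-formula} of $\nu$. This is where the $2\Lambda^{-1}$ in \eqref{eq:RWIO-jump} comes from --- not from a log-potential continuum limit of the pole density, but from the leading Taylor coefficient of $\log\widetilde{a}$ at $\lambda=\infty$. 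The constant diagonal conjugation that you correctly anticipate in Step~1 is done \emph{after} this step to absorb $\ii(2K+1)\phaseint_0$ and the $\ii(-1)^K$ prefactor. So your ``main obstacle'' dissolves once you use the global interpolant $\widetilde{a}$ on a single large circle rather than individual small circles; the continuum limit you describe is never needed.
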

This result shows that $\widetilde{\psi}(x,\mathbf{t})$ exhibits focusing events of large amplitude proportional to $\eps^{-1}$ in the neighborhood of each focal point $(x^\circ,\mathbf{t}^\circ)\in\mathbb{R}^M$, the family of which is parameterized by an arbitrary integer $K$.  In a neighborhood of size proportional to $\eps^2$ in $x$ and proportional to $\eps^{m+1}$ in $t_m$ of each focal point, the wave field takes on a universal form involving the function $\Psi(X,T_2,T_3,\dots,T_M)$.  We refer to this kind of focusing as \emph{Suleimanov-Talanov focusing}.

\subsubsection{Extreme focusing for mixed flows}
According to Proposition~\ref{prop:mu-odd-analytic}, the $x$-coordinate of each focal point is the same and is given explicitly in terms of the support endpoints as $x^\circ = \frac{1}{2}(X_++X_-)$.  On the other hand, the time coordinates of the focal points $\mathbf{t}^\circ$ vary with the index $K\in\mathbb{Z}$, lying equally-spaced along a straight line in the multi-time parameter space $\mathbb{R}^{M-1}$ for $\mathbf{t}$.  Considering now a particular mixture of the flows in the focusing NLS hierarchy defined by relating the coordinates $t_2,t_3,\dots,t_M$ to a single real time variable $t$ by $t_m=a_mt$ for some fixed real constants $a_2,a_3,\dots,a_M$, it becomes clear that the focal points correspond to rare events that do not occur at all for most mixtures.  If they do occur, the fact that the rescaled local time coordinates $T_2,T_3,\dots,T_M$ should be bounded while the unscaled time variables $t_2,t_3,\dots,t_M$ should be in fixed proportion means that the limiting function $\Psi$ should be evaluated at $T_2=T_3=\cdots=T_{M-1}=0$.  However, it is also clear that the type of phenomena that can occur depends on whether the line containing the focal points passes through the origin $\mathbf{t}=\mathbf{0}$.  This happens exactly when $\tailint(\SP)$ is a linear monomial, or equivalently by Corollary~\ref{cor:even}, when $\psi_0(x)=A(x)$ is even about the midpoint $x^\circ$ of its support interval $[X_-,X_+]$.  See Figure~\ref{fig:focusing-times}.  
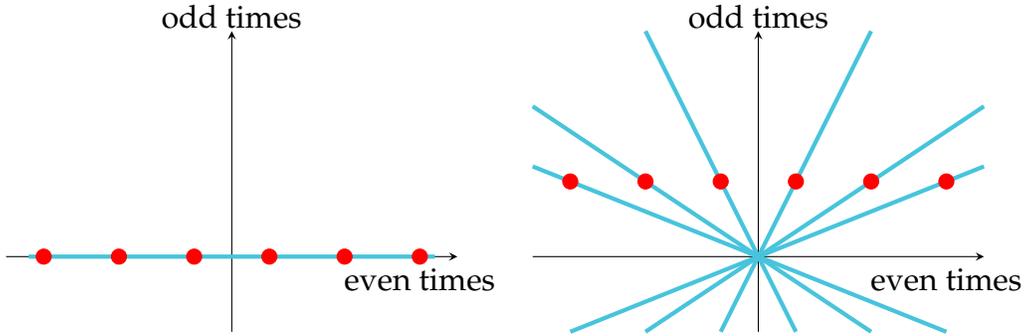
\begin{figure}[h]
\begin{tikzpicture}[>=stealth]
\begin{scope}[xshift=-3.5cm]
\draw[thin,->,black] (0cm,-1cm) -- (0cm,3cm);
\draw[thin,->,black] (-3cm,0cm) -- (3cm,0cm);
\draw[ultra thick,SkyBlue] (-2.7cm,0cm) -- (2.7cm,0cm);
\node at (0cm,3.2cm) {odd times};
\node at (2.5cm,-0.3cm) {even times};
\draw[fill,red] (-0.5cm,0cm) circle [radius=0.1cm];
\draw[fill,red] (-1.5cm,0cm) circle [radius=0.1cm];
\draw[fill,red] (-2.5cm,0cm) circle [radius=0.1cm];
\draw[fill,red] (0.5cm,0cm) circle [radius=0.1cm];
\draw[fill,red] (1.5cm,0cm) circle [radius=0.1cm];
\draw[fill,red] (2.5cm,0cm) circle [radius=0.1cm];
\end{scope}
\begin{scope}[xshift=3.5cm]
\draw[thin,->,black] (0cm,-1cm) -- (0cm,3cm);
\draw[thin,->,black] (-3cm,0cm) -- (3cm,0cm);
\draw[ultra thick,SkyBlue] (0.5cm,-1cm) -- (-1.5cm,3cm);
\draw[ultra thick,SkyBlue] (1.5cm,-1cm) -- (-3cm,2cm);
\draw[ultra thick,SkyBlue] (2.5cm,-1cm) -- (-3cm,1.2cm);
\draw[ultra thick,SkyBlue] (-0.5cm,-1cm) -- (1.5cm,3cm);
\draw[ultra thick,SkyBlue] (-1.5cm,-1cm) -- (3cm,2cm);
\draw[ultra thick,SkyBlue] (-2.5cm,-1cm) -- (3cm,1.2cm);
\node at (0cm,3.2cm) {odd times};
\node at (2.5cm,-0.3cm) {even times};
\draw[fill,red] (-0.5cm,1cm) circle [radius=0.1cm];
\draw[fill,red] (-1.5cm,1cm) circle [radius=0.1cm];
\draw[fill,red] (-2.5cm,1cm) circle [radius=0.1cm];
\draw[fill,red] (0.5cm,1cm) circle [radius=0.1cm];
\draw[fill,red] (1.5cm,1cm) circle [radius=0.1cm];
\draw[fill,red] (2.5cm,1cm) circle [radius=0.1cm];
\end{scope}
\end{tikzpicture}
\caption{Left:  the case that $\tailint(\SP)$ is a linear monomial.  Red points indicate the focus times $\mathbf{t}^\circ$ for different integers $K$, and there is a mixture of even flows (blue line) that experiences each focus.  Right:  the case that $\tailint(\SP)$ has a cubic or higher-order term, where there is a different mixture of flows for each $K\in\mathbb{Z}$ that focuses just once.}
\label{fig:focusing-times}
\end{figure}
Our result for mixtures of flows in the focusing NLS hierarchy is as follows:

\begin{thm}[Suleimanov-Talanov focusing of mixed flows and dispersive regularization]
Under the assumptions of Theorem~\ref{thm:multi-time}, 
\begin{enumerate}
\item If $\tailint(\SP)$ is a linear monomial, i.e., $\psi_0(x)=A(x)$ is even about $x^\circ$, then 
all mixed flows of the focusing NLS hierarchy that undergo Suleimanov-Talanov focusing correspond to  coordinates (a combination of the even flows only in the hierarchy)
\begin{equation}
(a_2,a_3,\dots,a_M)=-\frac{1}{2}\alpha\left(\phaseint_1,0,\phaseint_2,0,\dots,\phaseint_\mathcal{P}\right),\quad M=2\mathcal{P},
\end{equation}
for a fixed real $\alpha\neq 0$, and the flow with commensurate coordinates $t_m=a_mt$ exhibits infinitely many Suleimanov-Talanov focusing events periodically in time $t$ with period $2/|\alpha|$.  Specifically,
if we write $\widetilde{\psi}(x,t):=\widetilde{\psi}(x,(a_2,a_3,\dots,a_M)t)$ and set $t^\circ:=(2K+1)/\alpha$ for an arbitrary integer $K$, then with $\nu$ defined by \eqref{eq:nu-formula},
\begin{equation}
\widetilde{\psi}\left(x^\circ+\frac{\epsilon^2}{\nu}X,t^\circ + \frac{\epsilon^{M+1}}{a_M\nu^M}T_M\right)= \ii (-1)^{K+N} \frac{\nu}{\eps}\Psi(X,0,0,\dots,0,T_M) + \bigo{1}
\label{eq:mixed-flow-focus}
\end{equation}
as $\eps\to 0$, uniformly for $(X,T_M)\in\mathbb{R}^2$ bounded.  
\item Otherwise, for each integer $K$, mixed flows of the focusing NLS hierarchy corresponding to coordinates $t_m=a_mt$ with
\begin{multline}
(a_2,a_3,\dots,a_M)=\\-\frac{1}{2}\alpha\begin{cases}\left((2K+1)\phaseint_1,\tailint_2,(2K+1)\phaseint_2,\tailint_3,\dots,(2K+1)\phaseint_\mathcal{P}\right), &M=2\mathcal{P}\\
\left((2K+1)\phaseint_1,\tailint_2,(2K+1)\phaseint_2,\tailint_3,\dots,\tailint_{2\mathcal{Q}-1}\right),& M=2\mathcal{Q}-1
\end{cases}
\end{multline}
exhibit exactly one Suleimanov-Talanov focusing event near time $t=t^\circ:=1/\alpha$, where $\widetilde{\psi}(x,t):=\widetilde{\psi}(x,(a_2,a_3,\dots,a_M)t)$ is characterized by \eqref{eq:mixed-flow-focus} in the limit $\eps\to 0$ with $(X,T_M)\in\mathbb{R}^2$ bounded.
\end{enumerate}
\label{thm:mixture}
\end{thm}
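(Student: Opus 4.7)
The plan is to derive Theorem \ref{thm:mixture} as a geometric consequence of Theorem \ref{thm:multi-time} by restricting the multi-time parameter to the line $t_m = a_m t$ and identifying when this line hits a focal point $(x^\circ, \mathbf{t}^\circ)$ from the family \eqref{eq:focus-K}. First I would analyze the configuration of focal times in $\mathbb{R}^{M-1}$ as $K$ ranges over $\mathbb{Z}$: the coordinates $t_m^\circ$ with $m$ odd come from the coefficients of $\Phi$ (hence scale with $2K+1$) while those with $m$ even come from the coefficients of $\Xi$ (independent of $K$). A mixed flow $t_m = a_m t$ experiences Suleimanov-Talanov focusing at time $t^\circ$ precisely when $a_m t^\circ = t_m^\circ$ for every $m$.

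For part (1), the hypothesis that $\Xi(\lambda)$ is a linear monomial forces $\Xi_q = 0$ for $q \geq 2$, so every $K$-indexed focal point has all even-indexed $t_m^\circ$ equal to zero; equivalently, the focal points lie on the one-dimensional subspace spanned by $(\Phi_1, 0, \Phi_2, 0, \ldots, \Phi_\mathcal{P})$ through the origin. The admissible mixtures are exactly the nonzero scalar multiples of this vector, parametrized by $\alpha \neq 0$, and the intersection condition gives $t^\circ = (2K+1)/\alpha$, yielding the claimed periodic family of focal times with period $2/|\alpha|$. For part (2), a nontrivial term in $\Xi$ of degree $\geq 3$ produces a nonzero even-indexed focal coordinate that does \emph{not} depend on $K$; consequently the collection of focal points is not collinear through the origin, and each $K$ forces its own direction vector, namely the one that makes the line through the origin pass through the $K$th focal point (this is exactly the direction written out in the statement), giving a single focusing time $t^\circ = 1/\alpha$ for each such mixture.

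Once this geometric reduction is in place, substituting into \eqref{eq:hierarchy-focus-approx} completes the proof. Setting $t = t^\circ + \epsilon^{M+1}/(a_M \nu^M)\, T_M$ gives $t_M = a_M t = t_M^\circ + \epsilon^{M+1}/\nu^M \, T_M$, which matches the Theorem \ref{thm:multi-time} rescaling in the $T_M$-slot. For $m < M$ with $a_m \neq 0$, the same substitution yields
\begin{equation}
t_m = a_m t = t_m^\circ + \frac{\epsilon^{m+1}}{\nu^m}\, T_m(\epsilon), \qquad T_m(\epsilon) := \frac{a_m}{a_M}\, \frac{\epsilon^{M-m}}{\nu^{M-m}}\, T_M,
\end{equation}
so each $T_m(\epsilon) = \mathcal{O}(\epsilon^{M-m}) \to 0$ uniformly for $T_M$ bounded. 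Invoking Theorem \ref{thm:multi-time} gives
\begin{equation}
\widetilde{\psi}\!\left(x^\circ + \frac{\epsilon^2}{\nu} X, \mathbf{t}^\circ + \Big(\tfrac{\epsilon^3}{\nu^2} T_2(\epsilon),\ldots, \tfrac{\epsilon^{M+1}}{\nu^M} T_M\Big)\right) = \ii(-1)^{K+N}\,\frac{\nu}{\epsilon}\, \Psi(X, T_2(\epsilon),\ldots, T_M) + \mathcal{O}(1),
\end{equation}
and the joint real-analyticity of $\Psi$ noted after Riemann-Hilbert Problem \ref{rhp:RWIO} allows us to replace $\Psi(X, T_2(\epsilon),\ldots, T_{M-1}(\epsilon), T_M)$ by $\Psi(X, 0, \ldots, 0, T_M)$ at the cost of an error of order $\epsilon$ (uniformly on compacts), which is absorbed into the $\mathcal{O}(1)$ term after multiplication by $\nu/\epsilon$.

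The main technical point to verify carefully is that the uniformity claim in Theorem \ref{thm:multi-time} survives along the curve $T_m = T_m(\epsilon)$: since $T_m(\epsilon)$ remains in a fixed compact set as $\epsilon \downarrow 0$, the uniform-on-bounded-sets statement of Theorem \ref{thm:multi-time} applies directly, and no additional control on $\Psi$ beyond continuity is required. The remaining items, namely the identification of admissible mixtures and the $K \mapsto -K - 1$ periodicity in part (1), are bookkeeping arguments on the coefficients in \eqref{eq:focus-K}.
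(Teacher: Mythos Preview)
Your proposal is correct and follows essentially the same route as the paper: restrict the multi-time asymptotics of Theorem~\ref{thm:multi-time} to the line $t_m=a_mt$, observe that the lower rescaled times satisfy $T_m(\epsilon)=\mathcal{O}(\epsilon^{M-m})$, and use smoothness of $\Psi$ (the paper does this at the level of $\mathbf{R}$, equivalently) to set them to zero at the cost of an $\mathcal{O}(1)$ error in $\widetilde{\psi}$; the geometric classification of admissible mixtures is identical. One indexing slip to correct: you have the parity reversed in your verbal description --- it is the $t_m^\circ$ with $m$ \emph{even} that come from $\Phi$ and scale with $2K+1$, while the $t_m^\circ$ with $m$ \emph{odd} come from $\Xi$ and are independent of $K$ (so in part~(1) it is the odd-indexed coordinates that vanish), though your spanning vectors and final conclusions are nonetheless written correctly.
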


\subsubsection{Application to the focusing NLS equation}
In particular, this theorem allows us to prove a rigorous version of the result conjectured in the paper of Suleimanov \cite{Suleimanov17}.  To apply Theorem~\ref{thm:mixture} in the case of initial data $\psi_0(x)=A(x)$ given by \eqref{eq:ExactSemicircle} as is consistent with a Talanov-type solution for $E<0$ of the dispersionless focusing NLS system \eqref{eq:dispersionless-focusing}, first note that from the definition  \eqref{eq:ZS-phase-integral} of $\phaseint(\SP)$, by a residue calculation at $x=\infty$ (see \cite[Eqn.\@ (4.27)]{Buckingham:2016}),
\begin{equation}
\begin{split}
\phaseint(\ii s)&=\int_{x_-(s)}^{x_+(s)}\sqrt{\frac{4A_\mathrm{max}^2(x-X_-)(X_+-x)}{(X_+-X_-)^2}-s^2}\,\dd x\\ &=\frac{\pi(X_+-X_-)}{4A_\mathrm{max}}(A_\mathrm{max}^2-s^2),\quad 0<s<A_\mathrm{max}
\end{split}
\label{eq:semicircle-Psi}
\end{equation}
and an easier calculation starting from \eqref{eq:mu-define} gives\footnote{This corrects \cite[Eqn.\@ (4.29)]{Buckingham:2016}, which includes an extraneous factor of $\frac{1}{2}$.}
\begin{equation}
\tailint(\ii s)=(X_++X_-)s,\quad 0<s<A_\mathrm{max}.
\label{eq:semicircle-mu}
\end{equation}
Replacing $s$ with $s=-\ii\SP$ we see that $\phaseint(\SP)$ and $\tailint(\SP)$ are the following polynomials in $\SP$
\begin{equation}
\phaseint(\SP)=\frac{\pi(X_+-X_-)}{4A_\mathrm{max}}(A_\mathrm{max}^2+\SP^2)\quad\text{and}\quad
\tailint(\SP)=-\ii(X_++X_-)\SP.
\label{eq:Psi-mu-polynomials}
\end{equation}
According to Propositions~\ref{prop:Psi-even-analytic} and \ref{prop:mu-odd-analytic}, these are the simplest possible for a semicircular Klaus-Shaw potential (having the minimal number of terms in their Taylor series at $\SP=0$).  They match the form \eqref{eq:polynomial-forms} with $\mathcal{P}=\mathcal{Q}=1$, and hence $M=\max\{2\mathcal{P},2\mathcal{Q}-1\}=2$.  Our rigorous version of Suleimanov's result \cite{Suleimanov17} is then as follows:

\begin{cor}[Suleimanov-Talanov focusing in the NLS equation]
Let $\widetilde{\psi}(x,t_2)$ denote the semiclassical soliton ensemble with $M=2$, for the Talanov-type initial condition $\psi_0(x)=A(x)$ given by \eqref{eq:ExactSemicircle}.  Then $\widetilde{\psi}(x,t_2)$ is an exact solution of the focusing NLS equation \eqref{nls} with initial condition close to $\psi_0(x)$ as described by Theorem~\ref{thm-accuracy-t=0}.  Furthermore, $\widetilde{\psi}(x,t_2)$ exhibits Suleimanov-Talanov focusing near $x=x^\circ:=\frac{1}{2}(X_++X_-)$ periodically in time $t_2$ in the sense that for each integer $K$, defining a focus time by
\begin{equation}
t_K^\circ:=-\frac{\pi(X_+-X_-)}{8A_\mathrm{max}}(2K+1),
\end{equation}
we have
\begin{multline}
\widetilde{\psi}\left(x^\circ+\frac{12\eps^2}{A_\mathrm{max}^2(X_+-X_-)}X, t_K^\circ+ \frac{144\epsilon^3}{A_\mathrm{max}^4(X_+-X_-)^2}T_2\right)\\=(-1)^{K+N} \ii \frac{A_\mathrm{max}^2(X_+-X_-)}{12\eps}\Psi(X,T_2) + \bigo{1}
\label{eq:cor.Talanov.coords}
\end{multline}
as $\eps\to 0$ through the integer sequence $\eps=\eps_N:=\frac{1}{4}A_\mathrm{max}(X_+-X_-)N^{-1}$, $N=1,2,3,\dots$, where the error term is uniform for bounded $(X,T_2)\in\mathbb{R}^2$.
\label{cor:Talanov}
\end{cor}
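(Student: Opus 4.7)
The plan is to apply part (1) of Theorem~\ref{thm:mixture} directly, the required polynomial forms of $\phaseint(\lambda)$ and $\tailint(\lambda)$ being already recorded in \eqref{eq:Psi-mu-polynomials}. The first step is to confirm that $A(x)$ given by \eqref{eq:ExactSemicircle} satisfies Definition~\ref{def:semicircularKS}: take $u(x)\equiv 2A_\mathrm{max}/(X_+-X_-)$, which is constant and hence entire, with $c:=u>0$; the function $A$ is $C^2(\mathbb{R})$, compactly supported on $[X_-,X_+]$, and attains its maximum $A_\mathrm{max}$ only at $x_0=\tfrac{1}{2}(X_++X_-)$, where $A''(x_0)<0$ by direct computation. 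Once this is established, Definition~\ref{def:SSE} gives $\widetilde{\psi}(x,t_2)$ as an exact reflectionless solution of \eqref{nls} for each $\epsilon=\epsilon_N$, and Theorem~\ref{thm-accuracy-t=0} provides the initial-data accuracy claim.

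The second step is to read off the constants. From \eqref{eq:Psi-mu-polynomials} we have $\mathcal{P}=\mathcal{Q}=1$ and hence $M=\max\{2\mathcal{P},2\mathcal{Q}-1\}=2$, with Taylor coefficients $\phaseint_1=\pi(X_+-X_-)/(4A_\mathrm{max})$ and $\tailint_1=-(X_++X_-)$. Since $\tailint(\lambda)$ is a linear monomial, Corollary~\ref{cor:even} confirms that $\psi_0$ is even about $x^\circ=\tfrac{1}{2}(X_++X_-)$, placing us in case (1) of Theorem~\ref{thm:mixture}: every Suleimanov-Talanov-focusing mixed flow has $(a_2)=-\tfrac{1}{2}\alpha(\phaseint_1)$ for some real $\alpha\neq 0$. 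To recover the pure NLS flow (i.e.\ $a_2=1$, so that the multi-time and single-time versions of $\widetilde{\psi}$ coincide), choose $\alpha=-2/\phaseint_1=-8A_\mathrm{max}/(\pi(X_+-X_-))$, which gives focus times
\begin{equation*}
t^\circ=\frac{2K+1}{\alpha}=-\frac{\pi(X_+-X_-)(2K+1)}{8A_\mathrm{max}}=t_K^\circ,
\end{equation*}
matching the statement.

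The third step is to evaluate $\nu$ using \eqref{eq:nu-formula} and substitute into the universal formula \eqref{eq:mixed-flow-focus}. A short computation yields
\begin{equation*}
\nu=\frac{1}{2\pi}\int_0^{A_\mathrm{max}}\frac{\pi(X_+-X_-)}{4A_\mathrm{max}}\bigl(A_\mathrm{max}^2-s^2\bigr)\,\dd s = \frac{A_\mathrm{max}^2(X_+-X_-)}{12},
\end{equation*}
from which the spatial scale $\epsilon^2/\nu$, the temporal scale $\epsilon^{M+1}/(a_M\nu^M)=\epsilon^3/\nu^2$, and the amplitude prefactor $\nu/\epsilon$ in \eqref{eq:mixed-flow-focus} collapse to the coefficients $12\epsilon^2/(A_\mathrm{max}^2(X_+-X_-))$, $144\epsilon^3/(A_\mathrm{max}^4(X_+-X_-)^2)$, and $A_\mathrm{max}^2(X_+-X_-)/(12\epsilon)$ appearing in \eqref{eq:cor.Talanov.coords}. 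The admissible subsequence $\epsilon_N=A_\mathrm{max}(X_+-X_-)/(4N)$ follows from \eqref{eq:ZS-epsilon-assumption} upon noting $\|A\|_1=\phaseint_0=\pi A_\mathrm{max}(X_+-X_-)/4$. The corollary is essentially bookkeeping; the substantive asymptotic analysis is entirely absorbed into Theorem~\ref{thm:mixture}, so no genuine obstacle is expected beyond the careful matching of normalizations.
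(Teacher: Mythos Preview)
Your proposal is correct and follows essentially the same approach as the paper's proof, which simply invokes Theorem~\ref{thm:mixture} with $M=2$, $\mathcal{P}=\mathcal{Q}=1$, and $a_2=1$; you have additionally spelled out the computation of $\nu$, the focus times, and the scaling coefficients, all of which check out. One very minor quibble: the exact semicircle \eqref{eq:ExactSemicircle} is not literally $C^2(\mathbb{R})$ at the support endpoints (the derivative blows up there), but the paper itself treats it as the prototype semicircular Klaus-Shaw potential, so this is an inconsistency inherited from Definition~\ref{def:semicircularKS} rather than a defect in your argument.
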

\begin{proof}
Apply Theorem~\ref{thm:mixture} in the case $M=2$ with $\mathcal{P}=\mathcal{Q}=1$, in which case the first scenario holds.  We take $a_2=1$ to ensure that $\widetilde{\psi}$ solves the focusing NLS equation in the form \eqref{nls}.
\end{proof}
\begin{rem}
This result differs from the claim in \cite{Suleimanov17} in two ways.  Firstly, it concerns initial data $\psi_0(\diamond)=A(\diamond)$ corresponding to an integration constant $E<0$ instead of $E=0$ for the dispersionless system \eqref{eq:dispersionless-focusing}.  Second, it is not a statement about the initial-value problem for \eqref{nls} with Cauchy data $\psi_0(\diamond)=A(\diamond)$ but rather with modified Cauchy data $\widetilde{\psi}(\diamond,\mathbf{0})$.  While $\psi_0(\diamond)$ and $\widetilde{\psi}(\diamond,\mathbf{0})$ are close according to Theorem~\ref{thm-accuracy-t=0} and Corollary~\ref{cor-L2-convergence}, these two initial conditions are not equal.  Therefore, in light of the strong instabilities pointed out in Remark~\ref{rem-WatchOut}, it is remarkable that the dispersionless theory makes such an accurate prediction.
\end{rem}
The focusing events nearest to $t_2=0$ correspond to $K=-1,0$, where the solution grows to size proportional to $\eps^{-1}$ near the points $(x,t_2)=(\frac{1}{2}(X_++X_-),\pm\pi(X_+-X_-)/(8A_\mathrm{max}))$.  The time coordinates here are precisely $\pm\frac{1}{2}\Delta t$ as defined in \eqref{eq:Talanov-Delta-t} using also \eqref{eq:E-determine}--\eqref{eq:F-determine} with the initial width being $w(0)=\frac{1}{2}(X_+-X_-)$.  Thus the solution becomes large exactly near the points predicted by the dispersionless Talanov theory \cite{Talanov65} for $E<0$ as discussed in Section~\ref{sec-dispersionless} and near each of these two points the blowing up and collapsing solution is dispersively regularized according to the prediction of Suleimanov \cite{Suleimanov17}.  Moreover, the NLS solution survives beyond the time interval $(-\frac{1}{2}\Delta t,\frac{1}{2}\Delta t)$ and exhibits periodic ``breathing'' of period $\Delta t$ consisting of alternate periods of dispersive spreading and refocusing.  See Figure~\ref{fig:NLSfigure}.
\begin{figure}[htb]
\includegraphics[width=\textwidth]{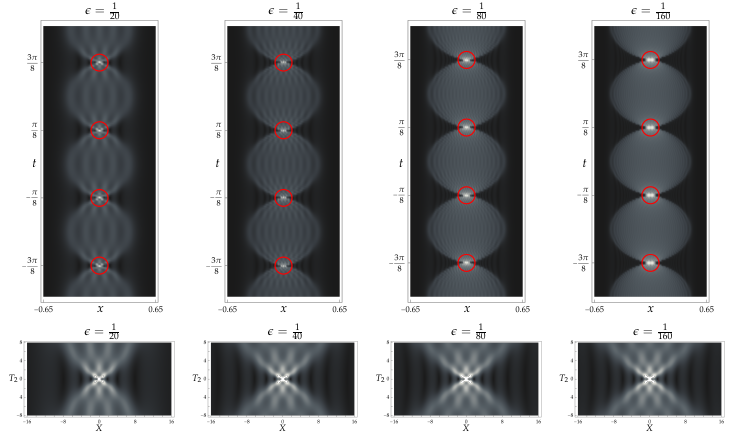}
\caption{Top row: 
Density plots of $|\widetilde \psi(x,t_2)|$ for different values of $\eps$, where $\widetilde{\psi}$ is the semiclassical soliton ensemble solution of NLS generated from the simplest semicircular Klaus-Shaw potential $A(x) = \sqrt{1- (x/2)^2} \chi_{[-\frac{1}{2}, \frac{1}{2}]}(x)$. 
Red circles indicate the location of the Suleimanov-Talanov focusing events described by Corollary~\ref{cor:Talanov}. 
Bottom row: 
Plots of $|\widetilde{\psi}(x,t_2)|$ in the rescaled local coordinates  $(X, T_2)$ defined by \eqref{eq:cor.Talanov.coords} centered at the focus point $(x^\circ, t_{-1}^\circ) = ( 0 ,\tfrac{\pi}{8})$. 
}
\label{fig:NLSfigure}
\end{figure}

\subsubsection{Application to other mixed flows:  Hirota and LPD equations}
There are many other semicircular Klaus-Shaw potentials for which $\phaseint(\SP)$ and $\tailint(\SP)$ are polynomials.  To prove this, we introduce polynomial perturbations of \eqref{eq:Psi-mu-polynomials}:
\begin{equation}
\begin{aligned}
&\phaseint(\SP) = \phaseint^\mathrm{sc}(\SP)\left(1+\sum_{k=1}^{\mathcal{P}-1}B_k\SP^{2k}\right), \quad
&&\phaseint^\mathrm{sc}(\SP):= \phaseint_0 \left( 1+ \frac{\SP^2}{ A_\mathrm{max}^2} \right),\\ 
&\tailint(\SP) =\tailint^\mathrm{sc}(\SP)+\ii\sum_{q=2}^\mathcal{Q}\tailint_q\SP^{2q-1}, \quad
&&\tailint^\mathrm{sc}(\SP):= \ii \tailint_1 \SP,
\end{aligned}
\label{eq:phaseandtail-perturbations}
\end{equation}
for some real coefficients $\phaseint_0$, $A_\mathrm{max}$, $B_1,\dots,B_{\mathcal{P}-1}$, $\tailint_1,\dots,\tailint_{\mathcal{Q}}$.  The perturbation of $\phaseint^{\mathrm{sc}}(\SP)$ is taken to be relative instead of additive in order to fix the maximum amplitude $A_\mathrm{max}$, since Proposition~\ref{prop:Psi-even-analytic} requires that $\phaseint(\ii A_\mathrm{max})=0$. 
If the coefficients $B_1,\dots,B_{\mathcal{P}-1},\Xi_1,\dots,\Xi_\mathcal{Q}$ are sufficiently small, then these expressions will be the phase integral $\phaseint$ and tail integral $\tailint$ of a semicircular Klaus-Shaw potential $A(x)$ with maximum amplitude $A_\mathrm{max}$ as determined by 
\eqref{eq:ZS-phase-integral} and \eqref{eq:mu-define} respectively.  To see this, we apply Proposition~\ref{prop:Psi-mu-invert}.  
First, we calculate
\begin{equation}
\begin{split}
\frac{1}{\pi}\int_0^s\frac{\displaystyle\frac{\dd}{\dd m}\tailint(\ii m)\,\dd m}{\sqrt{s^2-m^2}}  
&= -\frac{\tailint_1}{2}+\frac{1}{\pi}\sum_{q=2}^\mathcal{Q}(-1)^q(2q-1)\tailint_q\int_0^s\frac{m^{2q-2}\,\dd m}{\sqrt{s^2-m^2}} \\
&=-\frac{\tailint_1}{2} +\frac{1}{\pi}\sum_{q=2}^\mathcal{Q}(-1)^q(2q-1)\tailint_q\int_0^1\frac{v^{2q-2}\,\dd v}{\sqrt{1-v^2}}\cdot s^{2q-2}\\
&=-\frac{\tailint_1}{2} +\sum_{q=2}^\mathcal{Q}\frac{(-1)^q\tailint_q}{2}\frac{(2q-1)!!}{(2q-2)!!}\cdot s^{2q-2}.
\end{split}
\end{equation}
Also,
\begin{multline}
\frac{1}{\pi}\int_s^{A_\mathrm{max}}\frac{\displaystyle\frac{\dd}{\dd m}\phaseint(\ii m)\,\dd m}{\sqrt{m^2-s^2}} =
-\frac{2\phaseint_0}{\pi A_\mathrm{max}^2} \sqrt{A_\mathrm{max}^2-s^2} \\
+\frac{2\phaseint_0}{\pi A_\mathrm{max}^2} \sum_{k=1}^{\mathcal{P}-1}(-1)^kB_k\left(A_\mathrm{max}^2k\int_s^{A_\mathrm{max}}\frac{m^{2k-1}\,\dd m}{\sqrt{m^2-s^2}}-(k+1)\int_s^{A_\mathrm{max}}\frac{m^{2k+1}\,\dd m}{\sqrt{m^2-s^2}}\right),
\end{multline}
and since by the substitution $m=\sqrt{s^2+(A_\mathrm{max}^2-s^2)z^2}$,
\begin{equation}
\begin{split}
\int_s^{A_\mathrm{max}}\frac{m^{2k-1}\,\dd m}{\sqrt{m^2-s^2}}
&= \sqrt{A_\mathrm{max}^2-s^2}\int_0^1(s^2+(A_\mathrm{max}^2-s^2)z^2)^{k-1}\,\dd z,\\
\int_s^{A_\mathrm{max}}\frac{m^{2k+1}\,\dd m}{\sqrt{m^2-s^2}}&=\sqrt{A_\mathrm{max}^2-s^2}\int_0^1(s^2+(A_\mathrm{max}^2-s^2)z^2)^k\,\dd z,
\end{split}
\end{equation}
we obtain
\begin{equation}
\frac{1}{\pi}\int_s^{A_\mathrm{max}}\frac{\displaystyle\frac{\dd}{\dd m}\phaseint(\ii m)\,\dd m}{\sqrt{m^2-s^2}}
=-\frac{2\phaseint_0}{\pi A_\mathrm{max}^2} \sqrt{A_\mathrm{max}^2-s^2}
\left(1+\sum_{k=1}^{\mathcal{P}-1}(-1)^kB_k P_{k}(s^2)\right), 
\end{equation}
where $P_{k}(s^2)$ is a polynomial in $s^2$ of degree $k$ given by
\begin{equation}
P_{k}(s^2):=\int_0^1\left((k+1)(s^2+(A_\mathrm{max}^2-s^2)z^2)-A_\mathrm{max}^2k\right)(s^2+(A_\mathrm{max}^2-s^2)z^2)^{k-1}\,\dd z.
\end{equation}
This yields
\begin{multline}
x_\pm(s)=-\frac{\tailint_1}{2} +\sum_{q=2}^\mathcal{Q}\frac{(-1)^q\tailint_q}{2}\frac{(2q-1)!!}{(2q-2)!!}\cdot s^{2q-2}\\
\pm \frac{2\phaseint_0}{\pi A_\mathrm{max}^2} \sqrt{A_\mathrm{max}^2-s^2}\left(1+\sum_{k=1}^{\mathcal{P}-1}(-1)^kB_kP_{k}(s^2)\right).
\label{eq:inverse-functions-polynomials}
\end{multline}
These will be the two inverse functions of a semicircular Klaus-Shaw potential $A(x)$ with maximum amplitude $A_\mathrm{max}$ provided that $x_+(s)>x_-(s)$ and $x_+(s)$ (resp., $x_-(s)$) is decreasing (resp., increasing) on $0<s<A_\mathrm{max}$.  For fixed $\mathcal{Q}$ and $\mathcal{P}$, this is clearly the case as long as the coefficients $B_1,\dots,B_{\mathcal{P}-1}$ and $\tailint_2,\dots,\tailint_\mathcal{Q}$ are sufficiently small.  The maximizer $x_0$ of the semicircular Klaus-Shaw potential $A(x)$ obtained is 
\begin{equation}
x_0:=x_\pm(A_\mathrm{max})=-\frac{\tailint_1}{2} +\sum_{q=2}^{\mathcal{Q}}\frac{(-1)^q\tailint_q}{2}\frac{(2q-1)!!}{(2q-2)!!}A_\mathrm{max}^{2q-2},
\end{equation}
and the support endpoints  $X_\pm$ are given by
\begin{equation}
X_\pm := x_\pm(0)=-\frac{\tailint_1}{2}  \pm \frac{2\phaseint_0}{\pi A_\mathrm{max}}\left(1+\sum_{k=1}^{\mathcal{P}-1}(-1)^kB_kP_{k}(0)\right).
\label{eq:xpm-zero}
\end{equation}
Note that this gives $\tailint_1 = -(X_+ + X_-)$, as is consistent with Proposition~\ref{prop:mu-odd-analytic}. 

For low-degree examples it is possible to invert the above relationships and express things in terms of the Klaus-Shaw potential directly. 
Consider the case that $\mathcal{P}=1$ and $\mathcal{Q}=2$.  
Then the corresponding inverse functions from \eqref{eq:inverse-functions-polynomials} are 
\begin{equation}
x_\pm(s)=\frac{X_++X_-}{2}+\frac{3}{4}\tailint_2s^2 \pm \frac{X_+ - X_-}{2 A_\mathrm{max}} \sqrt{A_\mathrm{max}^2-s^2},\quad 0<s<A_\mathrm{max},
\label{eq:Hirota-inverse-functions}
\end{equation}
where \eqref{eq:xpm-zero} has been used to express $\phaseint_0$ in terms of $X_+ - X_-$ and $A_\mathrm{max}^2$. 
Therefore also
\begin{equation}
x_\pm'(s)=\left(\frac{3}{2}\tailint_2 \mp  \frac{X_+ - X_-}{2 A_\mathrm{max}} \frac{1}{\sqrt{A_\mathrm{max}^2-s^2}}\right)s,\quad 0<s<A_\mathrm{max}.
\end{equation}
From this, we can see easily that $\mp x_\pm'(s)>0$ on $(0,A_\mathrm{max})$ with linear vanishing at $s=0$ as necessarily holds for the inverse functions of every semicircular Klaus-Shaw potential if and only if 
\begin{equation}
3 |\tailint_2|<\frac{X_+-X_-}{A_\mathrm{max}^2}.
\end{equation}

Enforcing this inequality on  the coefficient $\tailint_2$ by setting
\begin{equation}
	\tailint_2 := \frac{X_+ - X_-}{3 A_\mathrm{max}^2} \xi, \qquad \xi \in (-1,1),
 \end{equation}
 we obtain a semicircular Klaus-Shaw potential with support $[X_-,X_+]$ that is additionally parametrized by $\xi$. We can write the potential explicitly by replacing $x_\pm(s)$ on the left-hand side of \eqref{eq:Hirota-inverse-functions} with $x$, yielding a quadratic equation for $s^2=A(x)^2$:
\begin{gather}
\frac{\xi^2}{4} \frac{s^4}{A_\mathrm{max}^4} + \left[ 1 - \xi \frac{2x - X_+ - X_-}{X_+ - X_-} \right] \frac{s^2}{A_\mathrm{max}^2} + \left( \frac{2x - X_+ - X_-}{X_+-X_-} \right)^2 - 1 = 0.
\end{gather}
Noting that the constant (in $s$) term above is negative for $x \in (X_-, X_+)$, the roots $s^2$ have opposite signs for $x\in [X_-,X_+]$, so selecting the positive root and taking a square root gives \eqref{eq:HirotaIC}. 
Applying Theorem~\ref{thm:mixture} to the semicircular Klaus-Shaw potential \eqref{eq:HirotaIC} in the case that $M=\max\{2\mathcal{P},2\mathcal{Q}-1\}=3$ (Hirota equation) yields the following corollary.

\begin{cor}[Suleimanov-Talanov focusing in the Hirota equation]
Consider the Hirota equation \eqref{eq:Hirota} with nonzero coefficients $a_2$ and $a_3$. For parameters $A_\mathrm{max} >0$, $X_+ > X_-$, and $\xi \in (-1,1)$, let $\psi_0(x)= A(x)$ be the semicircular Klaus-Shaw potential  
\begin{equation}\label{eq:HirotaIC}
	A(x) =  \frac{\sqrt{2} A_\mathrm{max}}{|\xi|} \left[  \sqrt{1 - 2 y(x) \xi + \xi^2} - 1 + y(x) \xi   \right]^{\mathrlap{{1/2}}} \chi_{[X_-, X_+]}(x), 
	\quad y(x)= \frac{2x - X_+ - X_-}{X_+ - X_-}, 
\end{equation}
and denote by $\widetilde \psi(x,t)$ the semiclassical soliton ensemble solution of \eqref{eq:Hirota} corresponding to $\psi_0$. Fix an integer $K\in\mathbb{Z}$.  Then whenever the parameters 
$(a_2,a_3; A_\mathrm{max}, \xi)$ 
satisfy
\begin{equation}\label{eq:Hirota.focus.condition}
	4 a_2 \xi - 3\pi (2K+1) a_3 A_\mathrm{max} = 0,  
\end{equation}
$\widetilde \psi(x,t)$ undergoes a single Suleimanov-Talanov focusing event near the point 
\begin{equation}
(x^\circ, t^\circ_K)=\left(\frac{1}{2}(X_++X_-), -\frac{\pi(X_+-X_-)}{8 a_2 A_\mathrm{max}}(2K+1)\right)
\end{equation}
in the sense that
\begin{multline}
\widetilde{\psi}\left(x^\circ + \frac{12\eps^2}{A_\mathrm{max}^2(X_+-X_-)}X,t^\circ_K+ \frac{\eps^4}{a_3}\left(\frac{12}{ A_\mathrm{max}^2(X_+-X_-)} \right)^3 T_3\right)\\
=(-1)^{K+N} \ii  \frac{A_\mathrm{max}^2(X_+-X_-)}{12\eps}\Psi(X,0,T_3) + \bigo{1}
\label{eq:cor.Hirota.coords}
\end{multline}
as $\eps\to 0$ through the integer sequence $\eps=\eps_N$, $N=1,2,3,\dots$, with the error estimate being uniform for bounded $(X,T_3)\in\mathbb{R}^2$.
\label{cor:Hirota}
\end{cor}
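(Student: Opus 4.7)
The plan is to deduce the corollary by invoking Theorem~\ref{thm:mixture} in the specific case $M=3$ with $\mathcal{P}=1$ and $\mathcal{Q}=2$, and then to convert the abstract data $(\phaseint_k,\tailint_k,\nu,\alpha)$ appearing there into the explicit Hirota-level parameters $(a_2,a_3,A_\mathrm{max},X_\pm,\xi)$. There is no need to reopen the Riemann--Hilbert analysis; essentially all the work is bookkeeping on already-constructed objects. The preamble to the corollary has already shown, via the inversion formula \eqref{eq:inverse-functions-polynomials} and the quadratic solve for $s^2=A(x)^2$, that the profile in \eqref{eq:HirotaIC} corresponds precisely to the choice $\mathcal{P}=1$, $\mathcal{Q}=2$ in \eqref{eq:phaseandtail-perturbations} with $\tailint_2=(X_+-X_-)\xi/(3A_\mathrm{max}^2)$, and that the condition $|\xi|<1$ is exactly what is required for $A$ to be a legitimate semicircular Klaus--Shaw potential. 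So I would begin the proof by stating this identification and recording the resulting coefficients.

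Next I would extract the remaining constants. From \eqref{eq:phaseandtail-perturbations} and \eqref{eq:xpm-zero} one reads off
\begin{equation*}
\phaseint_0=\frac{\pi A_\mathrm{max}(X_+-X_-)}{4},\qquad \phaseint_1=\frac{\pi(X_+-X_-)}{4A_\mathrm{max}},\qquad \tailint_1=-(X_++X_-),
\end{equation*}
and a direct integration of $\phaseint(\ii s)=\phaseint_0(1-s^2/A_\mathrm{max}^2)$ in \eqref{eq:nu-formula} yields
\begin{equation*}
\nu=\frac{1}{2\pi}\cdot\frac{2}{3}\phaseint_0 A_\mathrm{max}=\frac{A_\mathrm{max}^2(X_+-X_-)}{12}.
\end{equation*}
This one computation already produces the prefactors $\eps^2/\nu=12\eps^2/[A_\mathrm{max}^2(X_+-X_-)]$ and $\nu/\eps=A_\mathrm{max}^2(X_+-X_-)/(12\eps)$ that appear in \eqref{eq:cor.Hirota.coords}.

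The key step is then to invoke case~(2) of Theorem~\ref{thm:mixture}, which states that for $M=2\mathcal{Q}-1=3$ the flows that exhibit Suleimanov--Talanov focusing are exactly those of the form $(a_2,a_3)=-\tfrac{1}{2}\alpha\bigl((2K+1)\phaseint_1,\tailint_2\bigr)$ for some $\alpha\in\mathbb{R}\setminus\{0\}$, with focus time $t^\circ=1/\alpha$. Eliminating $\alpha$ from the two components of this relation gives the cross-multiplication identity $a_2\tailint_2=(2K+1)\phaseint_1 a_3$, and plugging in the explicit values of $\phaseint_1,\tailint_2$ reduces this to exactly $4a_2\xi-3\pi(2K+1)a_3 A_\mathrm{max}=0$, which is \eqref{eq:Hirota.focus.condition}. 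Solving the first component for $\alpha$ gives $\alpha=-8A_\mathrm{max}a_2/[\pi(2K+1)(X_+-X_-)]$, and hence
\begin{equation*}
t^\circ_K=\frac{1}{\alpha}=-\frac{\pi(X_+-X_-)(2K+1)}{8a_2 A_\mathrm{max}},
\end{equation*}
in agreement with the statement.

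Finally, substituting $a_M=a_3$, $\nu=A_\mathrm{max}^2(X_+-X_-)/12$, and $x^\circ=\tfrac12(X_++X_-)$ (from Proposition~\ref{prop:mu-odd-analytic} together with $\tailint_1=-(X_++X_-)$) into the conclusion \eqref{eq:mixed-flow-focus} of Theorem~\ref{thm:mixture} directly produces \eqref{eq:cor.Hirota.coords}, with the error estimate uniform in bounded $(X,T_3)\in\mathbb{R}^2$ inherited from the theorem. I do not anticipate any genuine obstacle: the corollary is essentially a specialization, and the only place where slight care is needed is in verifying that the algebraic quantity inside the bracket in \eqref{eq:HirotaIC} is nonnegative on $[X_-,X_+]$ and corresponds to the positive root of the quadratic in $s^2$, so that $A$ really is a semicircular Klaus--Shaw potential as required to apply the theorem. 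This is a one-line check using $|\xi|<1$.
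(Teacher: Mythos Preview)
Your proposal is correct and follows essentially the same approach as the paper's own proof: identify the phase and tail integrals for the profile \eqref{eq:HirotaIC} as the $\mathcal{P}=1$, $\mathcal{Q}=2$ case (the paper records these as \eqref{eq:Hirota_phase_and_tail}), then apply the second scenario of Theorem~\ref{thm:mixture} with $M=3$. Your version simply spells out the intermediate bookkeeping (the values of $\phaseint_0,\phaseint_1,\tailint_1,\tailint_2,\nu$, and the elimination of $\alpha$) that the paper leaves implicit.
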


\begin{proof}
Using the calculations in the paragraph preceding Corollary~\ref{cor:Hirota}, a Klaus-Shaw potential of the form \eqref{eq:HirotaIC} produces polynomial phase and tail integrals given by
\begin{equation}\label{eq:Hirota_phase_and_tail}
	\phaseint(\SP) = \frac{\pi(X_+ - X_-)}{4 A_\mathrm{max}} ( A_\mathrm{max}^2+ \SP^2),
	\qquad
	\tailint(\SP) = - \ii (X_+ + X_-) \SP + \frac{\ii (X_+ - X_-) \xi}{3 A_\mathrm{max}^2} \SP^3.
\end{equation}
We then apply Theorem~\ref{thm:mixture} in the case $M=3$ with $\mathcal{P}=1$ and $\mathcal{Q}=2$, in which case the second scenario holds.  
\end{proof}

As an illustration of Corollary~\ref{cor:Hirota}, we fixed a suitable semicircular Klaus-Shaw potential consistent with the hypotheses as well as a small value of $\eps$, and then constructed the corresponding semiclassical soliton ensembles for the Hirota equation \eqref{eq:Hirota} with a fixed coefficient $a_2=1$, varying only the coefficient $a_3$.  The plots are shown in Figure~\ref{fig:Hirota}.  Suleimanov-Talanov focusing is only observed for certain quantized values of $a_3$, and for those values it occurs precisely once.  Although these focusing events occur for different equations and at different times, upon rescaling about the predicted focus coordinates $(x^\circ,t_K^\circ)$ to the coordinates $(X,T_3)$ the plots all appear similar, pointing toward the universal nature of the limiting function $\Psi(X,0,T_3)$.
\begin{figure}[htb]
\includegraphics[width=.95\textwidth]{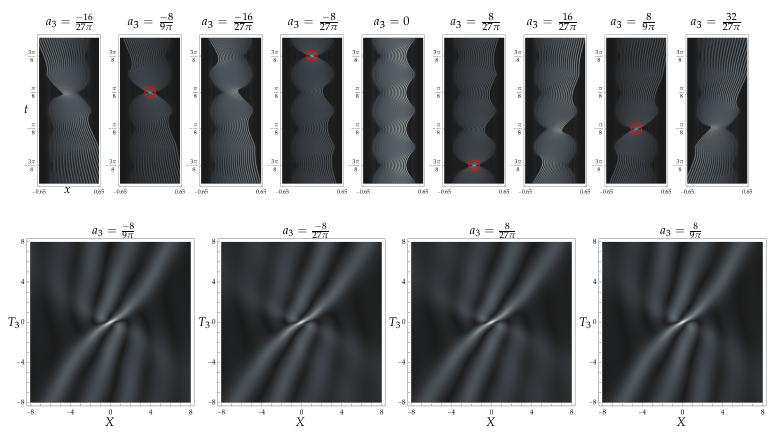}
\caption{
Top row:
Density plots of $|\widetilde \psi(x,t)|$ for the semiclassical soliton ensemble solutions, generated by the Klaus-Shaw potential \eqref{eq:HirotaIC} (with $X_\pm = \pm \tfrac{1}{2}, A_\mathrm{max}=1$, and $\xi=\tfrac{2}{3}$), of the Hirota equation \eqref{eq:Hirota} with $\eps=\tfrac{1}{80}$, $a_2=1$ and varying values of $a_3$.
Red circles indicate the locations of Suleimanov-Talanov focusing events as described in Corollary~\ref{cor:Hirota}. 
Unlike the even NLS flow, these occur only for quantized values of $a_3$ and at most once in the spacetime.  
Bottom row:
Density plots of $|\widetilde \psi(x,t)|$ in the rescaled local coordinates $(X, T_3)$ defined by \eqref{eq:cor.Hirota.coords} centered at the focus point $(x^\circ, t_K^\circ)$.
}
\label{fig:Hirota}
\end{figure}

As a second example, consider the case $\mathcal{P}=2$ and $\mathcal{Q}=1$, so that $M=\max\{2\mathcal{P},2\mathcal{Q}-1\}=4$ and we are thus in the setting of the LPD equation. From \eqref{eq:inverse-functions-polynomials}, the inverse functions become 
\begin{equation}
	x_\pm(s) = -\frac{\tailint_1}{2}  \pm \frac{2\phaseint_0}{\pi A_\mathrm{max}^2} \sqrt{A_\mathrm{max}^2-s^2} \left(1 + \frac{b_1}{3}\left(1 -  \frac{4s^2}{A_\mathrm{max}^2} \right) \right) , \quad 0<s<A_\mathrm{max}, 
\label{eq:xpm-lpd}
\end{equation}
where in \eqref{eq:phaseandtail-perturbations} we have written $B_1 = b_1 A_{\mathrm{max}}^{-2}$ to simplify the resulting formul\ae. The derivatives are  
\begin{equation}
	x'_\pm(s) = \mp \frac{2\phaseint_0}{\pi A_\mathrm{max}^2} \frac{s}{ \sqrt{A_\mathrm{max}^2-s^2}} \left( 1 + b_1\left( 3  - 4 \frac{s^2}{A_\mathrm{max}^2} \right) \right), \quad 0<s<A_\mathrm{max},
\end{equation}
and so the monotonicity condition $\mp x_\mp'(s)>0$ on $(0, A_\mathrm{max})$ guaranteeing that $A(x)$ is a semicircular Klaus-Shaw potential is satisfied if and only if 
\begin{equation}
	-\frac{1}{3} < b_1 <  1.
\end{equation}
Assuming that $b_1$ satisfies this inequality, we get a semicircular Klaus-Shaw potential whose support is the interval $[X_-, X_+]$ where, according to \eqref{eq:xpm-zero},
\begin{equation}
	X_\pm = x_\pm(0) = -\frac{\tailint_1}{2} \pm \frac{2\phaseint_0}{3\pi A_\mathrm{max}} (3+b_1).
	\label{eq:LPD-Xpm}
\end{equation}
The relations \eqref{eq:LPD-Xpm} yield expressions for the polynomial coefficients $\tailint_1$ and $\Phi_0$ in terms of the physical parameters of the initial condition:
\begin{equation}
	\tailint_1 = - ( X_+ + X_-), \qquad \Phi_0 = \frac{3 \pi A_\mathrm{max} (X_+-X_-)}{4(3+b_1)}.
\end{equation}
The potential $A(x)$ is then given implicitly by replacing $x_\pm(s)$ on the left-hand side of \eqref{eq:xpm-lpd}
with $x$, yielding a sextic equation for $s=A(x)$:
\begin{equation}
	\left[ \frac{2x - X_+ -X_- }{X_+ - X_-} \right]^2 = 
	\left( 1 - \frac{ A(x)^2}{A^2_\mathrm{max}} \right)\left( 1 - \gamma \frac{ A(x)^2}{A^2_\mathrm{max}} \right)^2, \quad
	\gamma := \frac{4b_1}{3+b_1},
\label{eq:lpd-IC-implicit}
\end{equation}
which has a unique solution such that $A(x) \in [0,A_\mathrm{max}]$ for each $x \in [X_-, X_+]$.  Note that $-\frac{1}{3}<b_1<1$ corresponds to $-\frac{1}{2}<\gamma<1$.

Using \eqref{eq:ZS-approximate-eigenvalues}-\eqref{eq:ZS-phase-integral} with $\Phi(\lambda)=\Phi_0(1+\lambda^2/A_\mathrm{max}^2)(1+b_1\lambda^2/A_\mathrm{max}^2)$, the semiclassical soliton ensemble $\widetilde \psi$ corresponding to this Klaus-Shaw potential is obtained via \eqref{eq:psitilde-reconstruct} from the solution of Riemann-Hilbert Problem~\ref{rhp-meromorphic} where the poles $\ii \widetilde{s}_n\in P\subset\mathbb{C}_+$ are given by
\begin{equation} 
	\ii \widetilde{s}_n = \ii A_\mathrm{max} \left[ \frac{1+b_1}{2b_1} \left(  1- \sqrt{ 1 -  \frac{4 b_1}{(1+b_1)^2}   \left(1 - \frac{2n+1}{2N} \right)} \right)\right]^{1/2}, \quad n = 0 , \dots, N-1;
\end{equation}
the residue coefficients $c_n^0$ are given by \eqref{eq:cn0} with $\widetilde{\tau}_n = (-1)^{n+1} \ee^{(X_+ + X_-) \widetilde{s}_n/\eps}$ according to \eqref{eq:ZS-proportionality-constant-no-interpolant}; and $\eps = \eps_N$ is given by \eqref{eq:LPD_epsilon} below.

Applying Theorem~\ref{thm:mixture} to the semicircular Klaus-Shaw potential \eqref{eq:lpd-IC-implicit} yields the following corollary.
\begin{cor}[Suleimanov-Talanov focusing in the LPD equation]
\label{cor:glpd}
Consider the LPD equation \eqref{eq:gLPD} with nonzero coefficients $a_2$ and $a_4$. Given parameters $A_\mathrm{max} >0$, $X_+ > X_-$, and $\gamma \in (-\tfrac{1}{2},1)$, let $\psi_0(x)= A(x)$ be the semicircular Klaus-Shaw potential supported on $[X_-, X_+]$ implicitly defined as the unique solution of 
\begin{equation}
	\left( \frac{2x - X_+  - X_-}{X_+ - X_-} \right)^2 = 
	\left( 1 - \frac{ A(x)^2}{A^2_\mathrm{max}} \right)\left( 1 - \gamma\frac{ A(x)^2}{A^2_\mathrm{max}} \right)^2, \qquad
	x \in [X_-, X_+], 
\label{eq:gLPD.IC2}
\end{equation}
for which $0 <  A(x) <  A_\mathrm{max}$ for each $x \in [X_-, X_+]$.
Denote by $\widetilde \psi(x,t)$ the corresponding semiclassical soliton ensemble solution of \eqref{eq:gLPD}. Then whenever the parameters $(a_2, a_4; A_\mathrm{max}, \gamma)$ satisfy
\begin{equation}\label{eq:gLPD.focus.condition}
	(4+2\gamma) a_4 A_\mathrm{max}^2 -   3 \gamma a_2 = 0,  
\end{equation}
$\widetilde \psi(x,t)$ experiences periodic Suleimanov-Talanov focusing events near the points 
\begin{equation}
(x^\circ, t^\circ_K)=\left(\frac{1}{2}(X_+ + X_-),  -(2K+1) \frac{\pi(X_+ - X_-)(2+\gamma)}{16 a_2 A_\mathrm{max}}\right), 
\quad K \in \Z
\end{equation}
in the sense that for each $K\in\mathbb{Z}$,
\begin{equation}
\widetilde{\psi}\left(x^\circ + \frac{\eps^2}{\nu_\textsc{lpd} }X,  t^\circ_K + \frac{\eps^{5}}{a_4 \nu_\textsc{lpd}^4 } T_4 \right) 
=\ii (-1)^{K+N} \frac{\nu_\textsc{lpd}}{\eps}\Psi(X,0,0,T_4) + \bigo{1}
\label{eq:cor.glpd.coords}
\end{equation}
as $\eps\to 0$ through the integer sequence $\eps=\eps_N$, $N=1,2,3,\dots$, with the error estimate being uniform for bounded $(X,T_4)\in\mathbb{R}^2$. Here
\begin{equation}
	\eps_N = \frac{(4-\gamma)(X_+ - X_-) A_\mathrm{max}}{16N}, 
	\qquad
	\nu_\textsc{lpd} = \frac{(5-2\gamma)(X_+ - X_-)A_\mathrm{max}} {60}.
\label{eq:LPD_epsilon}	
\end{equation}
\end{cor}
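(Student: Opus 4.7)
The plan is to apply Theorem~\ref{thm:mixture} in scenario (1) with $M=4$, $\mathcal{P}=2$, and $\mathcal{Q}=1$. The paragraph immediately preceding the corollary has already done most of the setup: it verifies that the implicit relation \eqref{eq:gLPD.IC2} defines a genuine semicircular Klaus-Shaw potential supported on $[X_-,X_+]$ with maximum amplitude $A_\mathrm{max}$ (valid precisely for $\gamma\in(-\tfrac{1}{2},1)$, i.e., $b_1\in(-\tfrac{1}{3},1)$), and shows that its phase and tail integrals are the polynomials $\phaseint(\SP)=\Phi_0(1+\SP^2/A_\mathrm{max}^2)(1+b_1\SP^2/A_\mathrm{max}^2)$ and $\tailint(\SP)=-\ii(X_++X_-)\SP$. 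Because $\tailint(\SP)$ is a linear monomial, Corollary~\ref{cor:even} identifies $A$ as even about $x^\circ=\tfrac{1}{2}(X_++X_-)$, placing us in scenario (1) of Theorem~\ref{thm:mixture}.

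Expanding $\phaseint$ in the form \eqref{eq:polynomial-forms}, I read off $\phaseint_1=\Phi_0(1+b_1)/A_\mathrm{max}^2$ and $\phaseint_2=\Phi_0 b_1/A_\mathrm{max}^4$. Scenario (1) requires $(a_2,a_3,a_4)=-\tfrac{1}{2}\alpha(\phaseint_1,0,\phaseint_2)$ for some $\alpha\ne 0$. The vanishing third coordinate is automatic for the LPD equation, and the remaining condition reduces to $a_2/a_4=\phaseint_1/\phaseint_2=A_\mathrm{max}^2(1+b_1)/b_1$. Substituting $b_1=3\gamma/(4-\gamma)$, so that $1+b_1=(4+2\gamma)/(4-\gamma)$, this ratio becomes $2(2+\gamma)A_\mathrm{max}^2/(3\gamma)$, which rearranges to precisely the focusing condition \eqref{eq:gLPD.focus.condition}. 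Solving $a_2=-\tfrac{1}{2}\alpha\phaseint_1$ for $\alpha$ and evaluating $t_K^\circ=(2K+1)/\alpha$ gives the focus time after simplification; here I would use the identities $3+b_1=12/(4-\gamma)$ and $\Phi_0=\pi A_\mathrm{max}(X_+-X_-)(4-\gamma)/16$ to reduce $\phaseint_1$ to $\pi(X_+-X_-)(2+\gamma)/(8A_\mathrm{max})$, yielding the expression in the corollary.

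The remaining data is the quantization scale $\eps_N$ and the rescaling constant $\nu$. The former follows at once from $\eps_N=\Phi_0/(N\pi)$ together with the identity for $\Phi_0$ above. For $\nu$, the definition \eqref{eq:nu-formula} and the change of variables $s=A_\mathrm{max} u$ yield
\begin{equation}
\nu=\frac{\Phi_0}{2\pi}\int_0^{A_\mathrm{max}}\left(1-\frac{s^2}{A_\mathrm{max}^2}\right)\!\left(1-\frac{b_1 s^2}{A_\mathrm{max}^2}\right)\dd s=\frac{\Phi_0 A_\mathrm{max}(5-b_1)}{15\pi},
\end{equation}
which becomes $\nu_\textsc{lpd}$ after using $5-b_1=4(5-2\gamma)/(4-\gamma)$ and the formula for $\Phi_0$. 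With all constants identified, the stated limit \eqref{eq:cor.glpd.coords} is just \eqref{eq:mixed-flow-focus} specialized to $M=4$, $a_M=a_4$, and $T_2=T_3=0$.

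There is no genuinely hard step; the entire argument is a bookkeeping reparameterization from the abstract coefficients $(\Phi_0,b_1)$ used in Theorem~\ref{thm:mixture} to the physical parameters $(A_\mathrm{max},X_\pm,\gamma)$ appearing in the corollary. The one subtlety I would flag is the monotonicity constraint $-\tfrac{1}{3}<b_1<1$ established in the preceding paragraph: this is exactly what guarantees that \eqref{eq:gLPD.IC2} selects $A(x)\in[0,A_\mathrm{max}]$ single-valuedly and that the resulting $A$ has the correct square-root vanishing at $x=X_\pm$ required by Definition~\ref{def:semicircularKS}, so that Theorem~\ref{thm:mixture} may indeed be invoked. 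Everything beyond this reduces to algebraic verification.
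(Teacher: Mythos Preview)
Your proposal is correct and follows exactly the same approach as the paper: identify the phase and tail integrals as polynomials with $\mathcal{P}=2$, $\mathcal{Q}=1$, observe that the linear $\tailint$ places us in scenario (1) of Theorem~\ref{thm:mixture}, and read off the constants. The paper's own proof is just two sentences to this effect; your version supplies the explicit algebraic verifications of $\phaseint_1$, $\phaseint_2$, $t_K^\circ$, $\epsilon_N$, and $\nu$ that the paper omits.
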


\begin{proof}
A Klaus-Shaw potential of the form \eqref{eq:gLPD.IC2} produces phase and tail integrals given by
\begin{equation}\label{eq:gLPD_phase_and_tail}
	\begin{aligned}
	\phaseint(\SP) &= \frac{(4-\gamma) \pi (X_+ - X_-) }{16A_\mathrm{max}} (A_\mathrm{\max}^2 + \SP^2) \left( 1 + \frac{3\gamma}{4-\gamma} \frac{\SP^2}{A_\mathrm{max}^2} \right),
	\\
	\tailint(\SP) &= -\ii (X_+ + X_-) \SP.
	\end{aligned}
\end{equation}
We apply Theorem~\ref{thm:mixture} in the case $M=4$ with $\mathcal{P}=2$ and $\mathcal{Q}=1$, in which case the first scenario holds. 
\end{proof}

An illustration of the prediction of Corollary~\ref{cor:glpd} is shown in Figure~\ref{fig:gLPD}, which displays the expected characteristic periodic focusing.
\begin{figure} [htb]
\centering
\includegraphics[width=\textwidth]{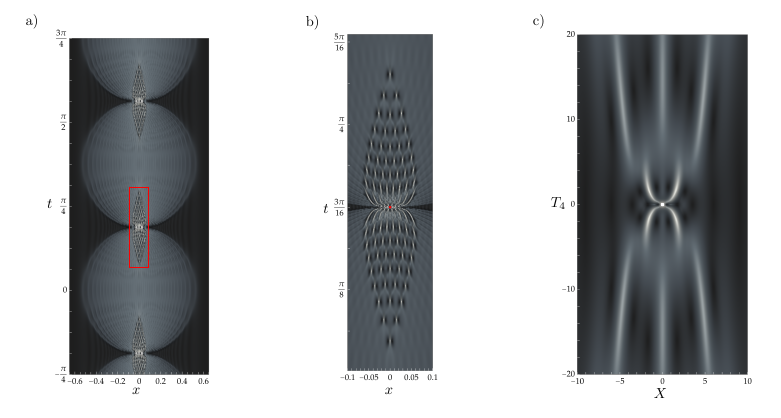}
\caption{a) Density plot of $|\widetilde \psi(x,t)|$ for the semiclassical soliton ensemble solution of the LPD equation \eqref{eq:gLPD} generated by the Klaus-Shaw initial data $A(x)$ given by \eqref{eq:gLPD.IC2} (with $X_\pm =\pm \tfrac{1}{2} = 0$, $A_\mathrm{max}=1$, $\gamma =\tfrac{4}{7}$) with $a_2=1$ and $a_4$ given by \eqref{eq:gLPD.focus.condition} and $\eps = \tfrac{1}{160}$.
b)  A higher resolution computation of the solution in the red rectangular region surrounding the focus point $(x^\circ,t_{-1}^\circ)=(0, \tfrac{3\pi}{16})$. 
c) The solution in the local coordinates $(X, T_4)$ described in Corollary~\ref{cor:glpd} in which the leading order behavior is described by $\Psi(X,0,0,T_4)$.  
}
\label{fig:gLPD}
\end{figure}

\begin{rem}
Under a Madelung-type ansatz $\psi(x,t)=\rho^{1/2}\ee^{\ii S/\eps}$ introducing real variables $\rho$ and $\mu=\rho S_x$, one can obtain a dispersionless system with two unknowns similar to \eqref{eq:dispersionless-focusing} for each mixed flow of the focusing NLS hierarchy.
Although Theorem~\ref{thm:mixture} proves that solutions of various mixed flows exhibit asymptotic behavior in which the amplitude reaches large values proportional to $\eps^{-1}$ on space and time scales that are small as $\eps\to 0$, we do not claim in general that these focusing events represent dispersive regularizations of Talanov-like collapse/blow-up solutions of the corresponding dispersionless systems.  Indeed, the plots shown in Figure~\ref{fig:gLPD} suggest that the earliest catastrophe of the dispersionless LPD system is instead of elliptic-umbilic type, leading to a triangular array of peaks similar to the Bertola-Tovbis regularization for focusing NLS \cite{BertolaT13}.  Moving forward in time from this point, the triangular array apparently develops into a modulated genus-two solution.  As such, the function $\Psi(X,0,0,T_4)$ might be expected to be a dispersive regularization of a Talanov-like collapse/blow-up solution of the genus-two Whitham modulation system for the LPD equation.  The latter is a quasilinear elliptic system with \emph{six} unknowns (see \cite{ForestL86} for the NLS analogue).
\label{rem:higher-genus}
\end{rem}

\subsubsection{Extreme focusing for higher pure flows}
The $n^\mathrm{th}$  \emph{pure flow} of the focusing NLS hierarchy, denoted NLS$_n$, corresponds to the case in which we tie the sequence of time coordinates to a single variable $t\in\mathbb{R}$ by $t_m=a_mt$ where $a_m=\delta_{m,n}$ is the Kronecker delta.  Examples include the mKdV equation \eqref{eq:mKdV} for $n=3$ and \eqref{eq:LPD} for $n=4$.  These equations for $n>2$ do not fall into the category of mixed flows to which Theorem~\ref{thm:mixture} applies, because according to Proposition~\ref{prop:Psi-even-analytic} the coefficient $\phaseint_1$ cannot vanish for semiclassical soliton ensembles generated from any semicircular Klaus-Shaw potential, and as such the mixture must contain a component of the $t_2$ flow (i.e., the NLS equation itself).  However, we can still show that for any given pure flow there exists initial data that leads to Suleimanov-Talanov focusing.

\begin{thm}[Suleimanov-Talanov focusing of pure flows]
Under the assumptions of Theorem~\ref{thm:multi-time}, fix $K\in\mathbb{Z}$, a flow index $n=2,3,\dots,M$, $M:=\max\{\deg(\phaseint),\deg(\tailint)\}$, and a real number $t^\circ$.   Set $\mathbf{t}_n^\circ:=(t_2^\circ,\dots,t_{n-1}^\circ,t_n^\circ-t^\circ,t_{n+1}^\circ,\dots,t_M^\circ)\in\mathbb{R}^{M-1}$ with $t_j^\circ$ defined by \eqref{eq:focus-K} for $j=2,\dots,M$.  Then the function $x\mapsto \psi(x,0):=\widetilde{\psi}(x,\mathbf{t}_n^\circ)$ is an initial datum 
for which the corresponding solution $\psi(x,t)$ of the $n^\mathrm{th}$ pure flow of the focusing NLS hierarchy as a function of $(x,t)\in\mathbb{R}^2$ experiences a Suleimanov-Talanov focusing at $(x,t)=(x^\circ,t^\circ)$ with $x^\circ:=-\frac{1}{2}\tailint_1$.  More precisely,
\begin{equation}
\psi\left(x^\circ+\frac{\epsilon^2}{\nu}X,t^\circ+\frac{\epsilon^{n+1}}{\nu^n}T_n\right)=\ii (-1)^{K+N}\frac{\nu}{\epsilon}\Psi(X,0,\dots,0,T_n,0,\dots,0)+\bigo{1}
\end{equation}
as $\epsilon=\epsilon_N\downarrow 0$, uniformly for $(X,T_n)\in\mathbb{R}^2$ bounded, where $\nu$ is given by \eqref{eq:nu-formula}.
\label{thm:pure-flow}
\end{thm}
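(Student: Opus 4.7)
The plan is to reduce the claim directly to Theorem~\ref{thm:multi-time} by exploiting the fact that, by Definition~\ref{def:SSE}, the semiclassical soliton ensemble $\widetilde{\psi}(x,\mathbf{t})$ is a simultaneous exact solution of all $M-1$ flows in the focusing NLS hierarchy on $\mathbb{R}^M$. First I would define
\[
\psi(x,t):=\widetilde{\psi}\!\left(x,\,t_2^\circ,\dots,t_{n-1}^\circ,\,t_n^\circ-t^\circ+t,\,t_{n+1}^\circ,\dots,t_M^\circ\right),
\]
so that $\psi(x,0)=\widetilde{\psi}(x,\mathbf{t}_n^\circ)$ by construction. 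Since only the $n^{\mathrm{th}}$ time coordinate varies with $t$ in this definition, and since $\epsilon\widetilde{\psi}_{t_n}=N_n[\widetilde{\psi},\widetilde{\psi}^*]$ holds identically as part of the simultaneous-solution property of the SSE, the chain rule immediately shows that $\psi(x,t)$ is an exact solution of the $n^{\mathrm{th}}$ pure flow with the prescribed initial data. No separate well-posedness or uniqueness statement is needed, because the theorem may be read as a statement about this particular time-shifted SSE.

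Next I would apply the rescaling $x=x^\circ+\epsilon^2 X/\nu$ and $t=t^\circ+\epsilon^{n+1}T_n/\nu^n$. This turns the time argument of $\widetilde{\psi}$ into $\mathbf{t}^\circ$ plus a perturbation that is nonzero only in slot $n$, of size $\epsilon^{n+1}T_n/\nu^n$, so that the resulting expression is exactly the left-hand side of \eqref{eq:hierarchy-focus-approx} specialized to $T_j=0$ for $j\neq n$. The desired asymptotic formula then follows at once from Theorem~\ref{thm:multi-time}, and the uniformity on bounded $(X,T_n)$-sets is inherited as a restriction of the uniformity on bounded $(X,T_2,\dots,T_M)$-sets asserted there.

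The whole argument is therefore a repackaging of Theorem~\ref{thm:multi-time}; the main analytical work lies in that earlier result and not here. The conceptual point worth underlining is why Theorem~\ref{thm:mixture} fails to cover pure flows with $n\ge 3$: Proposition~\ref{prop:Psi-even-analytic} forces $\phaseint_1>0$, so any mixture of the flows that produces Suleimanov--Talanov focusing must contain a nonzero $t_2$ component. The point of the present theorem is that this obstruction can be bypassed by absorbing the ``missing'' time coordinates into the \emph{initial data} $\widetilde{\psi}(x,\mathbf{t}_n^\circ)$ rather than into the flow itself, leaving only the pure $t_n$ evolution from that shifted starting point. I do not anticipate any step being an obstacle: the only item requiring care is verifying that the time shift is consistent with the simultaneous-solution property, and that is immediate from Definition~\ref{def:SSE}.
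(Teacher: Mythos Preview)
Your proposal is correct and follows essentially the same approach as the paper: the paper's proof is a single sentence invoking the commutativity of the hierarchy flows to shift the time origin so that the $t_n$-axis passes through the chosen focal point, which is exactly what your time-shifted definition of $\psi(x,t)$ accomplishes before appealing to Theorem~\ref{thm:multi-time} with $T_j=0$ for $j\neq n$.
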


\begin{proof}
We use the fact that all of the flows in the focusing NLS hierarchy commute to shift the origin in the space of times $t_2,t_3,\dots$ so that the line in the $t_n$ direction intersects a selected focus point indexed by $K$.
\end{proof}

\begin{rem}
The characterization of initial data in Theorem~\ref{thm:pure-flow} is implicit, in terms of the solution of Riemann-Hilbert Problem~\ref{rhp:M-sigma}.  The shift of origin that is behind the proof would be expected to introduce oscillations of finite amplitude and wavelength proportional to $\eps$, so that the initial data $\psi(x,0)$ would not be an approximation of any Klaus-Shaw potential, semicircular or otherwise.  The presence of such oscillations is clear in examples.
\label{rem:non-Madelung}
\end{rem}

To demonstrate the use of Theorem~\ref{thm:pure-flow}, first consider the NLS$_3$ (mKdV) flow. 
We generate initial data starting with a semicircular Klaus-Shaw potential $A(x)$ of the form \eqref{eq:HirotaIC} which has a quadratic phase integral $\phaseint(\SP)$ and a cubic tail integral $\tailint(\SP)$ given by \eqref{eq:Hirota_phase_and_tail}. 
Using Theorem~\ref{thm:multi-time} we choose a focus point 
\begin{equation}
\begin{split}
	(x^\circ, t_2^\circ, t_3^\circ) &= (-\tfrac{1}{2}\tailint_1,-\tfrac{1}{2}(2K+1)\phaseint_1,-\tfrac{1}{2}\tailint_2)\\
	&=
	\left( \frac{X_+ + X_-}{2}, -(2K+1) \frac{\pi(X_+-X_-)}{\pi A_\mathrm{max}}, -\frac{\xi (X_+ - X_-)}{6} \right)
	\end{split}
\end{equation} 
by fixing an integer $K \in \Z$.
We then construct initial data $\psi(x,0)$ for NLS$_3$ by flowing the semiclassical soliton ensemble $\widetilde{\psi}(x, t_2, t_3)$ defined by $A(x)$ under the $t_2$ (NLS) flow to $t_2^\circ$, i.e., we set $\psi(x,0) := \widetilde{\psi}(x,t_2^\circ, 0)$. 

Note that for each $x\in\mathbb{R}$, $\psi(x,0):=\widetilde{\psi}(x,t_2^\circ,0)$ is purely imaginary.  Indeed, the effect of evaluation at $t_2=t_2^\circ$ is to replace each coefficient $c_n^0$, $n=0,\dots,N-1$, in Riemann-Hilbert Problem~\ref{rhp-meromorphic} with $c_n^0\ee^{2\ii t_2^\circ(\ii \widetilde{s}_n)^2/\eps}$.  Since $\phaseint(\SP)=\phaseint_0+\phaseint_1\lambda^2$, using  \eqref{eq:ZS-approximate-eigenvalues} gives $2\ii t_2^\circ(\ii\widetilde{s}_n)^2/\eps = 2\pi\ii (n+\frac{1}{2})t_2^\circ/\Phi_1-2\ii t_2^\circ\Phi_0/(\eps\Phi_1)$.  Also, combining \eqref{eq:ZS-epsilon-assumption} and \eqref{eq:Psi0-Psi1} gives $\Phi_0/\eps=N\pi$, so $2\ii t_2^\circ(\ii\widetilde{s}_n)^2/\eps = 2\pi\ii (n-N+\frac{1}{2})t_2^\circ/\Phi_1$.  Finally, using $t_2^\circ=-\frac{1}{2}(2K+1)\Phi_1$ gives $2\ii t_2^\circ(\ii\widetilde{s}_n)^2/\eps = -2\pi\ii (n-N+\frac{1}{2})(K+\frac{1}{2})$, and hence $\ee^{2\ii t_2^\circ(\ii\widetilde{s}_n)^2/\eps}=(-1)^{n-N+K+1}\ii$.  Since $c_n^0$ is purely imaginary,  $c_n(x,t_2^\circ,0)=c_n^0\ee^{-2\widetilde{s}_nx/\eps}\ee^{2\ii t_2^\circ(\ii\widetilde{s}_n)^2/\eps}$ is real for every $n=0,\dots,N-1$.  It then follows from Proposition~\ref{prop:real-imag} that $\psi(x,0)=\widetilde{\psi}(x,t_2^\circ,0)$ is purely imaginary for all $x\in\mathbb{R}$.  Since the mKdV equation in the form \eqref{eq:mKdV} preserves this property, we can write the solution for positive $t=t_3$ in the form $\psi(x,t)=\ii u(x,t)$ where $u(x,t)$ is a real-valued solution of the mKdV (NLS$_3$) equation.
According to Theorem~\ref{thm:pure-flow}, the solution $u(x,t)$ undergoes a Suleimanov-Talanov focusing near the point $(x^\circ,t_3^\circ)$.
The results of numerical implementation of this procedure are shown in the first row of Figure~\ref{fig:mkdv_LPD}. 

Next, we illustrate Theorem~\ref{thm:pure-flow} for NLS$_4$, following the same procedure starting with a semicircular Klaus-Shaw potential $A(x)$ given by \eqref{eq:gLPD.IC2} with phase and tail integrals given by \eqref{eq:gLPD_phase_and_tail}; 
the corresponding focus point from Theorem~\eqref{eq:hierarchy-focus-approx} is 
\begin{equation}
	(x^\circ, t_2^\circ, t_3^\circ, t_4^\circ) = 
	\left( \frac{X_+ + X_-}{2},  -(2K+1) \frac{\pi(2-\gamma)(X_+ - X_-)}{16 A_\mathrm{max}},  0, 
	-(2K+1)\frac{ 3\pi \gamma (X_+ - X_-)}{32A_\mathrm{max}^3}  \right).
\end{equation}
For the NLS$_4$ flow, we take as our initial condition $\psi(x,0) = \widetilde{\psi}(x,t_2^\circ, t_3^\circ, 0)$ where $\widetilde{\psi}$ is the semiclassical soliton ensemble corresponding to $A(x)$ given by \eqref{eq:gLPD.IC2}. This initial condition is complex-valued.  The NLS$_4$ numerics are shown in the second row of Figure~\ref{fig:mkdv_LPD}.

\begin{figure}[htb]
\begin{overpic}[width=.3\textwidth]{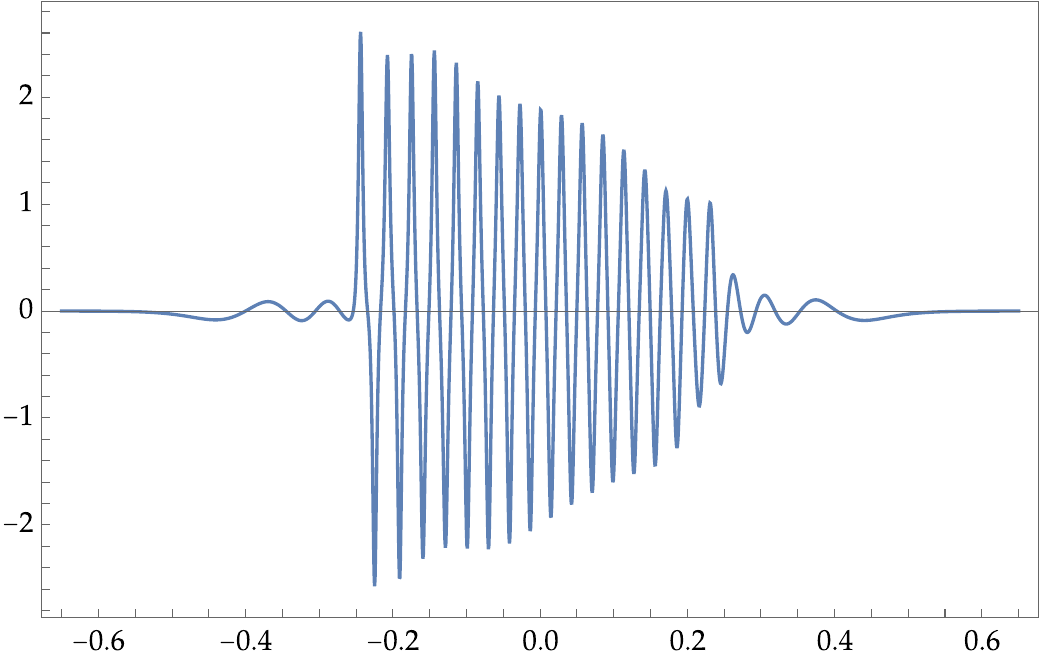} 
\put(-7,64){\scalebox{0.75}{a)}}
\put(51,-3){\scalebox{0.5}{$x$}}
\put(-5,33){\rotatebox[origin=c]{90}{\scalebox{0.5}{$u(x,0)$}}}
\end{overpic}
\hspace*{\stretch{1}}
\begin{overpic}[width=.32\textwidth]{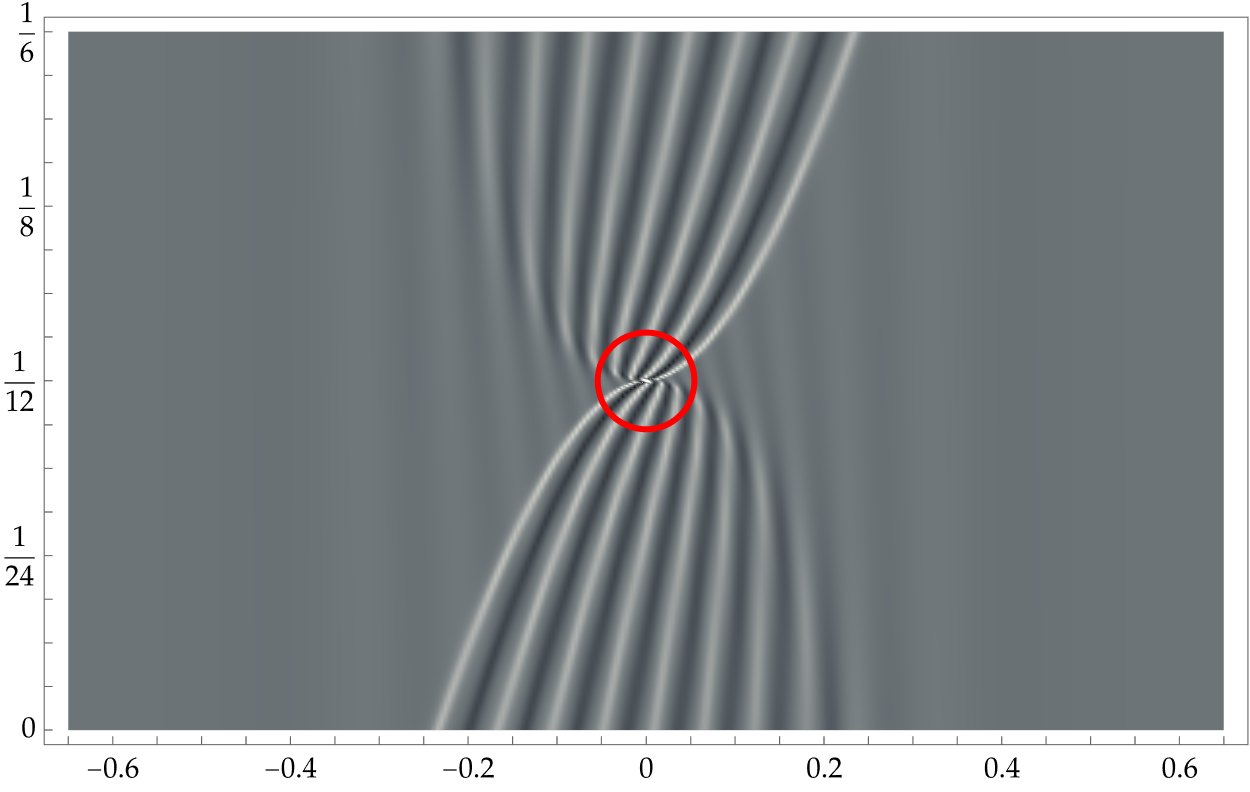}
\put(-7,61){\scalebox{0.75}{b)}}
\put(51,-3){\scalebox{0.5}{$x$}}
\put(-5,33){\rotatebox[origin=c]{90}{\scalebox{0.5}{$t_3$}}}
\end{overpic}
\hspace*{\stretch{1}}
\begin{overpic}[width=.3\textwidth]{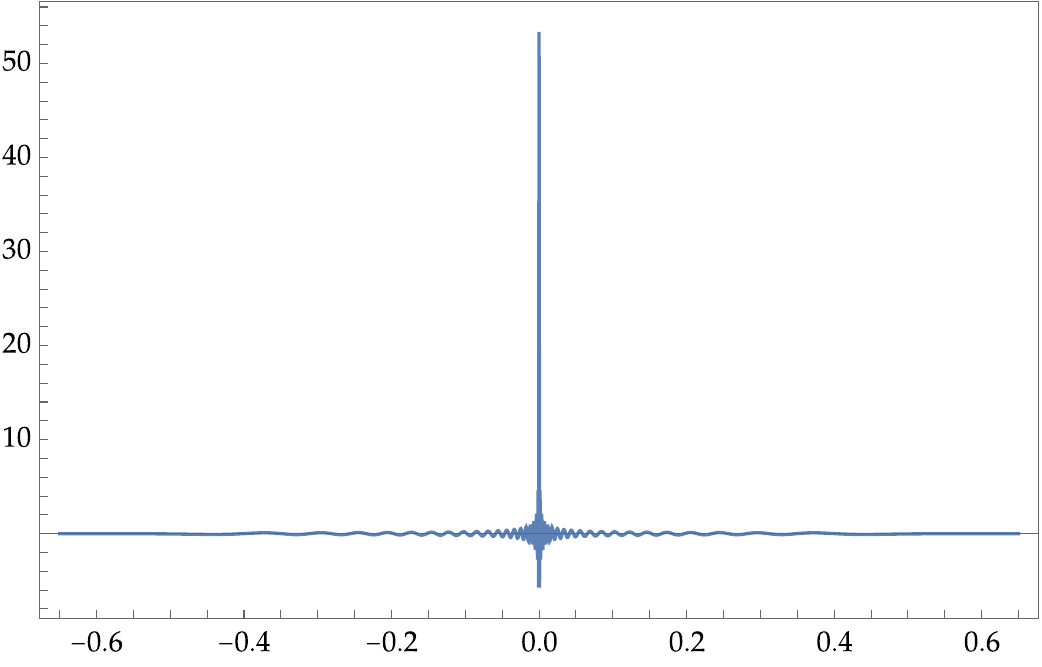}
\put(-7,64){\scalebox{0.75}{c)}}
\put(51,-3){\scalebox{0.5}{$x$}}
\put(-5,33){\rotatebox[origin=c]{90}{\scalebox{0.5}{$u(x,t_3^\circ)$}}}
\put(7,56){\scalebox{0.5}{$t=t^\circ_3=\frac{1}{12}$}}
\end{overpic}

\vspace*{12pt}
\begin{overpic}[width=.3\textwidth]{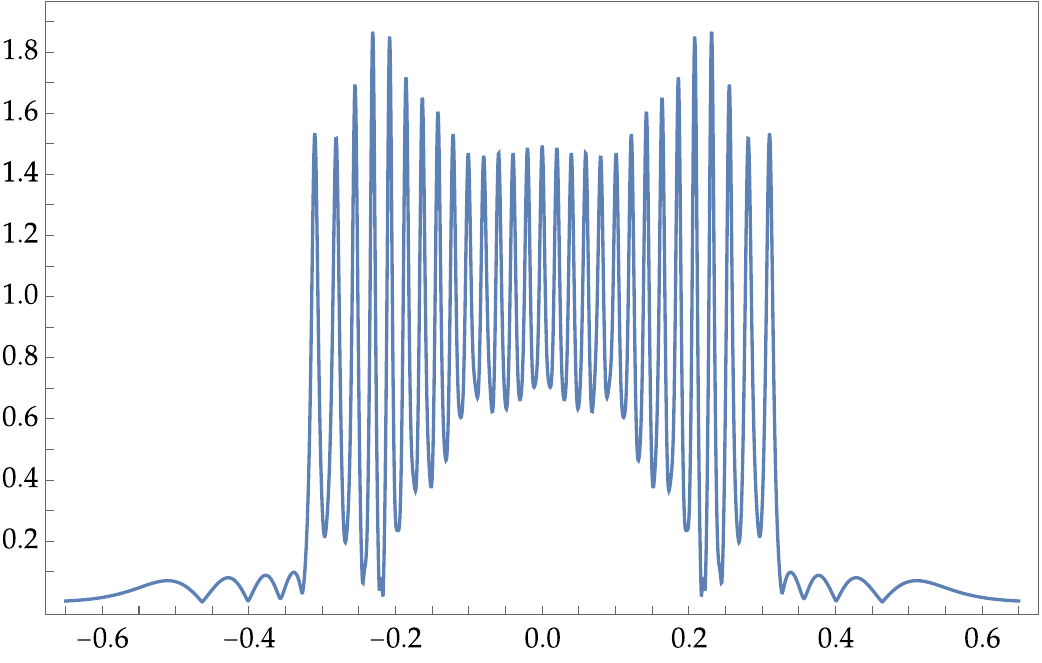}
\put(-7,64){\scalebox{0.75}{d)}}
\put(51,-3){\scalebox{0.5}{$x$}}
\put(-5,33){\rotatebox[origin=c]{90}{\scalebox{0.5}{$|\psi(x,0)|$}}}
\end{overpic}
\hspace*{\stretch{1}}
\begin{overpic}[width=.32\textwidth]{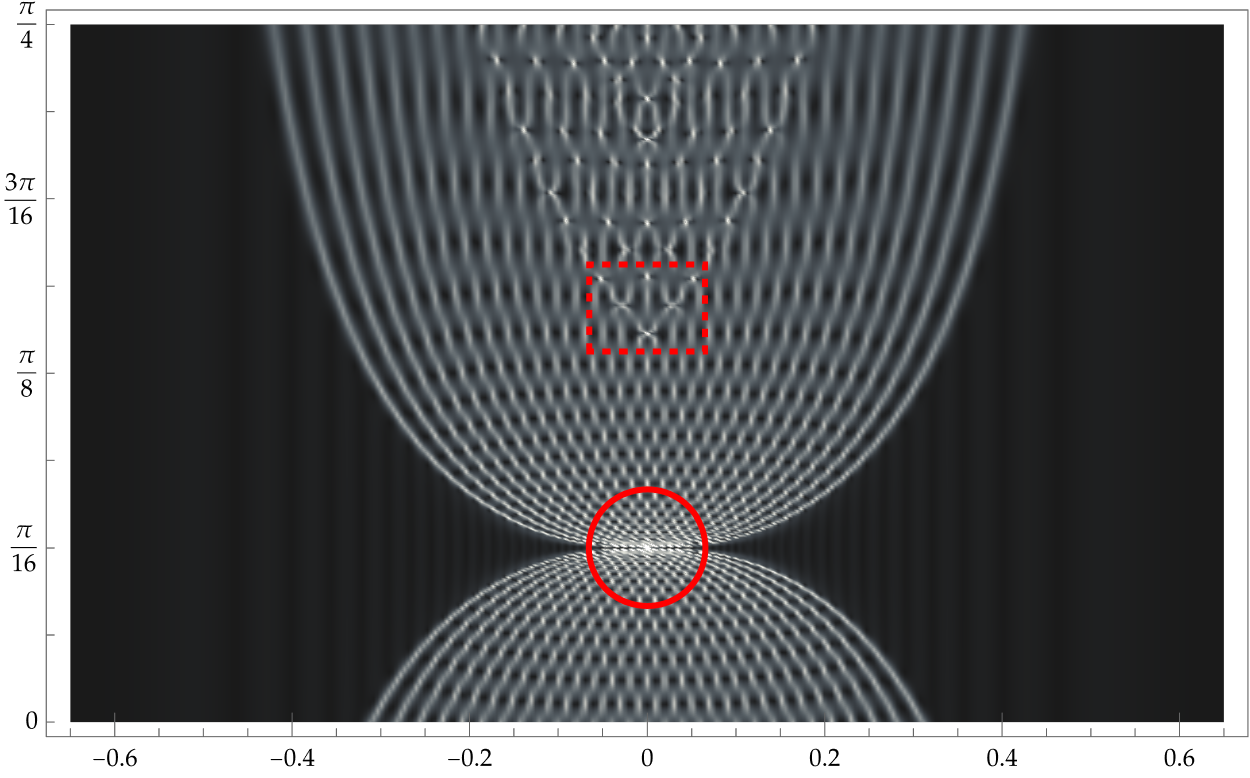}
\put(-7,62){\scalebox{0.75}{e)}}
\put(51,-3){\scalebox{0.5}{$x$}}
\put(-5,33){\rotatebox[origin=c]{90}{\scalebox{0.5}{$t_4$}}}
\end{overpic}
\hspace*{\stretch{1}}
\begin{overpic}[width=.3\textwidth]{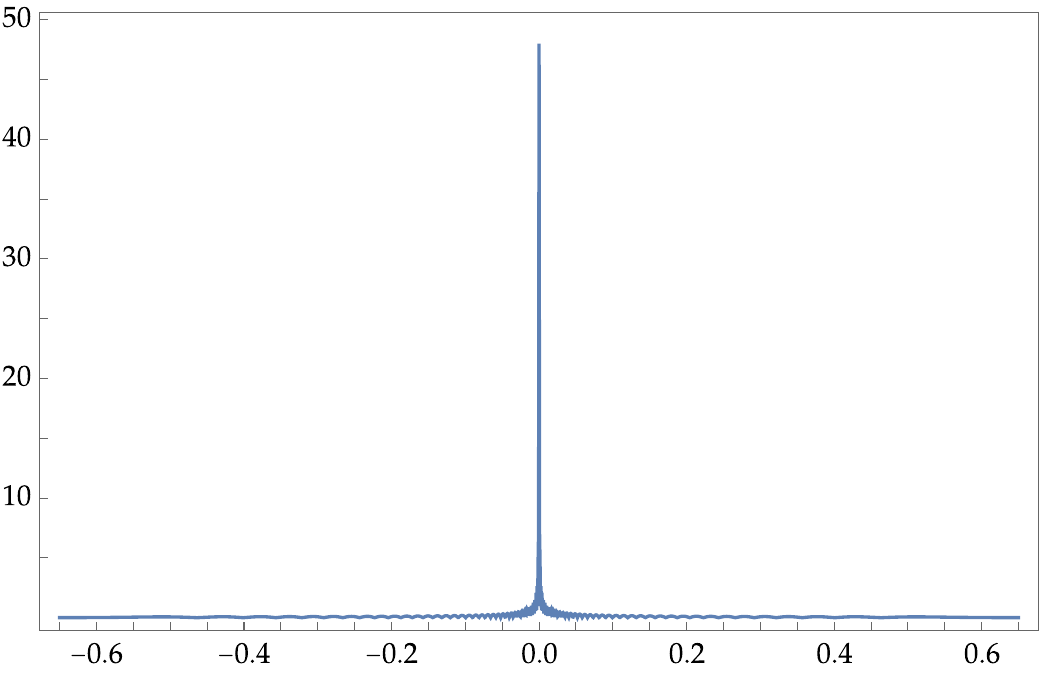}
\put(-7,65){\scalebox{0.75}{f\;\!)}}
\put(51,-3){\scalebox{0.5}{$x$}}
\put(-5,33){\rotatebox[origin=c]{90}{\scalebox{0.5}{$|\psi(x,t_4^\circ)|$}}}
\put(7,56){\scalebox{0.5}{$t = t_4^\circ=\frac{\pi}{16}$}}
\end{overpic}
\caption{Suleimanov-Talanov focusing in higher pure flows of the focusing NLS hierarchy. 
Top row, (left-to-right): Real-valued semiclassical soliton ensemble initial data $u(x,0)$ ($\epsilon=\tfrac{1}{160}$) for the NLS$_3$ (mKdV) equation \eqref{eq:mKdV}; density plot of the solution $u(x,t_3)$ in spacetime; plot of the solution $u(x,t)$ at the focusing time $t_3^\circ = \frac{1}{12}$.
Bottom row, (left-to-right): complex-valued semiclassical soliton ensemble initial data $\psi(x,0)$ ($\epsilon=\tfrac{1}{160}$) for the NLS$_4$ equation \eqref{eq:LPD}; density plot of the solution $\psi(x,t_4)$ in spacetime; plot of the solution $\psi(x,t_4)$ at the focusing time $t_4^\circ = \frac{\pi}{16}$.  
}
\label{fig:mkdv_LPD}
\end{figure}

\begin{rem}
Unlike the Talanov solutions of the dispersionless focusing NLS system \eqref{eq:dispersionless-focusing} which may be viewed as the collapse of a modulated plane wave (genus $0$), here we observe that in the mKdV case for special initial data one has extreme-amplitude focusing and collapse instead of a \emph{dispersive shock wave}, or a modulated genus-$1$ structure.  Likewise for the NLS$_4$ equation we have the focusing and collapse of a modulated genus-$2$ structure.  
\end{rem}

\subsection{Nongeneric character of Suleimanov-Talanov focusing}
\label{sec-interpolate}
Another way to view the unusual nature of Suleimanov-Talanov focusing, going beyond the fact that the semiclassical soliton ensemble for a given semicircular Klaus-Shaw potential $A(x)$ has only been proved to exhibit this kind of asymptotic behavior at a discrete set of points in the multi-time space of the focusing NLS hierarchy (Theorem~\ref{thm:multi-time}) which will only be observed in $1+1$ mixed-flow equations in the hierarchy that have just the right ratios of coefficients (Theorem~\ref{thm:mixture}), is to illustrate how deformations of a Klaus-Shaw potential can easily perturb a Talanov-type infinite-amplitude singularity into an elliptic umblic gradient catastrophe for solutions of the dispersionless approximate system \eqref{eq:dispersionless-focusing}.  

Here, we construct a family of smooth Klaus-Shaw potentials $A(x;\delta)$ interpolating between initial data consistent with a Talanov solution with $E<0$ for $\delta=0$ and and that consistent with an Akhmanov-Sukhorukov-Khokhlov solution for $\delta=1$.  We thus choose for the endpoints
\begin{equation}
A(x;0)=\sqrt{1-x^2}\chi_{[-1,1]}(x)\quad\text{and}\quad A(x;1)=\frac{1}{2}\mathrm{sech}(x),
\end{equation}
both of which have the same $L^1(\mathbb{R})$ norm, namely $\pi/2$.  We will interpolate between these two endpoints by constructing a Klaus-Shaw potential $A(x;\delta)$ such that for each $\delta\in [0,1]$,
\begin{equation}
\|A(\diamond;\delta)\|_{L^1(\mathbb{R})}=\phaseint(0;\delta)=\int_\mathbb{R}A(x;\delta)\,\dd x =\frac{\pi}{2}.
\label{eq:L1-norm-impose}
\end{equation}
The family of potentials $A(x;\delta)$ will be constructed by explicitly interpolating the derivatives $\evdensity(s)=\evdensity(s;\delta)$ of the corresponding phase integrals $\phaseint(\ii s;\delta)$ corresponding to $A(x;\delta)$ by \eqref{eq:ZS-density}.  A direct computation shows that
\begin{equation}
\evdensity(s;0)=-\frac{1}{\pi}\frac{\dd}{\dd s}\int_{x_-(s)}^{x_+(s)}\sqrt{1-x^2-s^2}\,\dd x =\frac{1}{2\pi\ii}\frac{\dd}{\dd s}\oint_LR(x;s)\,\dd x,
\end{equation}
where $R(x;s)^2=x^2+s^2-1$ and $R(x;s)$ is cut in the interval $[x_-(s),x_+(s)]$ with $R(x;s)=x+O(x^{-1})$ as $x\to\infty$, and $L$ is a positively-oriented loop enclosing the cut.  By a residue at $x=\infty$ we then obtain
\begin{equation}
\evdensity(s;0)=s,\quad 0<s<1.
\end{equation}
The function $\evdensity(s;1)$ corresponding to the other endpoint case of $A(x;1)$ can also be calculated directly from \eqref{eq:ZS-density}.  The calculation was done as an example in \cite{Kamvissis:2003}, with the result being that 
\begin{equation}
\evdensity(s;1)=1,\quad 0<s<\frac{1}{2}.
\end{equation}
Thus, we introduce coefficients $a(\delta)$ and $b(\delta)$ to be defined for $0<\delta<1$ and assume a linear combination
\begin{equation}
\evdensity(s;\delta)=a(\delta)+b(\delta)s,\quad 0<s<1-\frac{1}{2}\delta.
\end{equation}
We require that $a(0)=b(1)=0$ and that $b(0)=a(1)=1$ to match the desired endpoints, and we obtain a relation between the functions $a(\delta)$ and $b(\delta)$ by imposing the condition \eqref{eq:L1-norm-impose}.  Using the connection between $\evdensity(s)$ and $\phaseint(\ii s)$ in the definition \eqref{eq:ZS-phase-integral}, we are imposing that
\begin{equation}
\frac{1}{2}=\int_0^{1-\delta/2}(a(\delta)+b(\delta)s)\,\dd s = a(\delta)\left(1-\frac{1}{2}\delta\right)+\frac{1}{2}b(\delta)\left(1-\frac{1}{2}\delta\right)^2.
\label{eq:a-b-relation}
\end{equation}
It is straightforward to check that this relation is consistent with the boundary conditions on $a(\delta)$ and $b(\delta)$ at $\delta=0,1$.  We may therefore supplement it with an arbitrary second independent equation that is also consistent with the boundary conditions; we choose to simply define $b(\delta):=1-\delta$.  Then \eqref{eq:a-b-relation} determines $a(\delta)$ explicitly, and we have determined that
\begin{equation}
a(\delta):=\frac{1}{2}\left(1-\frac{1}{2}\delta\right)^{-1}-\left(1-\frac{1}{2}\delta\right)^2+\frac{1}{2}\left(1-\frac{1}{2}\delta\right),\quad b(\delta):=1-\delta.
\end{equation}
Having determined $\evdensity(s;\delta)$ in this way, we impose the condition that $x\mapsto A(x;\delta)$ is an even function by choosing $\tailint(\ii s;\delta)=0$ for $s\in (0,1-\frac{1}{2}\delta)$ for all $\delta\in [0,1]$ (see Corollary~\ref{cor:even}). 

Now that we have $\evdensity(s;\delta)$ (and hence also $\phaseint(\ii s;\delta)$ by \eqref{eq:ZS-phase-integral}), $\tailint(\ii s;\delta)$, and the maximum amplitude $A_\mathrm{max}(\delta)=1-\frac{1}{2}\delta$, we apply Proposition~\ref{prop:Psi-mu-invert} to obtain the inverse functions $x_\pm(s;\delta)$ of $s=A(x;\delta)$.  We obtain
\begin{equation}
x_+(s;\delta)=-x_-(s;\delta)=\frac{a(\delta)}{2}\ln\left(\frac{1-\frac{1}{2}\delta+\sigma}{1-\frac{1}{2}\delta-\sigma}\right)+b(\delta)\sigma,
\end{equation}
where $\sigma=\sigma(s;\delta)$ given by
\begin{equation}
\sigma(s;\delta):=\sqrt{\left(1-\frac{1}{2}\delta\right)^2-s^2}
\end{equation}
is a monotone decreasing function of $s$ on $(0,1-\frac{1}{2}\delta)$. Note also that
\begin{equation}
\frac{\dd}{\dd\sigma}x_+(s;\delta)=\frac{(1-\frac{1}{2}\delta)a(\delta)}{(1-\frac{1}{2}\delta)^2-\sigma^2} + b(\delta)=\frac{(1-\frac{1}{2}\delta)a(\delta)}{s^2}+b(\delta).
\end{equation}
Since $(1-\frac{1}{2}\delta)a(\delta)>0$ and $b(\delta)>0$ for $0<\delta<1$, $x_+(s;\delta)$ is monotone increasing in $\sigma$ and hence is a monotone decreasing function of $s\in (0,A_\mathrm{max}(\delta))$.  This proves that $A(x;\delta)$ is a Klaus-Shaw potential for all $\delta\in (0,1)$.  

In \cite[Eqn.\@ (6.56)]{Kamvissis:2003}, an implicit formula is given for the solution of the dispersionless focusing NLS system \eqref{eq:dispersionless-focusing} with even initial data $\rho(x,0)=\rho(-x,0)$ having zero initial momentum $\mu(x,0)\equiv 0$, along the symmetry axis $x=0$.  That formula reads
\begin{equation}
t  =\frac{1}{\rho}\int_{A_\mathrm{max}}^{\sqrt{\rho}>A_\mathrm{max}}\frac{s\evdensity(s)\,\dd s}{\sqrt{\rho-s^2}},\quad \rho=\rho(0,t),
\label{eq:x-0-general}
\end{equation}
which uses the analytic continuation of the function $\evdensity(s)$ beyond the right endpoint of the interval $(0,A_\mathrm{max})$ of its definition.  Substituting the $\delta$-dependent quantities $A_\mathrm{max}(\delta)$ and $\evdensity(s;\delta)$ obtained above, the relation \eqref{eq:x-0-general} becomes
\begin{multline}
t=\frac{\pi}{4}b(\delta)+\frac{1}{\rho}\left(a(\delta) +
\frac{b(\delta)}{2}\left(1-\frac{1}{2}\delta\right)\right)\sqrt{\rho-\left(1-\frac{1}{2}\delta\right)^2}\\{}-\frac{b(\delta)}{2}\arctan\left(\frac{1-\frac{1}{2}\delta}{\sqrt{\rho-(1-\frac{1}{2}\delta)^2}}\right).
\label{eq:t-interpolation}
\end{multline}
Hence,
\begin{equation}
\frac{\dd t}{\dd\rho}=\frac{2(1-\frac{1}{2}\delta)^2a(\delta)+(1-\frac{1}{2}\delta)^3b(\delta)-a(\delta)\rho}{2\rho^2\sqrt{\rho-(1-\frac{1}{2}\delta)^2}}.
\end{equation}
If $\delta=0$ we are at the Talanov-type potential endpoint, and $a(\delta)=0$ while $b(\delta)=1$.  Hence also $\dd t/\dd\rho=1/(2\rho^2\sqrt{\rho-1})$ which is positive for $\rho>A_\mathrm{max}(0)=1$ and integrable at $\rho=+\infty$, so $t$ increases to a finite positive limiting value of $\pi/4$ as $\rho\to+\infty$.  This corresponds of course to the finite-time blowup of the Talanov solution with $E<0$:  $\rho(0,t)\uparrow +\infty$ as $t\uparrow \frac{1}{4}\pi$.  However, if $0<\delta\le 1$ we have $a(\delta)> 0$, and hence $\dd t/\dd\rho$ has a simple root at the finite value $\rho=\rho_\mathrm{c}(\delta)$ given by
\begin{equation}
\rho_\mathrm{c}(\delta)=\left(1-\frac{1}{2}\delta\right)^2\left[2+\left(1-\frac{1}{2}\delta\right)\frac{b(\delta)}{a(\delta)}\right]\ge 2\left(1-\frac{1}{2}\delta\right)^2>\left(1-\frac{1}{2}\delta\right)^2,
\end{equation}
at which point $t$ takes the corresponding value $t=t_\mathrm{c}(\delta)$ obtained by evaluating the right-hand side of \eqref{eq:t-interpolation} at $\rho=\rho_\mathrm{c}(\delta)$.  
We have $\dd t/\dd\rho>0$ for $(1-\frac{1}{2}\delta)^2<\rho<\rho_\mathrm{c}(\delta)$, but $t$ has a simple critical point at $\rho=\rho_\mathrm{c}(\delta)$, a nondegenerate local maximum.  Hence for the direct function $\rho=\rho(0,t)$ we have a finite-amplitude gradient catastrophe point at $t=t_\mathrm{c}(\delta)$ with finite value $\rho=\rho_\mathrm{c}(\delta)$.  At the endpoint $\delta=1$ we recover the expected result that $t_\mathrm{c}(1)=1$ and $\rho_\mathrm{c}(1)=\frac{1}{2}$, consistent with the prediction of the Akhmanov-Sukhorukov-Khokhlov solution reviewed in \S~\ref{sec:ASK}.

This calculation shows that an arbitrarily small perturbation measured by $\delta>0$ of a Talanov-type initial condition destroys the infinite-amplitude focusing, replacing it instead with an elliptic umbilic gradient catastrophe point.   This result suggests a form of nongenericity of Talanov-type infinite-amplitude focusing, however it is a challenge to properly formulate this because the space of admissible perturbations of initial data should make sense for the dispersionless focusing NLS system \eqref{eq:dispersionless-focusing}, which is generally of elliptic type, except where $\rho=0$.

Taken together with the prediction of Suleimanov \cite{Suleimanov17} for Talanov pulses with $E=0$ and the Bertola-Tovbis result \cite{BertolaT13} for data leading to an elliptic-umbilic catastrophe, our work on Talanov pulses with $E<0$ (or more properly the corresponding semiclassical soliton ensembles) adds further evidence that 
the scenario that prevails in terms of the type and scale of the dispersive regularization is determined primarily by the nature of the singularity in the dispersionless solution.  If it is an elliptic-umbilic catastrophe (finite amplitude gradient catastrophe), then one has Bertola-Tovbis regularization based on the Painlev\'e-I tritronqu\'ee solution and a field of Peregrine rogue waves at locations corresponding to the poles of the tritronqu\'ee solution.  If it is an infinite-amplitude collapse and blow up, then one has instead Suleimanov-Talanov focusing based on the Painlev\'e-III hierarchy of Sakka.  This rubric seems clearer than that proposed in the recent work \cite{DemontisORS23}.  Note that the quantity denoted $\alpha_0^2+4\gamma_0$ in \cite{DemontisORS23} is proportional by a positive quantity to the integration constant $E$.

\section{General semicircular Klaus-Shaw potentials}
\label{sec-gen-scKS}
\subsection{Basic quantities in the semiclassical direct scattering theory for Klaus-Shaw potentials}
\label{sec:basic-quantities}
For a potential $A$ with the Klaus-Shaw property (see Definition~\ref{def:semicircularKS}), we may define the following quantities:
\begin{equation}
L(\SP):=-\ell^-(+\infty;\SP)=-\ell^+(-\infty;\SP)
\label{eq:Lfunc-def}
\end{equation}
for suitable $\SP\in\C$ (see Lemma~\ref{lem:L-alternate} below), wherein with the square root denoting the principal branch,
\begin{equation}
\begin{split}
\ell^-(x;\SP)&:=\int_{-\infty}^x\left[\left(-\SP^2-A(y)^2\right)^{1/2}+\ii\SP\right]\,\dd y,\\
\ell^+(x;\SP)&:=\int_x^{+\infty}\left[\left(-\SP^2-A(y)^2\right)^{1/2}+\ii\SP\right]\,\dd y.
\end{split}
\label{eq:mu-plus-minus-define}
\end{equation}

First, we give an alternate formula for $L(\SP)$.
\begin{lem}
$L(\SP)$ defined by \eqref{eq:Lfunc-def} is an analytic function of $\SP$ in the upper half-plane with the imaginary segment $0<-\ii\SP\le A_\mathrm{max}$ omitted, and it satisfies the asymptotic condition $L(\SP)=\bigo{\SP^{-1}}$ as $\SP\to\infty$ in the upper half-plane.  Also, $L(\SP)$ is equivalently given by
\begin{equation}
L(\SP)=\int_0^{A_\mathrm{max}}\left[\log(-\ii\SP+s)-\log(-\ii\SP-s)\right]\evdensity(s)\dd s,
\label{eq:ZS-nu0hat}
\end{equation}
where $\log$ denotes the principal branch and $\evdensity(s)$ is defined by \eqref{eq:ZS-density}, and its boundary values on the branch cut satisfy the jump condition 
\begin{equation}
\lim_{\delta\downarrow 0}\left[L(\ii s+\delta)-L(\ii s-\delta)\right]=2\ii\phaseint(\ii s),\quad 0<s<A_\mathrm{max},
\label{eq:ZS-nu0-jump}
\end{equation}
where $\phaseint(\ii s)$ is the phase integral defined by \eqref{eq:ZS-phase-integral}.
\label{lem:L-alternate}
\end{lem}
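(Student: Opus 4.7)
The plan is to proceed in four steps that treat the four claims in sequence.

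First, I would verify that the two candidate formulas for $L(\SP)$ agree, since by definition both $-\ell^-(+\infty;\SP)$ and $-\ell^+(-\infty;\SP)$ equal $-\int_\mathbb{R}\bigl[(-\SP^2 - A(y)^2)^{1/2} + \ii\SP\bigr]\,\dd y$. Checking that the principal branch satisfies $(-\SP^2)^{1/2} = -\ii\SP$ on the positive imaginary axis and then extending by analyticity across the upper half-plane (the only branch cut of $\SP\mapsto(-\SP^2)^{1/2}$ using the principal log is the real axis) shows that the integrand vanishes outside $\mathrm{spt}(A)=[X_-,X_+]$. Hence $L(\SP) = -\int_{X_-}^{X_+}\bigl[(-\SP^2-A(y)^2)^{1/2}+\ii\SP\bigr]\,\dd y$, which is manifestly analytic wherever the argument $-\SP^2-A(y)^2$ avoids $(-\infty,0]$ for every $y \in [X_-,X_+]$. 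An elementary case analysis shows that in the upper half-plane this fails exactly on the segment $\SP = \ii s$, $0 < s \le A_\mathrm{max}$, proving the claimed analyticity. For the decay, I would factor $(-\SP^2 - A(y)^2)^{1/2} + \ii\SP = -\ii\SP\bigl[(1+A(y)^2/\SP^2)^{1/2}-1\bigr]$ and use a uniform Taylor bound to see that the integrand is $\bigo{|\SP|^{-1}}$ on the compact support, giving $L(\SP) = \bigo{\SP^{-1}}$.

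Second, for the jump \eqref{eq:ZS-nu0-jump}, I would compute the boundary values of the integrand at $\SP = \ii s \pm \delta$ for $0 < s < A_\mathrm{max}$ and $\delta\downarrow 0^+$. Writing $-\SP^2 - A(y)^2 = s^2 - A(y)^2 \mp 2\ii s\delta + \bigo{\delta^2}$, for $y \in (x_-(s),x_+(s))$ this lies just below (resp.\ just above) the negative real axis, so the principal square root evaluates to $-\ii\sqrt{A(y)^2-s^2}$ on the right boundary and $+\ii\sqrt{A(y)^2-s^2}$ on the left boundary, while for $y\notin(x_-(s),x_+(s))$ the integrand is continuous across the cut. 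Integrating the difference $-2\ii\sqrt{A(y)^2-s^2}$ over $(x_-(s),x_+(s))$ and including the overall minus sign in the definition of $L$ yields the claimed jump $2\ii\phaseint(\ii s)$.

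Third, for the alternate representation \eqref{eq:ZS-nu0hat}, I would substitute the first integral form of $\evdensity(s)$ from \eqref{eq:ZS-density} into the candidate integral and apply Fubini to exchange the order of integration, reducing the claim to
\[
\int_0^{A(y)}\bigl[\log(-\ii\SP+s)-\log(-\ii\SP-s)\bigr]\frac{s\,\dd s}{\sqrt{A(y)^2-s^2}} = -\ii\pi\SP - \pi\sqrt{-\SP^2-A(y)^2}.
\]
I would prove this identity by differentiating the left-hand side with respect to $\alpha := -\ii\SP$, which produces $-2\int_0^B s^2\bigl[(\alpha^2-s^2)\sqrt{B^2-s^2}\bigr]^{-1}\dd s$ with $B:=A(y)$; via $s=B\sin\theta$ this reduces to the standard evaluation $\int_0^{\pi/2}\dd\theta/[\alpha^2\cos^2\theta+(\alpha^2-B^2)\sin^2\theta] = \pi/(2\alpha\sqrt{\alpha^2-B^2})$, yielding $\partial_\alpha = \pi - \pi\alpha/\sqrt{\alpha^2-B^2}$. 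Integrating back in $\alpha$ and fixing the constant of integration from the large-$|\alpha|$ behavior (where both sides are $\pi B^2/(2\alpha) + o(\alpha^{-1})$) completes the identity, and then the outer integration in $y$ reproduces the integral representation of $L(\SP)$ established in the first step.

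The main obstacle is branch-bookkeeping: one must match the principal branch of $\sqrt{\alpha^2-B^2}$ (with $\alpha=-\ii\SP$ in the right half-plane) against the principal branch of $\sqrt{-\SP^2-A(y)^2}$ (with $\SP$ in the upper half-plane minus the cut), and analogously for the log branches in the integrand of \eqref{eq:ZS-nu0hat}. Once the branch-cut structure is pinned down along the imaginary axis, the jump calculation and the closed-form evaluation of the inner integral both go through cleanly.
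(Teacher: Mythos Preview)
Your proposal is correct, but the route differs from the paper's in two of the four steps.

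For the analyticity and decay claims, you and the paper agree (the paper simply declares these obvious from \eqref{eq:Lfunc-def}--\eqref{eq:mu-plus-minus-define}).  For the inner-integral identity, the paper does \emph{not} differentiate in the parameter.  Instead it symmetrises to $[-A(x),A(x)]$, introduces the function $S(s)$ with $S(s)^2=A(x)^2-s^2$ normalized so $S(s)=\ii s+\bigo{s^{-1}}$ at infinity, rewrites the integral as a loop integral around the cut, integrates by parts via $s/S(s)=-S'(s)$, and then evaluates by residues at $s=\pm\ii\SP$ and $s=\infty$.  This yields $I(x)=-\ii S(\ii\SP)-\ii\SP$ directly, with the branch of $S$ already pinned down by its normalization at infinity.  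Your differentiation-plus-trig-substitution method is more elementary but pushes all of the branch bookkeeping into the analytic continuation of the standard formula $\int_0^{\pi/2}\dd\theta/(a\cos^2\theta+b\sin^2\theta)=\pi/(2\sqrt{ab})$ to complex $a,b$; the residue route handles this automatically through the choice of $S$.  Finally, the paper reverses the order of your last two steps: it proves the jump condition \emph{after} establishing \eqref{eq:ZS-nu0hat}, reading it off directly from the logarithmic representation (the jump of $\log(-\ii\SP-s)$ across the positive imaginary axis is $2\pi\ii$ for $s>-\ii\SP$), which reduces immediately to $\int_s^{A_\mathrm{max}}2\pi\ii\evdensity(s')\,\dd s'=2\ii\phaseint(\ii s)$.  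Your direct computation from the square-root integrand is equally valid but involves a bit more work.
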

\begin{proof}
The claimed domain of analyticity of $L(\SP)$ and the fact that $L(\SP)=\bigo{\SP^{-1}}$ for large $\SP$ are obvious from the formula \eqref{eq:Lfunc-def} and the definition \eqref{eq:mu-plus-minus-define}.  

Let $\hat{L}(\SP)$ denote the function defined by the right-hand side of \eqref{eq:ZS-nu0hat}.
Replacing $\evdensity(s)$ by the first expression in the definition \eqref{eq:ZS-density}, we first exchange the order of integration to obtain
\begin{equation}
\begin{split}
\hat{L}(\SP)&=\frac{1}{\pi}\int_0^{A_\mathrm{max}}\int_{x_-(s)}^{x_+(s)}\left[\log(-\ii\SP+s)-\log(-\ii\SP-s)\right]\frac{s}{\sqrt{A(x)^2-s^2}}\,\dd x\,\dd s\\
&=
\int_{-\infty}^{+\infty}I(x)\,\dd x,
\end{split}
\end{equation}
where the inner integral over $s$ is
\begin{equation}
I(x):=\frac{1}{\pi}\int_0^{A(x)}\left[\log(-\ii\SP+s)-\log(-\ii\SP-s)\right]
\frac{s}{\sqrt{A(x)^2-s^2}}\,\dd s.
\end{equation}
We may now evaluate $I(x)$ explicitly.  First, observe that the integrand is an even function of $s$, so
\begin{equation}
I(x)=\frac{1}{2\pi}\int_{-A(x)}^{A(x)}\left[\log(-\ii\SP+s)-\log(-\ii\SP-s)\right]\frac{s\,\dd s}{\sqrt{A(x)^2-s^2}}.
\end{equation}
Now let $S(s)$ denote the (odd) function analytic in the domain $\mathbb{C}\setminus [-A(x),A(x)]$ that satisfies the equation $S(s)^2=A(x)^2-s^2$ and the normalization (choice of branch) $S(s)=\ii s + \bigo{s^{-1}}$ as $s\to\infty$ in the complex plane.  Then, by the generalized Cauchy integral theorem,
\begin{equation}
I(x)=\frac{1}{4\pi}\oint_C\left[\log(-\ii\SP+s)-\log(-\ii\SP-s)\right]\frac{s\,\dd s}{S(s)},
\end{equation}
where $C$ is a positively-oriented loop surrounding the branch cut of $S$ for which the non-real horizontal branch cuts of the logarithms are in the unbounded exterior.  Now using $s/S(s)=-S'(s)$, we integrate by parts to obtain
\begin{equation}
I(x)=\frac{1}{4\pi}\oint_C\left[\frac{1}{s-\ii\SP}-\frac{1}{s+\ii\SP}\right]S(s)\,\dd s.
\end{equation}
Expanding the contour $C$ toward $s=\infty$ we may now evaluate $I(x)$ by residues at $s=\pm \ii\SP$ and $s=\infty$:
\begin{equation}
I(x)=-\frac{1}{2}\ii S(\ii\SP)+\frac{1}{2}\ii S(-\ii\SP)-\ii\SP = -\ii S(\ii\SP)-\ii\SP.
\end{equation}
Comparing with \eqref{eq:Lfunc-def}--\eqref{eq:mu-plus-minus-define}, to show $\hat{L}(\SP)=L(\SP)$ and thus complete the proof of the formula \eqref{eq:ZS-nu0hat}, it remains to identify $\ii S(\ii\SP)$ with the principal branch square root $(-\SP^2-A(x)^2)^{1/2}$ as is valid for all $\SP$ in the upper half-plane.

Finally, to confirm the jump condition \eqref{eq:ZS-nu0-jump}, observe from \eqref{eq:ZS-nu0hat} that for $0<s<A_\mathrm{max}$,
\begin{equation}
\lim_{\delta\downarrow 0}\left[L(\ii s+\delta)-L(\ii s-\delta)\right]=
\int_s^{A_\mathrm{max}}2\pi \ii\evdensity(s')\,\dd s',
\end{equation}
which equals $2\ii\phaseint(\ii s)$ according to \eqref{eq:ZS-phase-integral}.
\end{proof}

Now recalling the numbers $\{\widetilde{s}_n\}_{n=0}^{N-1}$ determined from \eqref{eq:ZS-approximate-eigenvalues} and letting
\begin{equation}
\widetilde{a}(\SP):=\prod_{n=0}^{N-1}\frac{\SP-\ii\widetilde{s}_n}{\SP+\ii\widetilde{s}_n},
\label{eq:ZS-tilde-a}
\end{equation}
we introduce a function $Y_\epsilon(\SP)$ given by 
\begin{equation}
Y_\epsilon(\SP):=\frac{\ee^{-L(\SP)/\epsilon}}{\widetilde{a}(\SP)}
\label{eq:ZS-Y-define}
\end{equation}
defining a function analytic for all complex $\SP$ with the vertical segment connecting $-\ii A_\mathrm{max}$ with $\ii A_\mathrm{max}$ omitted.  
Looking at the formula \eqref{eq:ZS-nu0hat}, and taking into account the definition \eqref{eq:ZS-tilde-a} of $\widetilde{a}(\SP)$ as a Blaschke product, one gets the idea that $\epsilon\log(Y_\epsilon(\SP))$ looks like the error in an approximation of the integral $L(\SP)$ by a Riemann sum.
By this reasoning, for some Klaus-Shaw potentials $A(x)$, $\log(Y_\epsilon(\SP))$ has been shown to be as small as $\bigo{\epsilon}$ for suitable $\SP$.  In the case of  semicircular Klaus-Shaw potentials $A(x)$ considered in this paper, we will show below (cf.\ Proposition~\ref{prop:Y-outside}) that for such $\SP$, $\log(Y_\epsilon(\SP))=\bigo{\epsilon^{1/2}}$ (and compute the leading term).

Recalling the domain of analyticity of $L(\SP)$, whose boundary values on the cut $0<-\ii\SP<A_\mathrm{max}$ are related according to \eqref{eq:ZS-nu0-jump} by $L_+(\SP)-L_-(\SP)=2\ii\phaseint(\SP)$ where the subscript $\pm$ indicates the limit from $\pm\re\{\SP\}>0$, we see that the function $\overline{L}(\SP)$ defined by 
\begin{equation}
\overline{L}(\SP):= L(\SP)\mp \ii\phaseint(\SP),\quad 0<\imag\{\SP\}<A_\mathrm{max},\quad
\SP\in\Omega,\quad\text{and}\quad\pm\re\{\SP\}>0,
\label{eq:Lbar-def-3}
\end{equation}
where $\Omega$ is the domain of analyticity of the phase integral $\phaseint(\SP)$,
is analytic where defined and moreover is continuous across the cut, therefore 
actually defining an analytic function on the part of $\Omega$ with $0<\imag\{\SP\}<A_\mathrm{max}$.
For $0<\ii\SP<A_\mathrm{max}$, this function coincides with the average of the boundary values taken by $L(\SP)$
(cf.\@ \eqref{eq:Lbar-def-1} below), 
which explains our use of the ``bar'' notation.  
Then, a function closely related to $Y_\epsilon(\SP)$ is
\begin{equation}
T_\epsilon(\SP):=\frac{2\cos(\phaseint(\SP)/\epsilon)\ee^{-\overline{L}(\SP)/\epsilon}}{\widetilde{a}(\SP)},
\quad 0<\imag\{\SP\}<A_\mathrm{max}\quad\text{and}\quad \SP\in\Omega,
\label{eq:ZS-T-define}
\end{equation}
which, according to \eqref{eq:ZS-approximate-eigenvalues} has removable singularities at the zeros of $\widetilde{a}(\SP)$ along the positive imaginary axis.  The relation between $T_\epsilon(\SP)$ and $Y_\epsilon(\SP)$ is given by the exact identities
\begin{equation}
T_\epsilon(\SP)=Y_\epsilon(\SP)(1+\ee^{\pm 2\ii\phaseint(\SP)/\epsilon}),\quad 0<\imag\{\SP\}<A_\mathrm{max},\quad \SP\in\Omega,\quad\text{and}\quad \pm\re\{\SP\}>0.
\label{eq:T-Y-relation}
\end{equation}
Assuming that $Y_\epsilon(\SP)=1+o(1)$ for $\SP$ bounded away from the branch cut of $Y_\epsilon$, a Cauchy-Riemann argument shows that $T_\epsilon(\SP)=1+o(1)$ holds for $\SP$ to the left or right of the segment $0<-\ii\SP<A_\mathrm{max}$.  It was first noticed in \cite{Miller02}, and further explained in \cite{BaikKMM07} and \cite{LyngM07}, that under some conditions a similar estimate of $T_\epsilon(\SP)$ holds uniformly on compact subsets of its domain of definition, including points $\SP$ with $0<-\ii\SP<A_\mathrm{max}$.  We will prove the version of this result applicable to semicircular Klaus-Shaw potentials $A(x)$ (see Definition~\ref{def:semicircularKS}) below.

\subsection{Semicircular Klaus-Shaw potentials.  Asymptotic properties of $Y_\epsilon(\SP)$ and $T_\epsilon(\SP)$}
\label{sec-prop-YT}

In this section we record several properties of the functions $Y_\epsilon(\SP)$ and $T_\epsilon(\SP)$ that are needed in the steepest-descent analysis that follows. To streamline the presentation, propositions with longer more technical proofs are deferred to Appendix~\ref{sec-proofs}. 

Recall Proposition~\ref{prop:Psi-even-analytic}.  
Now for $0<s<A_\mathrm{max}$, consider 
\begin{equation}
\rr:=\frac{1}{\pi}(\phaseint(0)-\phaseint(\ii s))\quad\text{with inverse}\quad s=s(\rr).
\label{eq:t-and-s-defs}
\end{equation}
The inverse function here is well-defined because $\evdensity(s)>0$, and we have the following result.
\begin{lem}
Let $A$ be a semicircular Klaus-Shaw potential.  Then $s(\rr)$ is analytic on the open interval $0<\rr<\phaseint(0)/\pi$, and furthermore
\begin{equation}
s(\rr)=\sqrt{\rr}v(\rr),\quad 0<\rr<\frac{\phaseint(0)}{\pi},
\label{eq:s-in-terms-of-v}
\end{equation}
where $v(\rr)$ is analytic at $\rr=0$:
\begin{equation}
v(\rr)=\sum_{k=0}^\infty v_k\rr^k,\quad |\rr|<\sigma,\quad v_0>0.
\label{eq:v-def}
\end{equation}
Also, $s$ is also analytic at $\rr=\phaseint(0)/\pi$ and 
$s'(\phaseint(0)/\pi)>0$.
\label{lem:s-properties}
\end{lem}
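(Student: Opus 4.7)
The plan is to apply the analytic inverse function theorem to $\rr = (\phaseint(0)-\phaseint(\ii s))/\pi$ in three regimes: near $s=0$, on the open interval, and near $s = A_\mathrm{max}$.

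\textbf{Behavior at} $\rr = 0$. By Proposition~\ref{prop:Psi-even-analytic}, $\phaseint(\lambda)$ is analytic and even at $\lambda = 0$ with $\phaseint_1 > 0$. Setting $u = s^2$ and using \eqref{eq:Psi-series} yields an analytic map $u \mapsto \rr = (\phaseint_1/\pi)u + \mathcal{O}(u^2)$ whose derivative at $u = 0$ is nonzero, so the analytic inverse function theorem furnishes an expansion $u = \rr\,h(\rr)$ with $h$ analytic at $\rr = 0$ and $h(0) = \pi/\phaseint_1 > 0$. Taking the principal square root produces $s(\rr) = \sqrt{\rr}\,v(\rr)$ with $v(\rr) := \sqrt{h(\rr)}$ analytic at $\rr = 0$ and $v_0 = \sqrt{\pi/\phaseint_1} > 0$, establishing \eqref{eq:s-in-terms-of-v}--\eqref{eq:v-def}.

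\textbf{Analyticity on the open interval.} Combining \eqref{eq:ZS-density} with \eqref{eq:ZS-phase-integral} gives $\dd\rr/\dd s = \evdensity(s) > 0$ for $0 < s < A_\mathrm{max}$, and the loop-integral representation $\phaseint(\ii s) = \tfrac{1}{2}\oint_L R(x;s^2)\,\dd x$ used in the proof of Proposition~\ref{prop:Psi-even-analytic} shows that $s \mapsto \phaseint(\ii s)$ extends analytically to a complex neighborhood of each point of $(0, A_\mathrm{max})$. Another application of the analytic inverse function theorem yields analyticity of $s(\rr)$ throughout $0 < \rr < \phaseint(0)/\pi$.

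\textbf{Behavior at} $\rr = \phaseint(0)/\pi$. This is the main obstacle, since the Taylor series of Proposition~\ref{prop:Psi-even-analytic} is not guaranteed to have radius of convergence reaching $\lambda = \ii A_\mathrm{max}$. Because $A'(x_0) = 0$ and $A''(x_0) < 0$, the function $g(x) := A_\mathrm{max}^2 - A(x)^2$ has a non-degenerate minimum at $x_0$ with $g''(x_0) = -2A_\mathrm{max} A''(x_0) > 0$. The analytic Morse lemma then supplies an analytic change of variable $x = x_0 + y(z)$ on a complex neighborhood of $z=0$, with $y(0)=0$ and $y'(0) > 0$, satisfying $g(x_0 + y(z)) = \tfrac{1}{2}g''(x_0)z^2$. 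Starting from
\[
\phaseint(\ii s) = \int_{x_-(s)}^{x_+(s)}\sqrt{(A_\mathrm{max}^2 - s^2) - g(x)}\,\dd x,
\]
substituting $x = x_0 + y(z)$ and then $z = z_0 \sin\theta$ with $z_0 := \sqrt{2(A_\mathrm{max}^2 - s^2)/g''(x_0)}$ yields
\[
\phaseint(\ii s) = (A_\mathrm{max}^2 - s^2)\sqrt{2/g''(x_0)}\int_{-\pi/2}^{\pi/2}\cos^2\theta\, y'(z_0 \sin\theta)\,\dd\theta.
\]
Expanding $y'$ in its Taylor series and integrating termwise shows that $\phaseint(\ii s)$ is an analytic function of $A_\mathrm{max}^2 - s^2$ in a neighborhood of $s = A_\mathrm{max}$, with leading coefficient $\tfrac{\pi}{2}y'(0)\sqrt{2/g''(x_0)} > 0$. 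Consequently $\phaseint(\ii s)$ is analytic in $s$ at $s = A_\mathrm{max}$ with $(\dd/\dd s)\phaseint(\ii s)|_{s = A_\mathrm{max}} < 0$, so a final application of the inverse function theorem provides analyticity of $s(\rr)$ at $\rr = \phaseint(0)/\pi$ together with $s'(\phaseint(0)/\pi) > 0$.
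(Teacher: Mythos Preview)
Your proof is correct. The arguments at $\rr=0$ and on the open interval coincide with the paper's: both invoke evenness and analyticity of $\phaseint$ at $\lambda=0$ with $\phaseint_1>0$ to invert $\rr$ as a function of $s^2$, and both use $\rr'(s)=\evdensity(s)>0$ together with analyticity of $\phaseint(\ii s)$ on $(0,A_\mathrm{max})$ for the interior.

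At the right endpoint the routes differ. You normalize the quadratic minimum of $g(x)=A_\mathrm{max}^2-A(x)^2$ via the analytic Morse lemma, substitute $z=z_0\sin\theta$, and read off that $\phaseint(\ii s)$ is an analytic function of $A_\mathrm{max}^2-s^2$ with strictly positive leading coefficient, whence $\rr'(A_\mathrm{max})>0$. The paper instead writes $\evdensity(s)=\frac{s}{2\pi}\oint_L R(x;s^2)^{-1}\,\dd x$ on a fixed loop $L$ enclosing $[x_-(s),x_+(s)]$; as $s\to A_\mathrm{max}$ the two branch points coalesce at $x_0$, the integrand becomes meromorphic with a simple pole there, and a single residue gives $\evdensity(A_\mathrm{max})=A_\mathrm{max}/\sqrt{-A_\mathrm{max}A''(x_0)}>0$. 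The contour argument is shorter and yields $\evdensity(A_\mathrm{max})$ in closed form; your Morse-lemma computation is more hands-on and stays entirely in real variables, but requires carrying the change of coordinates through the integral. Both are standard ways to desingularize a coalescing-turning-point integral, and each delivers exactly what the lemma needs.
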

\begin{rem}
In the case that $A(x)=\psi_0(x)$ is a semicircular Klaus-Shaw potential for which $u(x)\equiv\text{const.}$, it is straightforward to use \eqref{eq:semicircle-Psi}  to obtain
\begin{equation}
\rr = \frac{X_+-X_-}{4A_\mathrm{max}}s^2\implies s =\sqrt{\rr}v(\rr),\quad v(\rr)\equiv v_0=\sqrt{\frac{4A_\mathrm{max}}{X_+-X_-}}>0.
\end{equation}
In particular, $v_k=0$ for all $k>0$ in \eqref{eq:v-def}.
\label{rem:v-for-exact-semicircle}
\end{rem}
\begin{proof}
Analyticity of $s(\rr)$ on the open interval $0<\rr<\phaseint(0)/\pi$ follows from the implicit function theorem because $\phaseint(\ii s)$ is analytic for $0<s<A_\mathrm{max}$ and $\evdensity(s)>0$ holds for each point in this open interval.  
For a semicircular Klaus-Shaw potential $A$, it follows from \eqref{eq:t-and-s-defs} and Proposition~\ref{prop:Psi-even-analytic} that 
$\rr$ is an even analytic function of $s$ at $s=0$, with $\rr=0$ for $s=0$:
\begin{equation}
\rr=-\frac{1}{\pi}\sum_{k=1}^\infty \phaseint_{k}(-1)^ks^{2k},\quad |s|<\delta
\end{equation}
for some radius of convergence $\delta>0$.  Since $\phaseint_2>0$, it follows from the implicit function theorem that 
\begin{equation}
s(\rr)^2 = \sum_{k=1}^\infty \phi_k\rr^k,\quad |\rr|<\sigma
\end{equation}
for some radius of convergence $\sigma>0$, and where $\phi_1>0$.  Taking a positive square root then yields \eqref{eq:s-in-terms-of-v} with \eqref{eq:v-def}.

To study $s(\rr)$ for $\rr$ near $\phaseint(0)/\pi$, we
write $\evdensity(s)$ as a contour integral:
\begin{equation}
\evdensity(s)=\frac{s}{2\pi}\oint_L\frac{\dd x}{R(x;s^2)}
\end{equation}
and note that under the stated condition on $A''$, as $s\uparrow A_\mathrm{max}$ exactly two roots of $R(x;s^2)$ coalesce at $x=x_0$.  This implies that $L$ can be chosen so that $R(x;A_\mathrm{max}^2)^{-1}$ is meromorphic within $L$ having a single pole at $x=x_0$ with residue $-\ii/\sqrt{-A_\mathrm{max}A''(x_0)}$.  Thus $\rr'(A_\mathrm{max})=\evdensity(A_\mathrm{max})>0$, from which it follows that $s'(\phaseint(0)/\pi)>0$ holds for the inverse function.
\end{proof}

\subsubsection{Analysis of $Y_\epsilon(\SP)$ in the limit $\epsilon\downarrow 0$}
The first result concerns the asymptotic behavior of the function $Y_\epsilon(\SP)$ for $\SP$ suitably bounded away from the branch cut of $Y_\epsilon$.  For some small width parameter $\delta>0$, let $\Lambda$ denote the thin ``parabolic'' lens centered on the imaginary segment $0<-\ii\SP<A_\mathrm{max}$ consisting of the points
\begin{equation}
	\label{eq:Lambda-def}
	\Lambda:= \left\{ \SP \in \C \,:\,  |\re\{\SP\}| \le \delta\imag\{\SP\}(A_\mathrm{max}-\imag\{\SP\}),\ 0\le \imag\{\SP\}\le A_\mathrm{max}  \right\}.
\end{equation}

\begin{restatable}[Exterior asymptotic behavior of $Y_\epsilon(\SP)$]{prop}{Yexterior}
Suppose that $A$ is a semicircular Klaus-Shaw potential.
For arbitrary $\sigma>0$, let $D_\sigma$ denote the domain defined by $D_\sigma:=\{\SP\in\mathbb{C}_+\setminus\Lambda: |\SP|>\sigma,\; |\SP-\ii A_\mathrm{max}|>\sigma\}$.  Then 
\begin{equation}
Y_\epsilon(\SP)=
1-\frac{2\ii v_0}{\SP}\left(1-\frac{1}{\sqrt{2}}\right)\zeta(-\tfrac{1}{2})\epsilon^{1/2}+\bigo{\epsilon},
\quad\epsilon\downarrow 0,
\end{equation}
where $\zeta(\cdot)$ denotes the Riemann zeta function and $v_0>0$ denotes the constant defined in Lemma~\ref{lem:s-properties}, holds uniformly for $\SP\in D_\sigma$.
\label{prop:Y-outside}
\end{restatable}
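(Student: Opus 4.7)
The plan is to express $\epsilon\log Y_\epsilon(\SP)$ as the error in a midpoint Riemann sum approximation of the integral representation of $L(\SP)$, isolate the endpoint square-root singularity, and evaluate the leading contribution via a Hurwitz-zeta identity.

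First I would use Lemma~\ref{lem:L-alternate} to write
\begin{equation}
L(\SP)=\int_0^{A_\mathrm{max}}\log\!\left(\frac{-\ii\SP+s}{-\ii\SP-s}\right)\evdensity(s)\,\dd s,
\end{equation}
and change variables $s\mapsto r$ via $r=(\phaseint(0)-\phaseint(\ii s))/\pi$, noting $\evdensity(s)\,\dd s=\dd r$. Then
\begin{equation}
L(\SP)=\int_0^{N\epsilon}f(r;\SP)\,\dd r,\qquad f(r;\SP):=\log\!\left(\frac{-\ii\SP+s(r)}{-\ii\SP-s(r)}\right),
\end{equation}
where I used $\phaseint(0)/\pi=N\epsilon$ from \eqref{eq:ZS-epsilon-assumption} and \eqref{eq:Psi0-Psi1}. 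Meanwhile the Bohr-Sommerfeld rule \eqref{eq:ZS-approximate-eigenvalues} gives $\widetilde s_n=s(r_n)$ with $r_n=(N-n-\tfrac12)\epsilon$, so $\{\widetilde s_n\}$ is precisely the set of midpoint nodes $\{s((m+\tfrac12)\epsilon):m=0,\dots,N-1\}$. A short calculation with principal branches (verified, say, by continuity from $\SP=1$) shows that for $\SP\in D_\sigma$,
\begin{equation}
\epsilon\log\widetilde a(\SP)=-\sum_{m=0}^{N-1}\epsilon\,f\bigl((m+\tfrac12)\epsilon;\SP\bigr)=:-S_\epsilon(\SP),
\end{equation}
so that $\epsilon\log Y_\epsilon(\SP)=-L(\SP)+S_\epsilon(\SP)$ is exactly the midpoint-rule error (with opposite sign) for the integral defining $L(\SP)$.

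Next I would analyze the singularity of $f(\,\cdot\,;\SP)$ at the left endpoint $r=0$ using Lemma~\ref{lem:s-properties}. Expanding $\log\frac{-\ii\SP+s}{-\ii\SP-s}=2\arctanh(s/(-\ii\SP))$ and inserting $s(r)=\sqrt r\,v(r)$ with $v$ analytic and $v(0)=v_0>0$, we get
\begin{equation}
f(r;\SP)=\frac{2\ii v_0}{\SP}\sqrt r+g(r;\SP),
\end{equation}
where $g(r;\SP)$ is $O(r^{3/2})$ near $r=0$ (the odd terms in the $\arctanh$ series combined with $v(r)$ give only $r^{3/2}$ and higher fractional powers) and is analytic at the right endpoint $r=N\epsilon$ since $s$ is analytic with $s'>0$ there (Lemma~\ref{lem:s-properties}) and $\SP$ stays a distance $\sigma$ from $\ii A_\mathrm{max}$. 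For the singular piece I would apply the exact identity $\sum_{m=0}^{N-1}(m+\tfrac12)^{1/2}=\zeta(-\tfrac12,\tfrac12)-\zeta(-\tfrac12,N+\tfrac12)$ together with $\zeta(s,\tfrac12)=(2^s-1)\zeta(s)$ and the standard Hurwitz-zeta asymptotic
$\zeta(-\tfrac12,N+\tfrac12)=-\tfrac23(N+\tfrac12)^{3/2}+\tfrac12(N+\tfrac12)^{1/2}+O(N^{-3/2})$
to obtain
\begin{equation}
\int_0^{N\epsilon}\!\sqrt r\,\dd r-\sum_{m=0}^{N-1}\epsilon\sqrt{(m+\tfrac12)\epsilon}=(1-2^{-1/2})\zeta(-\tfrac12)\,\epsilon^{3/2}+O(\epsilon^2),
\end{equation}
where $N^{-1/2}=O(\epsilon^{1/2})$ is used to absorb the remainder. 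For the smoother piece $g$, the midpoint rule has error $O(\epsilon^{5/2})$ from the $r^{3/2}$-type singular terms and super-algebraic accuracy for the analytic remainder; in either case the contribution is $O(\epsilon^2)$ uniformly on $D_\sigma$. Combining these gives
\begin{equation}
\epsilon\log Y_\epsilon(\SP)=-\frac{2\ii v_0}{\SP}(1-2^{-1/2})\zeta(-\tfrac12)\,\epsilon^{3/2}+O(\epsilon^2),
\end{equation}
and exponentiating $\log Y_\epsilon=O(\epsilon^{1/2})$ yields the claimed expansion.

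The main obstacle is the \emph{uniformity} of all error estimates for $\SP\in D_\sigma$. Concretely, one must check: (i) the principal-branch identification of $-\epsilon\log\widetilde a$ with $S_\epsilon$, which could fail if $-\ii\SP\pm s(r)$ crosses the negative real axis, a scenario ruled out by $\SP\in\mathbb{C}_+$; (ii) that the constants implicit in the $O(r^{3/2})$ bound for $g(r;\SP)$ depend only on $\sigma$, which follows from the lower bounds $|\SP|\ge\sigma$ and $|\SP-\ii A_\mathrm{max}|\ge\sigma$ together with the fact that exclusion of the parabolic lens $\Lambda$ keeps the quotient $s(r)/(-\ii\SP)$ bounded away from $\pm 1$ for all $r\in(0,N\epsilon)$; and (iii) the error in the midpoint-rule analysis for the smooth/right-endpoint part, which is handled by a standard Euler-Maclaurin expansion uniform on compact subsets of the domain of analyticity of $f(\cdot;\SP)$. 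Checking this uniformity — especially property (ii) near $\SP=\ii A_\mathrm{max}^-$ and on the lens boundary — is the only technically delicate step; everything else is bookkeeping of the Hurwitz zeta identity and the Taylor expansion of $\arctanh(\sqrt r\,v(r)/(-\ii\SP))$.
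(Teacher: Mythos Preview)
Your approach is essentially the same as the paper's: write $\epsilon\log Y_\epsilon(\SP)$ as a midpoint-rule error after the change of variables $s\mapsto r$, subtract off the $v_0\sqrt{r}/\SP$ singular piece, evaluate its Riemann-sum error via the Hurwitz-zeta identity $\zeta(-\tfrac12,\tfrac12)=(2^{-1/2}-1)\zeta(-\tfrac12)$, and control the remainder uniformly on $D_\sigma$. One small correction: the midpoint rule on the analytic remainder is $O(\epsilon^2)$, not ``super-algebraic'' (there is no periodicity to kill the Euler--Maclaurin boundary terms); the paper handles this by the explicit bound $\int_0^{\phaseint(0)/\pi}|F_{rr}(r;\SP)|\,\dd r=O(|\SP|^{-1})$, which also delivers the uniformity you correctly flag as the main technical point.
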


\begin{proof} \hyperlink{proof-Y-outside}
{The proof} can be found in Appendix~\ref{sec-proofs}.
\end{proof}

Note that the explicit term proportional to $\epsilon^{1/2}$ is not present in a result proven for a function similar to $Y_\epsilon(\SP)$ in \cite[Proposition~4.3]{BaikKMM07}.  This discrepancy can be directly traced to the fact that the function $\rho(s)$ here vanishes linearly at $s=0$ while
its analogue in \cite{BaikKMM07} is bounded away from zero at the origin, as the proof will show.

The domain $D_\sigma$ obviously excludes small values of $\SP$.  The next result describes the different way that $Y_\epsilon(\SP)$ behaves for $|\SP|$ small.  Significantly, an approximation accurate in the sense of small relative error can be obtained without excluding $\SP$ from the lens $\Lambda$, i.e., the following result allows $\SP$ to lie among the singularities of $Y_\epsilon(\SP)$.
\begin{restatable}[Asymptotic behavior of $Y_\epsilon(\SP)$ for $\SP\approx 0$]{prop}{Yzetasmall}
Suppose that $A$ is a semicircular Klaus-Shaw potential.
Then 
\begin{equation}
Y_\epsilon(\SP)=\mathcal{Y}_0\left(\frac{\varphi_0(\SP)}{\epsilon^{1/2}}\right)\left(1+\mathcal{E}_0(\SP)\epsilon^{1/2} + \bigo{\epsilon}\right),\quad\epsilon\downarrow 0,\quad \varphi_0(\SP):=\ii\pi^{-1/2}(\phaseint(0)-\phaseint(\SP))^{1/2}
\label{eq:z-zero-define}
\end{equation}
holds uniformly for $\imag\{\SP\}>0$ and $|\SP|$ sufficiently small,  
where the mapping $\SP\mapsto \varphi_0(\SP)$ is conformal near $\SP=0$ with $\varphi_0(0)=0$ and $\varphi_0'(0)=1/v(0)>0$, and where
\begin{equation}
\mathcal{Y}_0(Z):=\ee^{-\ii\pi Z^2\mathrm{sgn}(\re\{Z\})}\prod_{n=0}^\infty\frac{\displaystyle\sqrt{n+\tfrac{1}{2}}-\ii Z}{\displaystyle\sqrt{n+\tfrac{1}{2}}+\ii Z}\ee^{4\ii Z(\sqrt{n+1}-\sqrt{n})},\quad\text{and}
\label{eq:Y0-def}
\end{equation}
\begin{equation}
\mathcal{E}_0(\SP):=-2\ii\frac{\varphi_0(\SP)}{v(-\varphi_0(\SP)^2)}\frac{v(-\varphi_0(\SP)^2)-v(0)}{-\varphi_0(\SP)^2}\left(1-\frac{1}{\sqrt{2}}\right)\zeta(-\tfrac{1}{2}).
\label{eq:E0-define}
\end{equation}
Also, the error terms in \eqref{eq:z-zero-define} proportional to $\epsilon^{1/2}$ and $\epsilon$ both vanish identically in the limit $\SP\to 0$.  In fact, $\mathcal{E}_0(\lambda)$ is analytic at $\lambda=0$ and $\mathcal{E}_0(\lambda)=O(\lambda)$.
\label{prop:Y-zeta-small}
\end{restatable}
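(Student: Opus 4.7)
The plan is to transfer the analysis of $Y_\epsilon(\lambda)$ near $\lambda=0$ to a local problem scaled by $\epsilon^{1/2}$, and to identify $\mathcal{Y}_0$ as the exact answer in the associated model problem. First I would rewrite $-\log Y_\epsilon(\lambda) = L(\lambda)/\epsilon + \log\widetilde a(\lambda)$ under the change of variables $t = \pi^{-1}(\phaseint(0)-\phaseint(\ii s))$ from Lemma~\ref{lem:s-properties}. Using Lemma~\ref{lem:L-alternate} together with the fact that the Bohr-Sommerfeld quantization \eqref{eq:ZS-approximate-eigenvalues} becomes $t(\widetilde s_{N-1-k}) = \tau_k := (k+\tfrac{1}{2})\epsilon$, this yields the exact ``midpoint-rule error'' representation
\[
-\log Y_\epsilon(\lambda) = \sum_{k=0}^{N-1}\left[\frac{1}{\epsilon}\int_{k\epsilon}^{(k+1)\epsilon}g(t;\lambda)\,\dd t - g(\tau_k;\lambda)\right], \qquad g(t;\lambda):=\log\frac{\lambda+\ii s(t)}{\lambda-\ii s(t)},
\]
valid modulo branch conventions to be tracked, where $s(t)=\sqrt{t}\,v(t)$. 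Away from $t=0$ each summand is $\bigo{\epsilon^2}$ by the standard midpoint rule; the dominant error comes from the square-root singularity of $s(t)$ at $t=0$, which produces contributions of size $\bigo{\epsilon^{1/2}}$ per interval there.

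Next I would rescale by $Z := \varphi_0(\lambda)/\epsilon^{1/2}$ and $u := t/\epsilon$. For $u$ of order one, $s(\epsilon u)=v_0\epsilon^{1/2}\sqrt{u}\,(1+\bigo{\epsilon u})$ and $\lambda = v_0\epsilon^{1/2}Z\,(1+\bigo{\epsilon Z^2})$, so the common factor $v_0\epsilon^{1/2}$ cancels inside the ratio, and the leading-order local sum is formally
\[
\sum_{k=0}^{\infty}\left[\int_{k}^{k+1}\tilde g(u;Z)\,\dd u - \tilde g(k+\tfrac{1}{2};Z)\right], \qquad \tilde g(u;Z):=\log\frac{Z+\ii\sqrt{u}}{Z-\ii\sqrt{u}}.
\]
A direct comparison with \eqref{eq:Y0-def} shows this sum equals $-\log\mathcal{Y}_0(Z)$: the product corresponds term-by-term to the discrete midpoints $u=k+\tfrac{1}{2}$; the factors $\ee^{4\ii Z(\sqrt{n+1}-\sqrt{n})}$ arise from the leading $-2\ii Z/\sqrt{u}$ behaviour of $\tilde g(u;Z)$ under the integral $\int_k^{k+1}$, converting it into a convergent series; and the prefactor $\ee^{-\ii\pi Z^2\sgn(\re Z)}$ encodes the branch discontinuity across $\re Z = 0$, which corresponds via $Z = \varphi_0(\lambda)/\epsilon^{1/2}$ to the cut of $L(\lambda)$ on the imaginary axis.

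To upgrade this formal leading-order match to the stated $\bigo{\epsilon}$ accuracy with explicit $\epsilon^{1/2}$-correction, I would compute $Y_\epsilon(\lambda)/\mathcal{Y}_0(\varphi_0(\lambda)/\epsilon^{1/2})$ by subtracting from $-\log Y_\epsilon(\lambda)$ the midpoint-rule expression for a reference model in which $s(t)$ is replaced by its leading behaviour $v_0\sqrt{t}$ and the time interval is extended to $(0,\infty)$; by Remark~\ref{rem:v-for-exact-semicircle}, this reference problem is realized by an exact-semicircle Klaus-Shaw potential, and its $Y$-function is precisely $\mathcal{Y}_0(Z)$. The residual sum then involves the analytic perturbation $s(t)-v_0\sqrt{t} = \sqrt{t}\,[v(t)-v(0)]$, of order $t^{3/2}$ at the endpoint, together with the smooth-midpoint-rule error from $t$ bounded away from $0$. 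The latter is of the type treated in Proposition~\ref{prop:Y-outside}, whose factor $(1-2^{-1/2})\zeta(-\tfrac{1}{2})$ originates from precisely the same endpoint expansion evaluated in the outer regime $Z\to\infty$; combining it with the perturbative contribution of $v(t)-v_0$ assembles into the explicit formula \eqref{eq:E0-define}. The claim $\mathcal{E}_0(\lambda)=\bigo{\lambda}$ then follows from the prefactor $\varphi_0(\lambda)$ in \eqref{eq:E0-define} combined with the analyticity of $v$: the quotient $[v(-\varphi_0(\lambda)^2)-v(0)]/(-\varphi_0(\lambda)^2)$ is analytic at $0$ and $\varphi_0(\lambda)=\bigo{\lambda}$.

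The main obstacle will be the careful accounting of logarithm branches throughout, since $\lambda=0$ is an endpoint of the cut of $L$ and the poles of $\widetilde a$ accumulate onto this cut as $\epsilon\downarrow 0$; in particular, the $\sgn(\re Z)$-dependent phase must be shown to arise precisely from the branch jump of $\tilde g(u;Z)$ across $\re Z=0$, integrated term by term, rather than being inserted by hand. A secondary technical point is that the approximation $s(\epsilon u)=v_0\epsilon^{1/2}\sqrt{u}(1+\bigo{\epsilon u})$ fails for $u$ of size $\bigo{\epsilon^{-1}}$, so the summation must be split at an intermediate scale such as $u=\epsilon^{-\delta}$ and matched to the outer analysis of Proposition~\ref{prop:Y-outside}; this matching is what allows the error to be shown uniformly valid as $\lambda\to 0$, including at the poles $\lambda=\ii\widetilde s_n$ of $Y_\epsilon$, which justifies the relative (rather than pointwise absolute) form of the asymptotic expansion.
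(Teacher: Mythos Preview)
Your plan is correct and follows essentially the same approach as the paper: change variables to $t=(\phaseint(0)-\phaseint(\ii s))/\pi$, split off the model contribution in which $s(\xi)\pm s(t)$ is replaced by $\xi^{1/2}\pm t^{1/2}$ (the paper does this via the auxiliary functions $G^\pm(t;\SP)=\log[(\ii s(\xi)\pm \ii s(t))/(\xi^{1/2}\pm t^{1/2})]$ rather than by invoking a reference semicircle, but the content is identical), extend the resulting model integral and sum to $(0,\infty)$ with the same $2\ii Z/\sqrt{\tau}$ and $4\ii Z(\sqrt{n+1}-\sqrt{n})$ regularizers you describe, and obtain $\mathcal{E}_0$ directly from the midpoint-rule error applied to the explicit $\sqrt{t}$-leading term of $G^--G^+$ (the same Hurwitz-zeta computation as in Proposition~\ref{prop:Y-outside}). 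The one place where the paper is more concrete than your sketch is the prefactor $\ee^{-\ii\pi Z^2\mathrm{sgn}(\re Z)}$: rather than inferring it from branch-jump consistency with $L(\SP)$, the paper computes it explicitly as $\ee^{k_1(Z)}$ where $k_1(Z):=\int_0^\infty[\log\tfrac{\tau^{1/2}+\ii Z}{\tau^{1/2}-\ii Z}-\tfrac{2\ii Z}{\tau^{1/2}}]\,\dd\tau$, evaluated by differentiating under the integral sign to get $k_1'(Z)=-2\pi\ii Z\,\mathrm{sgn}(\re Z)$.
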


\begin{proof} \hyperlink{proof:Yzetasmall}{The proof} can be found in Appendix~\ref{sec-proofs}. \end{proof}

\begin{rem}
Note that for the semicircular Klaus-Shaw potential with constant $u(x)$, given by $A(x)=\psi(x,0)$ defined by \eqref{eq:ExactSemicircle}, Remark~\ref{rem:v-for-exact-semicircle} gives $v(\rr)\equiv v(0)$, so $\mathcal{E}_0(\SP)$ vanishes identically in this case.
\label{rem:exact-semicircle-term-vanishes}
\end{rem}

The model function $\mathcal{Y}_0(Z)$ defined by \eqref{eq:Y0-def} is meromorphic in $Z$ in the right and left $Z$-planes but has a jump discontinuity across the imaginary $Z$-axis, which corresponds to the imaginary $\SP$-axis near $\SP=0$.  The form of this model function is quite different from that which can be obtained (also with a smaller relative error) under the assumption that $\rho(s)$ does not vanish at $s=0$ \cite[Proposition 4.3]{BaikKMM07}.  While we are unable to express $\mathcal{Y}_0(Z)$ in closed form, we can easily see that 
$\mathcal{Y}_0(Z)\ee^{\ii\pi Z^2\mathrm{sgn}(\mathrm{Re}\{Z\})}$ is meromorphic with simple zeros at $Z=-\ii\sqrt{n+\frac{1}{2}}$ and simple poles at $Z=\ii\sqrt{n+\frac{1}{2}}$ for $n\in\mathbb{Z}_{\ge 0}$, a phenomenon that locally captures the features of the factor $\widetilde{a}(\SP)^{-1}$ appearing in the definition \eqref{eq:ZS-Y-define} of $Y_\epsilon(\SP)$.  Although $\mathcal{Y}_0(Z)$ appears to be difficult to analyze directly from its definition in terms of an infinite product, we can easily prove the following.
\begin{restatable}[Behavior of $\mathcal{Y}_0(Z)$ for small and large $Z$]{prop}{Yzeroasymp}
As $Z\to 0$, $\mathcal{Y}_0(Z)=1-2\ii (\sqrt{2}-1)\zeta(\frac{1}{2})Z +\bigo{Z^2}$ where the error term has a jump discontinuity across the imaginary axis in the $Z$-plane.
Also, for each small $\delta>0$, $\mathcal{Y}_0(Z)=1+\bigo{Z^{-1}}$ as $Z\to\infty$ uniformly for $0\le\arg(Z)\le\pi/2-\delta$ and for $\pi/2+\delta\le\arg(Z)\le\pi$.
\label{prop:Y0-asymp}
\end{restatable}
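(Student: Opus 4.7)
I take logarithms and work with
\begin{equation*}
\log\mathcal{Y}_0(Z) = -\ii\pi Z^2 \sgn(\re Z) + \sum_{n=0}^\infty \phi_n(Z),\quad \phi_n(Z):=\log\frac{\sqrt{n+\tfrac{1}{2}}-\ii Z}{\sqrt{n+\tfrac{1}{2}}+\ii Z} + 4\ii Z(\sqrt{n+1}-\sqrt{n}),
\end{equation*}
using principal branches throughout. The two regimes $Z\to 0$ and $Z\to\infty$ are then handled by quite different expansions of $\phi_n$.

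For small $|Z|$, expansion gives $\phi_n(Z) = -2\ii Z/\sqrt{n+\tfrac{1}{2}} + 4\ii Z(\sqrt{n+1}-\sqrt{n}) + O(Z^3/n^{3/2})$, with the $O(Z)$ coefficient summing to $S:=-2\ii\sum_{n=0}^\infty[\tfrac{1}{\sqrt{n+1/2}} - 2(\sqrt{n+1}-\sqrt{n})]$; this series converges absolutely because the midpoint rule applied to $\int_n^{n+1}\dd x/\sqrt{x}=2(\sqrt{n+1}-\sqrt{n})$ has error $O(n^{-5/2})$. To evaluate $S$ in closed form I will combine the Hurwitz-zeta identity $\zeta(\tfrac{1}{2},\tfrac{1}{2})=(\sqrt{2}-1)\zeta(\tfrac{1}{2})$ with the standard analytic-continuation expansion $\sum_{n=0}^{N-1}(n+\tfrac{1}{2})^{-1/2}=\zeta(\tfrac{1}{2},\tfrac{1}{2}) + 2\sqrt{N+\tfrac{1}{2}}+o(1)$; cancelling $2\sqrt{N+\tfrac{1}{2}}$ against $2\sum_{n=0}^{N-1}(\sqrt{n+1}-\sqrt{n})=2\sqrt{N}$ in the limit $N\to\infty$ yields $S=-2\ii(\sqrt{2}-1)\zeta(\tfrac{1}{2})$. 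The prefactor contributes $-\ii\pi Z^2\sgn(\re Z)=O(Z^2)$ with a jump across $\ii\R$, so exponentiation gives the small-$Z$ claim with the discontinuous remainder.

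For large $|Z|$ I will exploit the exact antiderivative (obtained by $u=\sqrt{x}$ and integration by parts)
\begin{equation*}
F(A):=\int_0^A \log\tfrac{\sqrt{x}-\ii Z}{\sqrt{x}+\ii Z}\,\dd x = A\log\tfrac{\sqrt{A}-\ii Z}{\sqrt{A}+\ii Z} - 2\ii Z\sqrt{A} + 2\ii Z^2\arctan(\sqrt{A}/Z).
\end{equation*}
The plan is to peel off $\phi_0(Z)$, approximate $\sum_{n=1}^N\log\tfrac{\sqrt{n+1/2}-\ii Z}{\sqrt{n+1/2}+\ii Z}$ by $F(N+1)-F(1) + R_N(Z)$ where $R_N(Z)$ is the midpoint-rule error on $[1,N+1]$, and telescope $\sum_{n=0}^N 4\ii Z(\sqrt{n+1}-\sqrt{n})=4\ii Z\sqrt{N+1}$. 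As $N\to\infty$ with $\re Z\ne 0$, three sets of divergent or discontinuous pieces cancel: $(N+1)\log\tfrac{\sqrt{N+1}-\ii Z}{\sqrt{N+1}+\ii Z}=-2\ii Z\sqrt{N+1}+o(1)$ combines with the $-2\ii Z\sqrt{N+1}$ from $F(N+1)$ and the telescoped $4\ii Z\sqrt{N+1}$; the limit $2\ii Z^2\arctan(\sqrt{N+1}/Z)\to \ii\pi Z^2\sgn(\re Z)$ cancels the prefactor; and the residual $-\ii\pi\sgn(\re Z)$ and linear-in-$Z$ terms arising from $\phi_0(Z)$, $F(1)$, and the large-$Z$ expansions $\log\tfrac{a-\ii Z}{a+\ii Z}=-\ii\pi\sgn(\re Z)+2\ii a/Z+O(Z^{-3})$ and $2\ii Z^2\arctan(1/Z)=2\ii Z + O(Z^{-1})$ conspire to leave only a net $O(1/Z)$ contribution. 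The remaining midpoint error $R_\infty(Z)=\lim_N R_N(Z)$ is controlled by bounds on $f''(x)=-\ii Z/(2x^{3/2}(x+Z^2)) - \ii Z/(\sqrt{x}(x+Z^2)^2)$; splitting the integral $\int_1^\infty|f''|\,\dd x$ at $x\sim|Z|^2$ and using the uniform bound $|x+Z^2|\ge c(\delta)\max(x,|Z|^2)$—valid for $\arg Z\in[0,\tfrac{\pi}{2}-\delta]\cup[\tfrac{\pi}{2}+\delta,\pi]$—gives $R_\infty(Z)=O(1/Z)$, so that exponentiation yields $\mathcal{Y}_0(Z) = 1+O(1/Z)$ in each sector.

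The main obstacle will be the careful orchestration of these cancellations in the large-$|Z|$ regime: the explicit $\sgn(\re Z)$ in the prefactor, the branch of $\arctan$, and the principal logarithms all carry discontinuities across $\ii\R$, and it is only their simultaneous cancellation at the leading and subleading orders that produces a smooth $O(1/Z)$ remainder valid sector by sector. Once this bookkeeping is complete, the midpoint-error bound is essentially routine calculus, and the small-$Z$ expansion is a short Taylor-series computation that hinges only on the closed-form evaluation of $S$ via Hurwitz zeta.
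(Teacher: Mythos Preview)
Your small-$Z$ argument is essentially identical to the paper's: both Taylor-expand the summand, identify the linear coefficient as the absolutely convergent series $-2\ii\sum_{n\ge 0}\bigl[(n+\tfrac12)^{-1/2}-2(\sqrt{n+1}-\sqrt{n})\bigr]$, and evaluate it as $-2\ii(\sqrt{2}-1)\zeta(\tfrac12)$ via the Hurwitz-zeta identity $\zeta(\tfrac12,\tfrac12)=(\sqrt{2}-1)\zeta(\tfrac12)$.

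Your large-$Z$ argument, however, takes a genuinely different route. The paper does \emph{not} compute directly with the infinite product at all; instead it bootstraps from two results already established for the function $Y_\epsilon(\lambda)$. Concretely, Proposition~\ref{prop:Y-zeta-small} gives $Y_\epsilon(\lambda)=\mathcal{Y}_0(\varphi_0(\lambda)/\epsilon^{1/2})\,(1+O(\epsilon^{1/2}))$ for small $\lambda$, which can be inverted to write $\mathcal{Y}_0(Z)=Y_\epsilon(\varphi_0^{-1}(\epsilon^{1/2}Z))(1+O(\epsilon^{1/2}))$. Choosing $\epsilon$ so that $\tfrac12\gamma\le \epsilon^{1/2}|Z|\le\gamma$ for some fixed small $\gamma>0$ keeps $\lambda=\varphi_0^{-1}(\epsilon^{1/2}Z)$ in the region where Proposition~\ref{prop:Y-outside} applies (this is where the sectorial restriction on $\arg Z$ enters, to keep $\lambda$ outside the lens $\Lambda$), and that proposition immediately gives $Y_\epsilon(\lambda)=1+O(\epsilon^{1/2}/\lambda)=1+O(Z^{-1})$. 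The whole argument is three lines once those two propositions are in hand.

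Your direct approach---computing the closed-form antiderivative $F(A)$, telescoping, and tracking the simultaneous cancellation of the $\pm\ii\pi Z^2$ pieces from the prefactor and from $2\ii Z^2\arctan(\sqrt{N+1}/Z)$, together with the $\pm 4\ii Z\sqrt{N+1}$ pieces---is sound in outline and has the advantage of being self-contained, independent of any $\epsilon$-dependent auxiliary function. The cost is exactly what you identify as the main obstacle: the branch bookkeeping for the complex $\arctan$ and the principal logarithms across the two sectors must be done carefully, and the midpoint-rule bound on $R_\infty(Z)$ via $\int_1^\infty|f''|\,\dd x$ with the splitting at $x\sim|Z|^2$ is real work. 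The paper's indirect argument sidesteps all of this by recycling estimates it needed anyway.
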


\begin{proof}
\hyperlink{proof:Yzeroasymp}{The proof} can be found in Appendix~\ref{sec-proofs}. 
\end{proof} 

Finally, we have the following result, describing the behavior of $Y_\epsilon(\SP)$ for $\SP$ near $\ii A_\mathrm{max}$ in the limit $\epsilon\downarrow 0$.  As with Proposition~\ref{prop:Y-zeta-small}, 
the relative error terms are controlled even if $\SP$ is near the imaginary axis, i.e., among the poles of $Y_\epsilon(\SP)$.  
\begin{restatable}[Asymptotic behavior of $Y_\epsilon(\SP)$ for $\SP\approx \ii A_{\max}$]{prop}{propYtop}
Suppose that $A$ is a semicircular Klaus-Shaw potential.
Then 
the asymptotic formula 
\eq
Y_\epsilon(\SP) = 
\mathcal{Y}_1\left(\frac{\varphi_1(\SP)}{\epsilon}\right)\left(1+\mathcal{E}_1(\SP)\epsilon^{1/2}+\mathcal{O}\left(\epsilon\right)\right),\quad\epsilon\downarrow 0,\quad \varphi_1(\SP):=-\frac{\phaseint(\SP)}{\pi}
\label{Y-near-iAmax}
\endeq
holds uniformly for $|\SP-\ii A_\mathrm{max}|$ sufficiently small, where the mapping $\SP\mapsto \varphi_1(\SP)$ is conformal near $\SP=\ii A_\mathrm{max}$ with $\varphi_1(\ii A_\mathrm{max})=0$ and $\varphi_1'(\ii A_\mathrm{max})$ negative imaginary, and where
\eq
\mathcal{Y}_1(W):=\frac{1}{\sqrt{2\pi}}W^{-W}\Gamma(W+\tfrac{1}{2})\ee^W
\label{eq:Y1-def}
\endeq
and
\begin{equation}
\mathcal{E}_1(\SP):=-\frac{2\ii v_0}{\SP}\left(1-\frac{1}{\sqrt{2}}\right)\zeta(-\tfrac{1}{2}).
\label{eq:E1-def}
\end{equation}
\label{prop:Y-zeta-near-iAmax}
\end{restatable}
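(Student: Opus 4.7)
The plan is to treat $\log Y_\epsilon(\lambda)$ as the error in a midpoint-rule Riemann sum approximation, following the same philosophy as the proofs of Propositions~\ref{prop:Y-outside} and \ref{prop:Y-zeta-small}, but with the local model at the accumulation endpoint $s=A_\mathrm{max}$ built from $\Gamma(\cdot+\tfrac12)$ rather than from a smooth Taylor analysis. To begin, use the representation in Lemma~\ref{lem:L-alternate} and the identity $\log(-\ii\lambda\pm s) = \log(-\ii)+\log(\lambda\pm\ii s)$ to rewrite
\begin{equation*}
\log Y_\epsilon(\lambda) = \sum_{n=0}^{N-1}\bigl[\log(\lambda+\ii\widetilde s_n)-\log(\lambda-\ii\widetilde s_n)\bigr] - \frac{1}{\epsilon}\int_0^{A_\mathrm{max}}\bigl[\log(\lambda+\ii s)-\log(\lambda-\ii s)\bigr]\evdensity(s)\,\dd s.
\end{equation*}
After the change of variables $s=s(\rr)$ from Lemma~\ref{lem:s-properties}, for which $\evdensity(s)\,\dd s = \dd\rr$ and the quantization rule $\phaseint(\ii\widetilde s_n)=(n+\tfrac12)\epsilon\pi$ translates to $\rr(\widetilde s_n)=\epsilon(N-n-\tfrac12)$, the sum becomes exactly the midpoint rule for the integral with nodes at the midpoints of the uniform partition of $[0,\epsilon N]=[0,\phaseint(0)/\pi]$ of step $\epsilon$.

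Next, split the sum into a ``near'' block $n=0,\dots,n_0-1$ corresponding to eigenvalues close to $\ii A_\mathrm{max}$ and a ``far'' block $n=n_0,\dots,N-1$, splitting the integral at $\rr=\epsilon n_0$ correspondingly. For the far block, the integrand $\log(\lambda\pm\ii s(\rr))$ is smooth in $\rr$ uniformly in $\lambda$ (for $\lambda$ in a small fixed neighborhood of $\ii A_\mathrm{max}$), so the Euler--Maclaurin expansion applies: the interior contributions cancel to all orders in $\epsilon$, and the only surviving boundary contribution comes from the endpoint $\rr=0$, where the expansion $s(\rr)=\sqrt{\rr}\,v(\rr)$ reproduces exactly the square-root endpoint singularity whose regularization, via $\sum_{n\ge0}\sqrt{n+\tfrac12}$ zeta-regularized as $(1-2^{-1/2})\zeta(-\tfrac12)$, was already computed in Proposition~\ref{prop:Y-outside}. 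This produces the correction $\mathcal{E}_1(\lambda)\epsilon^{1/2}$ with exactly the coefficient written in \eqref{eq:E1-def}.

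For the near block, introduce the local variable $W:=\varphi_1(\lambda)/\epsilon$; by conformality, $\varphi_1(\ii\widetilde s_n)=-(n+\tfrac12)\epsilon$, so
\begin{equation*}
\lambda-\ii\widetilde s_n = \frac{\epsilon(W+n+\tfrac12)}{\varphi_1'(\ii A_\mathrm{max})}\bigl(1+\bigo{\epsilon(n+|W|+1)}\bigr),\qquad \lambda+\ii\widetilde s_n = 2\ii A_\mathrm{max}+\bigo{\epsilon(n+|W|+1)}.
\end{equation*}
The product over $n=0,\dots,n_0-1$ of the $\lambda-\ii\widetilde s_n$ factors therefore reassembles into $\Gamma(W+n_0+\tfrac12)/\Gamma(W+\tfrac12)$ times explicit powers of $\epsilon$ and $\varphi_1'(\ii A_\mathrm{max})$, uniformly in $W$ in compact sets including the poles at $W=-(n+\tfrac12)$. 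The analogous portion of the integral, $\epsilon^{-1}\int_0^{\epsilon n_0}\log(\lambda-\ii s(\rr))\,\dd\rr$, is evaluated by the same linearization of $s(\rr)$ near $\rr=\phaseint(0)/\pi$ and produces, after integration by parts, the Stirling-type combination $(n_0+W)\log(n_0+W)-(n_0+W)+\tfrac12\log(2\pi)+\cdots$ that pairs with $\log\Gamma(W+n_0+\tfrac12)$. The difference collapses, via Stirling's formula applied at $W+n_0+\tfrac12$, to precisely $\log\mathcal{Y}_1(W)=\log\bigl[(2\pi)^{-1/2}W^{-W}\Gamma(W+\tfrac12)\ee^W\bigr]$, independent of the cutoff $n_0$.

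Combining the near and far contributions gives \eqref{Y-near-iAmax}; uniformity follows because every step above is uniform in $\lambda$ in a fixed neighborhood of $\ii A_\mathrm{max}$, including at the poles of $Y_\epsilon$, since the local-model factor $\mathcal{Y}_1(W)$ has simple poles at exactly the correct locations $W=-(n+\tfrac12)$. The main obstacle, as in Propositions~\ref{prop:Y-outside} and \ref{prop:Y-zeta-small}, is this matched-asymptotic bookkeeping: tracking the $n_0$-dependent pieces so that they cancel between the ratio of Gamma functions and the Stirling expansion of the smoothly approximated integral, and verifying that all residual errors are $\bigo{\epsilon}$ uniformly in $W$, require careful use of Stirling with remainder and the Weierstrass product form of $\Gamma$.
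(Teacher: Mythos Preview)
Your overall strategy matches the paper's: split into a ``far'' block (the $\rr$-interval near the origin) and a ``near'' block (the $\rr$-interval near $\phaseint(0)/\pi$), extract the $\mathcal{E}_1(\SP)\epsilon^{1/2}$ correction from the far block via the $\sqrt{\rr}$-endpoint and the zeta-regularization already carried out in Proposition~\ref{prop:Y-outside}, and build $\mathcal{Y}_1(W)$ in the near block from a Gamma ratio followed by Stirling's formula. So the architecture is right.

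There is, however, a genuine gap in your near-block linearization. Your estimate
\[
\lambda-\ii\widetilde s_n \;=\; \frac{\epsilon(W+n+\tfrac12)}{\varphi_1'(\ii A_\mathrm{max})}\bigl(1+\bigo{\epsilon(n+|W|+1)}\bigr)
\]
is correct termwise, but taking logarithms and summing over $n=0,\dots,n_0-1$ gives an accumulated error of order $\epsilon\sum_{n<n_0}(n+|W|+1)=\bigo{\epsilon n_0^2+\epsilon n_0|W|}$. For the far block to have a uniformly smooth integrand for $\lambda$ in a \emph{fixed} neighborhood of $\ii A_\mathrm{max}$, the cutoff must sit at a fixed positive distance from $\phaseint(0)/\pi$, which forces $n_0\sim N\sim\epsilon^{-1}$; the accumulated error is then $\bigo{\epsilon^{-1}}$, not $\bigo{\epsilon}$. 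The same accumulation afflicts your ``same linearization'' of the integral.

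The paper sidesteps this by not Taylor-expanding at all. After disposing of the smooth factor $\log(-\ii(\lambda+\ii s(\rr)))$ by the midpoint rule, it introduces the auxiliary function
\[
H(\rr;\xi):=\log(\xi-\rr)-\log\bigl(s(\xi)-s(\rr)\bigr),\qquad \xi:=\frac{\phaseint(0)-\phaseint(\lambda)}{\pi},
\]
which is analytic on the whole near interval (including at $\rr=\xi$, since $s'(\phaseint(0)/\pi)\neq 0$ by the hypothesis $A''(x_0)<0$). The midpoint rule then handles $H$ with a clean $\bigo{\epsilon}$ error, and what remains is the \emph{exact} identity
\[
\sum_{m=0}^{n_0-1}\log\bigl(W+\tfrac12+m\bigr)=\log\Gamma\bigl(W+n_0+\tfrac12\bigr)-\log\Gamma\bigl(W+\tfrac12\bigr),
\]
together with the elementary closed form for $\epsilon^{-1}\int\log(\xi-\rr)\,\dd\rr$, with no further approximation. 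Stirling is then applied only once, to $\Gamma(W+n_0+\tfrac12)$ with argument of order $\epsilon^{-1}$, producing an $\bigo{\epsilon}$ remainder. Replacing your Taylor linearization by this $H$-trick (or, equivalently, changing variables from $s$ to $\rr$ \emph{before} comparing sum to integral, so that the singular factor becomes the exactly summable $\log(\xi-\rr)$) closes the gap.
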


\begin{proof}
\hyperlink{proof:Ytop}{The proof} can be found in Appendix~\ref{sec-proofs}.
\end{proof}

\begin{rem}
The leading term of the relative error is exactly the same as in Proposition~\ref{prop:Y-outside}.
\end{rem}
The model function defined by \eqref{eq:Y1-def} is very similar to that obtained in a similar situation in \cite[Proposition~4.3]{BaikKMM07}, but the error term is larger due to the effect of $\evdensity(s)$ vanishing linearly at $s=0$.  Note that in \eqref{eq:Y1-def}, $W^{-W}$ is the principal branch, and hence $\mathcal{Y}_1(W)$ has a branch cut across the negative real $W$-axis, which corresponds to the imaginary $\SP$-axis below the point $\SP=\ii A_\mathrm{max}$.

\subsubsection{Analysis of $T_\epsilon(\SP)$ in the limit $\epsilon\downarrow 0$}
\label{app:T}

We first give the analogue for $T_\epsilon(\SP)$ of Proposition~\ref{prop:Y-outside}.  The function $T_\epsilon(\SP)$ is analytic for $\SP\in\Lambda$ (unlike $Y_\epsilon(\SP)$, which has poles and a branch cut along the center line of the lens $\Lambda$) and the following result shows that its asymptotic behavior in this domain is simplest for $\SP$ suitably bounded away from the points $\SP=0$ and $\SP=\ii A_\mathrm{max}$.

\begin{restatable}[Basic asymptotic behavior of $T_\epsilon(\SP)$ for $\SP\in\Lambda$]{prop}{Tbasicasymp}
Suppose that $A$ is a semicircular Klaus-Shaw potential.
Let $\sigma>0$ be arbitrary and define $\Lambda_\sigma:=\{\SP\in\Lambda: |\SP|>\sigma,\;|\SP-\ii A_\mathrm{max}|>\sigma\}$.
Then 
\begin{equation}
T_\epsilon(\SP)=
1-\frac{2\ii v_0}{\SP}\left(1-\frac{1}{\sqrt{2}}\right)\zeta(-\tfrac{1}{2})\epsilon^{1/2}+\bigo{\epsilon},
\quad\epsilon\downarrow 0
\end{equation}
holds uniformly for $\SP\in \Lambda_\sigma$.  
\label{prop:T-outside}
\end{restatable}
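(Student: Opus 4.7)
The plan is to deduce the claimed asymptotic for $T_\epsilon$ on $\Lambda_\sigma$ from the asymptotic for $Y_\epsilon$ already established in Proposition~\ref{prop:Y-outside}, using the identity \eqref{eq:T-Y-relation} together with a maximum-principle argument inside $\Lambda_\sigma$. The starting observation is that $T_\epsilon(\SP)$ is analytic throughout $\Lambda$: the simple poles of $\widetilde{a}(\SP)^{-1}$ at $\SP=\ii\widetilde{s}_n$ are cancelled by the simple zeros of $\cos(\phaseint(\SP)/\epsilon)$ forced by the quantization rule \eqref{eq:ZS-approximate-eigenvalues}. Consequently, the candidate error function
$$G_\epsilon(\SP):=T_\epsilon(\SP)-1+\frac{2\ii v_0}{\SP}\left(1-\frac{1}{\sqrt{2}}\right)\zeta(-\tfrac{1}{2})\epsilon^{1/2}$$
is analytic on all of $\Lambda_\sigma$ (the only pole of the rational term, at $\SP=0$, is excluded since $|\SP|>\sigma$), and it suffices to bound $|G_\epsilon|=\bigo{\epsilon}$ uniformly on $\partial\Lambda_\sigma$.

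On the lateral portions of $\partial\Lambda_\sigma$ I would use \eqref{eq:T-Y-relation} together with a Cauchy--Riemann computation controlling the exponential factor. Since $s\mapsto \phaseint(\ii s)$ is real-valued with derivative $-\pi\evdensity(s)<0$, the complex derivative satisfies $\phaseint'(\ii s)=\ii\pi\evdensity(s)$, so $\imag\{\phaseint(\SP)\}=\pi\evdensity(\imag\{\SP\})\re\{\SP\}+\bigo{\re\{\SP\}^2}$ near the imaginary axis. On the lateral boundary of $\Lambda_\sigma$ one has $|\re\{\SP\}|\asymp\delta\,\imag\{\SP\}(A_\mathrm{max}-\imag\{\SP\})$ with $\imag\{\SP\}$ confined to a compact subinterval of $(0,A_\mathrm{max})$, so both $|\re\{\SP\}|$ and $\evdensity(\imag\{\SP\})$ are bounded below, yielding $|\ee^{\pm 2\ii\phaseint(\SP)/\epsilon}|=\bigo{\ee^{-c/\epsilon}}$. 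Since this lateral boundary also lies in the domain $D_\sigma$ of Proposition~\ref{prop:Y-outside}, combining the two estimates gives $|G_\epsilon|=\bigo{\epsilon}$ uniformly on this part of $\partial\Lambda_\sigma$.

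It remains to bound $|G_\epsilon|$ on the two short circular arcs $\{|\SP|=\sigma\}\cap\Lambda$ and $\{|\SP-\ii A_\mathrm{max}|=\sigma\}\cap\Lambda$. Off the imaginary axis I would again invoke \eqref{eq:T-Y-relation} and use Proposition~\ref{prop:Y-zeta-small} to evaluate $Y_\epsilon$ on the first arc and Proposition~\ref{prop:Y-zeta-near-iAmax} on the second. Because $|\SP|$ and $|\SP-\ii A_\mathrm{max}|$ are of fixed order $\sigma$ while $\epsilon\downarrow 0$, the model-function arguments $\varphi_0(\SP)/\epsilon^{1/2}$ and $\varphi_1(\SP)/\epsilon$ tend to infinity; their large-argument asymptotics (Proposition~\ref{prop:Y0-asymp} and the analogous statement for $\mathcal{Y}_1$ derived from Stirling's formula for $\Gamma(W+\tfrac{1}{2})$) reproduce the explicit correction $-\tfrac{2\ii v_0}{\SP}(1-\tfrac{1}{\sqrt{2}})\zeta(-\tfrac{1}{2})\epsilon^{1/2}$ after using $\varphi_0'(0)=1/v_0$ and its counterpart near $\ii A_\mathrm{max}$, leaving an $\bigo{\epsilon}$ remainder. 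With $|G_\epsilon|=\bigo{\epsilon}$ then established on all of $\partial\Lambda_\sigma$, the maximum modulus principle applied to the analytic function $G_\epsilon$ on $\Lambda_\sigma$ delivers the uniform bound claimed.

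The main obstacle is controlling the arcs at the points where they meet the imaginary axis. There the exponential factor in \eqref{eq:T-Y-relation} is no longer uniformly exponentially small (the real part of its exponent scales like $-|\re\{\SP\}|\pi\evdensity(\imag\{\SP\})/\epsilon$, which is of order unity when $|\re\{\SP\}|=\bigo{\epsilon}$), and simultaneously $\mathcal{Y}_0$ has a jump discontinuity across the positive imaginary $Z$-axis while $\mathcal{Y}_1$ has a branch cut along the negative real $W$-axis, so the asymptotics of Propositions~\ref{prop:Y-zeta-small} and \ref{prop:Y-zeta-near-iAmax} are not uniform up to these rays. A careful matched-asymptotic calculation, verifying that the jump of $Y_\epsilon$ across the imaginary axis exactly cancels the jump of the factor $1+\ee^{\pm 2\ii\phaseint/\epsilon}$ so as to leave an analytic $\bigo{\epsilon}$ remainder, will be needed to close this case; equivalently, one can recast the maximum-modulus step as a Cauchy-integral representation of $G_\epsilon$ over a slightly enlarged contour $\partial\Lambda_{\sigma/2}$, reducing the delicate axis-crossing portions to an integrable kernel bound.
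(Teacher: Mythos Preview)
Your maximum-modulus strategy is genuinely different from the paper's direct Riemann-sum computation, but it has a real gap precisely where you flag it: on the portions of the arcs $\{|\SP|=\sigma\}\cap\Lambda$ and $\{|\SP-\ii A_{\max}|=\sigma\}\cap\Lambda$ near the imaginary axis. There the factor $1+\ee^{\pm 2\ii\phaseint/\epsilon}$ is \emph{not} small (its exponent has real part $\propto -|\re\{\SP\}|/\epsilon$, which is $O(1)$ once $|\re\{\SP\}|\lesssim\epsilon$), while $Y_\epsilon$ has genuine simple poles at the $\ii\widetilde{s}_n$ lying on the arc. The product $T_\epsilon$ is analytic, but you cannot bound $G_\epsilon$ through the factorization $T_\epsilon=Y_\epsilon(1+\ee^{\pm 2\ii\phaseint/\epsilon})$ because each factor separately blows up. Your remedy (a), verifying that the jumps cancel, only recovers analyticity of $T_\epsilon$, not the quantitative $O(\epsilon)$ bound. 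Remedy (b), enlarging to $\partial\Lambda_{\sigma/2}$, merely relocates the same axis-crossing difficulty.

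What would actually close the gap is the large-argument behavior of the \emph{entire} model functions $\mathcal{T}_0$ and $\mathcal{T}_1$ along the positive imaginary $Z$-axis (resp.\ negative real $W$-axis). But in the paper those asymptotics (Proposition~\ref{prop:T0-asymp} and its analogue) are proved \emph{by invoking Proposition~\ref{prop:T-outside} itself}, so appealing to them here is circular. An independent infinite-product analysis of $\mathcal{T}_0(Z)$ as $Z\to\infty$ with $\arg(-\ii Z)\approx 0$ is conceivable but nontrivial, and you have not sketched it. The paper instead attacks $T_\epsilon$ directly on $\Lambda_\sigma$: writing $Y_\epsilon=Y_\epsilon^+/Y_\epsilon^-$, it subtracts from $\log Y_\epsilon^+$ the explicit term $\log(\phaseint(0)-\phaseint(\SP)-\pi r)$, which carries exactly the singularity arising when $s(r)$ crosses $-\ii\SP$; the remainder then has integrable second $r$-derivative uniformly on $\Lambda_\sigma$, and the leftover explicit ratio is evaluated with the reflection formula and Stirling to give $1+O(\epsilon)$. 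That subtraction is the missing idea in your approach.
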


\begin{proof}
\hyperlink{proof:Tbasicasymp}{The proof} can be found in Appendix~\ref{sec-proofs}.
\end{proof}

Next, we can easily obtain analogues of Propositions~\ref{prop:Y-zeta-small} and \ref{prop:Y-zeta-near-iAmax} with the help of the exact relation \eqref{eq:T-Y-relation} between $T_\epsilon(\SP)$ and $Y_\epsilon(\SP)$.
\begin{restatable}[Asymptotic behavior of $T_\epsilon(\SP)$ for $\SP\approx 0$]{prop}{TatZero}
Suppose that $A$ is a semicircular Klaus-Shaw potential.
Then 
\begin{equation}
T_\epsilon(\SP)=\mathcal{T}_0\left(\frac{\varphi_0(\SP)}{\epsilon^{1/2}}\right)\left(1+\mathcal{E}_0(\SP)\epsilon^{1/2}+\bigo{\epsilon}\right),\quad\epsilon\downarrow 0,\quad \varphi_0(\SP):=\ii\pi^{-1/2}(\phaseint(0)-\phaseint(\SP))^{1/2}
\label{eq:T-zeta-small}
\end{equation}
holds uniformly for $\imag\{\SP\}>0$ and $|\SP|$ sufficiently small, where the mapping $\SP\mapsto \varphi_0(\SP)$ is conformal near $\SP=0$ with $\varphi_0(0)=0$ and $\varphi_0'(0)=1/v(0)>0$, $\mathcal{E}_0(\SP)$ is defined by \eqref{eq:E0-define}, and where
\begin{equation}
\mathcal{T}_0(Z):=2\prod_{n=0}^\infty
\left(1-\frac{\ii Z}{\sqrt{n+\tfrac{1}{2}}}\right)^2\left(1-\frac{Z^2}{n+\tfrac{1}{2}}\right)\ee^{4\ii Z(\sqrt{n+1}-\sqrt{n})}.
\label{eq:T0-def}
\end{equation}
Also, the error terms in \eqref{eq:T-zeta-small} proportional to $\epsilon^{1/2}$ and $\epsilon$ both vanish identically in the limit $\SP\to 0$. 
\label{prop:T-zeta-small}
\end{restatable}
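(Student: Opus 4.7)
My plan is to derive the asymptotic for $T_\epsilon(\SP)$ algebraically from the corresponding one for $Y_\epsilon(\SP)$ in Proposition~\ref{prop:Y-zeta-small}, using the exact relation \eqref{eq:T-Y-relation}. Combining \eqref{eq:ZS-epsilon-assumption} with \eqref{eq:Psi0-Psi1} gives $\phaseint(0)=\|A\|_{L^1(\R)}=N\epsilon\pi$, so that $\ee^{2\ii\phaseint(0)/\epsilon}=1$.  Since $\varphi_0(\SP)^2=-\pi^{-1}(\phaseint(0)-\phaseint(\SP))$ from the definition \eqref{eq:z-zero-define}, setting $Z:=\varphi_0(\SP)/\epsilon^{1/2}$ yields
\begin{equation}
\ee^{\pm 2\ii\phaseint(\SP)/\epsilon}=\ee^{\pm 2\pi\ii Z^2}, \qquad \pm\re\{\SP\}>0.
\end{equation}
Substituting \eqref{eq:T-Y-relation} and Proposition~\ref{prop:Y-zeta-small} then reduces the entire assertion to the purely algebraic identity
\begin{equation}
\mathcal{Y}_0(Z)\bigl(1+\ee^{\pm 2\pi\ii Z^2}\bigr)=\mathcal{T}_0(Z), \qquad \pm\re\{Z\}>0,
\label{eq:proposal-model-identity}
\end{equation}
after which the bracket factor $1+\mathcal{E}_0(\SP)\epsilon^{1/2}+\bigo{\epsilon}$, together with its vanishing at $\SP=0$ and the conformal properties of $\varphi_0$, carries over unchanged.

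To establish \eqref{eq:proposal-model-identity}, note that $\varphi_0$ is conformal at $0$ with $\varphi_0'(0)=1/v(0)>0$, so $\mathrm{sgn}(\re\{Z\})=\mathrm{sgn}(\re\{\SP\})$; hence the half-plane-dependent factor $\ee^{-\ii\pi Z^2\mathrm{sgn}(\re\{Z\})}$ in \eqref{eq:Y0-def} combines with $1+\ee^{\pm 2\pi\ii Z^2}$ to produce the symmetric prefactor $2\cos(\pi Z^2)$ regardless of the sign of $\re\{Z\}$.  Rewriting each Blaschke-type factor as
\begin{equation}
\frac{\sqrt{n+\tfrac12}-\ii Z}{\sqrt{n+\tfrac12}+\ii Z}=\frac{\bigl(1-\ii Z/\sqrt{n+\tfrac12}\bigr)^2\bigl(1-Z^2/(n+\tfrac12)\bigr)}{1-Z^4/(n+\tfrac12)^2}
\end{equation}
and invoking the Weierstrass product $\cos(\pi w)=\prod_{n=0}^\infty\bigl(1-w^2/(n+\tfrac12)^2\bigr)$ with $w=Z^2$, the infinite product of the denominators cancels one copy of $\cos(\pi Z^2)$ in the prefactor, leaving exactly the product \eqref{eq:T0-def} defining $\mathcal{T}_0(Z)$.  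Convergence of both factorings on compact subsets of $\C$ must be checked to legitimize this rearrangement:  with $a_n:=\sqrt{n+\tfrac12}$, the logarithm of $(1-\ii Z/a_n)^2(1-Z^2/a_n^2)$ expands as $-2\ii Z/a_n+\bigo{a_n^{-3}}$ (the $a_n^{-2}$ contributions cancel), while $4\ii Z(\sqrt{n+1}-\sqrt{n})=2\ii Z/a_n+\bigo{a_n^{-5}}$ kills the leading $a_n^{-1}$ term, leaving a summable $\bigo{n^{-3/2}}$ remainder.

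Because both sign choices in \eqref{eq:proposal-model-identity} yield the same entire function $\mathcal{T}_0(Z)$, and because $T_\epsilon(\SP)$ is continuous across the positive imaginary axis by the definition \eqref{eq:ZS-T-define}, uniformity of the resulting approximation for $\imag\{\SP\}>0$ and $|\SP|$ small is inherited from the corresponding uniformity in Proposition~\ref{prop:Y-zeta-small}; likewise, the vanishing of the error factor as $\SP\to 0$ is exactly the vanishing of $\mathcal{E}_0(\SP)$ and of the $\bigo{\epsilon}$ remainder asserted there.  The main obstacle is really just the bookkeeping in \eqref{eq:proposal-model-identity}: one must carefully combine $\ee^{-\ii\pi Z^2\mathrm{sgn}(\re\{Z\})}$ in $\mathcal{Y}_0$ with the $\pm$ sign in \eqref{eq:T-Y-relation} so that their product is the entire symmetric quantity $2\cos(\pi Z^2)$, and then recognize the Weierstrass product for cosine hidden inside the ratio $\mathcal{T}_0/\mathcal{Y}_0$.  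All the hard analytical content is already contained in Proposition~\ref{prop:Y-zeta-small}.
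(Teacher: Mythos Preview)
Your proposal is correct and follows essentially the same route as the paper: combine the exact identity \eqref{eq:T-Y-relation} with Proposition~\ref{prop:Y-zeta-small}, use $\phaseint(0)=N\pi\epsilon$ to reduce $1+\ee^{\pm 2\ii\phaseint(\SP)/\epsilon}$ to $1+\ee^{\pm 2\pi\ii Z^2}$, fold this against the sign-dependent prefactor in $\mathcal{Y}_0$ to obtain $2\cos(\pi Z^2)$, and then invoke the Weierstrass product for cosine to arrive at \eqref{eq:T0-def}. Your explicit Blaschke-factor rewriting and the convergence check are more detailed than the paper's one-line appeal to the cosine product, but the argument is the same.
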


\begin{proof}
\hyperlink{proof:TatZero}{The proof} can be found in Appendix~\ref{sec-proofs}.
\end{proof}

The behavior of  $\mathcal{T}_0(Z)$ for small and large $Z$ is as follows.
\begin{restatable}[Behavior of $\mathcal{T}_0(Z)$ for small and large $Z$]{prop}{TSmallLargeZ}
$\mathcal{T}_0(Z)$ is analytic at $Z=0$ with Taylor expansion $\mathcal{T}_0(Z)=2-4\ii (\sqrt{2}-1)\zeta(\tfrac{1}{2})Z + \bigo{Z^2}$ as $Z\to 0$. 
Also, for each small $\delta>0$, $\mathcal{T}_0(Z)=1-2\ii(1-\sqrt{2})\zeta(-\tfrac{1}{2})Z^{-1} + \bigo{Z^{-2}}$ as $Z\to\infty$ uniformly for $|\arg(-\ii Z)|\le\delta$.
\label{prop:T0-asymp}
\end{restatable}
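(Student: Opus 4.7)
My plan is to derive both claims from the identity
\begin{equation*}
\mathcal{T}_0(Z) \;=\; \mathcal{Y}_0^+(Z) + \mathcal{Y}_0^-(Z), \qquad \mathcal{Y}_0^\pm(Z) := \ee^{\mp\ii\pi Z^2}\prod_{n=0}^\infty\frac{\sqrt{n+\tfrac12}-\ii Z}{\sqrt{n+\tfrac12}+\ii Z}\,\ee^{4\ii Z(\sqrt{n+1}-\sqrt{n})},
\end{equation*}
where $\mathcal{Y}_0^\pm$ are the two analytic continuations of $\mathcal{Y}_0(Z)$ obtained by fixing $\mathrm{sgn}(\re\{Z\})=\pm 1$ in \eqref{eq:Y0-def}. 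To verify it I would insert the classical product formula $\cos(\pi w) = \prod_{n\ge 0}(1-w^2/(n+\tfrac12)^2)$ with $w=Z^2$ into $2\cos(\pi Z^2) P(Z)$ (where $P$ is the common Blaschke-type product in the definitions of $\mathcal{Y}_0^\pm$) and then use $n+\tfrac12+Z^2 = (\sqrt{n+\tfrac12}-\ii Z)(\sqrt{n+\tfrac12}+\ii Z)$ to recognize the right-hand side as the defining product of $\mathcal{T}_0(Z)$. The crucial structural upshot is that although each $\mathcal{Y}_0^\pm$ has simple poles at $Z = \ii\sqrt{n+\tfrac12}$, their residues cancel in the sum, consistent with $\mathcal{T}_0$ being entire.

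For the expansion at $Z=0$, since $\ee^{\mp\ii\pi Z^2}=1+O(Z^2)$, one has $\mathcal{T}_0(Z)=2\mathcal{Y}_0(Z)+O(Z^2)$, and Proposition~\ref{prop:Y0-asymp} immediately delivers the claimed linearization $\mathcal{T}_0(Z)=2-4\ii(\sqrt{2}-1)\zeta(\tfrac12)Z+O(Z^2)$. As an independent check I would differentiate the defining product term by term at $Z=0$: the apparent divergences cancel because $\sqrt{n+1}-\sqrt{n} = 1/(2\sqrt{n+\tfrac12})+O(n^{-5/2})$, leaving the absolutely convergent series
\begin{equation*}
\mathcal{T}_0'(0)/\mathcal{T}_0(0) = -2\ii\sum_{n=0}^\infty\Bigl[(n+\tfrac12)^{-1/2} - 2\bigl(\sqrt{n+1}-\sqrt{n}\bigr)\Bigr],
\end{equation*}
whose value can be identified as $-2\ii(\sqrt{2}-1)\zeta(\tfrac12)$ using the Hurwitz-zeta identity $\zeta(\tfrac12,\tfrac12)=(\sqrt{2}-1)\zeta(\tfrac12)$ together with the representation $\zeta(s,a) = \lim_{N\to\infty}[\sum_{n=0}^{N-1}(n+a)^{-s} - (N+a)^{1-s}/(1-s)]$ valid for $0<\re\{s\}<1$.

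For the large-$Z$ asymptotics I would split the sector $|\arg(-\ii Z)|\le\delta$ into three overlapping pieces by choosing some $\delta_1\in(0,\min(\delta,\pi/4))$: a right wedge $\arg\{Z\}\in[\pi/2-\delta,\pi/2-\delta_1]$, a left wedge $\arg\{Z\}\in[\pi/2+\delta_1,\pi/2+\delta]$, and a thin middle wedge $\arg\{Z\}\in[\pi/2-\delta_1,\pi/2+\delta_1]$ containing the positive imaginary axis. On the right wedge, $\mathcal{Y}_0^+(Z)=\mathcal{Y}_0(Z) = 1+O(Z^{-1})$ by Proposition~\ref{prop:Y0-asymp}, while $\mathcal{Y}_0^-(Z) = \mathcal{Y}_0^+(Z)\ee^{2\ii\pi Z^2}$ is exponentially small because $\re\{2\ii\pi Z^2\} = -2\pi|Z|^2\sin(2\arg\{Z\})\le -2\pi|Z|^2\sin(2\delta_1)<0$; the left wedge is treated symmetrically with $\mathcal{Y}_0 = \mathcal{Y}_0^-$ and $\mathcal{Y}_0^+ = \mathcal{Y}_0^-\ee^{-2\ii\pi Z^2}$ exponentially small. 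Consequently $\mathcal{T}_0(Z) = 1+O(Z^{-1})$ on both outer wedges. To propagate the estimate through the middle wedge, I would apply a Phragm\'en-Lindel\"of argument to the entire function $g(Z) := Z^2(\mathcal{T}_0(Z) - 1 - cZ^{-1})$: the counting density of zeros of $\mathcal{T}_0$ (at $\pm\sqrt{n+\tfrac12}$ and doubly at $-\ii\sqrt{n+\tfrac12}$) is $O(R^2)$, so $\mathcal{T}_0$ has order $\le 2$; having chosen $\delta_1<\pi/4$ the middle wedge has opening $2\delta_1<\pi/2$, whose critical Phragm\'en-Lindel\"of order $\pi/(2\delta_1)>2$ dominates that of $g$, so boundedness of $g$ on the two boundary rays (which follows from the outer-wedge estimates once $c$ matches) forces $g$ to remain bounded throughout the middle wedge, yielding the uniform expansion $\mathcal{T}_0(Z) = 1+cZ^{-1}+O(Z^{-2})$.

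The main obstacle is pinning down the value $c = -2\ii(1-\sqrt{2})\zeta(-\tfrac12)$, since Proposition~\ref{prop:Y0-asymp} only supplies the leading $1+O(Z^{-1})$ behavior of $\mathcal{Y}_0$ and does not isolate the coefficient of $Z^{-1}$. I would determine $c$ by asymptotic matching with Proposition~\ref{prop:T-outside}: in the transitional regime where both Proposition~\ref{prop:T-zeta-small} and Proposition~\ref{prop:T-outside} apply, substituting $Z=\varphi_0(\SP)/\eps^{1/2}$ and using $\varphi_0(\SP)=\SP/v_0+O(\SP^3)$ (Proposition~\ref{prop:Y-zeta-small}) transports the explicit coefficient of $\eps^{1/2}/\SP$ in the outer expansion over to the coefficient of $Z^{-1}$ in $\mathcal{T}_0(Z)$, producing the claimed value of $c$.
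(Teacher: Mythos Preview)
Your small-$Z$ argument is essentially the paper's: both rest on the identity $\mathcal{T}_0(Z)=2\cos(\pi Z^2)\ee^{k_2(Z)}$ (your $\mathcal{Y}_0^+ + \mathcal{Y}_0^-$ is just this rewritten), together with the value $k_2'(0)=-2\ii(\sqrt{2}-1)\zeta(\tfrac12)$ that underlies Proposition~\ref{prop:Y0-asymp}.

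For large $Z$, however, your route diverges from the paper's and contains both redundancy and a gap. The paper does \emph{not} pass through $\mathcal{Y}_0^\pm$ or Phragm\'en--Lindel\"of at all; it uses only the matching step you save for the end. Concretely, one writes $\mathcal{T}_0(Z)=T_\epsilon(\varphi_0^{-1}(\epsilon^{1/2}Z))\bigl(1-\mathcal{E}_0(\SP)\epsilon^{1/2}+\bigo{\epsilon}\bigr)$ from Proposition~\ref{prop:T-zeta-small} and inserts the expansion of $T_\epsilon$ from Proposition~\ref{prop:T-outside}. The point you may have missed is that the two explicit $\epsilon^{1/2}$ corrections combine \emph{exactly}: using the identity $\SP=\varphi_0(\SP)\,v(-\varphi_0(\SP)^2)$, one finds
\[
-\frac{2\ii v_0}{\SP}\Bigl(1-\tfrac{1}{\sqrt2}\Bigr)\zeta(-\tfrac12)\epsilon^{1/2}
-\mathcal{E}_0(\SP)\epsilon^{1/2}
=-\frac{2\ii}{Z}\Bigl(1-\tfrac{1}{\sqrt2}\Bigr)\zeta(-\tfrac12),
\]
with no residual $\SP$-dependence. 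Hence $\mathcal{T}_0(Z)=1+cZ^{-1}+\bigo{\epsilon}=1+cZ^{-1}+\bigo{Z^{-2}}$ directly, uniformly for $|\arg(-\ii Z)|\le\delta$ (this sector corresponds to $\SP\in\Lambda_\sigma$). So the matching is not merely a device to read off $c$; it delivers the full two-term expansion with the $\bigo{Z^{-2}}$ remainder.

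This makes your Phragm\'en--Lindel\"of argument unnecessary, and as written it also has a gap: to bound $g(Z)=Z^2(\mathcal{T}_0(Z)-1-cZ^{-1})$ on the boundary rays you need $\mathcal{T}_0(Z)=1+cZ^{-1}+\bigo{Z^{-2}}$ there, but Proposition~\ref{prop:Y0-asymp} only supplies $\mathcal{Y}_0(Z)=1+\bigo{Z^{-1}}$, so your outer-wedge estimate yields merely $g(Z)=\bigo{Z}$, not bounded. You cannot close the Phragm\'en--Lindel\"of loop without already having the two-term expansion somewhere---and once you have it from the matching, the whole detour is moot.
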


\begin{proof}
\hyperlink{proof:TSmallLargeZ}{The proof} can be found in Appendix~\ref{sec-proofs}.
\end{proof}

\begin{restatable}[Asymptotic behavior of $T_\epsilon(\SP)$ for $\SP\approx \ii A_{\max}$]{prop}{Ttop}
Suppose that $A$ is a semicircular Klaus-Shaw potential.
Then 
the asymptotic formula
\eq
T_\epsilon(\SP) = 
\mathcal{T}_1\left(\frac{\varphi_1(\SP)}{\epsilon}\right)\left(1+\mathcal{E}_1(\SP)+\mathcal{O}\left(\epsilon\right)\right),\quad\epsilon\downarrow 0,\quad \varphi_1(\SP):=-\frac{\phaseint(\SP)}{\pi}
\label{T-near-iAmax}
\endeq
holds uniformly for $|\SP-\ii A_\mathrm{max}|$ sufficiently small, where the mapping $\SP\mapsto \varphi_1(\SP)$ is  conformal near $\SP=\ii A_\mathrm{max}$ with $\varphi_1(\ii A_\mathrm{max})=0$ and $\varphi_1'(\ii A_\mathrm{max})$ negative imaginary, and where
\eq
\mathcal{T}_1(W):=\frac{\sqrt{2\pi}\ee^W(-W)^{-W}}{\Gamma(\tfrac{1}{2}-W)},
\label{eq:T1-def}
\endeq
and where $\mathcal{E}_1(\SP)$ is defined in \eqref{eq:E1-def}.
\label{prop:T-zeta-near-iAmax}
\end{restatable}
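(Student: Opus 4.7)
The proof of Proposition~\ref{prop:T-zeta-near-iAmax} will proceed by combining the exact identity \eqref{eq:T-Y-relation} with the asymptotic expansion of $Y_\epsilon(\SP)$ near $\SP = \ii A_\mathrm{max}$ already furnished by Proposition~\ref{prop:Y-zeta-near-iAmax}. The model function $\mathcal{T}_1(W)$ will not be constructed from scratch; instead it will emerge from the product $\mathcal{Y}_1(W)\bigl(1+\ee^{\mp 2\pi\ii W}\bigr)$ after a Gamma-reflection simplification. Since $\varphi_1(\SP) = -\phaseint(\SP)/\pi$, we have $2\ii\phaseint(\SP)/\epsilon = -2\pi\ii W$ where $W:=\varphi_1(\SP)/\epsilon$, and so \eqref{eq:T-Y-relation} reads
\begin{equation*}
T_\epsilon(\SP) = Y_\epsilon(\SP)\bigl(1+\ee^{\mp 2\pi\ii W}\bigr),\quad \pm\re\{\SP\}>0.
\end{equation*}
Substituting the expansion of Proposition~\ref{prop:Y-zeta-near-iAmax} for $Y_\epsilon$ reduces the proposition to the purely algebraic identity $\mathcal{Y}_1(W)(1+\ee^{\mp 2\pi\ii W}) = \mathcal{T}_1(W)$, to be verified with the correct branch on each side of the imaginary $\SP$-axis.

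The second step is the simplification itself. Using the factorization $1+\ee^{\mp 2\pi\ii W}=2\ee^{\mp\pi\ii W}\cos(\pi W)$ together with the Euler reflection formula $\Gamma(W+\tfrac12)\cos(\pi W) = \pi/\Gamma(\tfrac12-W)$ and the definition \eqref{eq:Y1-def} of $\mathcal{Y}_1$, I obtain
\begin{equation*}
\mathcal{Y}_1(W)\bigl(1+\ee^{\mp 2\pi\ii W}\bigr) = \frac{\sqrt{2\pi}\,\ee^W\,W^{-W}\ee^{\mp\pi\ii W}}{\Gamma(\tfrac12-W)}.
\end{equation*}
Comparing with \eqref{eq:T1-def}, it remains to establish $W^{-W}\ee^{\mp\pi\ii W}=(-W)^{-W}$ with principal branches. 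Because $\varphi_1'(\ii A_\mathrm{max})$ is negative imaginary, the side $\re\{\SP\}>0$ corresponds to $\im\{W\}<0$, where the principal branch satisfies $\log(-W)=\log W + \ii\pi$ and hence $(-W)^{-W}=W^{-W}\ee^{-\ii\pi W}$, matching the upper sign; the mirror computation ($\log(-W)=\log W-\ii\pi$ in the upper half $W$-plane) handles $\re\{\SP\}<0$. This yields the asserted leading-order formula.

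The third step is control of the error. Propagating the factor $1+\mathcal{E}_1(\SP)\epsilon^{1/2}+\bigo{\epsilon}$ of Proposition~\ref{prop:Y-zeta-near-iAmax} through the bounded multiplication by $1+\ee^{\mp 2\pi\ii W}$ is immediate. Uniformity on a full neighborhood of $\SP=\ii A_\mathrm{max}$, including along the imaginary axis where $Y_\epsilon$ has its poles and its branch cut, is obtained by noting that the zeros of $\cos(\pi W)$ at $W=-(n+\tfrac12)$ (which are precisely the approximate eigenvalues $\SP=\ii\widetilde{s}_n$ scaled) cancel exactly against the zeros of $1/\Gamma(W+\tfrac12)$ in $\mathcal{Y}_1$, so that $\mathcal{T}_1(W)$ is finite and nonzero in the relevant region and the relative error makes sense uniformly.

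The principal issue to watch carefully — really only a bookkeeping matter — is the branch of $(-W)^{-W}$ in $\mathcal{T}_1$: the two independent computations on the two sides of the imaginary $\SP$-axis must glue together into a single analytic function of $\SP$ in the lens $\Lambda$, consistent with the fact that $T_\epsilon(\SP)$ itself has no jump across the imaginary axis (unlike $Y_\epsilon$). This enforced self-consistency provides a useful check that the principal-branch assignments made above are the correct ones and explains why the branch cut of $(-W)^{-W}$ on the positive real $W$-axis corresponds to the harmless (since $T_\epsilon$ is analytic) image of the imaginary segment below $\ii A_\mathrm{max}$ under $W=\varphi_1(\SP)/\epsilon$.
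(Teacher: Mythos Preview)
Your proof is correct and follows essentially the same route as the paper: combine the exact identity \eqref{eq:T-Y-relation} with Proposition~\ref{prop:Y-zeta-near-iAmax}, rewrite $1+\ee^{\mp 2\pi\ii W}=2\ee^{\mp\pi\ii W}\cos(\pi W)$, apply the reflection formula $\Gamma(W+\tfrac12)\cos(\pi W)=\pi/\Gamma(\tfrac12-W)$, and absorb $\ee^{\mp\pi\ii W}$ into the principal branch $(-W)^{-W}$ using the sign correspondence between $\re\{\SP\}$ and $\im\{W\}$. One small slip in your closing paragraph: since $\varphi_1'(\ii A_\mathrm{max})$ is negative imaginary, the positive real $W$-axis (where $(-W)^{-W}$ is cut) is the image of the imaginary segment \emph{above} $\ii A_\mathrm{max}$, not below---which is exactly what you want, as $T_\epsilon$ is analytic below $\ii A_\mathrm{max}$.
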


\begin{proof}
\hyperlink{proof:Ttop}{The proof} can be found in Appendix~\ref{sec-proofs}.
\end{proof}

\section{Proof of Theorem~\ref{thm-accuracy-t=0} for $x\in (X_-,X_+)\setminus\{x_0\}$}
\label{sec-zakharov-shabat}
In this section we apply the Deift-Zhou steepest-descent method to prove Theorem~\ref{thm-accuracy-t=0} for $x \in (X_-, X_+) \setminus \{ x_0 \} $, that is for $x$ inside the support of the initial data $A(x)$ and away from its unique maximizer $x=x_0$. 
The analysis is different for $x$ in one or the other of the open intervals
\begin{equation}
	J^- := (X_-, x_0), \qquad J^+ := (x_0, X_+).
\end{equation}
For technical reasons we do not consider the case when $x = x_0$, though our numerics (see Figure~\ref{fig:NLS_ICs}) suggest nothing interesting happens near the maximizer. 
In Section~\ref{sec:ZS-g} we construct the $g$-functions $g^\pm(\SP;x)$ which we will use for the steepest-descent analysis for $x \in J^\pm$. We then describe the remaining analysis for $x \in J^+$ in Sections~\ref{sec-steepest-descent}-\ref{sec-accuracy}. The modifications necessary to extend the proof to $x \in J^-$ are then described briefly in Section~\ref{sec-x<x_0}. Finally, in Section~\ref{sec-proof-Thm1} we combine the results to complete the proof of Theorem~\ref{thm-accuracy-t=0} for $x\in (X_-,X_+)\setminus\{x_0\}$.

\subsection{Two $g$-functions for inverse scattering}
\label{sec:ZS-g}
Let $A$ be a potential with the Klaus-Shaw property (see Definition~\ref{def:semicircularKS}).
Note that for $L(\SP)$ defined by \eqref{eq:Lfunc-def}, we may consider a point $\SP=\ii s$, $0<s<A_\mathrm{max}$ and compute the average of the boundary values taken on the branch cut at this point:
\begin{multline}
\overline{L}(\ii s):=\frac{1}{2}\left(L_+(\ii s)+L_-(\ii s)\right) \\
= (x_+(s)-x_-(s))s - \int_{-\infty}^{x_-(s)}
\left[\sqrt{s^2-A(x)^2}-s\right]\,\dd x - \int_{x_+(s)}^{+\infty}\left[\sqrt{s^2-A(x)^2}-s\right]\,\dd x.
\label{eq:Lbar-def-1}
\end{multline}
Comparing with the definition \eqref{eq:mu-define} of $\tailint(\ii s)$, we see that
\begin{equation}
\tailint(\ii s)\pm\overline{L}(\ii s)=2x_\pm(s)s-2\int_{x_\pm(s)}^{\pm\infty}\left[\sqrt{s^2-A(x)^2}-s\right]\,\dd x,\quad 0<s<A_\mathrm{max}.
\label{eq:mu-plus-Lbar}
\end{equation}
\begin{lem}
Let $A:\R \to [0,\infty)$ be a semicircular Klaus-Shaw potential with support $[X_-,X_+]$ (see Definition~\ref{def:semicircularKS}).  Then 
\begin{equation}
\tailint(\ii s)\pm\overline{L}(\ii s)=2X_\pm s - 2\int_{x_\pm(s)}^{X_\pm}\sqrt{s^2-A(x)^2}\,\dd x,\quad 0<s<A_\mathrm{max}.
\label{eq:mu-plus-Lbar-cpt}
\end{equation}
Furthermore, $\tailint(\ii s)\pm\overline{L}(\ii s)$ is analytic on $0<s<A_\mathrm{max}$ and extends to an odd analytic function of $s$ in a neighborhood of $s=0$ satisfying $\tailint(\ii s)\pm\overline{L}(\ii s)=2X_\pm s + \bigo{s^3}$.  Also for $0<s<A_\mathrm{max}$, $\mp(\tailint(\ii s)\pm\overline{L}(\ii s))\ge \mp 2X_\pm s$.
\label{lem:mu-Lbar}
\end{lem}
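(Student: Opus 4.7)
The plan is to start from the identity \eqref{eq:mu-plus-Lbar} derived just before the lemma and exploit the compact support of $A$.  Since $A(x)\equiv 0$ for $x\notin[X_-,X_+]$, the integrand $\sqrt{s^2-A(x)^2}-s$ vanishes identically for $x>X_+$ in the $+$ case (respectively for $x<X_-$ in the $-$ case).  Splitting the tail integral in \eqref{eq:mu-plus-Lbar} at $X_\pm$ thus gives
\begin{equation}
\int_{x_\pm(s)}^{\pm\infty}\!\bigl[\sqrt{s^2-A(x)^2}-s\bigr]\dd x
= \int_{x_\pm(s)}^{X_\pm}\!\sqrt{s^2-A(x)^2}\,\dd x \mp s\bigl(X_\pm - x_\pm(s)\bigr),
\end{equation}
and substituting into \eqref{eq:mu-plus-Lbar} the $x_\pm(s)s$ terms cancel, leaving \eqref{eq:mu-plus-Lbar-cpt}.

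For the analyticity statement near $s=0$, I would mimic the contour-integral argument in the proof of Proposition~\ref{prop:mu-odd-analytic}. Make the substitution $x=X_\pm\mp s^2 w$, so that $\dd x=\mp s^2\dd w$ and, using $A(x)^2=u(x)^2(X_+-x)(x-X_-)$,
\begin{equation}
\frac{A(X_\pm\mp s^2 w)^2}{s^2}=u(X_\pm\mp s^2w)^2\,w\,(X_+-X_-\mp s^2 w).
\end{equation}
Thus the integrand becomes $s\,R(w;s^2)$ with $R(w;s^2)^2=1-u(X_\pm\mp s^2w)^2w(X_+-X_-\mp s^2w)$, so the integral is $\tfrac12 s^3$ times a contour integral of $R(w;s^2)$ over a small teardrop loop $L$ enclosing the branch cut between $w=0$ and the root nearest the origin; as in Proposition~\ref{prop:mu-odd-analytic}, $L$ can be chosen independently of $s$ for $|s|$ small, and $R(w;s^2)$ admits a uniformly convergent Taylor expansion in $s^2$ on $L$.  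Term-by-term integration shows the integral is an even analytic function of $s$ at $s=0$, and thus $\tailint(\ii s)\pm\overline{L}(\ii s)=2X_\pm s+\bigo{s^3}$ is odd and analytic.  Analyticity on the open interval $0<s<A_\mathrm{max}$ follows from analyticity of $x_\pm(s)$ there (granted by $u\ge c>0$) together with analyticity of $\tailint$ and $\overline{L}$ on that interval.

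For the inequality, observe that on $(x_0,X_+)$ the function $A$ is strictly decreasing from $A_\mathrm{max}$ to $0$ (by the Klaus-Shaw property), so its inverse branch $x_+(s)$ satisfies $x_+(s)<X_+$ for $0<s<A_\mathrm{max}$ and $A(x)<A(x_+(s))=s$ for $x\in(x_+(s),X_+)$.  The integrand $\sqrt{s^2-A(x)^2}$ is therefore strictly positive on the interval of integration, so
\begin{equation}
\int_{x_+(s)}^{X_+}\sqrt{s^2-A(x)^2}\,\dd x\ge 0,
\end{equation}
yielding $\tailint(\ii s)+\overline{L}(\ii s)\le 2X_+s$, equivalently $-(\tailint(\ii s)+\overline{L}(\ii s))\ge -2X_+s$.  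The $-$ case is identical after noting that $A$ is strictly increasing on $(X_-,x_0)$, so $x_-(s)>X_-$ and $A(x)<s$ on $(X_-,x_-(s))$, giving a positive integral and hence $\tailint(\ii s)-\overline{L}(\ii s)\ge 2X_-s$.  I do not foresee a genuine obstacle; the only point requiring some care is making sure the teardrop contour in the analyticity argument can be chosen uniformly in $s$, which works because $u$ is analytic and bounded away from zero near $X_\pm$.
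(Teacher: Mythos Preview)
Your proof is correct and follows essentially the same route as the paper's. Two small remarks: (i) in your displayed splitting of the tail integral the sign should be $-s(X_\pm-x_\pm(s))$ in both cases, not $\mp s(X_\pm-x_\pm(s))$ --- this is just a typo, since your stated cancellation and conclusion are right; (ii) for analyticity at $s=0$ the paper uses the change of variable $x\mapsto y=A(x)^2$ followed by $y=s^2z$ (so the integral becomes $s^3\int_0^1\sqrt{1-z}\,x'(s^2z)\,\dd z$ with $x'(y)$ analytic near $y=0$), whereas you substitute $x=X_\pm\mp s^2w$ and invoke the teardrop-contour argument from Proposition~\ref{prop:mu-odd-analytic} --- both are valid and equivalent in spirit.
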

\begin{proof}
Analyticity on $0<s<A_\mathrm{max}$ follows from analyticity of $A(x)$ within its support, which also implies the analyticity of the turning point functions $x_\pm(s)$ on $0<s<A_\mathrm{max}$.  Equation \ref{eq:mu-plus-Lbar-cpt} 
follows since $A(x)$ has compact support $[X_-,X_+]$.
Since for $0<s<A_\mathrm{max}$ it holds that $x_\pm(s)\in (X_-,X_+)$, it follows that the integral may be dropped to obtain the inequality $\mp(\tailint(\ii s)\pm\overline{L}(\ii s))>2X_\pm s$.
Now observe that due to the Klaus-Shaw condition, $A(x)^2$ is monotone on the interval of integration, and hence the inverse function $x(y)$ satisfying $y=A(x)^2$ is well-defined.  Therefore, making the substitution $x=x(y)$ and rescaling by $y=s^2z$ gives
\begin{equation}
\begin{split}
\tailint(\ii s)\pm\overline{L}(\ii s)&=2X_\pm s + 2\int_0^{s^2}\sqrt{s^2-y}\,x'(y)\,\dd y\\
&=2X_\pm s + 2s^3\int_0^1\sqrt{1-z}\,x'(s^2z)\,\dd z,\quad 0<s<A_\mathrm{max}.
\end{split}
\label{eq:mu-pm-Lbar}
\end{equation}
Taking into account the representation $A(x)^2=u(x)^2(X_+-x)(x-X_-)$ on its support, with $u(x)$ being a positive analytic function on a complex neighborhood of $[X_-,X_+]$, it is obvious that $y=A(x)^2$ can be considered to be a univalent function on a neighborhood of each of the support endpoints $X_\pm$.  Therefore, $x'(y)$ is an analytic function of $y$ near $y=0$ and hence the right-hand side of \eqref{eq:mu-pm-Lbar} is an odd analytic function of $s$ near $s=0$.
\end{proof}

Let 
\begin{equation}
	R(\SP;x) = \sqrt{\SP^2 + A(x)^2}
\end{equation}	
be the function analytic for $\SP$ in the complex plane with a vertical cut between $\pm \ii A(x)$ omitted  that satisfies $R(\SP;x)=\SP+\bigo{\SP^{-1}}$ as $\SP\to\infty$.  Consider the function $g^\pm(\SP;x)$ defined in the same domain as $R(\SP;x)$ by the formula
\begin{align}
g^\pm(\SP;x):=&\,\frac{R(\SP;x)}{2\pi \ii}\left[\int_0^{A(x)}
\frac{2xs-\tailint(\ii s)\mp \overline{L}(\ii s)}{\sqrt{A(x)^2-s^2}(s+\ii \SP)}\,\dd s 
+ 
\int_0^{A(x)}\frac{2xs-\tailint(\ii s)\mp\overline{L}(\ii s)}{\sqrt{A(x)^2-s^2}(s-\ii \SP)}\,\dd s\right]\nonumber\\
{}=&\,\frac{R(\SP;x)}{2\pi \ii}\int_0^{A(x)}\frac{\phi^\pm(s;x)\,\dd s}{\sqrt{A(x)^2-s^2}(s^2+\SP^2)},
\label{eq:g-functions-zeta}
\end{align}
where 
\begin{equation}
\phi^\pm(s;x):=2s\cdot(2sx-\tailint(\ii s)\mp\overline{L}(\ii s)),\quad 0<s<A_\mathrm{max}.
\label{eq:phi-pm-define}
\end{equation}  
According to the identities \eqref{eq:mu-plus-Lbar}, $g^\pm(\SP;x)$ is a kind of integral transform of the turning point function $x_\pm(\cdot)$.  If $A(\cdot)$ is a semicircular Klaus-Shaw potential (cf. Definition~\ref{def:semicircularKS}) then by Lemma~\ref{lem:mu-Lbar}, $\phi^\pm(s;x)$ may be uniquely extended to a complex neighborhood of $[-A(x),A(x)]$ as an even analytic function of $s$ (which we also denote by $\phi^\pm(s;x)$), in which case we can write $g^\pm(\SP;x)$ in the form
\begin{equation}
\begin{split}
g^\pm(\SP;x)&=\frac{R(\SP;x)}{4\pi \ii}\int_{-A(x)}^{A(x)}\frac{\phi^\pm(s;x)\,\dd s}{\sqrt{A(x)^2-s^2}(s^2+\SP^2)} \\ &= \frac{R(\SP;x)}{8\pi \ii}\oint_L\frac{\phi^\pm(s;x)\,\dd s}{R(\ii s;x)(s^2+\SP^2)},\quad \text{$\ii\SP$ and $-\ii\SP$ exterior to $L$},
\end{split}
\label{eq:g-functions-zeta-outside}
\end{equation}
where $L$ is a simple closed curve in the domain of analyticity of $\phi^\pm(s;x)$ that encircles the interval $[-A(x),A(x)]$ 
once in the positive sense.  If we want to allow $\ii\SP$ and $-\ii\SP$ to approach the interval $[-A(x),A(x)]$, we can pay the price of two residues
and obtain (using oddness of $R(\diamond;x)$ and evenness of $\phi^\pm(\cdot;x)$)
\begin{equation}
g^\pm(\SP;x)=\frac{\phi^\pm(-\ii\SP;x)}{4\ii\SP}+\frac{R(\SP;x)}{8\pi \ii}
\oint_{L}\frac{\phi^\pm(s;x)\,\dd s}{R(\ii s;x)(s^2+\SP^2)},\quad \text{$\ii\SP$ and $-\ii\SP$ interior to $L$}.
\label{eq:g-functions-zeta-inside}
\end{equation}

\begin{restatable}{prop}{gprop}
Let $A(\cdot)$ be a semicircular Klaus-Shaw potential with support $[X_-,X_+]$ and maximizer $x_0$.  The function $g^\pm(\SP;x)$, for $x \in [X_-,X_+]$, has the following properties.  
\begin{itemize}
\item[G1:] $g^\pm(\SP;x)$ is analytic and uniformly bounded in its domain of definition.  
\item[G2:] $g^\pm(\SP;x)$ is an odd function of $\SP$.  
\item[G3:] $g^\pm(\SP^*;x)=-g^\pm(\SP;x)^*$, and in particular $g^\pm(\SP;x)$ is imaginary for real $\SP\neq 0$.
\item[G4:] The sum of boundary values taken by $g^\pm$ on its branch cut satisfies
\begin{equation}
g_+^\pm(\ii s;x)+g_-^\pm(\ii s;x)=-2sx + \tailint(\ii s)\pm\overline{L}(\ii s),\quad 0<s<A(x).
\label{eq:g-zeta-plus-minus}
\end{equation}
\item[G5:] $g^\pm(\SP;x)=\bigo{\SP^{-1}}$ as $\SP \to \infty$.
\item[G6:] 
If $\pm(x-x_0)>0$, then there exist analytic functions $\SP\mapsto G^\pm_1(\SP;x)$ and $\SP\mapsto G^\pm_2(\SP;x)$ defined in a neighborhood $D(x)$ of $\ii A(x)$ such that 
\begin{equation}
g^\pm(\SP;x)=G_1^\pm(\SP;x)+(-\ii\SP-A(x))^{3/2}G_2^\pm(\SP;x)
\end{equation}
holds for $\SP\in D(x)\setminus B(x)$, where $B(x)$ denotes the branch cut of $g^\pm(\SP;x)$.  
\item[G7:] $g^\pm(\SP;x)\to 0$ as $x\downarrow X_-$ or $x\uparrow X_+$.  
\item[G8:] The partial derivative of $g^\pm(\SP;x)$ with respect to $x$ is given explicitly by $g^\pm_x(\SP;x)=\ii(\SP-R(\SP;x))$.  
\item[G9:] We have the identities
\begin{equation} 
g^+(\SP;x)=-\ii\int_x^{X_+}(\SP-R(\SP;y))\,\dd y,\quad
g^-(\SP;x)=\ii\int_{X_-}^x(\SP-R(\SP;y))\,\dd y,\quad x\in [X_-,X_+].
\label{eq:gplus-gminus-x-integral}
\end{equation}
Recalling the function $L(\SP)$ defined by \eqref{eq:Lfunc-def}--\eqref{eq:mu-plus-minus-define}, we also have the identity
\begin{equation}
L(\SP)-g^+(\SP;x)=-g^-(\SP;x),\quad x\in [X_-,X_+].
\label{eq:gplus-gminus-L-identity}
\end{equation}
\end{itemize}
\label{prop:g-properties}
\end{restatable}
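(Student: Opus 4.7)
Properties G1--G5 follow directly from the integral representations \eqref{eq:g-functions-zeta-outside} and \eqref{eq:g-functions-zeta-inside}.  The contour $L$ in \eqref{eq:g-functions-zeta-outside} can be chosen independent of $\SP$ on compacta inside the analyticity domain of $\phi^\pm(\diamond;x)$ supplied by Lemma~\ref{lem:mu-Lbar}, giving analyticity of $g^\pm(\SP;x)$ off the branch cut of $R(\SP;x)$ and local boundedness (G1).  Oddness in $\SP$ (G2) follows from the oddness of $R$ in $\SP$ and the evenness of $(s^2+\SP^2)^{-1}$; Schwarz symmetry (G3) follows from real-analyticity of $\phi^\pm$ on a Schwarz-symmetric contour; the decay $g^\pm=\bigo{\SP^{-1}}$ (G5) follows by expanding $R(\SP;x)/(s^2+\SP^2)=\SP^{-1}+\bigo{\SP^{-3}}$ at infinity.

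For G4, apply Plemelj--Sokhotski to \eqref{eq:g-functions-zeta-inside} on the cut: the first ``residue'' term $\phi^\pm(-\ii\SP;x)/(4\ii\SP)$ is continuous across the cut and evaluates at $\SP=\ii s$ to $-sx+\tfrac{1}{2}(\tailint(\ii s)\pm\overline L(\ii s))$, while the second term flips sign via $R_+(\ii s;x)+R_-(\ii s;x)=0$; adding boundary values yields \eqref{eq:g-zeta-plus-minus}.  For G7, combining Lemma~\ref{lem:mu-Lbar} with Proposition~\ref{prop:mu-odd-analytic} gives $\phi^\pm(s;x)=4s^2(x-X_\pm)+\bigo{s^4}$ as $s\to 0$ uniformly for $x$ near $X_\pm$; rescaling $s=A(x)t$ in \eqref{eq:g-functions-zeta} then yields $g^\pm(\SP;x)=\bigo{A(x)^2}$ uniformly on compacta in $\SP$, which tends to zero as $x\to X_\pm$ since $A(X_\pm)=0$.

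For G8 and G9, introduce the candidate functions $h^+(\SP;x):=-\ii\int_x^{X_+}(\SP-R(\SP;y))\,\dd y$ and $h^-(\SP;x):=\ii\int_{X_-}^x(\SP-R(\SP;y))\,\dd y$.  Monotonicity of $A$ on $[x,X_+]$ and $[X_-,x]$ (unique nondegenerate maximizer of Definition~\ref{def:semicircularKS}) implies $h^\pm(\SP;x)$ is analytic off $[-\ii A(x),\ii A(x)]$; oddness, Schwarz symmetry, and $\bigo{\SP^{-1}}$ decay follow directly from the integral form.  A Plemelj calculation of the boundary-value sum of $h^\pm$ on the cut, together with Lemma~\ref{lem:mu-Lbar}, shows it satisfies the identity \eqref{eq:g-zeta-plus-minus} satisfied by $g^\pm$; a parallel computation of the jumps of $h^\pm$ matches the jumps of $g^\pm$ obtained from \eqref{eq:g-functions-zeta-inside} by collapsing $L$ onto $[-A(x),A(x)]$ and collecting residues at $s=\mp\ii\SP$.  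Both functions are thus analytic off the cut with the same boundary values and the same $\bigo{\SP^{-1}}$ decay at infinity, so they agree via the Cauchy representation.  This is G9; differentiating $h^\pm$ in $x$ yields G8.  The identity \eqref{eq:gplus-gminus-L-identity} then reduces to $L(\SP)=-\ii\int_{X_-}^{X_+}(\SP-R(\SP;y))\,\dd y$, which matches \eqref{eq:Lfunc-def}--\eqref{eq:mu-plus-minus-define} under the principal-branch identification $\sqrt{-\SP^2-A(y)^2}=-\ii R(\SP;y)$ valid in $\mathbb{C}_+$.

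G6 is the main technical obstacle.  Using G9 and setting $\SP=\ii A(x)+z$, $y=x+u$, the crucial local expansion is
\begin{equation*}
R(\SP;y)^2=A(y)^2-A(x)^2+2\ii A(x)z+z^2=-\mu u+2\ii A(x)z+\bigo{u^2+u|z|+|z|^2},
\end{equation*}
where $\mu:=-2A(x)A'(x)>0$ uses $A'(x)<0$ on $J^+$, a strict inequality ensured by the Klaus-Shaw hypothesis (unique maximizer at $x_0$ with $A''(x_0)<0$, so $A$ is strictly decreasing on $J^+$).  A Malgrange-style preparation supplies the analytic factorization $R^2=\alpha(u,z)(-u+\beta(z))$ near $(u,z)=(0,0)$ with $\alpha(0,0)=\mu$ and $\beta(z)=2\ii A(x)z/\mu+\bigo{z^2}$.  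Evaluating $\int_0^\epsilon\sqrt{\alpha(u,z)(-u+\beta(z))}\,\dd u$ near the endpoint $u=0$ produces an analytic function of $z$ plus a $\beta(z)^{3/2}$ term, which is an analytic multiple of $(-\ii z)^{3/2}=(-\ii\SP-A(x))^{3/2}$; the remaining contribution from $u$ bounded away from zero is analytic in $\SP$ near $\ii A(x)$.  Assembling these yields $g^+(\SP;x)=G_1^+(\SP;x)+(-\ii\SP-A(x))^{3/2}G_2^+(\SP;x)$ with $G_1^+,G_2^+$ analytic near $\SP=\ii A(x)$; the case of $g^-$ on $J^-$ is symmetric.
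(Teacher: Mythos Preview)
Your argument is correct in substance, but it diverges from the paper's proof in two places that deserve comment.

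For G8 and G9 you reverse the paper's order: you build the candidate integrals $h^\pm$ first, match their boundary data and decay with those of $g^\pm$ to conclude $h^\pm=g^\pm$, and then differentiate to obtain G8.  The paper instead differentiates the boundary-value identity G4 with respect to $x$, observes that $g^\pm_x$ solves a scalar Riemann--Hilbert problem whose unique solution is $\ii(\SP-R(\SP;x))$, and only then integrates (using G7) to recover \eqref{eq:gplus-gminus-x-integral}.  The paper's route works for every $x\in[X_-,X_+]$.  Yours relies on the claim that $A$ is monotone on $[x,X_+]$ (for $h^+$) and on $[X_-,x]$ (for $h^-$), which holds only when $\pm(x-x_0)\ge 0$; for the opposite sign the interval straddles $x_0$ and the integral defining $h^\pm$ acquires a genuine jump on the segment $\ii A(x)<\ii s<\ii A_{\max}$ that $g^\pm$ does not have.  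So as written you have established G8 and G9 only on the ``correct'' half-interval in $x$.  This is easily repaired (either by the paper's RHP argument for $g^\pm_x$, or by first proving $g^+-g^-=L$ directly from \eqref{eq:g-functions-zeta} and then transferring the integral formula across $x_0$), but you should not assert monotonicity on both subintervals simultaneously.

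For G6 you take a different, more geometric route than the paper.  The paper works from \eqref{eq:g-functions-zeta-inside}: it identifies $G_1^\pm(\SP;x)=\phi^\pm(-\ii\SP;x)/(4\ii\SP)$, notes that the remaining factor $R(\SP;x)\oint_L\cdots$ vanishes at least like $(\SP-\ii A(x))^{1/2}$, and then shows by an explicit computation (integration by parts, interchange of integration order, and evaluation of an inner integral to $\pi/2$) that the loop integral itself vanishes at $\SP=\ii A(x)$, upgrading the exponent to $3/2$.  Your local Weierstrass-preparation argument on the G9 integral is a valid alternative that avoids that computation; since G6 is only asserted for $\pm(x-x_0)>0$, the restriction in your G9 proof causes no trouble here.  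Similarly, your rescaling proof of G7 is a fine substitute for the paper's observation that $R(\SP;x)\to\SP$ as $A(x)\to 0$ makes the loop integrand in \eqref{eq:g-functions-zeta-outside} analytic inside $L$, so the integral vanishes by Cauchy's theorem.
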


\begin{proof}
\hyperlink{proof-g-prop}{The proof} can be found in Appendix~\ref{sec-g-prop}.
\end{proof}
The formula \eqref{eq:g-functions-zeta-inside} motivates the introduction of a function $h^\pm(\SP;x)$ related to $g^\pm(\SP;x)$ as follows:
\begin{equation}
\begin{split}
h^\pm(\SP;x):=&\pm\left(g^\pm(\SP;x)-\frac{1}{2}(\tailint(\SP)\pm\overline{L}(\SP)+2\ii\SP x)\right)\\
{}=&\pm\frac{R(\SP;x)}{8\pi \ii}\oint_L\frac{\phi^\pm(s;x)\,\dd s}{R(\ii s;x)(s^2+\SP^2)},\quad
\text{$\ii\SP$ and $-\ii\SP$ interior to $L$}.
\end{split}
\label{eq:h-def-zeta}
\end{equation}
\begin{restatable}{prop}{hprop}
Let $A(\cdot)$ be a semicircular Klaus-Shaw potential with support $[X_-,X_+]$ and maximizer $x_0$. For $x \in J^\pm_c$ an arbitrary compact subset of $J^\pm$, the corresponding function $h^\pm(\SP;x)$ has the following properties:
\begin{itemize}
\item[H1:] There is a conformal mapping $\SP\mapsto W(\SP)$ defined in a neighborhood $D(x)$ of $\SP=\ii A(x)$ such that $4h^\pm(\SP;x)^2=W(\SP)^3$ for $\SP\in D(x)$ and $W(\SP)>0$ for $\SP\in D(x)$ with $A(x)<-\ii\SP$.
\item[H2:] Given $\delta>0$ sufficiently small there exists a positive constant $\eta=\eta(J_c^\pm,\delta)$ such that
$h^\pm(\SP;x)>\eta$ for $A(x)+\delta<-\ii\SP<A_\mathrm{max}$ and
$\re\{h^\pm(\SP;x)\}<-\eta$ for $\delta<|\re\{\SP\}|<2\delta$ and $\delta<\imag\{\SP\}<A(x)-\delta$.  
\item[H3:] Given $\delta>0$ sufficiently small there exists a positive constant $\eta=\eta(J^\pm_c,\delta)$ such that $\re\{h^\pm(\SP;x)- \ii\phaseint(\SP)\}>\eta$ holds on the parabolic arc 
$\re\{\SP\}=\delta\imag\{\SP\}(A_\mathrm{max}-\imag\{\SP\})$ with $\delta<\imag\{\SP\}<A_\mathrm{max}$.
Similarly, $\re\{h^\pm(\SP;x)+\ii\phaseint(\SP)\}>\eta$ holds on the parabolic arc 
$\re\{\SP\}=-\delta\imag\{\SP\}(A_\mathrm{max}-\imag\{\SP\})$ with $\delta<\imag\{\SP\}<A_\mathrm{max}$.
\item[H4:] The boundary values $h^\pm_+(\SP;x)$ and $h^\pm_-(\SP;x)$ taken by $h^\pm(\SP;x)$ on the branch cut $-A(x)\le -\ii\SP\le A(x)$ from the right and left half-planes respectively are both analytic at $\SP=0$ with convergent power series consisting of even powers of $\SP$.  Also, $h^\pm_-(\SP;x)=-h^\pm_+(\SP;x)$, and $h^\pm_+(\SP;x)=\ii\alpha^\pm(x) + \ii\beta^\pm(x)\SP^2 + \bigo{\SP^4}$ as $\SP\to 0$ where $\alpha^\pm(x)$ and $\beta^\pm(x)$ are real, and where $\beta^\pm(x) \geq c$, for $c >0$ a constant depending on $J_c^\pm$.
\item[H5:] Recalling that $\phaseint(\SP)$ is an even analytic function of $\SP$ near $\SP=0$, the even analytic function $h^\pm_+(\SP;x)-\ii\phaseint(\SP)$ satisfies $h^\pm_+(\SP;x)-\ii\phaseint(\SP)= \ii(\alpha^\pm(x)-\phaseint_0) +\ii(\beta^\pm(x)-\phaseint_1)\SP^2 + \bigo{\SP^4}$ as $\SP\to 0$, where the real coefficients $\phaseint_0$ and $\phaseint_1$ are given by \eqref{eq:Psi0-Psi1}, and where $\beta^\pm(x)-\phaseint_1\leq - c$, for $c > 0$ a constant depending on $J_c^\pm$.  Also $h^\pm_-(\SP;x)+\ii\phaseint(\SP)=-(h^\pm_+(\SP;x)-\ii\phaseint(\SP))$.
\item[H6:] 
The boundary values taken by $h^\pm(\SP;x)$ on the branch cut $0<-\ii\SP<A(x)$ can be expressed in terms of the difference in boundary values taken by $g^\pm(\SP;x)$ on the same cut:
\begin{equation}
g^\pm_+(\SP;x)-g^\pm_-(\SP;x)=\pm 2h_+^\pm(\SP;x) = \mp 2h_-^\pm(\SP;x),\quad 0<-\ii\SP<A(x).
\label{eq:gdiff-h}
\end{equation}
\end{itemize}
\label{prop:h-properties}
\end{restatable}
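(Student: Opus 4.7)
My plan is to deduce H1, H4, H5, and H6 from a uniform decomposition of $g^\pm$ across its branch cut, and then to prove H2 and H3 by explicit sign analysis using the contour integral representation \eqref{eq:h-def-zeta} together with Lemma~\ref{lem:mu-Lbar}. The starting observation is an identity obtained by direct computation from the definitions of $\phi^\pm$, $\tailint$, and $\overline{L}$ (and the oddness of the latter two),
\[
\frac{\phi^\pm(-\ii\SP;x)}{2\ii\SP} = \tailint(\SP) \pm \overline{L}(\SP) + 2\ii\SP x,
\]
which shows that the odd-in-$\SP$ analytic part of the representation \eqref{eq:g-functions-zeta-inside} of $g^\pm$ coincides exactly with the analytic function subtracted in the definition of $h^\pm$. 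Consequently, the boundary values on the cut $0 < -\ii\SP < A(x)$ take the form
\[
h^\pm_\pm(\SP;x) = \pm R_\pm(\SP;x)\,J^\pm(\SP^2;x),
\]
where $R_\pm$ are the boundary values of $R(\SP;x)$ (with $R_- = -R_+$ on the cut) and $J^\pm$ is analytic in $\SP^2$ in a complex neighborhood of $0$. Since $R_\pm$ is even in $\SP$, H4 is immediate in form; reality of $\alpha^\pm, \beta^\pm$ comes from G3; and H6 follows at once from $g^\pm_+ - g^\pm_- = (R_+-R_-)J^\pm = 2R_+J^\pm = \pm 2h^\pm_+$.

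For H1, I combine the above cancellation with G6: near $\ii A(x)$, $g^\pm = G_1^\pm + (-\ii\SP-A(x))^{3/2}G_2^\pm$, and matching the sum of boundary values from G4 against $2G_1^\pm$ (the $3/2$-power has opposite boundary values on the cut just below $\ii A(x)$) forces $G_1^\pm(\SP;x) = \frac{1}{2}(\tailint(\SP)\pm\overline{L}(\SP)+2\ii\SP x)$ locally. Hence $h^\pm(\SP;x) = \pm(-\ii\SP-A(x))^{3/2}G_2^\pm(\SP;x)$, and H1 follows by setting $W(\SP) := (-\ii\SP-A(x))\bigl(4G_2^\pm(\SP;x)^2\bigr)^{1/3}$ with the real-positive cube root. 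Conformality at $\SP = \ii A(x)$ reduces to the non-vanishing of $G_2^\pm(\ii A(x);x)$, which is checked directly from \eqref{eq:g-functions-zeta-inside} by isolating the leading contribution of the endpoint singularity at $s = A(x)$; positivity of $W$ for $-\ii\SP > A(x)$ is fixed by the same branch choice. Property H5 is a restatement of H4 after subtracting the convergent even series for $\ii\phaseint$ from Proposition~\ref{prop:Psi-even-analytic}. The strict inequalities $\beta^\pm(x) \ge c$ in H4 and $\phaseint_1 - \beta^\pm(x) \ge c$ in H5 are obtained by expressing each of these coefficients as an explicit integral against a sign-definite density (provided by Lemma~\ref{lem:mu-Lbar} together with the representation of $x_\pm(s)$), and invoking continuity in $x$ together with compactness of $J^\pm_c$.

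Properties H2 and H3 are direct sign analyses. For H2(a), I set $\SP = \ii s_0$ with $A(x)+\delta < s_0 < A_\mathrm{max}$ and collapse the loop $L$ in \eqref{eq:h-def-zeta} onto the real cut of $R(\ii s;x)$, expressing $h^\pm(\ii s_0;x)$ as a single real integral whose sign is read off from $\phi^\pm(s;x)<0$ for $x\in J^\pm$ (a consequence of Lemma~\ref{lem:mu-Lbar} combined with the geometric interpretation of $x_\pm(s)$), giving uniform strict positivity away from $\ii A(x)$. For H2(b), H4 yields $\re h^\pm=0$ on the cut; the transverse derivative $\partial_\SP h^\pm_\pm(\ii s;x)$ is real and strictly negative by the lower bound on $\beta^\pm$, and a first-order Taylor expansion perpendicular to the cut gives $\re h^\pm<-\eta$ in a narrow neighborhood, extended uniformly to the specified region by continuity and compactness. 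For H3, the function $h^\pm \mp \ii\phaseint$ is analytic across $0 < -\ii\SP < A(x)$ because its jump vanishes by H4 and the even analyticity of $\phaseint$, so the analogous transverse-derivative argument applies with slope $\phaseint_1-\beta^\pm>0$ from H5; the resulting strict positivity near the cut is then propagated to the parabolic arc by using that $\re(h^\pm \mp \ii\phaseint)$ is harmonic in a simply connected region whose boundary one controls. The principal technical obstacle throughout is establishing \emph{uniform} strictness of these inequalities on compact $J^\pm_c$, which requires carefully controlling the integrand contributions from the endpoints $\pm A(x)$ of the branch cut—manageable here thanks to the semicircular (square-root) behavior of $A$ built into Definition~\ref{def:semicircularKS}.
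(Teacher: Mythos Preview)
Your approach to H1, H4, and H6 is sound and essentially parallel to the paper's: both factor $h^\pm=\tfrac{R}{2\pi\ii}\cdot(\text{even analytic})$ and identify the first term of \eqref{eq:g-functions-zeta-inside} with the piece subtracted in \eqref{eq:h-def-zeta}.  However, your arguments for H2(b), H3, and the strict inequality in H5 contain a real gap.

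For H2(b) you appeal to ``the transverse derivative $\partial_\SP h^\pm_\pm(\ii s;x)$ is real and strictly negative by the lower bound on $\beta^\pm$,'' and for H3 you invoke the slope $\phaseint_1-\beta^\pm$.  But $\beta^\pm$ is merely the coefficient of $\SP^2$ in the Taylor expansion of $h^\pm_+$ \emph{at} $\SP=0$; it gives no information about the tangential derivative of $\im h^\pm_+(\ii s;x)$ for $s$ bounded away from~$0$, which is what a Cauchy--Riemann argument needs to control $\re h^\pm$ on the whole strip $\delta<\im\SP<A(x)-\delta$.  Your claim that $h^\pm\mp\ii\phaseint$ is analytic across the cut is also incorrect: by H4 the jump of $h^\pm$ there equals $2h^\pm_+\neq 0$, and $\ii\phaseint$ is entire, so the jump persists.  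What is true is that the boundary values are purely imaginary, so $\re(h^\pm\mp\ii\phaseint)$ is continuous across the cut --- but to get a definite sign off the cut you need monotonicity of the imaginary part along the \emph{entire} segment, not just its quadratic germ at the origin.

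The paper supplies this via a tool you do not develop.  Integrating the identity $h^\pm_x=\mp\ii R(\SP;x)$ (an immediate consequence of G8) from the turning point $x_\pm(-\ii\SP)$ gives
\[
h^\pm(\SP;x)=\mp\ii\int_{x_\pm(-\ii\SP)}^x R(\SP;y)\,\dd y,\qquad \pm(x-x_0)>0.
\]
This yields H2(a) by inspection, and upon differentiating in $\SP$ gives strict monotonicity of the even analytic factor $H^\pm$ along the full segment $0<-\ii\SP<A_\mathrm{max}$, which is what drives the Cauchy--Riemann arguments for H2(b) and H3.  Combined with G9, the same formula produces the derivative inequality for $2\phaseint\mp(-\ii[g^\pm_+-g^\pm_-])$ on which H3 rests, and also gives the weak bound $\beta^\pm-\phaseint_1\le 0$ in H5.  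For the \emph{strict} inequality in H5 the paper does not exhibit a sign-definite integral as you suggest; instead it computes $(\beta^\pm-\phaseint_1)_x=\mp 1/(2A(x))$ to rule out interior zeros, an argument your sketch does not cover.
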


\begin{proof}
\hyperlink{proof-h-prop}{The proof} can be found in Appendix~\ref{sec-g-prop}.
\end{proof}

\subsection{Steepest-descent analysis for $x \in J^+$}
\label{sec-steepest-descent}
\noindent

\subsubsection{Removal of the poles}
For $x \in J^+$, we begin by interpolating the residues at the poles to replace the meromorphic function $\widetilde{\mathbf{M}}(\SP;x,\mathbf{0})$ satisfying Riemann-Hilbert Problem~\ref{rhp-meromorphic} with a sectionally analytic function.  Let $\Sigma_0 = [0, \ii A_\text{max}]$ be oriented from $\ii A_\text{max}$ to $0$; 
let $\Sigma_\pm$ be oriented contours from $\SP = 0$ to $\SP = \ii A_\text{max}$ lying in $\pm\Re\{\SP \}> 0 $ away from its endpoints such that the parabolic lens region $\Lambda$ defined by \eqref{eq:Lambda-def} is enclosed by $\Sigma_+ \cup \Sigma_-$.  
Denote by $\Omega_\pm$ the region enclosed between $\Sigma_0$ and $\Sigma_\pm$. 
See Figure~\ref{fig-Q-jumps}.

\begin{figure}[h]
\begin{center}
\scalebox{.7}{
\begin{tikzpicture}[>=stealth]
\coordinate (O) at (0,0);
\coordinate (iA) at (0,3);
\coordinate (iAc) at (0,-3);
\coordinate (top) at (0,4.5);
\coordinate (bottom) at (0,-4.5);
\coordinate (leftmax) at (-5,0);
\coordinate (rightmax) at (5,0);

\draw[dashed,thick]  (leftmax) node[left] {$\phantom{\Im(\SP)=0}$} -- (rightmax) node[right, scale=1.5] {$\imag(\SP)=0$};

\fill[color=blue!50, fill opacity=0.4] (O) ..controls (2,2) and (2,3.5).. (top) ..controls (-2,3.5) and (-2,2).. (O);
\fill[color=blue!50, fill opacity=0.4] (O) ..controls (2,-2) and (2,-3.5).. (bottom) ..controls (-2,-3.5) and (-2,-2).. (O);
\draw[->-=0.4, ultra thick, draw=black] (O) ..controls (2,2) and (2,3.5).. (top);
\draw[->-=0.4, ultra thick, draw=black] (O) ..controls (-2,2) and (-2,3.5).. (top);
\draw[->-=0.4, ultra thick, draw=black] (O) ..controls (2,-2) and (2,-3.5).. (bottom);
\draw[->-=0.4, ultra thick, draw=black] (O) ..controls (-2,-2) and (-2,-3.5).. (bottom);
\draw[-<-=0.4, ultra thick] (O) -- (top) 
  node[pos=0.7, pin={[pin distance=10mm, scale=1.5, pin edge={<-,black}]20: $\Sigma_0$ }] {};  
\draw[-<-=0.4, ultra thick] (O) -- (bottom)
  node[pos=0.7, pin={[pin distance=10mm, scale=1.5, pin edge={<-,black}]-20: $\Sigma_0^*$ }] {};

\node[scale=1.5] at (.8,2.5) {$\Omega_+$};
\node[scale=1.5] at (-.6,2.5) {$\Omega_-$};
\node[scale=1.5] at (.8,-2.5) {$\Omega_+^*$};
\node[scale=1.5] at (-.6,-2.5) {$\Omega_-^*$};
\node[scale=1.5] at (1.8,1.5) {$\Sigma_+$};
\node[scale=1.5] at (-1.6,1.5) {$\Sigma_-$};
\node[scale=1.5] at (1.8,-1.4) {$\Sigma_+^*$};
\node[scale=1.5] at (-1.6,-1.4) {$\Sigma_-^*$};

\end{tikzpicture} 
}
\end{center}
\caption{\label{fig-Q-jumps} 
The regions $\Omega_\pm$ and the jump contours 
$\Sigma_0$ and $\Sigma_\pm$ (with their orientations) used to define the matrix transformation $\widetilde{\bf{M}} \mapsto \bf{Q}$ in \eqref{eq:Qdef}.
}
\end{figure}
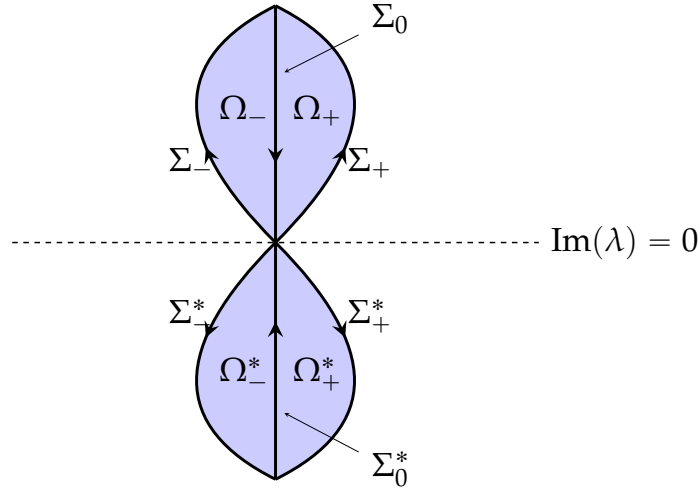

For any fixed $K \in \Z$ and $(x,t_2,t_3,\dots,t_M)\in\mathbb{R}^M$, define the exponent function
\begin{equation}
	2f_K(\SP;x,\mathbf{t}) :=  \ii (2K+1) \phaseint(\SP) + \tailint(\SP) + 2\ii Q(\SP;x,\mathbf{t}),
	\label{eq:fK-define}
\end{equation}
where $Q$ is defined in \eqref{eq:cn0},
which is analytic in $\Omega_+ \cup \Omega_-$. Let $K_+:=0$ and $K_-:=-1$. Define
\eq
{\bf Q}(\SP;x):=\begin{cases} \widetilde{\bf M}(\SP;x,\mathbf{0}) 
\begin{pmatrix} 1 & 0 \\ -\ii(-1)^{K_\pm}\widetilde{a}(\lambda)^{-1} \ee^{2f_{K_\pm}(\SP;x,\mathbf{0})/\epsilon} & 1 \end{pmatrix}, 
& \SP\in\Omega_\pm, \\ 
{\bf Q}(\SP^*;x)^{-\dagger}, & \SP\in\Omega_\pm^*, \\ \widetilde{\bf M}(\SP;x,\mathbf{0}), & \text{otherwise}. \end{cases}
\label{eq:Qdef}
\endeq
The resulting Riemann-Hilbert problem satisfied by $\mathbf{Q}(\SP;x)$ is as follows.

\begin{myrhp}[Sectionally Analytic Problem]
Given $\epsilon>0$ and $x \in J^+$, seek a $2\times 2$ matrix function $\mathbf{Q}(\SP)=\mathbf{Q}(\SP;x)$ with the following properties.
\begin{itemize}
\item[]\textit{\textbf{Analyticity:}}  $\mathbf{Q}(\SP)$ is analytic for $\SP\in\mathbb{C}\setminus\Sigma$ and satisfies the Schwarz symmetry condition $\mathbf{Q}(\SP^*)=\mathbf{Q}(\SP)^{-\dagger}$.
\item[]\textit{\textbf{Jump conditions:}} $\mathbf{Q}(\SP)$ takes continuous boundary values on $\Sigma$ from each maximal connected component of $\mathbb{C}\setminus\Sigma$.  Given a point $\SP\in\mathbb{C}_+$ on one of the oriented arcs of $\Sigma$, let the boundary value taken at $\SP$ by $\mathbf{Q}$ from the left (respectively, right) be denoted $\mathbf{Q}_+(\SP)$ (respectively, $\mathbf{Q}_-(\SP)$).  Then 
\begin{equation}
\mathbf{Q}_+(\SP)=\mathbf{Q}_-(\SP)\mathbf{V^Q}(\SP),
\end{equation}
where the jump matrix $\mathbf{V^Q}(\SP)=\mathbf{V^Q}(\SP;x)$ is defined on the various arcs of $\Sigma\cap\mathbb{C}_+$ by 
\begin{equation}
\mathbf{V^Q}(\SP;x):=\begin{pmatrix} 1 & 0 \\ -\ii T_\epsilon(\SP) \ee^{\varphi^+(\SP;x)/\epsilon} & 1 \end{pmatrix}
,\quad \SP\in\Sigma_0,
\label{eq:jump+++1}
\end{equation}
\eq
\mathbf{V^Q}(\SP;x):= \begin{pmatrix} 1 & 0 \\ -\ii Y_\epsilon(\SP)\ee^{[\varphi^+(\SP;x)\pm 2\ii\phaseint(\SP)]/\epsilon} & 1 \end{pmatrix}, \quad \SP\in\Sigma_\pm,
\label{eq:jump+++2}
\endeq
where 
\eq
\label{eq:varphi+}
\varphi^+(\lambda;x):=\tailint(\lambda)+2\ii Q(\lambda;x,\mathbf{0})+\overline{L}(\lambda).
\endeq
Corresponding jump conditions on the arcs of $\Sigma$ in the lower half-plane are induced by the Schwarz symmetry condition.
\item[]\textit{\textbf{Normalization:}} $\mathbf{Q}(\SP)\to\mathbb{I}$ as $\SP\to\infty$.
\end{itemize}
\label{rhp:+++}
\end{myrhp}
Here in writing down the jumps we have used \eqref{eq:ZS-nu0-jump}, \eqref{eq:ZS-Y-define}, 
\eqref{eq:Lbar-def-3}, and \eqref{eq:ZS-T-define}.

\subsubsection{Installing the $g$-function and lens deformation}
We now introduce the $g$-function.  Let $g^+(\SP;x)$ be defined as in 
\eqref{eq:g-functions-zeta}.
Make the change of variables 
\begin{equation}
\mathbf{R}(\SP;x):= 
\mathbf{Q}(\SP;x) \begin{pmatrix} \ee^{-g^+(\SP;x)/\epsilon} & 0 \\ 0 & \ee^{g^+(\SP;x)/\epsilon} \end{pmatrix},
\label{eq:Q-to-R-case1}
\end{equation}
Now define $B\equiv B(x)$ to be the subset of $\Sigma_0$ in which $g^+$ has 
its jump discontinuity (with orientation inherited from $\Sigma_0$).  We call 
$\overline{B\cup B^*}$ the \emph{band}.  The function ${\bf R}(\SP;x)$ 
satisfies the jumps 
\eq
\mathbf{R}_+(\SP;x)=\mathbf{R}_-(\SP;x)\begin{pmatrix}\ee^{-(g^{+}_+(\SP;x)-g^{+}_-(\SP;x))/\epsilon} & 0 \\
-\ii T_\epsilon(\SP) & \ee^{(g^{+}_+(\SP;x)-g^{+}_-(\SP;x))/\epsilon} \end{pmatrix}, \quad \SP\in B(x),
\label{eq:Q-jump-cut}
\endeq
and
\begin{equation}
\mathbf{R}_+(\SP;x)=\mathbf{R}_-(\SP;x) \begin{pmatrix} 1 & 0 \\ -\ii T_\epsilon(\SP)\ee^{-2h^{+}(\SP;x)/\epsilon} & 1 \end{pmatrix},\quad
 \SP\in\Sigma_0\setminus B(x),
 \label{eq:jump-above-1}
\end{equation}
where, as in \eqref{eq:h-def-zeta},
\begin{equation}
h^{+}(\SP;x):=g^{+}(\SP;x)-\frac{1}{2}\varphi^{+}(\SP;x),\qquad \SP\in \Sigma_0\setminus B(x). 
\label{eq:h+-def}
\end{equation}
To compute the jumps we have used Property G4 of Proposition \ref{prop:g-properties}.

We now prepare to open lenses.  Properties H1--H3 from \S\ref{sec:ZS-g} can 
be used to characterize the analytic continuation of $h^+(\SP;x)$
from $\Sigma_0\setminus B(x)$ to the domains $\Omega_\pm$.
In particular, a matrix factorization and the use of the identity \eqref{eq:gdiff-h} implies that 
\eqref{eq:Q-jump-cut} can be written in the equivalent form:
\eq
\mathbf{R}_+(\SP;x)\mathbf{L}_+(\SP;x)=
\mathbf{R}_-(\SP;x)\mathbf{L}_-(\SP;x)^{-1}
\begin{pmatrix}0 & -\ii T_\epsilon(\SP)^{-1}
\\
-\ii T_\epsilon(\SP)
& 0 \end{pmatrix}, \quad \SP\in B(x),
\label{eq:factored-jump-B-case1}
\endeq
where $\mathbf{L}(\SP;x)$ is the matrix analytic for $\SP\in\Omega_+\cup\Omega_-$ defined by
\eq
\mathbf{L}(\SP;x):=
\begin{pmatrix} 1 & -\ii T_\epsilon(\SP)^{-1}\ee^{2h^+(\SP;x)/\epsilon} \\ 0 & 1 \end{pmatrix}
, \quad  \SP\in \Omega_+\cup\Omega_-,\quad x\in J_+.
\label{eq:L-define-case1}
\endeq
Note that $\mathbf{L}(\SP;x)$ inherits from $h^+(\SP;x)$ a jump discontinuity across $B(x)$, which explains the subscripts in \eqref{eq:factored-jump-B-case1} indicating boundary values taken.  Lastly, we note that the jump condition on $\Sigma_\pm$ takes the form
\begin{equation}
\mathbf{R}_+(\SP;x)=\mathbf{R}_-(\SP;x) \begin{pmatrix} 1 & 0 \\ 
-\ii Y_\epsilon(\SP)\ee^{2[\pm \ii\phaseint(\SP)-h^+(\SP;x)]/\epsilon} & 1 \end{pmatrix}
,\quad\SP\in\Sigma_\pm.
\label{eq:jump-outer-lens-1}
\end{equation}
Define two lens domains $\Lambda_+$ and $\Lambda_-$ bounded by $B(x)$ and the 
two parabolic arcs defined in property H3 of $h^+$ in 
Proposition~\ref{prop:h-properties} in \S\ref{sec:ZS-g}.  We denote these 
parabolic arcs by $\Sigma_{L\pm}$.  Property H3 also 
tells us that the matrix ${\bf L}(\SP;x)$ will decay to the identity as 
$\epsilon\downarrow 0$ on $\Sigma_{L\pm}$.  With this in mind, define 
\begin{equation}
\mathbf{S}(\SP;x):=\begin{cases}
\mathbf{R}(\SP;x)\mathbf{L}(\SP;x),&\quad \SP\in\Lambda_+,\\
\mathbf{R}(\SP;x)\mathbf{L}(\SP;x)^{-1},&\quad\SP\in\Lambda_-,\\
\mathbf{S}(\SP^*;x)^{-\dagger},&\quad\SP\in\Lambda_+^*\cup\Lambda_-^*,\\
\mathbf{R}(\SP;x),&\quad \text{otherwise.}
\end{cases}
\label{eq:S-define-3}
\end{equation}

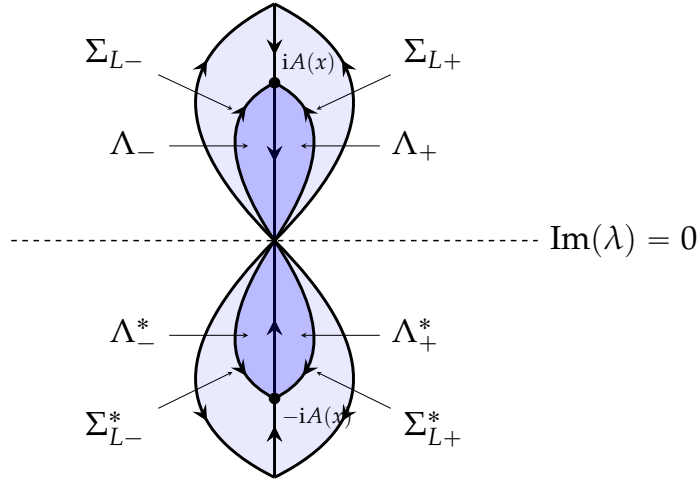
\begin{figure}
\begin{center}
\scalebox{.7}{
\begin{tikzpicture}[>=stealth]

\coordinate (O) at (0,0);
\coordinate (iA) at (0,3);
\coordinate (iAc) at (0,-3);
\coordinate (top) at (0,4.5);
\coordinate (bottom) at (0,-4.5);
\coordinate (leftmax) at (-5,0);
\coordinate (rightmax) at (5,0);

\draw[dashed,thick] (leftmax) node[left] {$\phantom{\Im(\SP)=0}$}-- (rightmax) node[right, scale=1.5] {$\Im(\SP)=0$};

\fill[color=blue!20, fill opacity=0.4] (O) ..controls (2,2) and (2,3.5).. (top) ..controls (-2,3.5) and (-2,2).. (O);
\fill[color=blue!20, fill opacity=0.4] (O) ..controls (2,-2) and (2,-3.5).. (bottom) ..controls (-2,-3.5) and (-2,-2).. (O);
\draw[->-=0.7, ultra thick, draw=black] (O) ..controls (2,2) and (2,3.5).. (top);
\draw[->-=0.7, ultra thick, draw=black] (O) ..controls (-2,2) and (-2,3.5).. (top);
\draw[->-=0.7, ultra thick, draw=black] (O) ..controls (2,-2) and (2,-3.5).. (bottom);
\draw[->-=0.7, ultra thick, draw=black] (O) ..controls (-2,-2) and (-2,-3.5).. (bottom);
\draw[-<-=0.4,-<-=0.85, ultra thick] (O) -- (top);  
\draw[-<-=0.4,-<-=0.85, ultra thick] (O) -- (bottom);

\fill[color=blue, fill opacity=0.2] (O) ..controls (1,1.5) and (1,2.5)..(iA) ..controls (-1,2.5) and (-1,1.5).. (O);
\fill[color=blue, fill opacity=0.2] (O) ..controls (1,-1.5) and (1,-2.5)..(iAc) ..controls (-1,-2.5) and (-1,-1.5).. (O);
\draw[->-=0.8, ultra thick, draw=black] (O) ..controls (1,1.5) and (1,2.5)..(iA)
 node[pos=0.7, pin={[pin distance=10mm, scale=1.5, pin edge={<-,black}]20: $\Sigma_{L+}$ }] {};
\draw[->-=0.8, ultra thick, draw=black] (O) ..controls (-1,1.5) and (-1,2.5)..(iA)
 node[pos=0.7, pin={[pin distance=10mm, scale=1.5, pin edge={<-,black}]160: $\Sigma_{L-}$ }] {};;
\draw[->-=0.8, ultra thick, draw=black] (O) ..controls (1,-1.5) and (1,-2.5)..(iAc)
 node[pos=0.7, pin={[pin distance=10mm, scale=1.5, pin edge={<-,black}]-20: $\Sigma_{L+}^*$ }] {};;
\draw[->-=0.8, ultra thick, draw=black] (O) ..controls (-1,-1.5) and (-1,-2.5)..(iAc) 
 node[pos=0.7, pin={[pin distance=10mm, scale=1.5, pin edge={<-,black}]-160: $\Sigma_{L-}^*$ }] {};;
\draw[fill] (iA) circle [radius=0.1] node[above right] {$\ii A(x)$};
\draw[fill] (iAc) circle [radius=0.1] node[below right] {$-\ii A(x)$};

\draw[->,black] (2,1.8) node[right,scale=1.5]{$\Lambda_+$} -- (0.5,1.8);
\draw[->,black] (-2,1.8) node[left,scale=1.5]{$\Lambda_-$} -- (-0.5,1.8);
\draw[->,black] (2,-1.8) node[right,scale=1.5]{$\Lambda_+^*$} -- (0.5,-1.8);
\draw[->,black] (-2,-1.8) node[left,scale=1.5]{$\Lambda_-^*$} -- (-0.5,-1.8);

\end{tikzpicture} 
}
\caption{
\label{fig-S-jumps} 
The regions $\Lambda_\pm$ and the jump contours $\Sigma_{L\pm}$ (along with their orientations) defining the transformation $\bf{R} \mapsto \bf{S}$ given by \eqref{eq:S-define-3}.
}
\end{center}
\end{figure}

The substitution \eqref{eq:S-define-3} separates the factors in the jump 
conditions \eqref{eq:factored-jump-B-case1} so that the jump matrix for 
$\mathbf{S}(\SP;x)$ on the band $B(x)$ is the  explicit off-diagonal factor on 
the right-hand side of \eqref{eq:factored-jump-B-case1}.  
On the contours $\Sigma_{L\pm}$ we have 
$\mathbf{S}_+(\SP;x)=\mathbf{S}_-(\SP;x)\mathbf{L}(\SP;x)$. From 
Proposition~\ref{prop:T-outside} in \S\ref{app:T}, the jump matrix for 
$\mathbf{S}(\SP;x)$ on $B(x)$ is uniformly an $\bigo{\epsilon^{1/2}}$ 
perturbation of $-\ii\sigma_1$ as long as $\SP$ is bounded away from the 
real axis.  

Let $\Sigma' = \Sigma \cup \Sigma_{L\pm} \cup \Sigma^*_{L\pm}$ denote the contour formed by adjoining the new lens contours to $\Sigma$.  The matrix $\mathbf{S}(\lambda;x)$ is then seen to satisfy the following Riemann-Hilbert problem, which has been ``stabilized'' via the introduction of $g^+(\lambda;x)$. 
\begin{myrhp}[Stabilized Problem]
Given $\epsilon>0$ and $x \in J^+$, seek a $2\times 2$ matrix function $\mathbf{S}(\SP)=\mathbf{S}(\SP;x)$ with the following properties.
\begin{itemize}
\item[]\textit{\textbf{Analyticity:}}  $\mathbf{S}(\SP)$ is analytic for $\SP\in\mathbb{C}\setminus\Sigma'$ and satisfies the Schwarz symmetry condition $\mathbf{S}(\SP^*)=\mathbf{S}(\SP)^{-\dagger}$.
\item[]\textit{\textbf{Jump conditions:}} $\mathbf{S}(\SP)$ takes continuous boundary values on $\Sigma'$ from each maximal connected component of $\mathbb{C}\setminus\Sigma'$.  Given a point $\SP\in\mathbb{C}_+$ on one of the oriented arcs of $\Sigma'$, let the boundary value taken at $\SP$ by $\mathbf{S}$ from the left (respectively, right) be denoted $\mathbf{S}_+(\SP)$ (respectively, $\mathbf{S}_-(\SP)$).  Then 
\begin{equation}
\mathbf{S}_+(\SP)=\mathbf{S}_-(\SP)\mathbf{V^S}(\SP),
\end{equation}
where the jump matrix $\mathbf{V^S}(\SP)=\mathbf{V^S}(\SP;x)$ is defined on the various arcs of $\Sigma'\cap\mathbb{C}_+$ by 
\begin{equation}
\mathbf{V^S}(\SP;x):= \begin{dcases}
	\begin{pmatrix} 1 & 0 \\ -\ii T_\epsilon(\SP)\ee^{-2 h^+(\SP;x)/\epsilon} & 1 \end{pmatrix},
	 & \SP\in\Sigma_0 \setminus B(x), \\
	\begin{pmatrix}0 & -\ii T_\epsilon(\SP)^{-1} \\ -\ii T_\epsilon(\SP) & 0 \end{pmatrix}, 
 	 & \SP\in B(x), \\
	\begin{pmatrix} 1 & 0 \\ -\ii Y_\epsilon(\SP)\ee^{2[\pm\ii \phaseint(\SP)-h^+(\SP;x)]/\epsilon} & 1 \end{pmatrix}, 
	 &\SP\in\Sigma_\pm, \\
	\begin{pmatrix} 1 & -\ii T_\epsilon(\SP)^{-1}\ee^{2h^+(\SP;x)/\epsilon} \\ 0 & 1 \end{pmatrix},
	 &\SP \in \Sigma_{L,\pm}, \\
\end{dcases}
\label{eq:jump+++S}
\end{equation}
where $h^+(\SP;x)$ is given by \eqref{eq:h+-def}. 
Corresponding jump conditions on the arcs of $\Sigma'$ in the lower half-plane are induced by the Schwarz symmetry 
condition.
\item[]\textit{\textbf{Normalization:}} $\mathbf{S}(\SP)\to\mathbb{I}$ as $\SP\to\infty$.
\end{itemize}
\label{rhp:+++S}
\end{myrhp}

\subsection{Parametrix construction}
\label{sec-parametrix}
\subsubsection{Outer parametrix}

We begin with the construction of an outer parametrix designed to approximately solve the jump condition for $\mathbf{S}(\SP ; x)$ on the vertical band $B(x) \cup B(x)^*$. 
It follows from Proposition~\ref{prop:T-outside} that $\mathbf{V^S}(\lambda;x) =  -\ii \sigma_1 + \bigo{\eps^{1/2}}$ for those $\SP \in B(x)$ bounded away from $\SP =0$ and $\SP = \ii A_\text{max}$. Define
\begin{equation}
\label{eq:S.out.matrix}
	\breve{\mathbf{S}}^{\mathrm{out}}(\SP;x) : = \boldsymbol{\mathcal{E}} \left( \frac{\SP + \ii A(x)}{\SP- \ii A(x)} \right)^{\sigma_3/4} \boldsymbol{\mathcal{E}}^{-1}, \quad \SP \in \C \setminus ( B(x) \cup B(x)^* ),
\end{equation}
where the powers $\pm1/4$ refer to the principal branch, and 
\begin{equation}
	\boldsymbol{\mathcal{E}}:= \frac{1}{\sqrt{2}} \begin{pmatrix} \ee^{-\ii \pi/4} & \ee^{\ii \pi/4} \\ -\ee^{-\ii \pi/4} & \ee^{\ii \pi/4}\end{pmatrix}, 
	\qquad \det(\boldsymbol{\mathcal{E}}) = 1, \qquad \boldsymbol{\mathcal{E}}^{-1} = \boldsymbol{\mathcal{E}}^\dagger
	\label{eq:E-matrix-def}
\end{equation}
is a matrix of eigenvectors for $-\ii \sigma_1$. 

Then $\breve{\mathbf{S}}^{\mathrm{out}}(\SP;x)$ is analytic in its domain of definition, bounded for $\SP$ away from $\pm \ii A(x)$, with $\det (\breve{\mathbf{S}}^{\mathrm{out}}(\SP;x)) = 1$, and 
\begin{equation}\label{eq:S.out.expand}
	\breve{\mathbf{S}}^{\mathrm{out}}(\SP;x) = \mathbb{I} + \frac{ A(x)}{2\ii \SP} \sigma_1+\bigo{\SP^{-2}}, \qquad \SP \to \infty.
\end{equation}
Moreover, the boundary values $\breve{\mathbf{S}}^{\mathrm{out}}_+(\SP;x)$ and $\breve{\mathbf{S}}^{\mathrm{out}}_-(\SP;x)$ satisfy the jump relation 
\begin{equation}
	\breve{\mathbf{S}}^{\mathrm{out}}_+(\SP;x) = \breve{\mathbf{S}}^{\mathrm{out}}_-(\SP;x) (-\ii \sigma_1),  \qquad \lambda \in B(x) \cup B(x)^*,
\end{equation}
where the jump contour orientation is taken downwards from $\ii A(x)$ to $-\ii A(x)$.

\subsubsection{Airy Parametrix at $\SP = \ii A(x)$}
We need local models in neighborhoods of the band endpoints $\pm\ii A(x)$. 
Provided that $|x -x_0|> 0$, where $x_0$ is the maximizer of $A$, the appropriate model is constructed from Airy functions.  See \cite[Appendix B]{BothnerM19} for a complete derivation of this model with a slightly different normalization.
The function $y = \Ai(u)$ is the unique solution of the differential equation $y''(u) = u y(u)$ such that 
\begin{equation}
	\Ai(u) = \frac{ \ee^{-2u^{3/2}/3}}{2u^{1/4}\sqrt{\pi}} \left[ 1- \frac{5}{48} u^{-3/2} + \bigo{u^{-3}} \right],  \qquad u \to \infty, \quad | \arg(u) | < \pi.
	\label{eq:Airy-asymp}
\end{equation}
It is an entire function, and it satisfies the identity
\begin{equation}
	\Ai(u) + \omega \Ai(\omega u) + \omega^2 \Ai (\omega^2 u) = 0, \qquad \omega := \ee^{2\ii \pi/3}.
	\label{eq:Airy-connection}
\end{equation}
Its derivative, $\Ai'(u)$, satisfies
\begin{equation}
	\Ai'(u) = - \frac{ u^{1/4} \ee^{-2u^{3/2}/3}}{2\sqrt{\pi}} \left[ 1 + \frac{7}{48} u^{-3/2} + \bigo{u^{-3}} \right], \qquad u \to \infty, \quad | \arg(u) | < \pi.
	\label{eq:Airy-prime-asymp}
\end{equation}
Setting $u:= \left(\frac{3}{2}\right)^{2/3} z$, we define a matrix function $\mathbf{A}(z)$ as follows:
\begin{equation}
	\mathbf{A}(z) := \begin{dcases}
		\sqrt{2\pi} \left( \tfrac{4}{3} \right)^{\frac{\sigma_3}{6}} 
		\begin{pmatrix} -\Ai'(u) & \omega \Ai'(\omega^2 u) \\ -\ii \Ai(u) &\ii \omega^2 \Ai ( \omega^2 u) \end{pmatrix} 
		\ee^{\frac{\ii}{4} \pi \sigma_3} \ee^{\frac{2}{3}u^{3/2} \sigma_3},
		& \arg(z) \in (0, \tfrac{2\pi}{3}), \\
		\sqrt{2\pi} \left( \tfrac{4}{3} \right)^{\frac{\sigma_3}{6}} 
		\begin{pmatrix} \omega^2 \Ai'(\omega u) & \omega \Ai'(\omega^2 u) \\ \ii \omega \Ai(\omega u) & \ii \omega^2 \Ai ( \omega^2 u) \end{pmatrix} 
		\ee^{\frac{\ii}{4} \pi \sigma_3} \ee^{\frac{2}{3}u^{3/2} \sigma_3},
		& \arg(z) \in (\tfrac{2\pi}{3}, \pi), \\
		\sqrt{2\pi} \left( \tfrac{4}{3} \right)^{\frac{\sigma_3}{6}} 
		\begin{pmatrix} \omega \Ai'(\omega^2 u) & -\omega^2 \Ai'(\omega u) \\ \ii \omega^2 \Ai(\omega^2 u) & -\ii \omega \Ai ( \omega u) \end{pmatrix} 
		\ee^{\frac{\ii}{4} \pi \sigma_3} \ee^{\frac{2}{3}u^{3/2} \sigma_3},
		& \arg(z) \in (-\pi,-\tfrac{2\pi}{3}), \\
		\sqrt{2\pi} \left( \tfrac{4}{3} \right)^{\frac{\sigma_3}{6}} 
		\begin{pmatrix} - \Ai'( u) & -\omega^2 \Ai'(\omega u) \\ -\ii  \Ai( u) & -\ii \omega \Ai ( \omega u) \end{pmatrix} 
		\ee^{\frac{\ii}{4} \pi \sigma_3} \ee^{\frac{2}{3}u^{3/2} \sigma_3},
		& \arg(z) \in (-\tfrac{2\pi}{3} , 0).
	\end{dcases}
\end{equation}
From \eqref{eq:Airy-asymp} and \eqref{eq:Airy-prime-asymp} we get that
\begin{equation}
\mathbf{A}(z) \boldsymbol{\mathcal{E}} z^{-\sigma_3/4}=\mathbb{I}+\begin{pmatrix}\bigo{z^{-3}} & \bigo{z^{-1}}\\\bigo{z^{-2}} & \bigo{z^{-3}}\end{pmatrix},\quad z\to\infty
\label{eq:Airy-parametrix-norm}
\end{equation}
uniformly in all directions of the complex plane, where $\boldsymbol{\mathcal{E}}$ is defined by \eqref{eq:E-matrix-def}.
The matrix function $\mathbf{A}(z)$ is analytic in $z$ with the exception of the four rays $\arg(\pm z)=0$ and $\arg(z)=\pm 2\pi/3$, along which  $\mathbf{A}(z)$ takes continuous boundary values from either side, but  across which it experiences jump discontinuities.  Taking $\arg(-z)=0$ to be oriented away from the origin and all other rays to be oriented toward the origin, the use of \eqref{eq:Airy-connection} yields the following jump conditions relating the boundary values of $\mathbf{A}(z)$:
\begin{equation}
\begin{aligned}
	&\mathbf{A}_+(z)=\mathbf{A}_-(z)\begin{pmatrix}1 & \ii \ee^{-z^{3/2}}\\0 & 1\end{pmatrix}, &&\arg(z)=0, \\
	&\mathbf{A}_+(z)=\mathbf{A}_-(z)\, \ii \sigma_1, &&\arg(-z)=0, \\
	&\mathbf{A}_+(z)=\mathbf{A}_-(z)\begin{pmatrix}1 & 0\\ \ii \ee^{z^{3/2}} & 1\end{pmatrix}, &&\arg(z)=\pm\frac{2\pi}{3}.
\label{eq:Airy-jumps}
\end{aligned}
\end{equation}
These jump conditions along with \eqref{eq:Airy-parametrix-norm} and a Liouville argument imply that $\det(\mathbf{A}(z))=1$.  They capture the behavior of the exponential factors in the jump conditions satisfied by $\mathbf{S}(\SP;x)$ in a neighborhood of $\SP= \ii A(x)$. 

To exactly match the jump conditions \eqref{eq:jump+++S} in a neighborhood $\mathcal{U}_{\ii A(x)}$ of $\SP =\ii A(x)$, we first introduce a local coordinate $W(\SP)$. According to property H1 of Proposition~\ref{prop:h-properties}, the map $\SP \mapsto W(\SP;x):= (2h(\SP;x))^{2/3}$ is conformal on any sufficiently small neighborhood $\mathcal{U}_{\ii A(x)}$ of $\SP = \ii A(x)$ with conformal image a neighborhood of $W =0$ such that $W(\Sigma_0 \cap \mathcal{U}_{\ii A(x)};x)  \subset \R$ with $W(B(x) \cap \mathcal{U}_{\ii A(x)};x) = [-w,0]$ for some $w>0$. Locally, we deform $\Sigma_{L,\pm}$ such that $\arg (W(\Sigma_{L,\pm};x)) = \pm 2\pi/3$, and then we define the local model in the form
\begin{equation}
	\breve{\mathbf{S}}^{\mathrm{Airy}}(\SP;x) := \mathbf{H}^\eps(\SP;x) \mathbf{A}\left( \frac{W(\lambda;x)}{\epsilon^{2/3}}
	\right) \ii\sigma_2 T_\eps(\SP)^{\sigma_3/2}, 
	\qquad \SP \in \mathcal{U}_{\ii A(x)},
\end{equation}
where $\mathbf{H}^\eps(\SP;x)$ is a unit-determinant holomorphic function for $\SP \in \mathcal{U}_{\ii A(x)}$ used to match the local model to the outer model for $\SP \in \partial \mathcal{U}_{\ii A(x)}$.

To define $\mathbf{H}^\eps(\SP;x)$, observe that 
\begin{equation}
	\mathbf{C}_{\ii A(x)} := \left( \frac{W(\SP;x)}{\eps^{2/3}}
	\right)^{\sigma_3/4} \boldsymbol{\mathcal{E}} \ii\sigma_2
\end{equation}
is analytic with unit determinant for $\SP \in \mathcal{U}_{\ii A(x)} \setminus B(x)$, and satisfies the same jump condition as $\breve{\mathbf{S}}^{\mathrm{out}}(\SP;x)$ for $\SP \in B(x) \cap \mathcal{U}_{\ii A(x)}$. 
Let
\begin{equation}
	\mathbf{H}^\eps(\SP;x) := \breve{\mathbf{S}}^{\mathrm{out}}(\SP;x) \mathbf{C}_{\ii A(x)}(\SP)^{-1}, 
	\qquad \SP \in \mathcal{U}_{\ii A(x)},
\end{equation}
then $\mathbf{H}^\eps(\SP;x)$ has no jump on $ B(x) \cap \mathcal{U}_{\ii A(x)}$ and admits a holomorphic extension to $\mathcal{U}_{\ii A(x)}$.

\begin{prop}
The matrix $\breve{\mathbf{S}}_{\mathrm{Airy}}(\SP;x)$ is analytic in $\mathcal{U}_{\ii A(x)} \setminus \Sigma'$ with continuous boundary values on $\mathcal{U}_{\ii A(x)} \cap \Sigma'$ which  match the jump conditions of $\mathbf{S}(\SP;x)$ exactly. Finally, 
\begin{equation}
	\breve{\mathbf{S}}^{\mathrm{Airy}}(\SP;x) \breve{\mathbf{S}}^{\mathrm{out}}(\SP;x)^{-1} = \mathbb{I } + \bigo{\eps^{1/2}}, 
\end{equation}
which holds uniformly for $\SP \in \partial \mathcal{U}_{\ii A(x)}$ and $x \in J_c^+$.
\label{prop:Airy.match}
\end{prop}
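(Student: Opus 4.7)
My plan has three steps: sectional analyticity of $\breve{\mathbf{S}}^{\mathrm{Airy}}$ inside $\mathcal{U}_{\ii A(x)}$, exact matching of the jump conditions on $\Sigma'\cap\mathcal{U}_{\ii A(x)}$, and the asymptotic matching on $\partial\mathcal{U}_{\ii A(x)}$. For the first, the key point is that $\mathbf{H}^\eps$ extends holomorphically across $B(x)$ and across $\ii A(x)$. By construction $\breve{\mathbf{S}}^{\mathrm{out}}$ and $\mathbf{C}_{\ii A(x)}$ carry the same jump on $B(x)\cap\mathcal{U}_{\ii A(x)}$, which one verifies using the diagonalization $-\ii\sigma_1=\boldsymbol{\mathcal{E}}(\ii\sigma_3)\boldsymbol{\mathcal{E}}^{-1}$ and the fact that $W$ is conformal and vanishes linearly at $\ii A(x)$ by property H1. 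Hence the quotient $\mathbf{H}^\eps$ has no jump on $B(x)$, and at $\ii A(x)$ the matching quarter-root singularities of the two factors (both conjugated by the same $\boldsymbol{\mathcal{E}}$) give $\mathbf{H}^\eps=\bigo{|\SP-\ii A(x)|^{-1/2}}$; since $(\SP-\ii A(x))\mathbf{H}^\eps(\SP;x)\to 0$, the isolated singularity is removable. The determinant equals $1$ because both $\breve{\mathbf{S}}^{\mathrm{out}}$ and $\mathbf{C}_{\ii A(x)}$ have unit determinant.

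To verify the jumps, I would observe that $\mathbf{H}^\eps$, $\ii\sigma_2$, and $T_\eps^{\sigma_3/2}$ are analytic on $\mathcal{U}_{\ii A(x)}$ (the third using that $T_\eps$ is analytic and nonvanishing there, with a locally consistent branch of the square root), so all jumps of $\breve{\mathbf{S}}^{\mathrm{Airy}}$ originate from $\mathbf{A}(W(\SP;x)/\eps^{2/3})$. The conformal coordinate $W$ maps $\Sigma_0\setminus B(x)$, $B(x)$, and the locally deformed lens arcs $\Sigma_{L,\pm}$ onto the Airy jump-rays $\arg(z)=0$, $\arg(-z)=0$, and $\arg(z)=\pm 2\pi/3$ respectively, so \eqref{eq:Airy-jumps} applies directly. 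A short matrix computation on each arc, conjugating the Airy jump by $\ii\sigma_2 T_\eps^{\sigma_3/2}$ using the Pauli identity $\sigma_2\sigma_j\sigma_2=-\sigma_j$ ($j=1,3$) together with the identity $h^+_\pm(\SP;x)=\pm\ii\phaseint(\SP)+\text{analytic}$ across $\Sigma_0$ (property H5 of Proposition~\ref{prop:h-properties}), reproduces the jump matrices in \eqref{eq:jump+++S} case by case.

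For the boundary matching, on $\partial\mathcal{U}_{\ii A(x)}$ the local variable satisfies $|z|\asymp\eps^{-2/3}$, placing us deep in the large-argument regime of \eqref{eq:Airy-parametrix-norm}. Writing $\mathbf{A}(z)=[\mathbb{I}+E(z)]\,z^{\sigma_3/4}\boldsymbol{\mathcal{E}}^{-1}$ and using $\breve{\mathbf{S}}^{\mathrm{out}}=\mathbf{H}^\eps\mathbf{C}_{\ii A(x)}$, the ratio reduces to
\[
\breve{\mathbf{S}}^{\mathrm{Airy}}(\breve{\mathbf{S}}^{\mathrm{out}})^{-1}=\mathbf{H}^\eps\,[\mathbb{I}+E(z)]\,z^{\sigma_3/4}\boldsymbol{\mathcal{E}}^{-1}T_\eps^{-\sigma_3/2}\boldsymbol{\mathcal{E}}\,z^{-\sigma_3/4}(\mathbf{H}^\eps)^{-1},
\]
which equals $\mathbb{I}$ exactly when $E=0$ and $T_\eps=1$. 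The remainder splits into an Airy contribution controlled entrywise by \eqref{eq:Airy-parametrix-norm} and a $T_\eps$-contribution of size $\bigo{\eps^{1/2}}$ by Proposition~\ref{prop:T-outside}. The main obstacle is the diagonal conjugation by $z^{\pm\sigma_3/4}$, which can amplify off-diagonal entries of the middle matrix by factors of $|z|^{1/2}=\bigo{\eps^{-1/3}}$; reaching the uniform $\bigo{\eps^{1/2}}$ bound requires combining the specific entrywise decay rates in \eqref{eq:Airy-parametrix-norm} (which are tuned so that each entry of the Airy remainder survives the amplification) with the particular structure of the leading $T_\eps$-correction obtained from Proposition~\ref{prop:T-outside}, so that the most-amplified terms sum to the claimed order. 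Uniform boundedness of $\mathbf{H}^\eps$ and $(\mathbf{H}^\eps)^{-1}$ on the compact set $\partial\mathcal{U}_{\ii A(x)}\times J_c^+$ (from analyticity) then completes the proof.
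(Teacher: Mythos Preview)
The paper states Proposition~\ref{prop:Airy.match} without proof, so there is no text to compare against directly; I evaluate your argument on its merits.

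Your first two steps are essentially correct (the reference to property~H5 is spurious---that property concerns $\SP\approx 0$, not $\SP\approx\ii A(x)$---but it plays no role in the actual jump computation, which goes through exactly as you indicate once one conjugates the Airy jumps \eqref{eq:Airy-jumps} by $\ii\sigma_2\,T_\eps^{\sigma_3/2}$ and uses $W^{3/2}=2h^+$).

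The boundary-matching step, however, contains a genuine gap. Your final sentence asserts that $\mathbf{H}^\eps$ and its inverse are uniformly bounded on $\partial\mathcal{U}_{\ii A(x)}$ ``from analyticity.'' This is false: $\mathbf{H}^\eps$ depends on $\eps$ through $\mathbf{C}_{\ii A(x)}=(W/\eps^{2/3})^{\sigma_3/4}\boldsymbol{\mathcal{E}}\,\ii\sigma_2$, and on $\partial\mathcal{U}_{\ii A(x)}$ one has $\|\mathbf{H}^\eps\|\asymp\eps^{-1/6}$. Analyticity in $\SP$ on a compact set gives a bound for each fixed $\eps$, not a uniform one. This is exactly why your amplification worry appears, and your hand-wave about ``the particular structure of the leading $T_\eps$-correction'' does not resolve it.

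The correct mechanism is a cancellation you are missing. Write $\mathbf{H}^\eps=\mathbf{H}_0\,\eps^{\sigma_3/6}$ where $\mathbf{H}_0:=\breve{\mathbf{S}}^{\mathrm{out}}\,(W^{\sigma_3/4}\boldsymbol{\mathcal{E}}\,\ii\sigma_2)^{-1}$ is $\eps$-independent, holomorphic on $\mathcal{U}_{\ii A(x)}$, and genuinely uniformly bounded with bounded inverse on $\partial\mathcal{U}_{\ii A(x)}$. Then, since $z^{\sigma_3/4}=W^{\sigma_3/4}\eps^{-\sigma_3/6}$, the inner diagonal factors combine: $\eps^{\sigma_3/6}z^{\sigma_3/4}=W^{\sigma_3/4}$ and $z^{-\sigma_3/4}\eps^{-\sigma_3/6}=W^{-\sigma_3/4}$. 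Your displayed ratio becomes
\[
\mathbf{H}_0\bigl[\eps^{\sigma_3/6}(\mathbb{I}+E(z))\eps^{-\sigma_3/6}\bigr]\,W^{\sigma_3/4}\boldsymbol{\mathcal{E}}^{-1}T_\eps^{-\sigma_3/2}\boldsymbol{\mathcal{E}}\,W^{-\sigma_3/4}\mathbf{H}_0^{-1}.
\]
The $T_\eps$ contribution is now conjugated only by $\eps$-independent bounded matrices, so it is $\mathbb{I}+\bigo{\eps^{1/2}}$ directly from Proposition~\ref{prop:T-outside} (and \eqref{eq:T-Y-relation} with Proposition~\ref{prop:Y-outside} off the lens)---no delicate structure is needed. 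For the Airy remainder, the entrywise bounds in \eqref{eq:Airy-parametrix-norm} give $\eps^{\sigma_3/6}E(z)\eps^{-\sigma_3/6}=\bigo{\eps}$ entrywise on $\partial\mathcal{U}_{\ii A(x)}$ (the worst entries are $\eps^{1/3}\cdot\bigo{z^{-1}}=\bigo{\eps}$ and $\eps^{-1/3}\cdot\bigo{z^{-2}}=\bigo{\eps}$). Combining yields the claimed $\mathbb{I}+\bigo{\eps^{1/2}}$.
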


\subsubsection{Parametrix at the origin}
The outer model is not uniformly accurate near the origin, so a local model is needed. Let $\mathcal{U}_0$ denote a disk of sufficiently small radius, independent of $\eps$, centered at $\SP =0$. 
Motivated by the local approximations of $Y_\epsilon$ and $T_\epsilon$ for $\SP$ near 0 in Propositions~\ref{prop:Y-zeta-small} and \ref{prop:T-zeta-small}, introduce a rescaled local coordinate on $\mathcal{U}_0$ by first defining
\begin{equation}
	Z= Z(\SP) :=  \frac{\ii}{\sqrt{\eps \pi}} \left( \phaseint(0) - \phaseint(\SP) \right)^{1/2},\quad\lambda=\ii s,\quad 0<s<A_\mathrm{max},
	\label{eq:Z0-coord}
\end{equation}
where the square root is positive. It then follows from Proposition~\ref{eq:Psi-series} that this formula admits continuation to $\mathcal{U}_0$, defining $Z$ as an odd analytic function of $\SP$ with the property that $Z(0)=0$ and $Z'(0) > 0$. It follows that $\SP \mapsto Z$ is a conformal map on the disk $\mathcal{U}_0$ of sufficiently small radius independent of $\eps>0$. 

Next, noting that $h^+$ is analytic in $\mathcal{U}_0 \setminus (B(x) \cup B(x)^*)$ and that $h^+_+(\SP;x) + h^+_-(\SP;x) = 0$ for $\lambda \in B(x) \cup B(x)^*$, the function 
\begin{equation}
	\hat h^+(\SP;x) = \sgn (\Re\{ \SP \}) \cdot h^+(\SP;x)
	\label{eq:h-hat}
\end{equation}
has no jump on the band, and thus extends to an analytic function in $\mathcal{U}_0$. Define for $\lambda\in\mathcal{U}_0$ the functions
\begin{equation}
\begin{aligned}
	m^+(\SP; x) &:= \frac{2}{\eps}\left[ \hat h^+(\SP;x) - \hat h^+(0;x) \right], \\
	n^+(\SP; x) &:= \frac{2}{\eps}\left[ \ii \phaseint(\SP) - \ii \phaseint(0) - \hat h^+(\SP;x) + \hat h^+(0;x) \right]. \\
\end{aligned}
\end{equation}
According to properties H4 and H5 of Proposition~\ref{prop:h-properties} these functions are even, analytic functions of $\SP$, vanishing at $\SP=0$, such that the coefficient of $\SP^2$ in their Taylor expansion at $\SP=0$ is strictly positive imaginary. Since the coordinate map $Z(\SP)$ is an odd, univalent function of $\SP$, we can view $m^+(\SP; x)$ and $n^+(\SP; x)$ as even functions of $Z$ analytic at $Z=0$  with positive imaginary coefficients of $Z^2$ in their respective Taylor expansions:
\begin{equation}
\begin{aligned}
	m^+(\SP; x) &= \ii \mu^+(x) Z(\SP)^2 + \bigo{\eps Z(\SP)^4}, & \mu^+(x) \geq c, \\
	n^+(\SP; x) &= \ii \nu^+(x) Z(\SP)^2 + \bigo{\eps Z(\SP)^4}, & \nu^+(x) \geq c,
\end{aligned}
\label{eq:m-n-expansions}
\end{equation}
where $c >0$ is a constant depending on $J^+_c$.

Next, we define a function which is a local solution of the jump condition satisfied by the outer parametrix for $\SP$ near $0$. Let
\begin{equation}
	\mathbf{C}_0(\SP) := \begin{cases} \ii \sigma_1, & \Re\{\SP\} < 0 \\ \hfill \mathbb{I}, & \Re\{\SP\} > 0. \end{cases}
	\label{eq:C0}
\end{equation}
so that the function 
\begin{equation}
	\mathbf{H}_0(\SP;x) = \dot{\mathbf{S}}^{\mathrm{out}}(\SP;x) \mathbf{C}_0(\SP)^{-1}
	\label{eq:H0}
\end{equation}
has unit determinant, is independent of $\eps$, and is analytic for $\SP \in \mathcal{U}_0$. 

Then if we locally define a function $\mathbf{W}(\SP;x)$ by
\begin{equation}
	\mathbf{W}(\SP;x) = \mathbf{S}(\SP;x) \mathbf{C}_0(\SP)^{-1} \ee^{\hat{h}^+(0;x) \sigma_3/\epsilon},  \quad \SP \in \mathcal{U}_0,
\end{equation}
then $\mathbf{W}(\SP;x)$ is analytic in $\mathcal{U}_0 \setminus (\Sigma' \cap \mathcal{U}_0)$ where it satisfies the jump relation
\begin{equation}
\begin{gathered}
	\mathbf{W}_+(\SP;x) = \mathbf{W}_-(\SP;x) \mathbf{V^W}(\SP;x), \qquad \SP \in \Sigma' \cap \mathcal{U}_0, \\
	 \mathbf{V^W}(\SP;x) := \begin{dcases}
		\tril{-\ii Y_\eps(\SP) \ee^{n^+(\SP;x)}}, & \SP \in \Sigma^+ \cap \mathcal{U}_0, \\
		\triu{-\ii T_\eps(\SP) \ee^{m^+(\SP,x)}}, & \SP \in \Sigma^+_L \cap \mathcal{U}_0, \\	
		\diag{T_\eps(\SP)}{T_\eps(\SP)^{-1}}, & \SP \in B(x) \cap \mathcal{U}_0, \\	
		\tril{ -\ii T_\eps(\SP) \ee^{-m^+(\SP,x)}}, & \SP \in \Sigma^-_L \cap \mathcal{U}_0, \\	
		\triu{-\ii Y_\eps(\SP) \ee^{-n^+(\SP;x)}}, & \SP \in \Sigma^- \cap \mathcal{U}_0, \\
		\mathbf{V^W}(\SP^*;x)^\dagger, & \SP \in \Sigma' \cap \mathcal{U}_0 \cap \C^-.
	 \end{dcases}
\end{gathered}
\label{eq:Vw-def}
\end{equation}

To construct a local model $\breve{\mathbf{S}}^0$ inside $\mathcal{U}_0$, we make two approximations: first, we replace all instances of $Y_\eps(\SP)$ and $T_\eps(\SP)$ in the jump condition $\mathbf{V^S}$ defined by \eqref{eq:jump+++S} with their local approximations $\mathcal{Y}_0(Z(\lambda))$ and $\mathcal{T}_0(Z(\lambda))$ defined by \eqref{eq:Y0-def} and \eqref{eq:T0-def} respectively; second we rewrite the exponential phases in $\mathbf{V^S}$ in terms of the functions $m^+(\SP;x)$ and $n^+(\SP;x)$ and then replace these by the leading-order terms in their Taylor expansions at $\SP =0$ given in \eqref{eq:m-n-expansions}. 
For concreteness, we locally deform the contours $\Sigma' \cap \mathcal{U}_0$ as necessary so that the images are straight line segments with 
\begin{equation}
	\arg(Z( B(x) \cap \mathcal{U}_0)) = \tfrac{\pi}{2}, \quad
	\arg(Z( \Sigma^\pm \cap \mathcal{U}_0))= \tfrac{\pi}{2} \mp \tfrac{\pi}{3}, \quad
	\arg(Z( \Sigma_L^\pm \cap \mathcal{U}_0)) = \tfrac{\pi}{2} \mp \tfrac{\pi}{6}, 
	\label{eq:Z'-image}
\end{equation}	
and preserve the symmetry $Z(\Sigma')^* = Z(\Sigma')$ to define the images in $\C^-$. 
Let $\Sigma^\circledast$ denote the contour in the $Z$-plane consisting of six infinite rays with angles $\pm \arg(Z) \in \{\pi/6, \pi/3, \pi/2\}$ taking all rays oriented outward from the origin.

We then define 
\begin{equation}
	\breve{\mathbf{S}}^{0}(\SP;x) := \mathbf{H}_0(\SP ; x) \ee^{\hat{h}^+(0;x) \sigma_3/\eps} \breve{\mathbf{W}}( Z(\SP) ; \mu^+(x),\nu^+(x)) \ee^{-\hat{h}^+(0;x) \sigma_3/\eps} \mathbf{C}_0(\SP), 
	\quad \SP \in \mathcal{U}_0,
	\label{eq:S0toW0}
\end{equation}
where $\breve{\mathbf{W}}( Z ; \mu, \nu)$ satisfies the following problem:

\begin{myrhp}
Given parameters $\mu>0$ and $\nu>0$, seek a $2\times 2$ matrix function $\breve{\mathbf{W}}(Z)=\breve{\mathbf{W}}(Z;\mu,\nu)$ with the following properties.
\begin{itemize}
\item[]\textit{\textbf{Analyticity:}}  $\breve{\mathbf{W}}(Z)$ is analytic for $Z\in \mathbb{C}\setminus\Sigma^\circledast$ and satisfies the Schwarz symmetry condition $\breve{\mathbf{W}}(Z^*)=\breve{\mathbf{W}}(Z)^{-\dagger}$.
\item[]\textit{\textbf{Jump conditions:}} $\breve{\mathbf{W}}(Z)$ takes continuous boundary values on $\Sigma^\circledast$ from each maximal connected component of $\mathbb{C}\setminus\Sigma^\circledast$.  Given a point $Z$ on one of the oriented arcs of $\Sigma^\circledast$, let the boundary value taken at $Z$ by $\breve{\mathbf{W}}$ from the left (respectively, right) be denoted $\breve{\mathbf{W}}_+(Z)$ (respectively, $\breve{\mathbf{W}}_-(Z)$).  Then 
\begin{equation}
\breve{\mathbf{W}}_+(Z)=\breve{\mathbf{W}}_-(Z)\mathbf{V}^{\breve{\mathbf{W}}}(Z),
\end{equation}
where the jump matrix $\mathbf{V}^{\breve{\mathbf{W}}}(Z)=\mathbf{V}^{\breve{\mathbf{W}}}(Z; \mu, \nu)$ is defined on the various arcs of $\Sigma^\circledast$ by 
\begin{equation}
\mathbf{V}^{\breve{\mathbf{W}}}(Z;\mu,\nu):= \begin{dcases}
	\begin{pmatrix} 1 & 0 \\ -\ii \mathcal{Y}_0(Z) \ee^{\ii \nu Z^2} & 1 \end{pmatrix}, 
	 & \arg(Z) = \frac{\pi}{6}, \\
	\begin{pmatrix} 1 &  -\ii \mathcal{T}_0(Z)^{-1} \ee^{\ii\mu Z^2} \\ 0 & 1 \end{pmatrix},
	 & \arg(Z) = \frac{\pi}{3}, \\
	\begin{pmatrix} \mathcal{T}_0(Z)^{-1} & 0 \\ 0& \mathcal{T}_0(Z)  \end{pmatrix}, 
 	 & \arg(Z) =\frac{\pi}{2}, \\
	\begin{pmatrix} 1 & 0 \\ -\ii \mathcal{T}_0(Z)^{-1} \ee^{-\ii\mu Z^2}  & 1 \end{pmatrix},
	 & \arg(Z) = \frac{2\pi}{3}, \\ 
	\begin{pmatrix} 1 & -\ii \mathcal{Y}_0(Z) \ee^{-\ii \nu Z^2}\\ 0&  1 \end{pmatrix}, 
	 & \arg(Z) = \frac{5\pi}{6}, \\ 
	 \quad \mathbf{V}^{\breve{\mathbf{W}}}(Z^*;\mu,\nu)^\dagger & \arg(Z) \in -\frac{k\pi}{6},\  k=1,\dots,5.
\end{dcases}
\label{eq:jump+++W0}
\end{equation} 
\item[]\textit{\textbf{Normalization:}} $\breve{\mathbf{W}}(Z)\to\mathbb{I}$ as $Z\to\infty$.
\end{itemize}
\label{rhp:+++W0}
\end{myrhp}

\begin{lem}
Uniformly for $\SP \in \Sigma' \cap \mathcal{U}_0$,
\[
	\mathbf{V^W}(\SP;x)\mathbf{V}^{\breve{\mathbf{W}}} \left( Z(\SP);\mu^+(x), \nu^+(x)\right)^{-1} = \mathbb{I} + \bigo{\eps^{1/2}},
\]
where on the arcs $B(x)\cup B(x)^*$ we first replace $\mathbf{V^W}(\lambda;x)$ by its inverse to compare with outward orientation of all arcs of $\Sigma^\circledast$.
\label{lem:VW-match}
\end{lem}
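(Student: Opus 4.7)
The plan is to verify the $\mathbb{I}+\bigo{\eps^{1/2}}$ estimate arc-by-arc on each connected component of $\Sigma'\cap\mathcal{U}_0$ in the upper half-plane; the Schwarz symmetry of both $\mathbf{V^W}$ and $\mathbf{V}^{\breve{\mathbf{W}}}$ (after the orientation-reversing replacement by the inverse on $B(x)$) then automatically handles the lower half-plane. The three ingredients to combine are: the uniform relative approximations $Y_\eps(\SP)=\mathcal{Y}_0(Z(\SP))(1+\bigo{\eps^{1/2}})$ and $T_\eps(\SP)=\mathcal{T}_0(Z(\SP))(1+\bigo{\eps^{1/2}})$, valid uniformly on $\mathcal{U}_0$ by Propositions~\ref{prop:Y-zeta-small} and \ref{prop:T-zeta-small}; the Taylor expansions \eqref{eq:m-n-expansions} for $m^+(\SP;x)$ and $n^+(\SP;x)$; and the sign properties guaranteed by the contour alignment \eqref{eq:Z'-image} together with the uniform positivity $\mu^+(x),\nu^+(x)\geq c>0$ from Proposition~\ref{prop:h-properties}, which ensure $\re\{\ii\mu^+(x)Z^2\}\leq -c|Z|^2$ on the rays carrying $\Sigma_L^\pm$ and $\re\{\ii\nu^+(x)Z^2\}\leq -c|Z|^2$ on the rays carrying $\Sigma^\pm$.

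The band $B(x)\cap\mathcal{U}_0$ is immediate: the jump carries no exponential phase, and the inverse $\mathrm{diag}(T_\eps^{-1},T_\eps)$ matches $\mathrm{diag}(\mathcal{T}_0^{-1},\mathcal{T}_0)$ up to a factor $(1+\bigo{\eps^{1/2}})$ directly from Proposition~\ref{prop:T-zeta-small}. On each of the four lens arcs the product $\mathbf{V^W}(\mathbf{V}^{\breve{\mathbf{W}}})^{-1}$ is unit-diagonal triangular with a single off-diagonal entry of the form $T_\eps^{\mp 1}\ee^{\pm m^+}-\mathcal{T}_0(Z)^{\mp 1}\ee^{\pm\ii\mu^+ Z^2}$ on $\Sigma_L^\pm$ (and the analogous expression with $Y_\eps,\mathcal{Y}_0,n^+,\nu^+$ on $\Sigma^\pm$). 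Using the algebraic identity $AB-CD=(A-C)B+C(B-D)$, this splits into a prefactor piece controlled by the relative approximations above combined with the boundedness of $\mathcal{Y}_0$ and $\mathcal{T}_0^{\pm 1}$ on the relevant rays of $\Sigma^\circledast$ (their zeros and poles lie on the real and imaginary $Z$-axes, which are disjoint from the rays $\arg(Z)\in\{\pm\pi/6,\pm\pi/3\}$), and an exponential piece, whose control is the heart of the matter.

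The main obstacle, which I would address with a two-regime decomposition, is showing that $\ee^{m^+(\SP;x)}-\ee^{\ii\mu^+(x)Z(\SP)^2}=\bigo{\eps^{1/2}}$ uniformly on $\Sigma_L^\pm\cap\mathcal{U}_0$ (and similarly for $n^+,\nu^+$). The direct Taylor estimate $m^+-\ii\mu^+Z^2=\bigo{\eps Z^4}$ is useless once $|Z|$ reaches the scale $\eps^{-1/2}$ near the boundary of $\mathcal{U}_0$. Fix a small exponent $\alpha\in(0,1/8)$ and split the lens arc at $|\SP|=\eps^{1/2-\alpha}$. In the inner regime $|\SP|\leq\eps^{1/2-\alpha}$ the conformality of $Z(\SP)$ gives $|Z|\leq C\eps^{-\alpha}$, whence $|m^+-\ii\mu^+Z^2|=\bigo{\eps\cdot\eps^{-4\alpha}}=o(\eps^{1/2})$, and combined with the a priori bound $|\ee^{\ii\mu^+Z^2}|\leq 1$ the mean value theorem yields $\ee^{m^+}-\ee^{\ii\mu^+Z^2}=o(\eps^{1/2})$. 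In the outer regime $|\SP|>\eps^{1/2-\alpha}$, using the quadratic vanishing of $\hat h^+(\SP;x)-\hat h^+(0;x)$ at $\SP=0$ (from property H4) for the portion with $|\SP|$ still small, and property H2 for $|\SP|$ bounded below by a fixed constant, one obtains $\re\{m^+(\SP;x)\}\leq -c\eps^{-2\alpha}$, while the sign property above gives $\re\{\ii\mu^+(x)Z^2\}\leq -c\eps^{-2\alpha}$; both exponentials are then bounded by $\ee^{-c\eps^{-2\alpha}}$, which is super-exponentially smaller than $\eps^{1/2}$. Assembling these arc-by-arc contributions completes the verification.
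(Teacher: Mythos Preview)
Your proof is correct and follows essentially the same approach as the paper: the band arc is handled directly from Proposition~\ref{prop:T-zeta-small}, and on each lens arc the paper likewise factors out $\mathcal{Y}_0$ (or $\mathcal{T}_0$) via the relative approximation and then controls the difference of exponentials by a two-regime split, using the threshold $|\lambda|=\eps^{3/8}$ (the endpoint $\alpha=1/8$ of your range).

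One caveat worth flagging: your parenthetical justification for the boundedness of $\mathcal{Y}_0$ and $\mathcal{T}_0^{\pm 1}$ on the rays $\arg(Z)\in\{\pi/6,\pi/3,2\pi/3,5\pi/6\}$---that their zeros and poles lie only on the real and imaginary axes---is not by itself sufficient. These are transcendental functions built from infinite products with exponential convergence factors, and an entire function without zeros can certainly be unbounded along a ray. The boundedness as $Z\to\infty$ along these non-axial rays is precisely the content of Proposition~\ref{prop:Y0-asymp} (and its analogue for $\mathcal{T}_0$, which follows from Proposition~\ref{prop:Y0-asymp} together with the identity $\mathcal{T}_0(Z)=\mathcal{Y}_0(Z)(1+\ee^{2\pi\ii Z^2\sgn(\re Z)})$ and the sign of $\re\{\ii Z^2\}$ on those rays). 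The paper invokes Proposition~\ref{prop:Y0-asymp} explicitly for this step. With that correction, and noting that your outer-regime estimate on $\re\{m^+\}$, $\re\{n^+\}$ also supplies the boundedness of $\ee^{m^+}$, $\ee^{n^+}$ needed in the prefactor piece, your argument is complete.
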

\begin{proof}
First consider $\SP \in B(x)$; then the statement $\mathbf{V^W}(\SP;x)\mathbf{V}^{\breve{\mathbf{W}}}(Z(\SP); \mu^+(x), \nu^+(x))^{-1} = \mathbb{I} + \bigo{\eps^{1/2}}$ follows for such $\SP$  immediately from Proposition~\ref{prop:T-zeta-small}. The analysis on each of the remaining components of $\Sigma' \cap \mathcal{U}_0$ in the upper half-plane is similar, so we give full details for one case, say $\SP \in \Sigma^+$. Recalling \eqref{eq:Z'-image}, $\arg( Z(\Sigma^+ \cap \mathcal{U}_0)) = \pi/6$ and so comparing \eqref{eq:Vw-def} to \eqref{eq:jump+++W0} we have for any matrix norm $\| \cdot \|$
\begin{gather}
	\left\| \mathbf{V^W}(\SP;x) \mathbf{V}^{\breve{\mathbf{W}}}(Z(\SP);\mu^+(x),\nu^+(x))^{-1} - \mathbb{I} \right\| \leq K \left| e(\SP;x) \right|, \\
	e(\SP;x) :=    Y_\eps(\SP) \ee^{n^+(\SP;x)} - \mathcal{Y}_0(Z(\SP)) \ee^{\ii \nu^+(x) Z(\SP)^2} , 
\shortintertext{for some constant $K >0$ depending on the matrix norm. By Proposition~\ref{prop:Y-zeta-small}, }
		e(\SP;x) =   \mathcal{Y}_0(Z(\SP))  \left[ \left(1+\bigo{\eps^{1/2}} \right) \ee^{n^+(\SP;x)}- \ee^{\ii \nu^+(x) Z(\SP)^2}  \right] ,
		\label{eq:e-lambda-x}
\end{gather}
where we note that $ \mathcal{Y}_0(Z(\SP)) $ and $\ee^{n^+(\SP;x)}$ are uniformly bounded on $\Sigma^+ \cap \mathcal{U}_0$, as follows from Proposition~\ref{prop:Y0-asymp} and property H3 of Proposition~\ref{prop:h-properties}. 
Now consider separately those $\SP \in \mathcal{U}_0$ for which $|\SP| \leq \eps^{3/8}$ and those for which $|\SP| \geq \eps^{3/8}$. 
For $|\SP| \leq \eps^{3/8}$, if follows from \eqref{eq:m-n-expansions} that the difference of exponentials in \eqref{eq:e-lambda-x} is $\bigo{\eps^{1/2}}$.
Conversely, for $\SP \in \Sigma^+$ with  $|\SP| \geq \eps^{3/8}$, then both exponential factors are separately small beyond all orders in $\eps$ as $\eps \downarrow 0$.  We conclude that $e(\SP) = \bigo{\eps^{1/2}}$ holds uniformly for $\SP \in \Sigma^+ \cap \mathcal{U}_0$. Similar arguments apply on the other components of $\Sigma' \setminus B(x)$ in the upper-half-plane where the estimate in Proposition~\ref{prop:Y-zeta-small} is replaced by  Proposition~\ref{prop:T-zeta-small} whenever the jump condition depends on $T_\eps(\SP)$ in place of $Y_\eps(\SP)$; similarly property H3 of Proposition~\ref{prop:h-properties} is replaced by property H2 whenever $m^+(\SP;x)$ appears in place of $n^+(\SP;x)$. Once the result is established in the upper half-plane, the symmetries $\mathbf{V^W}(\SP) = \mathbf{V^W}(\SP^*)^\dagger$ and $\mathbf{V}^{\breve{\mathbf{W}}}(\SP) = \mathbf{V}^{\breve{\mathbf{W}}}(\SP^*)^\dagger$ extend the result to the components of $\Sigma' \cap \mathcal{U}_0$ in the lower half-plane. 
\end{proof}

\begin{lem}
	$\mathbf{V}^{\breve{\mathbf{W}}}(Z;\mu,\nu)$ is analytic in all of its arguments along each ray of $\Sigma^\circledast$, and the limiting values as $Z \to 0$ along each ray satisfy the cyclic consistency condition, i.e., their counterclockwise-ordered product is the identity.
\label{lem:W0-consistency}	
\end{lem}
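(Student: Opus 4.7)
The analyticity and cyclic consistency claims are essentially independent. Analyticity in $\mu$ and $\nu$ is automatic since these parameters appear only as coefficients of $Z^2$ inside the entire exponentials $\ee^{\pm\ii\mu Z^2}$ and $\ee^{\pm\ii\nu Z^2}$. For analyticity in $Z$ along each ray I would verify (i) that $\mathcal{Y}_0(Z)$ defined by \eqref{eq:Y0-def} is analytic on each open half-plane $\pm\re(Z)>0$: the logarithm of the Blaschke-type factor is $-2\ii Z/\sqrt{n+\tfrac12}+O(n^{-3/2})$, which telescopes against the companion factor $\ee^{4\ii Z(\sqrt{n+1}-\sqrt{n})}$ to leave a summable $O(n^{-3/2})$ remainder; (ii) that $\mathcal{T}_0(Z)$ defined by \eqref{eq:T0-def} is entire by the same device, now using the identity $1-Z^2/(n+\tfrac12)=(1-\ii Z/\sqrt{n+\tfrac12})(1+\ii Z/\sqrt{n+\tfrac12})$, with zeros only at the isolated points $Z=\pm\ii\sqrt{n+\tfrac12}$. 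Since the rays carrying $\mathcal{Y}_0$ are $\arg(Z)\in\{\pm\pi/6,\pm5\pi/6\}$ and thus avoid the imaginary axis, and the zeros of $\mathcal{T}_0$ all lie bounded away from the origin, every entry of $\mathbf{V}^{\breve{\mathbf{W}}}$ is analytic in $Z$ along each ray of $\Sigma^\circledast$, and jointly analytic in $(\mu,\nu)$.

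\textbf{Cyclic consistency.} My strategy is to use the prescribed Schwarz symmetry $\mathbf{V}^{\breve{\mathbf{W}}}(Z;\mu,\nu)=\mathbf{V}^{\breve{\mathbf{W}}}(Z^*;\mu,\nu)^\dagger$ to collapse the ten-ray consistency relation to a single statement about the five upper-half-plane jumps. Label the upper-half-plane jump matrices at $Z=0$, taken in counterclockwise order of their rays $\arg(Z)\in\{\pi/6,\pi/3,\pi/2,2\pi/3,5\pi/6\}$, as $V_1,\dots,V_5$, and set $U(\mu,\nu):=V_1V_2V_3V_4V_5$. Continuing counterclockwise through the lower half-plane, the rays are traversed in the order $-5\pi/6,-2\pi/3,-\pi/2,-\pi/3,-\pi/6$, where Schwarz symmetry identifies the jump at $Z=0$ with $V_5^\dagger,V_4^\dagger,V_3^\dagger,V_2^\dagger,V_1^\dagger$ respectively; applying $(AB)^\dagger=B^\dagger A^\dagger$ shows that their product in this order equals $U(\mu,\nu)^\dagger$. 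Thus the full counterclockwise product around $Z=0$ reduces to $U(\mu,\nu)\,U(\mu,\nu)^\dagger$, and the lemma reduces to the unitarity of $U$. Since every $V_j$ is unit-triangular or equal to the unit-determinant diagonal $\Diag(1/2,2)$, one has $\det U(\mu,\nu)\equiv 1$ automatically.

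\textbf{Computation and main obstacle.} At $Z=0$ all exponential factors reduce to $1$, $\mathcal{Y}_0(0)=1$ by Proposition~\ref{prop:Y0-asymp} and $\mathcal{T}_0(0)=2$ by Proposition~\ref{prop:T0-asymp}, so $U$ is independent of $(\mu,\nu)$ and is computed by a short $2\times 2$ matrix product; I anticipate the outcome $U=-\ii\sigma_1$, giving $UU^\dagger=\sigma_1^2=\mathbb{I}$ at once. This value is the geometrically expected one: $-\ii\sigma_1$ is exactly the jump taken by the outer parametrix $\breve{\mathbf{S}}^{\mathrm{out}}(\SP;x)$ across the band in \eqref{eq:S.out.matrix}, so counterclockwise traversal through the five upper-half-plane rays of $\Sigma^\circledast$ must reproduce that same factor if the local model is to match the outer model at $Z=0$. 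The only real obstacle in the write-up is orientation/bookkeeping: I need to pin down that the outward orientation of the lower-half-plane rays, combined with the Schwarz rule ``lower jump $=V(Z^*)^\dagger$,'' yields $U^\dagger$ rather than its inverse in counterclockwise order. Once that convention is verified the argument is purely algebraic.
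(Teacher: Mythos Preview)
Your approach is correct and takes a genuinely different route from the paper's. The paper argues indirectly: cyclic consistency holds automatically for the exact jump $\mathbf{V^W}(\SP;x)$ because that matrix arose from a sequence of sectionally analytic, unit-determinant substitutions applied to $\widetilde{\mathbf{M}}(\SP;x,\mathbf{0})$, which is analytic at $\SP=0$; and then, since the approximations $Y_\eps\to\mathcal{Y}_0$, $T_\eps\to\mathcal{T}_0$ and the Taylor truncations of $m^+,n^+$ are all \emph{exact} in the coincident limits $\SP\to 0$, $Z\to 0$ (Propositions~\ref{prop:Y-zeta-small} and \ref{prop:T-zeta-small}), the limiting values of $\mathbf{V}^{\breve{\mathbf{W}}}$ coincide with those of $\mathbf{V^W}$ ray-by-ray and simply inherit the consistency. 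No matrix product is ever computed.

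Your route is direct and self-contained: the Schwarz reduction of the ten-factor cyclic product to $UU^\dagger$ is clean, and the explicit evaluation $U=-\ii\sigma_1$ (using $\mathcal{Y}_0(0)=1$, $\mathcal{T}_0(0)=2$, all exponentials $=1$) goes through exactly as you anticipate --- a short $2\times 2$ calculation gives $V_1V_2V_3V_4V_5=\bigl(\begin{smallmatrix}0&-\ii\\-\ii&0\end{smallmatrix}\bigr)$. Your orientation bookkeeping is also correct: for outward-oriented rays the counterclockwise side is always the $+$ side regardless of half-plane, so the lower-half-plane contributions enter as $V_5^\dagger V_4^\dagger\cdots V_1^\dagger=U^\dagger$ in precisely that order. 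The geometric observation that $U$ should reproduce the outer-parametrix jump $-\ii\sigma_1$ is a nice structural check the paper does not make explicit.

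What each approach buys: the paper's argument transfers without change to more elaborate local models (no new algebra is needed each time), while yours is independent of the surrounding RHP machinery and produces the concrete value of $U$, which can serve as a sanity check on the model's normalization.
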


\begin{proof}
The analyticity of the jump matrix $\mathbf{V}^{\breve{\mathbf{W}}}(Z;\mu,\nu)$ on each ray of $\Sigma^\circledast$ in the upper half-plane is obvious from formul\ae\
\eqref{eq:Y0-def}, \eqref{eq:T0-def}, and from the fact that all the exponential factors have the form $\ee^{\pm \ii c Z^2}$ for $c \in \{\mu,\nu\}$. 
To prove the consistency of the jump conditions, we first observe that this condition holds automatically for $\mathbf{V^W}(\SP;x)$ (replaced on the re-oriented arcs $B(x)\cup B(x)^*$ by its inverse) because this jump matrix arose from a sequence of explicit sectionally analytic substitutions continuous up to the boundary of each component of $\C \setminus \Sigma'$, each with unit determinant, applied to the matrix $\widetilde{\mathbf{M}}(\SP;x,\mathbf{0})$ (cf. Riemann-Hilbert problem~\ref{rhp-meromorphic}) which by definition is analytic at $\SP =0$. To this we add the fact that the Taylor approximations \eqref{eq:m-n-expansions} are exact in the limit $Z \to 0$, and, according to Propositions \ref{prop:Y-zeta-small} and \ref{prop:T-zeta-small}, the functions $Y_\eps(\SP)$ and $T_\eps(\SP)$ agree in the limit $\SP \to 0$ along the arcs of $\Sigma'$ with the approximations $\mathcal{Y}_0(Z)$ and $\mathcal{T}_0(Z)$ in the corresponding limit $Z \to 0$ along $\Sigma^\circledast =Z (\Sigma' \cap \mathcal{U}_0)$. Therefore the limiting value of $\mathbf{V}^{\breve{\mathbf{W}}}(Z;\mu, \nu)$ as $Z \to 0$ along a given ray of $\Sigma^\circledast$ agrees exactly with that of $\mathbf{V^W}(\SP;x)$ as $\SP \to 0$ along the corresponding arc of $\Sigma'$ (the limits $\lim_{\SP \to 0} \mathbf{V^W}(\SP;x)$ and $\lim_{Z \to 0} \mathbf{V}^{\breve{\mathbf{W}}}(Z;\mu,\nu)$ along any component of $\Sigma'$ and $\Sigma^\circledast$ are independent of $x$ and $(\mu,\nu)$ respectively). 
\end{proof}

The model $\breve{\mathbf{W}}(Z;\mu,\nu)$ is independent of the dispersion parameter $\eps$ and its jump matrix $\mathbf{V}^{\breve{\mathbf{W}}}(Z;\mu,\nu)$ decays exponentially to the identity as $Z \to \infty$ along each each ray of $\Sigma^\circledast \setminus \ii\R$. The jump along the imaginary axis also decays to identity, but only algebraically.  Before proving an existence result for the solution of our model problem, it is useful to first remove the slowly decaying jump along the imaginary axis.  
Using the asymptotic expansion of $\mathcal{T}_0(Z)$ from Proposition~\ref{prop:T0-asymp} write
\begin{equation}
	\widetilde{\mathcal{T}}_0(Z) := \mathcal{T}_0(Z) \ee^{2\ii (1-\sqrt{2}) \zeta(-\frac{1}{2}) Z^{-1}}, 
	\label{eq:T0-tilde-def}
\end{equation}
so that $\widetilde{\mathcal{T}}_0(Z) = 1+ \bigo{Z^{-2}}$ as $Z \to \infty$ with $-\ii Z >0$. 
Let $\mathcal{B}:[0,\infty) \to [0,1]$ be a $C^\infty$ bump function with the property that 
\begin{equation}
	\mathcal{B}(\xi) = 1 \text{\quad for } \xi \leq \tfrac{1}{2}
	\qquad \text{and} \qquad 
	\mathcal{B}(\xi) = 0 \text{\quad for } \xi\geq 1
\end{equation}
and define the function 
\begin{align}
	D(Z) := \begin{dcases}
	\begin{multlined}[.7\textwidth] \exp \left( \vphantom{\int_0^\infty\frac{2\xi}{\xi^2+Z^2}}\ii (1-\sqrt{2}) \zeta(-\tfrac{1}{2})  \sgn(\Re\{Z\}) Z^{-1} + \right. \\ \left. 
	    \frac{1}{2\pi \ii} \int_{1/2}^\infty   (1-\mathcal{B}(\xi ) ) \log( \widetilde{\mathcal{T}}_0(\ii \xi))  \frac{ 2 \xi}{\xi^2+Z^2}\dd \xi \right),
	\end{multlined} 
	& |Z| > 1, \\		
	 \exp \left( \frac{1}{2\pi \ii} \int_{1/2}^\infty  (1-\mathcal{B}(\xi ) ) \log( \widetilde{\mathcal{T}}_0(\ii \xi))\frac{2 \xi }{\xi^2+Z^2}\dd \xi \right), 
	&|Z| < 1.
	\label{eq:D-def}
\end{dcases}
\end{align}

Let $\partial \mathbb{D}$ denote the unit circle in the $Z$-plane, and let $\partial \mathbb{D}^\pm$ denote the upper and lower unit semi-circles oriented from $Z=-1$ to $Z=1$. Then
\begin{prop}
The function $D(Z)$ is analytic, bounded, nonzero, and satisfies the symmetry $D(Z^*)^* =  D(Z)^{-1}$ for $Z \in \C \setminus (\ii \R \cup \partial \mathbb{D})$. 
For $Z \in \ii \R \cup \partial \mathbb{D}$ it takes continuous boundary values which satisfy the jump relation
\begin{gather}
	D_+(Z) = D_-(Z) v^D(Z) \\
	v^D(Z) = \begin{cases}
		\mathcal{T}_0(Z), & -\ii Z \in (1,\infty), \\
		\widetilde{\mathcal{T}}_0(Z)^{1-\mathcal{B}(|Z|)},  & -\ii Z \in (\tfrac{1}{2},1), \\
		1,  & -\ii Z \in (0, \tfrac{1}{2}), \\
		\ee^{ \ii (1-\sqrt{2}) \zeta(-\frac{1}{2})  \sgn(\Re\{Z\}) Z^{-1}}, & \phantom{-\ii}Z \in \partial \mathbb{D}^+, \\
		v^D(Z^*)^*, & -\ii Z <0 \text{ or }  Z \in \partial \mathbb{D}^-
	\end{cases}
	\label{eq:D-jump}
\end{gather}
and, uniformly for large $Z$, 
\[
	D(Z) = 1+ \bigo{Z^{-1}},  \quad Z \to \infty. 
\]
\label{prop:D}
\end{prop}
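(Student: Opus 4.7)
The plan is to verify each of the four claims in turn (analyticity/boundedness/nonvanishing, Schwarz symmetry, jump relation, and large-$|Z|$ asymptotic). The key structural inputs will be the positivity of $\widetilde{\mathcal{T}}_{0}(\ii\xi)>0$ for $\xi>0$ (visible from the infinite product \eqref{eq:T0-def}), which makes $\log\widetilde{\mathcal{T}}_{0}(\ii\xi)$ real, and the $O(\xi^{-2})$ decay of $\log\widetilde{\mathcal{T}}_{0}(\ii\xi)$ as $\xi\to\infty$, a consequence of the definition \eqref{eq:T0-tilde-def} and Proposition~\ref{prop:T0-asymp}. Setting
\[
F(Z):=\frac{1}{2\pi\ii}\int_{1/2}^{\infty}(1-\mathcal{B}(\xi))\log\widetilde{\mathcal{T}}_{0}(\ii\xi)\frac{2\xi\,\dd\xi}{\xi^{2}+Z^{2}},
\]
analyticity of $D$ on $\mathbb{C}\setminus(\ii\mathbb{R}\cup\partial\mathbb{D})$ follows by differentiation under the integral sign (the integrand depends on $Z$ only through $1/(\xi^{2}+Z^{2})$, which is analytic off $\{Z=\pm\ii\xi:\xi\ge 1/2\}$), while nonvanishing is automatic from the exponential form and boundedness will follow from the large-$|Z|$ estimate below. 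The Schwarz symmetry $D(Z^{*})^{*}=D(Z)^{-1}$ reduces to the observations $F(Z^{*})=-\overline{F(Z)}$ (from reality of the integrand together with $1/(\xi^{2}+(Z^{*})^{2})=\overline{1/(\xi^{2}+Z^{2})}$) and $\sgn(\Re Z^{*})(Z^{*})^{-1}=\sgn(\Re Z)\,\overline{Z^{-1}}$, which together yield $\log D(Z^{*})=-\overline{\log D(Z)}$.

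The heart of the proof is the jump calculation on $\ii\mathbb{R}\cap\{|Z|>1/2\}$. Expanding $\frac{2\xi}{\xi^{2}+Z^{2}}=\frac{1}{\xi-\ii Z}+\frac{1}{\xi+\ii Z}$ rewrites $F$ as a pair of Cauchy integrals along $(1/2,\infty)\subset\mathbb{R}$. For a target point $Z_{0}=\ii t_{0}$ with $t_{0}>1/2$, only the pole $\xi=t_{0}$ of the second factor lies in the integration range, and Sokhotski--Plemelj (with $\ii\mathbb{R}$ oriented upward, so that the $+$-side is on the left) gives $F_{+}(\ii t_{0})-F_{-}(\ii t_{0})=(1-\mathcal{B}(t_{0}))\log\widetilde{\mathcal{T}}_{0}(\ii t_{0})$. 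In the outer regime $t_{0}>1$ one must also add the jump of the $\sgn$-term, which evaluates to $-2(1-\sqrt{2})\zeta(-\tfrac{1}{2})/t_{0}$ at $Z_{0}=\ii t_{0}$; this exactly cancels the auxiliary exponential factor $\ee^{2(1-\sqrt{2})\zeta(-\tfrac{1}{2})/t_{0}}$ built into $\widetilde{\mathcal{T}}_{0}(\ii t_{0})$, leaving $\log\mathcal{T}_{0}(Z)$ as claimed. For $t_{0}\in(\tfrac{1}{2},1)$ the $\sgn$-term is absent and only the partial-log contribution $\widetilde{\mathcal{T}}_{0}(Z)^{1-\mathcal{B}(|Z|)}$ remains; for $t_{0}\in(0,\tfrac{1}{2})$ the integrand vanishes and there is no jump. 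The jump across $\partial\mathbb{D}^{+}$ is immediate since $F$ is continuous across $\partial\mathbb{D}$ (no singularity of the integrand there) and the orientation from $-1$ to $1$ through the upper half-plane places $|Z|>1$ on the $+$-side; the remaining jumps on $\partial\mathbb{D}^{-}$ and the lower imaginary axis then follow automatically from the Schwarz symmetry.

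The large-$|Z|$ asymptotic $D(Z)=1+O(Z^{-1})$ is obtained by estimating each part of $\log D$: the $\sgn$-term is manifestly $O(Z^{-1})$, and splitting the integral defining $F$ at $\xi=|Z|$ and using the $\xi^{-2}$ decay of $\log\widetilde{\mathcal{T}}_{0}(\ii\xi)$ gives $|F(Z)|=O(|Z|^{-2}\log|Z|)$. The main subtlety I anticipate is the orientation and sign bookkeeping in the jump calculation: the precise cancellation between the explicit $\sgn$-exponent jump and the exponential correction absorbed into $\widetilde{\mathcal{T}}_{0}$ is the whole motivation for the definition \eqref{eq:T0-tilde-def}, and any sign error propagates directly into the claimed jump relation \eqref{eq:D-jump}.
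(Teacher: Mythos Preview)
Your proposal is correct and follows essentially the same approach as the paper: partial-fraction decomposition of $2\xi/(\xi^{2}+Z^{2})$ to express $F$ as a sum of Cauchy integrals, followed by the Plemelj formula for the jumps, reality of $\log\widetilde{\mathcal{T}}_{0}(\ii\xi)$ for the Schwarz symmetry, and $L^{1}$-decay of $\log\widetilde{\mathcal{T}}_{0}(\ii\xi)$ for the large-$|Z|$ estimate. The paper's only cosmetic difference is that it changes variables to move the Cauchy integrals onto the imaginary $\eta$-axis before invoking Plemelj, whereas you keep $\xi$ real and track the pole $\xi=-\ii Z$ directly; your explicit verification of the cancellation between the $\sgn$-term jump and the exponential correction hidden in $\widetilde{\mathcal{T}}_{0}$ is more detailed than the paper's terse appeal to ``the Cauchy--Plemelj formula.''
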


\begin{proof}
Using the partial fraction expansion 
$2\xi(\xi^2+Z^2)^{-1}=-\ii (Z-\ii\xi)^{-1} +\ii (Z+\ii\xi)^{-1}$
 and a change of variables one can rewrite the integral term in \eqref{eq:D-def} in the form
\begin{multline}
\frac{1}{2\pi\ii}\int_{1/2}^\infty(1-\mathcal{B}(\xi))\log(\widetilde{\mathcal{T}}_0(\ii\xi))\frac{2\xi}{\xi^2+Z^2}\,\dd\xi \\
=
\frac{1}{2\pi\ii}\int_0^{\ii\infty}(1-\mathcal{B}(|\eta|))\log(\widetilde{\mathcal{T}}_0(\eta))\frac{\dd\eta}{\eta-Z} \\
+\frac{1}{2\pi\ii}\int_0^{-\ii\infty}(1-\mathcal{B}(|\eta|))\log(\widetilde{\mathcal{T}}_0(\eta^*)^*)\frac{\dd\eta}{\eta-Z},
\end{multline}
where in the last term we used the fact that $\widetilde{\mathcal{T}}_0(Z)$ is real-valued on the positive imaginary axis.
Analyticity of $D(Z)$ for $Z \in \C \setminus (\ii \R \cup \partial \mathbb{D})$ and the jump conditions then follow immediately from the Cauchy-Plemelj formula. Boundedness on any compact set follows from the fact that the function $\widetilde{\mathcal{T}}_0(Z)$ is analytic and nonzero on the positive imaginary axis and $\widetilde{\mathcal{T}}_0(Z)= 1 + \bigo{Z^{-2}}$ (cf.\@ Proposition~\ref{prop:T0-asymp})  which implies both the continuity and boundedness of the boundary values for $Z \in \ii \R$. In particular $D(Z)$ is actually analytic for $-\ii Z \in (-1/2,1/2)$. 
The symmetry property $D(Z^*)^* = D(Z)^{-1}$ follows from direct inspection. 
Finally, the decay estimate $D(Z) = 1+ \bigo{Z^{-1}}$ as $Z \to \infty$ follows immediately from the fact that $\log( \widetilde{\mathcal{T}}_0(\ii \xi) ) \in L^1([0,\infty))$ since it decays like $\xi^{-2}$ as $\xi \to \infty$.
\end{proof}

Using $D(Z)$ we transform the model problem. Let 
\begin{equation}
	\mathbf{X}(Z;\mu,\nu) : = \breve{\mathbf{W}}(Z;\mu,\nu) D(Z)^{\sigma_3},
	\label{eq:X0-def}
\end{equation}
then using Proposition~\ref{prop:D} it follows that $\mathbf{X}(Z;\mu,\nu)$ solves the following problem.

\begin{myrhp}
Given $\mu, \nu >0$, seek a $2\times 2$ matrix function $\mathbf{X}(Z)=\mathbf{X}(Z;\mu,\nu)$ with the following properties.
\begin{itemize}
\item[]\textit{\textbf{Analyticity:}}  $\mathbf{X}(Z)$ is analytic for $Z\in \mathbb{C}\setminus (\Sigma^\circledast \cup \partial{\mathbb{D}})$ and satisfies the Schwarz symmetry condition $\mathbf{X}(Z^*)=\mathbf{X}(Z)^{-\dagger}$.
\item[]\textit{\textbf{Jump conditions:}} $\mathbf{X}(Z)$ takes continuous boundary values on $\Sigma^\circledast \cup \partial{\mathbb{D}}$ from each maximal connected component of $\mathbb{C}\setminus (\Sigma^\circledast \cup \partial\mathbb{D})$.  For $Z \in \Sigma^\circledast \cup \partial\mathbb{D}$, denote the boundary value taken from the left (respectively, right) by $\mathbf{X}_+(Z)$ (respectively, $\mathbf{X}_-(Z)$).  Then 
\begin{equation}
	\mathbf{X}_+(Z)=\mathbf{X}_-(Z)\mathbf{V^X}(Z),
\end{equation}
where the jump matrix $\mathbf{V^X}(Z)=\mathbf{V^X}(Z;\mu,\nu)$ is defined on the various arcs of $\Sigma^\circledast \cup \partial \mathbb{D}$ by 
\begin{equation}
\mathbf{V^X}(Z;\mu,\nu):= \begin{dcases}
	\begin{pmatrix} 1 & 0 \\ -\ii \mathcal{Y}_0(Z) D(Z)^{2} \ee^{\ii \nu Z^2} & 1 \end{pmatrix}, 
	 & \arg(Z) = \frac{\pi}{6}, \\
	\begin{pmatrix} 1 &  -\ii \mathcal{T}_0(Z)^{-1} D(Z)^{-2} \ee^{\ii\mu Z^2} \\ 0 & 1 \end{pmatrix},
	 & \arg(Z) = \frac{\pi}{3}, \\
	\left( \frac{\widetilde{\mathcal{T}}_0(Z)^{1-\mathcal{B}(|Z|)}}{\mathcal{T}_0(Z)} \right)^{\sigma_3},
 	 &  -\ii Z \in (0,1) , \\
	\begin{pmatrix} 1 & 0 \\ -\ii \mathcal{T}_0(Z)^{-1} D(Z)^{2} \ee^{-\ii\mu Z^2}  & 1 \end{pmatrix},
	 & \arg(Z) = \frac{2\pi}{3}, \\ 
	\begin{pmatrix} 1 & -\ii \mathcal{Y}_0(Z) D(Z)^{-2} \ee^{-\ii \nu Z^2}\\ 0&  1 \end{pmatrix}, 
	 & \arg(Z) = \frac{5\pi}{6}, \\ 
	 \ee^{\ii (1-\sqrt{2}) \zeta(-\frac{1}{2}) \sgn( \Re \{ Z \} ) Z^{-1} \sigma_3}, & Z \in \partial \mathbb{D}^+, \\
	 \quad \mathbf{V^X}(Z^*;\mu,\nu)^\dagger, & \arg(Z) \in -\frac{k\pi}{6},\  k=1,\dots,5.
\end{dcases}
\label{eq:jump+++X0}
\end{equation}
\item[]\textit{\textbf{Normalization:}} $\mathbf{X}(Z)\to\mathbb{I} + \bigo{Z^{-1}}$ as $Z\to\infty$.
\end{itemize}
\label{rhp:+++X0}
\end{myrhp}

\begin{lem}
	$\mathbf{V^X}(Z;\mu,\nu)$ is analytic in $\mu$ and $\nu$ along each arc of $\Sigma^\circledast \cup \partial \mathbb{D}$, and at each point of self-intersection in the contour $\Sigma^\circledast \cup \partial \mathbb{D}$, the limiting values of the jump along each component contour satisfy the cyclic consistency condition. 
\label{lem:X0-consistency}	
\end{lem}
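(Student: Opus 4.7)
The analyticity claim is immediate on inspection of \eqref{eq:jump+++X0}: the parameters $\mu$ and $\nu$ appear in $\mathbf{V^X}(Z;\mu,\nu)$ only through the entire functions $\ee^{\pm\ii\mu Z^2}$ and $\ee^{\pm\ii\nu Z^2}$, while all remaining building blocks ($\mathcal{Y}_0$, $\mathcal{T}_0$, $\widetilde{\mathcal{T}}_0$, $D$, the bump function $\mathcal{B}$, and the function $\sgn(\mathrm{Re}\{Z\})Z^{-1}$) are independent of $(\mu,\nu)$ and analytic on each arc of $\Sigma^\circledast\cup\partial\mathbb{D}$ in its interior. So the entire argument will focus on the cyclic consistency.

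The self-intersection points of $\Sigma^\circledast\cup\partial\mathbb{D}$ are the origin (where all six rays of $\Sigma^\circledast$ meet) together with the ten points $Z=\ee^{\ii k\pi/6}$, $k\in\{\pm 1,\pm 2,\pm 3,\pm 4,\pm 5\}$, where each ray of $\Sigma^\circledast$ crosses $\partial\mathbb{D}$; the points $Z=\pm\ii$ are special in that both the imaginary axis (which carries a jump of $D$) and $\partial\mathbb{D}$ pass through them. The overall strategy is to use the explicit factorization \eqref{eq:X0-def}, namely $\mathbf{X}(Z)=\breve{\mathbf{W}}(Z;\mu,\nu)D(Z)^{\sigma_3}$, to relate the jump matrices by
\begin{equation}
\mathbf{V^X}(Z) = D_-(Z)^{-\sigma_3}\,\mathbf{V}^{\breve{\mathbf{W}}}(Z;\mu,\nu)\,D_+(Z)^{\sigma_3},\quad Z\in\Sigma^\circledast,
\end{equation}
with the natural analogue $\mathbf{V^X}(Z)=D_-(Z)^{-\sigma_3}D_+(Z)^{\sigma_3}=(v^D(Z))^{\sigma_3}$ on those arcs of $\partial\mathbb{D}$ where $\breve{\mathbf{W}}$ carries no jump. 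One then observes that, going once counterclockwise around a self-intersection point $Z_0$ through all sectors and crossing each incident arc exactly once with the appropriate orientation sign $\epsilon_k=\pm 1$, the cyclic product of boundary values of $D^{\sigma_3}$ telescopes: if $D_1,D_2,\dots,D_n$ denote the values of $D$ in the cyclically ordered sectors meeting at $Z_0$, then
\begin{equation}
\prod_{k=1}^n\mathbf{V^X}_k^{\epsilon_k}
= D_1^{-\sigma_3}\!\left(\prod_{k=1}^n(\mathbf{V}^{\breve{\mathbf{W}}}_k)^{\epsilon_k}\right)\!D_1^{\sigma_3}.
\end{equation}
Thus cyclic consistency of $\mathbf{V^X}$ at $Z_0$ reduces to cyclic consistency of $\mathbf{V}^{\breve{\mathbf{W}}}$ at $Z_0$.

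At $Z=0$, the function $D$ is analytic in the disk $|Z|<1/2$ by Proposition~\ref{prop:D}, so $D_1=\cdots=D_n$; the telescoping argument applies and Lemma~\ref{lem:W0-consistency} directly furnishes the cyclic consistency of $\mathbf{V}^{\breve{\mathbf{W}}}$. At each point $Z_0=\ee^{\ii k\pi/6}$ with $k\neq\pm 3$, only one arc of $\Sigma^\circledast$ and two arcs of $\partial\mathbb{D}$ meet, and $\breve{\mathbf{W}}$ has a jump on the ray only while $D$ has a jump on the circle only; the telescoping identity then holds separately on the $\breve{\mathbf{W}}$-jumps (where $D_+=D_-$ acts trivially) and on the $D$-jumps (where $\mathbf{V}^{\breve{\mathbf{W}}}=\mathbb{I}$), and the cyclic product collapses to the trivial product of inverse pairs. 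The only delicate case is $Z_0=\pm\ii$, where four arcs meet; this will be the main technical step.

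The main obstacle is therefore the verification at $Z_0=\pm\ii$. Here we must check consistency of four jumps: the ray $\arg Z=\pi/2$ (or $-\pi/2$) on either side of $\partial\mathbb{D}$, and both arcs of $\partial\mathbb{D}^+$ (or $\partial\mathbb{D}^-$) separated by $\Re\{Z\}=0$. The sign $\sgn(\Re\{Z\})$ in the $\partial\mathbb{D}$ jump of $\mathbf{V^X}$ means the two unit-circle jumps at $Z=\ii$ are not simply reciprocal. The plan is to handle $Z_0=\ii$ by reading off the four boundary values $D_1,\dots,D_4$ of $D$ in the four sectors from the explicit formula \eqref{eq:D-def}, using that (i) on the ray $\arg Z=\pi/2$ with $-\ii Z>1$ one has by construction $D_+=\mathcal{T}_0(Z)D_-$, so the $\breve{\mathbf{W}}$-jump $\mathcal{T}_0^{-\sigma_3}$ on this ray is exactly canceled (producing the trivial jump for $\mathbf{X}$ there), and (ii) on the ray $\arg Z=\pi/2$ with $\tfrac{1}{2}<-\ii Z<1$ the same cancellation is partial, leaving the explicit factor $(\widetilde{\mathcal{T}}_0(Z)^{1-\mathcal{B}(|Z|)}/\mathcal{T}_0(Z))^{\sigma_3}$ recorded in \eqref{eq:jump+++X0}. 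Tracking the four boundary values $D_1,D_2,D_3,D_4$ of $D$ and the jump of $\breve{\mathbf{W}}$ on the imaginary axis around $Z_0=\ii$, one then verifies that the product of the four $\mathbf{V^X}$ jumps taken in cyclic order is the identity; the symmetry $D(Z^*)^*=D(Z)^{-1}$ from Proposition~\ref{prop:D} reduces the case $Z_0=-\ii$ to the case $Z_0=\ii$ already handled.
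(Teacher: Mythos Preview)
Your proof is correct and follows essentially the same approach as the paper. The paper's argument is far more compressed: it simply observes that the transformation $\mathbf{X}=\breve{\mathbf{W}}D^{\sigma_3}$ is given by conjugation with a factor $D(Z)^{\sigma_3}$ that (by Proposition~\ref{prop:D}) has \emph{continuous} boundary values from every connected component of $\mathbb{C}\setminus(\Sigma^\circledast\cup\partial\mathbb{D})$, including at all self-intersection points, and notes that this automatically preserves cyclic consistency of the jumps inherited from Lemma~\ref{lem:W0-consistency}. Your telescoping identity is precisely the mechanism that makes this general principle work, and your case-by-case discussion at $Z=0$, at the generic circle crossings $\ee^{\ii k\pi/6}$, and at the special points $Z=\pm\ii$ simply unpacks what ``continuous boundary values preserve consistency'' means concretely. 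The paper skips all of this detail; your version is a faithful expansion of the same idea, and the separate treatment of $Z=\pm\ii$ that you flag as ``delicate'' is in fact rendered unnecessary once one invokes continuity of $D$ at those points.
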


\begin{proof}	
	Analyticity of $\mathbf{V^X}(Z;\mu,\nu)$ in $\mu$ and $\nu$ is obvious from \eqref{eq:jump+++X0}. Cyclic consistency of the jumps at self-intersection points of the jump contour follows from Lemma~\ref{lem:W0-consistency} and the observation that the function $D(Z)$ defining the transformation \eqref{eq:X0-def} from $\dot{\mathbf{W}} \mapsto \mathbf{X}$ has continuous boundary values as $Z \to \Sigma^\circledast \cup \partial \mathbb{D}$ --- including all points of self intersection --- from each connected component of $\C \setminus (\Sigma^\circledast \cup \partial \mathbb{D})$ which preserves the consistency of the jumps.	
\end{proof}	

\begin{lem}
There exists a unique solution $\mathbf{X}(\SP;\mu,\nu)$ of Riemann-Hilbert Problem \ref{rhp:+++X0} which is uniformly bounded for $(\mu, \nu)$ in any compact subset of $(0,\infty)^2$. 
\end{lem}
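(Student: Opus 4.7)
The plan is to establish existence and uniqueness in the standard way for Riemann--Hilbert problems of small-norm type (after removing explicit model factors), and then to deduce the uniform bound by a continuity-and-compactness argument. Uniqueness is the easy half. If $\mathbf{X}_1$ and $\mathbf{X}_2$ both solve Riemann--Hilbert Problem~\ref{rhp:+++X0} with parameters $(\mu,\nu)$, then a direct calculation using \eqref{eq:jump+++X0} shows that $\det(\mathbf{X}_i(Z;\mu,\nu))$ has no jump across $\Sigma^\circledast\cup\partial\mathbb{D}$ (each jump matrix has unit determinant) and tends to $1$ at infinity, so $\det(\mathbf{X}_i)\equiv 1$. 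Thus $\mathbf{X}_1\mathbf{X}_2^{-1}$ is an entire matrix function tending to $\mathbb{I}$ at infinity, and a Liouville argument gives $\mathbf{X}_1=\mathbf{X}_2$.

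For existence I would invoke Zhou's vanishing lemma \cite{Zhou89}. The first step is to verify its hypotheses. Lemma~\ref{lem:X0-consistency} delivers the cyclic consistency at every self-intersection of $\Sigma^\circledast\cup\partial\mathbb{D}$; continuity of the jump matrix along each smooth arc is immediate from the analyticity properties of $\mathcal{Y}_0$, $\mathcal{T}_0$, $\widetilde{\mathcal{T}}_0$, and $D(Z)$ (Propositions~\ref{prop:T0-asymp} and \ref{prop:D}); and the contour $\Sigma^\circledast\cup\partial\mathbb{D}$ is Schwarz symmetric with jump matrix satisfying $\mathbf{V^X}(Z^*;\mu,\nu)=\mathbf{V^X}(Z;\mu,\nu)^\dagger$ on the lower half-plane arcs. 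On the rays with $\arg(Z)\in\{\pm\pi/6,\pm\pi/3,\pm 2\pi/3,\pm 5\pi/6\}$ the off-diagonal entries of $\mathbf{V^X}-\mathbb{I}$ carry exponentially decaying factors $\ee^{\pm\ii\mu Z^2}$ or $\ee^{\pm\ii\nu Z^2}$, and on $\partial\mathbb{D}$ the contour is compact; on the diagonal segment $-\ii Z\in(0,1)$ the jump is $(\widetilde{\mathcal{T}}_0(Z)^{1-\mathcal{B}(|Z|)}/\mathcal{T}_0(Z))^{\sigma_3}$, which is identically $\mathbb{I}$ for $|Z|\le 1/2$ (so the contour is effectively bounded away from the origin there) and differs from $\mathbb{I}$ only by the bounded analytic factor $\widetilde{\mathcal{T}}_0(Z)$ by construction. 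These properties together with the Schwarz symmetry of both the contour and the jump matrix place the problem in the standard framework where the associated singular integral operator on the $L^2$-space of the contour is Fredholm of index zero.

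The vanishing lemma is then applied as follows: suppose $\mathbf{X}^0(Z)$ solves the homogeneous problem (i.e.\ with normalization $\mathbf{X}^0(Z)\to \mathbf{0}$ at infinity). Set
\begin{equation}
\mathbf{H}(Z):=\mathbf{X}^0(Z)\mathbf{X}^0(Z^*)^\dagger.
\end{equation}
Using $\mathbf{V^X}(Z;\mu,\nu)^\dagger=\mathbf{V^X}(Z^*;\mu,\nu)$ on the Schwarz-symmetric arcs off the imaginary axis and the diagonal (hence commuting with $\dagger$) nature of the jump on $-\ii Z\in(0,1)$ and on $\partial\mathbb{D}$, one checks that $\mathbf{H}$ has no jump across those arcs that are not self-conjugate, while on $\ii\mathbb{R}\cap\{-\ii Z\in(0,1)\}$ and on $\partial\mathbb{D}$ one identifies $\mathbf{H}_+=\mathbf{H}_-\cdot(v^\dagger v)$ with $v^\dagger v$ positive definite (the jumps are of the form $\mathrm{diag}(q,q^{-1})$ with $q>0$ on $-\ii Z\in(1/2,1)$ and $q=1$ elsewhere, and the unit-circle jump is a pure phase conjugated so as to be $\mathbb{I}$ after the symmetry product). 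Then the standard contour-integration argument gives $\int_{\mathbb{R}}\mathbf{X}^0(x)\mathbf{X}^0(x)^\dagger\,\dd x=\mathbf{0}$, forcing $\mathbf{X}^0\equiv \mathbf{0}$. The Fredholm alternative then produces a solution $\mathbf{X}(Z;\mu,\nu)$ of Riemann--Hilbert Problem~\ref{rhp:+++X0}.

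Finally, for the uniform bound on compact subsets of $(0,\infty)^2$: since $\mathbf{V^X}(Z;\mu,\nu)-\mathbb{I}$ depends analytically on $(\mu,\nu)$ (Lemma~\ref{lem:X0-consistency}) and is uniformly small in $L^2\cap L^\infty$ of the contour uniformly for $(\mu,\nu)$ in any compact subset $K\subset(0,\infty)^2$ on the rays where the exponential factors decay, while the contributions from $\partial\mathbb{D}$ and the short segment $-\ii Z\in(1/2,1)$ are independent of $(\mu,\nu)$, the Cauchy singular integral operator $\mathcal{C}^{\mathbf{V^X}-\mathbb{I}}$ is a continuous family of bounded operators on $L^2(\Sigma^\circledast\cup\partial\mathbb{D})$. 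Uniqueness together with the continuity of $(\mu,\nu)\mapsto(\mathrm{Id}-\mathcal{C}^{\mathbf{V^X}-\mathbb{I}})^{-1}$ as a map into bounded operators (where defined) and the Fredholm alternative shows $(\mathrm{Id}-\mathcal{C}^{\mathbf{V^X}-\mathbb{I}})^{-1}$ depends continuously on $(\mu,\nu)\in K$, hence is uniformly bounded on $K$. The representation of $\mathbf{X}(Z;\mu,\nu)$ via a Cauchy integral of the solution of the singular integral equation then yields the desired uniform bound.

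The main obstacle is the verification of the vanishing lemma in the presence of the ``artificial'' unit-circle contour $\partial\mathbb{D}$: one must ensure that the diagonal $\dagger$-product of the unit-circle jump is exactly $\mathbb{I}$ (guaranteed here by the symmetry $v^D(Z^*)=v^D(Z)^*$ established in Proposition~\ref{prop:D}) and that no spurious contribution from the contour of $\partial\mathbb{D}$ enters the energy identity, which follows from the deformation properties of Cauchy integrals.
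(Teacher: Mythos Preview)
Your proposal is correct and follows essentially the same route as the paper: invoke Zhou's vanishing lemma \cite{Zhou89} using the cyclic consistency from Lemma~\ref{lem:X0-consistency} and the Schwarz symmetry $\mathbf{V^X}(Z^*;\mu,\nu)=\mathbf{V^X}(Z;\mu,\nu)^\dagger$, then use Fredholm theory for existence, and finally recover $\mathbf{X}$ from the Cauchy integral of the $L^2$ boundary data.

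Two remarks are worth making. First, for the uniform bound the paper takes a slightly slicker path than your continuity-plus-compactness argument: it complexifies the parameters into the sector $\mathcal{S}=\{z:|\arg z|<\pi/3\}$, observes that $(\mu,\nu)\mapsto\mathbf{V^X}(\diamond;\mu,\nu)-\mathbb{I}$ is analytic from $\mathcal{S}^2$ into $L^2\cap L^\infty$ of the contour, and then appeals to \emph{analytic} Fredholm theory. Analyticity of the solution in $(\mu,\nu)\in\mathcal{S}^2$ immediately yields uniform boundedness on compact subsets of $(0,\infty)^2$. Your argument via continuity of the inverse operator is equally valid but requires the extra step of checking that inversion of bounded operators is continuous on its domain. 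Second, your sketch of the vanishing-lemma mechanics is a bit tangled: since $\Sigma^\circledast\cup\partial\mathbb{D}$ meets $\mathbb{R}$ only at the two isolated points $\pm 1$, the function $\mathbf{H}(Z)=\mathbf{X}^0(Z)\mathbf{X}^0(Z^*)^\dagger$ is genuinely analytic in the open upper half-plane (the Schwarz symmetry of the jump makes all jumps there cancel, including those on the imaginary segment and on $\partial\mathbb{D}^+$), so the energy identity on $\mathbb{R}$ follows directly without any separate ``$v^\dagger v$ positive definite'' analysis on the imaginary axis or the circle. The paper simply cites Zhou without reproducing this computation.
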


\begin{proof} 
Let $\mathcal{S}$ denote the open sector $\mathcal{S} := \{ z\in \C: |\arg(z)| < \pi/3 \}$; the mapping $( \mu,\nu) \mapsto \mathbf{V^X}(\diamond;\mu,\nu)- \mathbb{I}$ is analytic from $\mathcal{S}^2 \to  L^2(\Sigma^\circledast \cup \partial \mathbb{D})\cap L^\infty(\Sigma^\circledast\cup\partial\mathbb{D})$. Cyclic consistency of the jump $\mathbf{V^X}(Z;\mu,\nu)$ at points of self-intersection established in Lemma~\ref{lem:X0-consistency} together with the symmetry $\mathbf{V^X}(Z^*;\mu,\nu) = \mathbf{V^X}(Z;\mu,\nu)^\dagger$ are sufficient to apply Zhou's vanishing lemma \cite{Zhou89} to deduce that the only matrix $\mathbf{X}^0(Z)$ analytic in $\C \setminus (\Sigma^\circledast \cup \partial \mathbb{D})$ solving the homogenous version of the Riemann-Hilbert problem, namely $\mathbf{X}^0_{+}(Z) =\mathbf{X}^0_{-}(Z) \mathbf{V^X}(Z)$ for $Z \in \Sigma^\circledast \cup \partial \mathbb{D}$ and $\mathbf{X}^0(Z) \to \mathbf{0}$ as $Z \to \infty$, is the zero matrix $\mathbf{X}^0(Z) \equiv \mathbf{0}$.
From analytic Fredholm theory it then follows that there exists a unique solution of Riemann-Hilbert Problem~\ref{rhp:+++X0} whose boundary values $\mathbf{X}_\pm(Z;\mu,\nu)$ satisfy the jump condition, and also the normalization condition at $Z = \infty$ in the sense that $\mathbf{X}_\pm(\diamond) - \mathbb{I} \in L^2(\Sigma^\circledast \cup \partial \mathbb{D})$. Moreover these boundary values depend analytically on the parameters $(\mu,\nu) \in \mathcal{S}^2$. 
The solution $\mathbf{X}(Z;\mu,\nu)$ of Riemann-Hilbert problem \ref{rhp:+++X0} is given in terms of its boundary values by 
\begin{equation}
	\mathbf{X}(Z;\mu,\nu) = \mathbb{I} +  \frac{1}{2\pi \ii} \int_{\Sigma^\circledast \cup \partial \mathbb{D}} \frac{ \mathbf{X}_-(\eta;\mu,\nu) (\mathbf{V^X}(\eta;\mu,\nu) - \mathbb{I})}{\eta-Z} \dd \eta.
\end{equation}
Observing that $\mathbf{V^X}(\eta;\mu,\nu) - \mathbb{I}$ decays exponentially to zero along each unbounded component of $\Sigma^\circledast$ for each $(\mu,\nu) \in \mathcal{S}^2$, the factor $(\eta-Z)^{-1}$ can be expanded geometrically for $Z \to \infty$ yielding an asymptotic expansion for $\mathbf{X}(Z;\mu,\nu)$ in powers of $Z^{-1}$.  In particular, $\mathbf{X}(Z;\mu,\nu) = \mathbb{I}+\bigo{Z^{-1}}$ as $Z \to \infty$. Cyclic consistency of the jump matrix $\mathbf{V^X}(Z)$ at points of self-intersection and $C^\infty$ smoothness on each arc of $\Sigma^\circledast \cup \partial \mathbb{D}$ imply that $\mathbf{X}(Z)$ has bounded and continuous boundary values $\mathbf{X}_\pm(Z;\mu,\nu)$. Finally, analyticity of the solution $\mathbf{X}(Z;\mu,\nu)$ for $(\mu,\nu) \in \mathcal{S}^2$ implies uniform boundedness of the solution for parameters $(\mu,\nu)$ in any compact subset of $\mathcal{S}^2$, and hence by restriction to positive real values, in any compact subset of $(0,\infty)^2$.
\end{proof}	

With $\mathbf{X}(Z;\mu,\nu)$ uniquely determined, we then invert the explicit transformations \eqref{eq:S0toW0}, \eqref{eq:X0-def} connecting $\breve{\mathbf{S}}^0(\SP;x)$ to $\mathbf{X}(Z;\mu,\nu)$ to write for $\SP\in\mathcal{U}_0$,
\begin{equation}
	\breve{\mathbf{S}}^0(\SP;x) =  
	\mathbf{H}_0(\SP ; x) \ee^{\hat{h}^+(0;x) \sigma_3/\eps} \mathbf{X}( Z(\SP) ; \mu^+(x), \nu^+(x)) D(Z(\SP))^{-\sigma_3} \ee^{-\hat{h}^+(0;x) \sigma_3/\eps} \mathbf{C}_0(\SP),
	\label{eq:S0fromX0}
\end{equation}
where we recall the definitions in \eqref{eq:Z0-coord}--\eqref{eq:h-hat}, \eqref{eq:C0}--\eqref{eq:H0}, and \eqref{eq:D-def}.

\begin{lem} For $x \in J^+_c$, 
	$\breve{\mathbf{S}}^0(\SP;x)$ is uniformly bounded on $\mathcal{U}_0$ with $\det( \breve{\mathbf{S}}(\SP;x)) = 1$. Also, 
	$\breve{\mathbf{S}}^0(\SP;x) \breve{\mathbf{S}}^{\mathrm{out}}(\SP;x)^{-1} = \mathbb{I} + \bigo{\eps^{1/2}}$ holds uniformly for $\lambda\in\partial \mathcal{U}_0$. Finally,  
	$\mathbf{V}^\mathbf{S}(\SP;x) \mathbf{V}^{\breve{\mathbf{S}}}(\SP;x)^{-1} = \mathbb{I} + \bigo{\eps^{1/2}}$ holds uniformly for $\SP \in \Sigma' \cap \mathcal{U}_0$, where $\mathbf{V^S}(\SP;x)$ and $\mathbf{V}^{\breve{\mathbf{S}}}(\SP;x)$ denote the jump matrices for $\mathbf{S}(\lambda;x)$ and $\breve{\mathbf{S}}^0(\lambda;x)$ respectively. 
\label{lem:approximate-jumps-active}
\end{lem}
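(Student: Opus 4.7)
The proof has three parts, each flowing from the explicit factorization \eqref{eq:S0fromX0}. I plan to first verify that the factored form is well-behaved (first assertion), then exploit the large-$Z$ decay of its components (second assertion), and finally combine the structural identity $\mathbf{X}D^{-\sigma_3}=\breve{\mathbf{W}}$ with Lemma~\ref{lem:VW-match} to compare jumps (third assertion). The main subtlety will be the orientation/side bookkeeping in part three, where the sectionally constant matrix $\mathbf{C}_0$ has a jump across $B(x)\cap\mathcal{U}_0$ and one must verify that its boundary values enter the comparison of $\mathbf{V^S}$ with $\mathbf{V}^{\breve{\mathbf{S}}}$ on the two sides in such a way that the $\mathbf{C}_{0,+}$ factors cancel.

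For the first assertion, every factor in \eqref{eq:S0fromX0} has unit determinant: this is immediate for $\mathbf{H}_0=\breve{\mathbf{S}}^{\mathrm{out}}\mathbf{C}_0^{-1}$, $\mathbf{C}_0$, $D^{-\sigma_3}$, and $\ee^{\pm\hat h^+(0;x)\sigma_3/\eps}$, while for $\mathbf{X}$ it follows by a Liouville argument since each component of $\mathbf{V^X}$ has unit determinant and $\mathbf{X}\to\mathbb{I}$ at infinity. For uniform boundedness, the key observation is that $\hat h^+(0;x)=\ii\alpha^+(x)$ is purely imaginary by property H4 of Proposition~\ref{prop:h-properties} (once one notes that $\hat h^+$ defined in \eqref{eq:h-hat} extends analytically across $B(x)$ via the boundary identity $h^+_+=-h^+_-$), so the exponentials $\ee^{\pm\hat h^+(0;x)\sigma_3/\eps}$ are unitary of norm one. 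The remaining factors are uniformly bounded on $\mathcal{U}_0$ for $x\in J_c^+$: $\mathbf{H}_0$ is a holomorphic completion of $\breve{\mathbf{S}}^{\mathrm{out}}$ inside $\mathcal{U}_0$, $D^{\pm\sigma_3}$ is bounded by Proposition~\ref{prop:D}, and $\mathbf{X}$ is bounded since properties H4 and H5 ensure that $(\mu^+(x),\nu^+(x))$ lies in a compact subset of $(0,\infty)^2$ uniformly in $x\in J_c^+$.

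For the second assertion, on the fixed circle $\partial\mathcal{U}_0$ one has $|Z(\SP)|\asymp\eps^{-1/2}$. Combining the large-$Z$ expansion $\mathbf{X}(Z;\mu,\nu)=\mathbb{I}+\bigo{Z^{-1}}$, which follows from the integral representation of $\mathbf{X}$ with exponentially decaying jump data on the unbounded rays of $\Sigma^\circledast$, with $D(Z)=1+\bigo{Z^{-1}}$ from Proposition~\ref{prop:D}, one gets $\mathbf{X}(Z(\SP))D(Z(\SP))^{-\sigma_3}=\mathbb{I}+\bigo{\eps^{1/2}}$ on $\partial\mathcal{U}_0$. Inserting this into \eqref{eq:S0fromX0} and using the unitarity of $\ee^{\pm\hat h^+(0;x)\sigma_3/\eps}$ to transport the error through the conjugation yields
\begin{equation*}
\breve{\mathbf{S}}^0(\SP;x)=\mathbf{H}_0(\SP;x)\mathbf{C}_0(\SP)+\bigo{\eps^{1/2}}=\breve{\mathbf{S}}^{\mathrm{out}}(\SP;x)+\bigo{\eps^{1/2}}
\end{equation*}
by \eqref{eq:H0}, and since $\breve{\mathbf{S}}^{\mathrm{out}}(\SP;x)^{-1}$ is uniformly bounded on $\partial\mathcal{U}_0$ the claim follows by right-multiplication.

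For the third assertion, substituting the inverse of \eqref{eq:X0-def} into \eqref{eq:S0fromX0} yields the structural expression
\begin{equation*}
\breve{\mathbf{S}}^0(\SP;x)=\mathbf{H}_0(\SP;x)\ee^{\hat h^+(0;x)\sigma_3/\eps}\breve{\mathbf{W}}(Z(\SP);\mu^+,\nu^+)\ee^{-\hat h^+(0;x)\sigma_3/\eps}\mathbf{C}_0(\SP),
\end{equation*}
which parallels the defining transformation $\mathbf{W}(\SP;x)=\mathbf{S}(\SP;x)\mathbf{C}_0(\SP)^{-1}\ee^{\hat h^+(0;x)\sigma_3/\eps}$. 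Since $\mathbf{H}_0$ and $\ee^{\pm\hat h^+(0;x)\sigma_3/\eps}$ are analytic in $\mathcal{U}_0$, computing $\mathbf{V^S}=\mathbf{S}_-^{-1}\mathbf{S}_+$ and $\mathbf{V}^{\breve{\mathbf{S}}}=\breve{\mathbf{S}}^0_{-}{}^{-1}\breve{\mathbf{S}}^0_{+}$ and taking their ratio produces on every arc of $\Sigma'\cap\mathcal{U}_0$
\begin{equation*}
\mathbf{V^S}(\SP;x)\mathbf{V}^{\breve{\mathbf{S}}}(\SP;x)^{-1}=\mathbf{C}_{0,-}(\SP)^{-1}\ee^{\hat h^+(0;x)\sigma_3/\eps}\bigl[\mathbf{V^W}(\SP;x)\mathbf{V}^{\breve{\mathbf{W}}}(Z(\SP);\mu^+,\nu^+)^{-1}\bigr]\ee^{-\hat h^+(0;x)\sigma_3/\eps}\mathbf{C}_{0,-}(\SP),
\end{equation*}
where a telescoping cancellation eliminates the $\mathbf{C}_{0,+}$ factors and the extra exponential pair that appear in the individual translations $\mathbf{V^S}\leftrightarrow\mathbf{V^W}$ and $\mathbf{V}^{\breve{\mathbf{S}}}\leftrightarrow\mathbf{V}^{\breve{\mathbf{W}}}$. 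By Lemma~\ref{lem:VW-match} the bracketed expression equals $\mathbb{I}+\bigo{\eps^{1/2}}$ (with the inverse on $B(x)\cup B(x)^*$ interpreted in the orientation-consistent sense of that lemma), and the sandwiching by the unitary exponential and the bounded matrix $\mathbf{C}_{0,-}$ preserves this estimate. A secondary check, needed for the jump comparison above to be well-posed, is that $\breve{\mathbf{S}}^0$ acquires no spurious jump along the closed curve $Z^{-1}(\partial\mathbb{D})\subset\mathcal{U}_0$: the jumps of $\mathbf{X}$ on $\partial\mathbb{D}^\pm$ arise entirely from $D^{\sigma_3}$ via \eqref{eq:X0-def} and are therefore annihilated by the factor $D^{-\sigma_3}$ in \eqref{eq:S0fromX0}.
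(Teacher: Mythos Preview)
Your proof is correct and follows essentially the same approach as the paper's own proof: unit determinant and boundedness from the factored form \eqref{eq:S0fromX0} with $\hat h^+(0;x)\in\ii\mathbb{R}$, the boundary mismatch estimate from the large-$Z$ decay of $\mathbf{X}$ and $D$, and the jump comparison by reducing both $\mathbf{V^S}$ and $\mathbf{V}^{\breve{\mathbf{S}}}$ to conjugates of $\mathbf{V^W}$ and $\mathbf{V}^{\breve{\mathbf{W}}}$ by the same bounded factors and invoking Lemma~\ref{lem:VW-match}. Your added care with the $\mathbf{C}_{0,\pm}$ boundary values on $B(x)$ and the observation that the $\partial\mathbb{D}$ jump of $\mathbf{X}$ is cancelled by $D^{-\sigma_3}$ are welcome clarifications that the paper leaves implicit.
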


\begin{proof}
The fact that $\breve{\mathbf{S}}^0(\SP;x)$ is bounded with unit determinant follows from the corresponding properties of $\mathbf{X}(Z;\mu,\nu)$ and the explicit formula \eqref{eq:S0fromX0}. Then, using \eqref{eq:H0}, we have
\begin{multline}
	\breve{\mathbf{S}}^0(\SP;x) \breve{\mathbf{S}}^{\mathrm{out}}(\SP;x)^{-1} =  \\
	\mathbf{H}_0(\SP ; x) \ee^{\hat{h}^+(0;x) \sigma_3/\eps} \mathbf{X}( Z(\SP) ; \mu^+(x), \nu^+(x)) D(Z(\SP))^{-\sigma_3} \ee^{-\hat{h}^+(0;x) \sigma_3/\eps} \mathbf{H}_0(\SP ; x)^{-1}.
\end{multline}
Combining the large-$Z$ asymptotic behavior of $D(Z)$ and $\mathbf{X}(Z; \mu^+(x), \nu^+(x))$ described in Proposition~\ref{prop:D} and Riemann-Hilbert Problem~\ref{rhp:+++X0} with the local coordinate rescaling in \eqref{eq:Z0-coord}, it follows that $\mathbf{X}( Z(\SP) ; \mu^+(x), \nu^+(x)) D(Z(\SP))^{-\sigma_3} = \mathbb{I} + \bigo{\eps^{1/2}}$ uniformly for $\SP \in \partial \mathcal{U}_0$. Observing that $\hat{h}^+(0) = h_+^+(0) \in \ii \R$, as follows from property H4 in Proposition~\ref{prop:h-properties}, and that $\mathbf{H}_0(\SP;x)$ is analytic on the closure of $\mathcal{U}_0$ and is independent of $\eps$, it follows that $\breve{\mathbf{S}}^0(\SP;x) \breve{\mathbf{S}}^{\mathrm{out}}(\SP;x)^{-1} = \mathbb{I} + \bigo{\eps^{1/2}}$ holds as $\eps \downarrow 0$ uniformly for $\SP \in \partial \mathcal{U}_0$. 

To estimate $\mathbf{V}^\mathbf{S}(\SP;x) \mathbf{V}^{\breve{\mathbf{S}}}(\SP;x)^{-1} - \mathbb{I}$ for $\SP \in \Sigma' \cap \mathcal{U}_0$, we observe that 
\begin{equation}
	\begin{aligned}
	\mathbf{V}^{\breve{\mathbf{S}}}(\SP;x) &= \mathbf{C}_0(\SP)^{-1} \ee^{\hat{h}^+(0) \sigma_3/\eps} \mathbf{V}^{\breve{\mathbf{W}}}(Z(\SP); \mu^+(x), \nu^+(x)) \ee^{-\hat{h}^+(0) \sigma_3/\eps}\mathbf{C}_0(\SP), \\
	\mathbf{V^S}(\SP;x) &= \mathbf{C}_0(\SP)^{-1} \ee^{\hat{h}^+(0) \sigma_3/\eps} \mathbf{V^W}(\SP;x) \ee^{-\hat{h}^+(0) \sigma_3/\eps}\mathbf{C}_0(\SP). 
\end{aligned}
\end{equation}
Since the conjugating factors are all bounded with bounded inverses in $\mathcal{U}_0$, the result follows from Lemma~\ref{lem:VW-match}.
\end{proof}

\subsubsection{Global Parametrix}
For each $x \in J^+$, we can now define a parametrix for $\mathbf{S}(\SP;x)$ as follows:
\begin{equation}
	\breve{\mathbf{S}}(\SP;x) := \begin{dcases}
	\breve{\mathbf{S}}^0(\SP;x), & \SP \in \mathcal{U}_0, \\
	\breve{\mathbf{S}}^\text{Airy}(\SP;x), & \SP \in \mathcal{U}_{\ii A(x)}, \\
	\breve{\mathbf{S}}^\text{Airy}(\SP^*;x)^{-\dagger}, & \SP \in \mathcal{U}_{\ii A(x)}^*, \\
	\breve{\mathbf{S}}^{\mathrm{out}}(\SP;x), & \text{ elsewhere}.
	\end{dcases}
\label{eq:global-parametrix}
\end{equation}

\subsection{Accuracy of the global parametrix}
\label{sec-accuracy}
To compare the unknown solution $\mathbf{S}(\SP;x)$ of Riemann-Hilbert Problem~\ref{rhp:+++S}, directly constructed from the solution $\widetilde{\mathbf{M}}(\SP;x,0)$ of the soliton ensemble Riemann-Hilbert Problem~\ref{rhp:M-sigma} with our explicitly constructed parametrix $\dot{\mathbf{S}}(\SP;x)$ we consider the error defined by the ratio
\begin{equation}
	\mathbf{E}(\SP;x) := \mathbf{S}(\SP;x) \breve{\mathbf{S}}(\SP;x)^{-1}
	\label{eq:Edef}
\end{equation}
in each region of the complex $\SP$-plane in which both factors of the right-hand side are defined and analytic. Then  $\mathbf{E}(\SP;x)$ is analytic for $\SP \in \Sigma''$, where $\Sigma'' = \Sigma' \cup \partial \mathcal{U}_0 \cup \partial \mathcal{U}_{\ii A(x)} \cup \partial \mathcal{U}^*_{\ii A(x)} $. The following result characterizes the jump matrix of $\mathbf{E}(\SP;x)$ on $\Sigma''$.

\begin{lem} For $x \in J_c^+$, the estimate 
\begin{equation}
	\mathbf{E}_+(\SP;x) = \mathbf{E}_-(\SP;x) \left[ \mathbb{I} + \bigo{\eps^{1/2}} \right], \qquad \SP \in \Sigma''
	\label{eq:error.def}
\end{equation}
holds uniformly as $\eps \downarrow 0$.
\label{lem:error-accuracy}
\end{lem}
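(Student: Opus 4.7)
The plan is to establish the estimate $\mathbf{V^E}(\SP;x)=\mathbb{I}+\bigo{\eps^{1/2}}$ for the jump matrix of $\mathbf{E}$ uniformly on each component of $\Sigma''$, by partitioning $\Sigma''$ into several classes of arcs and invoking the relevant estimates already established. On each component, I would compute $\mathbf{V^E}(\SP;x)$ by inserting \eqref{eq:Edef} into $\mathbf{E}_+=\mathbf{E}_-\mathbf{V^E}$, yielding the identity
\begin{equation}
\mathbf{V^E}(\SP;x)=\breve{\mathbf{S}}_-(\SP;x)\bigl[\mathbf{V^S}(\SP;x)\mathbf{V}^{\breve{\mathbf{S}}}(\SP;x)^{-1}\bigr]\breve{\mathbf{S}}_-(\SP;x)^{-1},
\end{equation}
with the convention that $\mathbf{V^S}$ or $\mathbf{V}^{\breve{\mathbf{S}}}$ equals $\mathbb{I}$ on any arc across which the corresponding function is analytic. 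Since both $\mathbf{S}$ and $\breve{\mathbf{S}}$ have unit determinant, and since $\breve{\mathbf{S}}(\SP;x)$ is bounded (with bounded inverse) uniformly in $\eps$ on $\Sigma''$ when $x$ ranges over any compact $J_c^+\subset J^+$, conjugation preserves the relative error rate; hence it suffices to estimate the bracketed factor.

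\textbf{Partition of $\Sigma''$.} I would divide the analysis into (i) the disk boundaries $\partial\mathcal{U}_0$, $\partial\mathcal{U}_{\ii A(x)}$, $\partial\mathcal{U}_{\ii A(x)}^*$, across which $\mathbf{S}$ is analytic so $\mathbf{V^S}=\mathbb{I}$; (ii) the arcs $\Sigma'\cap \mathcal{U}_0$ and their Schwarz reflections; (iii) the arcs $\Sigma'\cap\mathcal{U}_{\ii A(x)}$ and their Schwarz reflections; (iv) the outer arcs $\Sigma'\setminus(\overline{\mathcal{U}_0}\cup\overline{\mathcal{U}_{\ii A(x)}}\cup\overline{\mathcal{U}_{\ii A(x)}^*})$. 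Class (i) at $\partial\mathcal{U}_0$ is handled directly by Lemma~\ref{lem:approximate-jumps-active}, which gives $\mathbf{V^E}=\breve{\mathbf{S}}^0(\breve{\mathbf{S}}^{\mathrm{out}})^{-1}=\mathbb{I}+\bigo{\eps^{1/2}}$; at $\partial\mathcal{U}_{\ii A(x)}$ it is handled by Proposition~\ref{prop:Airy.match}, with the Schwarz reflection following from the symmetry built into the definition \eqref{eq:global-parametrix}. Class (ii) is exactly the second conclusion of Lemma~\ref{lem:approximate-jumps-active}. Class (iii) contributes no error at all, because the Airy parametrix was constructed (Proposition~\ref{prop:Airy.match}) to match the jumps of $\mathbf{S}$ exactly on $\Sigma'\cap\mathcal{U}_{\ii A(x)}$.

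\textbf{Outer arcs.} Class (iv) is where the compact-subset-dependent estimates come in. For $\SP\in B(x)$ outside the disks, $\breve{\mathbf{S}}^{\mathrm{out}}$ has jump $-\ii\sigma_1$ exactly, while $\mathbf{V^S}$ is the explicit off-diagonal matrix in \eqref{eq:jump+++S} with $T_\eps(\SP)$, so by Proposition~\ref{prop:T-outside} the ratio is $\mathbb{I}+\bigo{\eps^{1/2}}$, uniformly in $x\in J_c^+$ since $\Lambda_\sigma$ contains these arcs for $\sigma$ depending only on $J_c^+$. For the lens arcs $\Sigma_{L\pm}$, the outer $\Sigma_0\setminus B(x)$ arc, and $\Sigma_\pm$ (all outside the disks), $\breve{\mathbf{S}}=\breve{\mathbf{S}}^{\mathrm{out}}$ is analytic across the contour, so $\mathbf{V}^{\breve{\mathbf{S}}}=\mathbb{I}$. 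The jump matrix $\mathbf{V^S}$ there is $\mathbb{I}$ plus a single off-diagonal entry involving $\ee^{\pm 2h^+(\SP;x)/\eps}$ (on $\Sigma_0\setminus B(x)$ and on $\Sigma_{L\pm}$) or $\ee^{2[\pm\ii\phaseint(\SP)-h^+(\SP;x)]/\eps}$ (on $\Sigma_\pm$), multiplied by the bounded factor $T_\eps$ or $Y_\eps$ (bounded by Propositions~\ref{prop:Y-outside}, \ref{prop:T-outside}, \ref{prop:Y-zeta-near-iAmax}, \ref{prop:T-zeta-near-iAmax}, Proposition~\ref{prop:Y0-asymp} and Proposition~\ref{prop:T0-asymp}). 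Properties H2 and H3 of Proposition~\ref{prop:h-properties} provide a constant $\eta=\eta(J_c^+,\delta)>0$ such that the real part of the exponent is bounded above by $-2\eta$ on each of these arcs, giving $\mathbf{V^S}=\mathbb{I}+\bigo{\ee^{-\eta/\eps}}$, which is certainly $\mathbb{I}+\bigo{\eps^{1/2}}$.

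\textbf{Main obstacle and uniformity.} The delicate point is maintaining uniformity in $x\in J_c^+$: the constant $\eta$ in properties H2, H3 depends on $J_c^+$ and on the chosen lens width $\delta$, and the radii of the disks $\mathcal{U}_0$ and $\mathcal{U}_{\ii A(x)}$ must be chosen independently of $\eps$ yet small enough that the conformal coordinates $\varphi_0, \varphi_1, W$ of Propositions~\ref{prop:Y-zeta-small}, \ref{prop:T-zeta-small}, \ref{prop:Y-zeta-near-iAmax}, \ref{prop:T-zeta-near-iAmax}, and property H1 exist uniformly. I would fix $J_c^+$ first, then choose the lens width $\delta$ and the disk radii uniformly in $x\in J_c^+$ (which is possible because the conformal maps depend analytically on $x$ and $A(x)$ is bounded away from $0$ and $A_{\mathrm{max}}$ on $J_c^+$), and finally absorb all constants into the implicit $\bigo{\cdot}$ constants. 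With this uniformization in hand, the piecewise estimates above combine into a single uniform bound $\mathbf{V^E}=\mathbb{I}+\bigo{\eps^{1/2}}$ on all of $\Sigma''$, completing the proof.
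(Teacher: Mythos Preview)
Your proposal is correct and follows essentially the same approach as the paper's proof: the same partition of $\Sigma''$ into disk boundaries, arcs interior to the local parametrix disks, and outer arcs, with the same invocations of Proposition~\ref{prop:Airy.match}, Lemma~\ref{lem:approximate-jumps-active}, Propositions~\ref{prop:Y-outside}, \ref{prop:T-outside}, \ref{prop:Y-zeta-near-iAmax}, \ref{prop:T-zeta-near-iAmax}, and properties H2--H3 of Proposition~\ref{prop:h-properties}. Your explicit discussion of the uniformity in $x\in J_c^+$ is a helpful addition that the paper leaves largely implicit.
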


\begin{proof}
First consider $\SP$ on the boundaries of the neighborhoods $\mathcal{U}_0$, $\mathcal{U}_{\ii A(x)}$, and $\mathcal{U}_{\ii A(x)}^*$, which is in each case a simple closed curve. Across these curves, $\mathbf{S}(\SP;x)$ has no jump, so $\mathbf{E}_+(\SP;x) = \mathbf{E}_-(\SP;x) \breve{\mathbf{S}}_-(\SP;x) \breve{\mathbf{S}}_+(\SP;x)^{-1}$. Taking the curves $\partial \mathcal{U}_0$, $\partial \mathcal{U}_{\ii A(x)},\ \partial \mathcal{U}_{\ii A(x)}^*$ to have clockwise orientation, $\breve{\mathbf{S}}_+(\SP;x) = \breve{\mathbf{S}}^{\mathrm{out}}(\SP;x)$ in each case, while $\breve{\mathbf{S}}_-(\SP;x) = \breve {\mathbf{S}}^0(\SP;x)$ for $\SP \in \partial \mathcal{U}_0$, $\breve{\mathbf{S}}_-(\SP;x) = \breve{ \mathbf{S}}^{\mathrm{Airy}}(\SP;x)$ for $\SP \in \partial \mathcal{U}_{\ii A(x)}$, and  $\breve{\mathbf{S}}_-(\SP;x) = \breve{\mathbf{S}}^{\mathrm{Airy}}(\SP^*;x)^{-\dagger}$ for $\SP \in \partial \mathcal{U}_{\ii A(x)}^*$.  The bound 
$\mathbf{E}_+(\SP;x) = \mathbf{E}_-(\SP;x) \left[ \mathbb{I} + \bigo{\eps^{1/2}} \right]$ then follows from Proposition~\ref{prop:Airy.match} for $\lambda \in \partial \mathcal{U}_{\ii A(x)}$ and the same estimate holds on $\partial \mathcal{U}_{\ii A(x)}^*$ by Schwarz symmetry of the matrix factors in \eqref{eq:error.def}. Similarly, by Lemma~\ref{lem:approximate-jumps-active}, we have $\mathbf{E}_+(\SP;x) = \mathbf{E}_-(\SP;x) \left[ \mathbb{I} + \bigo{\eps^{1/2}} \right]$ for $\SP \in \partial \mathcal{U}_0$. 

The matrix $\mathbf{S}(\SP;x)$ has jump discontinuities on all of the other component contours of $\Sigma''$. If we write $\mathbf{S}_+(\SP;x) = \mathbf{S}_-(\SP;x) \mathbf{V}^\mathbf{S}(\SP;x)$ and $\breve{\mathbf{S}}_+(\SP;x) = \breve{\mathbf{S}}_-(\SP;x) \mathbf{V}^{\breve{\mathbf{S}}}(\SP;x)$ for $\SP$ on any of these arcs, then 
\begin{equation}
	\mathbf{E}_+(\SP;x) = \mathbf{E}_-(\SP;x) \breve{\mathbf{S}}_-(\SP;x) \mathbf{V}^\mathbf{S}(\SP;x) \mathbf{V}^{\breve{\mathbf{S}}}(\SP;x)^{-1}\breve{\mathbf{S}}_-(\SP;x)^{-1}.
\end{equation}	
For $\SP \in \Sigma'' \cap \left( \mathcal{U}_{\ii A(x)} \cup \mathcal{U}_{\ii A(x)}^* \right)$, it follows immediately from Proposition~\ref{prop:Airy.match} that $\mathbf{E}_+(\SP;x) = \mathbf{E}_-(\SP;x)$. In the remainder of $\Sigma''$ either:  $\breve{\mathbf{S}}(\SP;x) = \breve{\mathbf{S}}^{\mathrm{out}}(\SP;x)$, which is bounded with bounded inverse for $\SP \not\in \left(\mathcal{U}_{\ii A(x)} \cup \mathcal{U}_{\ii A(x)}^* \right)$ as follows immediately from \eqref{eq:S.out.matrix}; or, when $\SP \in \mathcal{U}_0$, $\breve{\mathbf{S}}(\SP;x) = \breve{\mathbf{S}}^0(\SP;x)$ which is bounded with bounded inverse by Lemma~\ref{lem:approximate-jumps-active}.  Thus, it will be sufficient to estimate $\mathbf{V}^\mathbf{S}(\SP;x) \mathbf{V}^{\breve{\mathbf{S}}}(\SP;x)^{-1} - \mathbb{I}$ for $\SP$ lying in $\Sigma'$ outside $\mathcal{U}_{\ii A(x)} \cup \mathcal{U}_{\ii A(x)}^*$. Moreover, it is sufficient to consider only $\SP$ with $\Im \{ \SP \} >0$ because the corresponding estimates for $\SP$ in the lower half-plane follow from Schwarz symmetry. 

To analyze $\mathbf{V}^\mathbf{S}(\SP;x) \mathbf{V}^{\breve{\mathbf{S}}}(\SP;x)^{-1} - \mathbb{I}$ on these arcs, first consider $\SP \in \Sigma_0$ outside the neighborhoods $\mathcal{U}_0$ and $\mathcal{U}_{\ii A(x)}$. If $\SP \in B(x)$, then  $\mathbf{V}^\mathbf{S}(\SP;x)$ is given by \eqref{eq:jump+++S} while $\mathbf{V}^{\breve{\mathbf{S}}}(\SP;x)$ is exactly the same with $T_\eps (\lambda)$ replaced by 1. The fact that $\mathbf{V}^\mathbf{S}(\SP;x) \mathbf{V}^{\breve{\mathbf{S}}}(\SP;x)^{-1} - \mathbb{I} = \bigo{\eps^{1/2}}$ holds uniformly for $\lambda \in B(x)$ bounded away from $0$ and $\ii A_{\mathrm{max}}$ follows from Proposition~\ref{prop:T-outside}. 
If,  instead, $\SP \in \Sigma_0 \setminus\ B(x)$ outside of $\mathcal{U}_{\ii A(x)}$, then $\mathbf{V}^{\breve{\mathbf{S}}}(\SP;x)  = \mathbb{I}$, and $\mathbf{V}^\mathbf{S}(\SP;x)$ is given by \eqref{eq:jump+++S}. Then $\mathbf{V}^\mathbf{S}(\SP;x) \mathbf{V}^{\breve{\mathbf{S}}}(\SP;x)^{-1} - \mathbb{I} = \mathbf{V}^\mathbf{S}(\SP;x) - \mathbb{I}$ is uniformly exponentially small as $\eps \downarrow 0$  as follows form property H2 of Proposition~\ref{prop:h-properties}  together with Propositions~\ref{prop:T-outside} and \ref{prop:T-zeta-near-iAmax} to uniformly bound $T_\epsilon(\lambda)$ (to apply Proposition~\ref{prop:T-zeta-near-iAmax}, note that the model function $\mathcal{T}_1(W)$ defined by \eqref{eq:T1-def} is uniformly bounded for $W\le 0$).

Next, consider $\SP$ in the lens contours $\Sigma_\pm$ and $\Sigma_{\pm,L}$ outside the neighborhoods $\mathcal{U}_0$ and $\mathcal{U}_{\ii A(x)}$. On each of these contours $\mathbf{V}^{\breve{\mathbf{S}}}(\SP;x) = \mathbb{I}$, while $\mathbf{V}^\mathbf{S}(\SP;x)$ is given by \eqref{eq:jump+++S}. In each case we have that $\mathbf{V}^\mathbf{S}(\SP;x) \mathbf{V}^{\breve{\mathbf{S}}}(\SP;x)^{-1} - \mathbb{I} = \mathbf{V}^\mathbf{S}(\SP;x) - \mathbb{I}$ is uniformly exponentially small as $\eps \downarrow 0$. 
For $\SP \in \Sigma_{\pm} \setminus \mathcal{U}_0$ this follows from property H3 of Proposition~\ref{prop:h-properties} together with Proposition~\ref{prop:Y-outside} to control $Y_\epsilon(\SP)$ for $\SP$ away from $\ii A_\mathrm{max}$ and Proposition~\ref{prop:Y-zeta-near-iAmax} to handle the case when $\SP$ approaches this point along $\Sigma_\pm$ (noting that the model function $\mathcal{Y}_1(W)$ defined by \eqref{eq:Y1-def} remains bounded for such $\SP$).
For $\SP \in \Sigma_{L,\pm} \setminus (\mathcal{U}_0 \cup \mathcal{U}_{\ii A(x)})$ this follows from property H2 of Proposition~\ref{prop:h-properties} along with Proposition~\ref{prop:T-outside}. 

Finally, consider $\SP \in \Sigma'' \cap \mathcal{U}_0$. The uniform estimate  $\mathbf{V}^\mathbf{S}(\SP;x) \mathbf{V}^{\breve{\mathbf{S}}}(\SP;x)^{-1} - \mathbb{I} = \bigo{\eps^{1/2}}$  now follows directly from Lemma~\ref{lem:approximate-jumps-active} in this case.
\end{proof}

\subsection{Alteration of the nonlinear steepest descent calculation for $x< x_0$}
\label{sec-x<x_0}
In the steepest descent analysis carried out above it was assumed that  $x \in J^+ = (x_0, X_+)$, where $x_0$ is the unique location of the maximum of the initial amplitude of our semicircular initial data, i.e.,  $A(x_0) =A_\mathrm{max}$. The analysis for $x \in J^-=(X_-,x_0)$ is very similar with minor modifications we will outline below. For $x \in J^-$, our starting point is the renormalized meromorphic Riemann-Hilbert Problem~\ref{rhp-meromorphic_renorm} with solution $\widetilde{\mathbf{M}}^\updownarrow(\SP;x,\mathbf{0})$. The first step in our analysis is to interpolate the poles of $\widetilde{\mathbf{M}}^\updownarrow(\SP;x,\mathbf{0})$ to replace it with a sectionally analytic function. Let $K_+ = -1$ and $K_- = 0$. Akin to \eqref{eq:Qdef}, define
\begin{equation}
	\mathbf{Q}^\updownarrow(\SP;x) := \begin{dcases}
		\widetilde{\mathbf{M}}^\updownarrow(\SP;x,\mathbf{0}) \begin{pmatrix} 1 & \ii (-1)^{K_\pm} \widetilde{a}(\SP)^{-1} \ee^{-2 f_{K_\pm}(\SP;x,\mathbf{0})/\eps} \\ 0 & 1 \end{pmatrix},
		& \SP \in \Omega_\pm, \\
	\mathbf{Q}^\updownarrow(\SP^*;x)^{-\dagger}, & \SP \in \Omega_\pm^*, \\
	\widetilde{\mathbf{M}}^\updownarrow(\SP;x,\mathbf{0}), & \text{otherwise},	
	\end{dcases}
\end{equation}
where we remind the reader of the domains $\Omega^\pm$ and contour $\Sigma$ defined in Figure~\ref{fig-Q-jumps} .
The resulting function $\mathbf{Q}^\updownarrow(\SP;x)$ is analytic for $\SP \in \C \setminus \Sigma$ and satisfies a problem like Riemann-Hilbert Problem~\ref{rhp:+++} with the jump \eqref{eq:jump+++1}-\eqref{eq:jump+++2} replaced by 
\begin{gather}
	\mathbf{V}^{\mathbf{Q}^\updownarrow}(\SP;x) = \begin{dcases}
		\begin{pmatrix}
		1 & - \ii T_\eps(\SP) \ee^{-\varphi^-(\SP;x)/\eps} \\ 0 & 1 
		\end{pmatrix}, & \SP \in \Sigma_0,  \\
		\begin{pmatrix} 
		1 & - \ii Y_\eps(\SP) \ee^{-\left[ \varphi^-(\SP;x) \mp 2\ii \phaseint(\SP) \right]/\eps} \\ 0 & 1 
		\end{pmatrix}, & \SP \in \Sigma_\pm, \\
		\mathbf{V}^{\mathbf{Q}^\updownarrow}(\SP^*;x)^\dagger, & \SP \in \Sigma \cap \C^-,
	\end{dcases}
	\shortintertext{where}
	\varphi^-(\SP;x) := \tailint(\SP) +2\ii Q(\SP;x,\mathbf{0}) - \overline{L}(\SP).
\end{gather}

The next step in the analysis is again to introduce a $g$-function.  For $x \in J^-$, we use the function $g^-(\SP;x)$ (cf. \eqref{eq:g-functions-zeta}) to define
\begin{equation}
\mathbf{R}^\updownarrow(\SP;x) := \mathbf{Q}^\updownarrow(\SP;x)  \begin{pmatrix} \ee^{-g^-(\SP;x)/\eps } & 0 \\ 0 &  \ee^{g^-(\SP;x)/\eps }  \end{pmatrix}.
\end{equation}
Just as was the case for $x\in J^+$, for $x \in J^-$ the function $g^-(\SP;x)$ is analytic for $\SP$ away from the band $\overline{B \cup B^*}$ (here $B\equiv B(x)$). Using \eqref{eq:g-zeta-plus-minus} and \eqref{eq:gdiff-h} we have
\begin{equation}
	\mathbf{R}_+^\updownarrow(\SP;x) = \mathbf{R}_-^\updownarrow(\SP;x) 
	\begin{pmatrix} 
		\ee^{2h^-_+(\SP;x)/ \eps} &-\ii T_\eps(\SP) \\ 0 & \ee^{2h^-_-(\SP;x)/ \eps}
	\end{pmatrix},
	\qquad \SP \in B(x),
\end{equation}
where $h^-(\SP;x)$, analytic for $\SP\in \C \setminus \overline{B \cup B^*}$, is defined by \eqref{eq:h-def-zeta}. 
This jump relation can be rewritten as
\begin{equation}
	\mathbf{R}_+^\updownarrow(\SP;x) \mathbf{L}_+^\updownarrow(\SP;x) = \mathbf{R}_-^\updownarrow(\SP;x) \mathbf{L}_-^\updownarrow(\SP;x)^{-1} 
	\begin{pmatrix} 0 & -\ii T_\eps(\SP) \\ -\ii T_\eps(\SP)^{-1} & 0 \end{pmatrix}, \quad \SP \in B(x), 
\end{equation}
where $\mathbf{L}_+^\updownarrow(\SP;x)$ is the matrix analytic for $\SP \in \Omega_+ \cup \Omega_-$ defined by 
\begin{equation}
	 \mathbf{L}^\updownarrow(\SP;x) := \begin{pmatrix} 1 & 0 \\ -\ii T_\eps(\SP)^{-1} \ee^{2h^-(\SP;x)/\eps } & 1 \end{pmatrix}.
\end{equation}
This factorization  motivates a further deformation. 
As before, let $\Lambda_\pm$ be the two domains bounded by $B(x)$ and a pair of parabolic arcs defined in property H3 of Proposition~\ref{prop:h-properties}; denote the parabolic arcs by $\Sigma_{L\pm}$ (cf. Figure~\ref{fig-S-jumps}).
Let
\begin{equation}
	\mathbf{S}^\updownarrow(\SP;x) := \begin{dcases}
		\mathbf{R}^\updownarrow(\SP;x)  \mathbf{L}^\updownarrow(\SP;x)^{\pm 1}, & \SP \in \Lambda_\pm, \\
		\mathbf{S}^\updownarrow(\SP^*;x)^{-\dagger},  & \SP \in \Lambda_+^* \cup \Lambda_-^*, \\
		\mathbf{R}^\updownarrow(\SP;x), & \text{otherwise.}
	\end{dcases}
\end{equation}
Then $\mathbf{S}^\updownarrow(\SP;x)$ is a function analytic in $\C \setminus \Sigma'$ satisfying Riemann-Hilbert Problem~\ref{rhp:+++S}, except that the jump $\mathbf{V^S}(\SP;x)$ is replaced by 
\begin{equation}
	\mathbf{V}^{\mathbf{S}^\updownarrow}(\SP;x) := \begin{dcases}
		\begin{pmatrix} 1 & -\ii T_\epsilon(\SP) \ee^{-2h^-(\SP;x)/\epsilon} \\ 0 & 1 \end{pmatrix}, & \SP \in \Sigma_0 \setminus B(x), \\
		\begin{pmatrix} 0 & -\ii T_\eps(\SP) \\ -\ii T_\eps(\SP)^{-1} & 0 \end{pmatrix}, & \SP \in  B(x), \\
		\begin{pmatrix} 1 & -\ii Y_\eps(\SP) \ee^{-2\left[h^-(\SP;x) \mp \ii \phaseint(\SP) \right]/\eps} \\ 0 & 1 \end{pmatrix}, & \SP \in \Sigma_\pm, \\
		\begin{pmatrix} 1 & 0 \\ -\ii T_\eps(\SP)^{-1} \ee^{2h^-(\SP;x)/\eps } & 1 \end{pmatrix}, & \SP \in \Sigma_{L\pm}. \\
	\end{dcases}
\label{eq:jump+++S.J-}
\end{equation}

The jump $\mathbf{V}^{\mathbf{S}^\updownarrow}(\SP;x)$ has a well-defined pointwise limit as $\eps \downarrow 0$ away from the points $\SP = 0, \ii A(x), -\ii A(x)$. 
The construction of a global parametrix $\breve{\mathbf{S}}^\updownarrow(\SP;x)$ for $\mathbf{S}^\updownarrow(\SP;x)$ follows along the same lines as was done for $x\in J^+$ in Section~\ref{sec-parametrix}. One important point we emphasize is that inspecting \eqref{eq:jump+++S} and \eqref{eq:jump+++S.J-} and recalling Proposition~\ref{prop:T-outside}, 
\begin{equation}
	\lim_{\epsilon \downarrow 0} \mathbf{V}^\mathbf{S}(\SP;x) = \lim_{\epsilon \downarrow 0} \mathbf{V}^{\mathbf{S}^\updownarrow}(\SP;x) = -\ii \sigma_1, \qquad \SP \in B(x),
\end{equation}
which shows that the outer model $\breve{\mathbf{S}}^\mathrm{out}(\SP;x)$ valid for $x \in J^+$ previously constructed in \eqref{eq:S.out.matrix} works for $x \in J^-$ as well.  In particular, for $x \in J^-$ the endpoints of $B(x)$, which ultimately determine the leading order asymptotic behavior of the semiclassical soliton ensemble $\widetilde{\psi}(x,\mathbf{0})$, are still given by $\pm \ii A(x)$. 
We leave the remaining details to the interested reader. 

Once the global parametrix $\breve{\mathbf{S}}^\updownarrow(\SP;x)$ is constructed, the error 
\begin{equation}
	\mathbf{E}^\updownarrow(\SP;x) = \mathbf{S}^\updownarrow(\SP;x) \breve{\mathbf{S}}^\updownarrow(\SP;x)^{-1}
	\label{eq:Edef-left}
\end{equation}
can be considered, and, repeating the analysis in Section~\ref{sec-accuracy}, one proves the following lemma by mimicking the proof of Lemma~\ref{lem:error-accuracy}.
\begin{lem} 
For $x \in J^-_c $, the estimate
\begin{equation}
	\mathbf{E}^\updownarrow_+(\SP;x) = \mathbf{E}^\updownarrow_-(\SP;x) \left[ \mathbb{I} + \bigo{\eps^{1/2}} \right], \qquad  \SP \in \Sigma'',
\end{equation}
holds uniformly as $\eps \downarrow 0$.
\label{lem:error-accuracy-2}
\end{lem}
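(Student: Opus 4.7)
The plan is to mirror the proof of Lemma~\ref{lem:error-accuracy} essentially verbatim, exploiting the fact that the passage from $\widetilde{\mathbf{M}}$ to $\widetilde{\mathbf{M}}^\updownarrow$ carried out in \eqref{eq:pole-swap} has the effect of transposing the upper-triangular and lower-triangular off-diagonal positions throughout the steepest-descent deformations, while $g^+(\SP;x)$ is replaced by $g^-(\SP;x)$ and $h^+(\SP;x)$ by $h^-(\SP;x)$. Since Propositions~\ref{prop:g-properties} and \ref{prop:h-properties} treat the $\pm$ cases symmetrically, and since the outer parametrix $\breve{\mathbf{S}}^{\mathrm{out}}(\SP;x)$ defined in \eqref{eq:S.out.matrix} depends only on the band endpoints $\pm\ii A(x)$ and not on whether $x \in J^+$ or $x\in J^-$, all of the analytic inputs required to repeat the argument are already in place.

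First, I will analyze $\mathbf{E}^\updownarrow$ on the three circular arcs $\partial \mathcal{U}_0$, $\partial \mathcal{U}_{\ii A(x)}$, and $\partial \mathcal{U}_{\ii A(x)}^*$. Across each of these $\mathbf{S}^\updownarrow(\SP;x)$ is analytic, so $\mathbf{E}^\updownarrow_+(\mathbf{E}^\updownarrow_-)^{-1} = \breve{\mathbf{S}}^\updownarrow_-(\breve{\mathbf{S}}^\updownarrow_+)^{-1}$. On $\partial \mathcal{U}_{\ii A(x)}$ one invokes the $J^-$-analog of Proposition~\ref{prop:Airy.match} (the Airy parametrix built from the same function $\mathbf{A}(z)$, transposed to match the flipped triangular structure in \eqref{eq:jump+++S.J-}), while on $\partial \mathcal{U}_{\ii A(x)}^*$ the required bound follows by Schwarz symmetry of the parametrix. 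On $\partial \mathcal{U}_0$ I will prove the $J^-$-analog of Lemma~\ref{lem:approximate-jumps-active}: the local model matches the outer model to $\mathcal{O}(\epsilon^{1/2})$ because, by Propositions~\ref{prop:Y-zeta-small}, \ref{prop:T-zeta-small}, \ref{prop:Y0-asymp} and \ref{prop:T0-asymp}, $\mathcal{X}(Z(\SP);\mu^-(x),\nu^-(x)) D(Z(\SP))^{-\sigma_3} = \mathbb{I}+\mathcal{O}(\epsilon^{1/2})$ on $\partial \mathcal{U}_0$, with the positivity $\mu^-(x),\nu^-(x) \geq c > 0$ on compact subsets of $J^-$ supplied by properties H4 and H5 of Proposition~\ref{prop:h-properties}.

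Second, I will bound $\mathbf{V}^{\mathbf{S}^\updownarrow}(\SP;x)(\mathbf{V}^{\breve{\mathbf{S}}^\updownarrow}(\SP;x))^{-1}-\mathbb{I}$ uniformly for $\SP$ on each remaining arc of $\Sigma''$ outside the three disks. For $\SP \in B(x)$, the approximate jump is the same matrix with $T_\epsilon(\SP)$ replaced by $1$, so the required $\mathcal{O}(\epsilon^{1/2})$ bound follows from Proposition~\ref{prop:T-outside}. For $\SP \in \Sigma_0\setminus B(x)$ the parametrix has no jump, and $\mathbf{V}^{\mathbf{S}^\updownarrow}-\mathbb{I}$ is exponentially small because property H2 of Proposition~\ref{prop:h-properties} yields $\mathrm{Re}(-2h^-(\SP;x)/\epsilon)\to-\infty$, while $T_\epsilon(\SP)$ remains bounded by Propositions~\ref{prop:T-outside} and \ref{prop:T-zeta-near-iAmax} (noting that $\mathcal{T}_1(W)$ in \eqref{eq:T1-def} is bounded for $W \leq 0$). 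For $\SP \in \Sigma_\pm$, property H3 of Proposition~\ref{prop:h-properties} supplies $\mathrm{Re}(-2[h^-(\SP;x)\mp\ii\phaseint(\SP)]/\epsilon) \leq -\eta/\epsilon$, and $Y_\epsilon(\SP)$ is bounded by Propositions~\ref{prop:Y-outside} and \ref{prop:Y-zeta-near-iAmax}. For $\SP \in \Sigma_{L\pm}$, property H2 again gives exponential smallness of $\ee^{2h^-(\SP;x)/\epsilon}$, and $T_\epsilon(\SP)^{-1}$ is bounded by Proposition~\ref{prop:T-outside}. In every case the estimate is uniform for $x \in J_c^-$. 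Finally, on the portion of $\Sigma'$ lying inside $\mathcal{U}_0$, the required estimate $\mathbf{V}^{\mathbf{S}^\updownarrow}(\mathbf{V}^{\breve{\mathbf{S}}^\updownarrow})^{-1}-\mathbb{I} = \mathcal{O}(\epsilon^{1/2})$ follows from the $J^-$-analog of Lemma~\ref{lem:VW-match}, whose proof is an exact transcription of the proof of Lemma~\ref{lem:VW-match} with $(m^+,n^+,h^+)$ replaced by $(m^-,n^-,h^-)$ and the triangular factorizations transposed.

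The only non-routine point—and the main place to be careful—is verifying that the transposed version of Riemann-Hilbert Problem~\ref{rhp:+++W0} that governs the origin model for $x \in J^-$ is itself solvable with a solution uniformly bounded for $(\mu^-(x),\nu^-(x))$ in compact subsets of $(0,\infty)^2$. This amounts to checking Schwarz symmetry and cyclic consistency of the transposed jumps at the origin (which follows from Lemma~\ref{lem:W0-consistency} applied to $\widetilde{\mathbf{M}}^\updownarrow$ rather than $\widetilde{\mathbf{M}}$), followed by an appeal to Zhou's vanishing lemma identical to the one used above. Once this is in place, the chain of implications leading to Lemma~\ref{lem:error-accuracy-2} is routine.
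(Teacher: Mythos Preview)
Your proposal is correct and takes exactly the approach the paper intends: the paper's own proof consists of the single sentence ``repeating the analysis in Section~\ref{sec-accuracy}, one proves the following lemma by mimicking the proof of Lemma~\ref{lem:error-accuracy},'' and your writeup is a careful, detailed execution of precisely that mimicry with the $\pm$ roles interchanged and the triangular structures transposed.
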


\subsection{Application of small-norm theory}
\label{sec-proof-Thm1}
To deal with the cases $x\in J^\pm$ simultaneously, for any matrix function $\mathbf{A}$ defined for $x \in J^+$ and $\mathbf{A}^\updownarrow$ defined for $x\in J^-$, set 
\begin{equation}
	\mathbf{A}^\sharp(\SP;x) := \begin{cases}
		\mathbf{A}(\SP;x), & x \in J^+, \\
		\mathbf{A}^\updownarrow(\SP;x),& x \in J^- .
	\end{cases}
\end{equation}
It follows from  \eqref{eq:Edef} and \eqref{eq:Edef-left} that $\mathbf{E}^\sharp(\SP;x) = \mathbb{I} + \bigo{\SP^{-1}}$ as $\SP \to \infty$ and $\mathbf{E}^\sharp(\SP;x)$ is an analytic function for $\SP \in C \setminus \Sigma''$. 
Moreover, from Lemmas~\ref{lem:error-accuracy} and \ref{lem:error-accuracy-2}, the jump matrix $\mathbf{V}^{\mathbf{E}^\sharp}(\SP;x)$ is uniformly an $\bigo{\eps^{1/2}}$ perturbation of the identity matrix for $x$ in compact subsets of $(X_-,X_+) \setminus \{ x_0 \}$ as $\eps \downarrow 0$. 
Finally we observe that the contour $\Sigma''$ is compact and independent of $\eps$.
Together, these facts classify $\mathbf{E}^\sharp(\SP;x)$ as the solution of a Riemann-Hilbert problem of \emph{small-norm type}.  The small-norm theory can be applied in the context of matrix-valued functions that are H\"older continuous up to the boundary of each connected component of $\C \setminus \Sigma''$. The theory establishes the existence of a unique function $\mathbf{E}^\sharp(\SP;x)$ satisfying the normalization and jump conditions in the Riemann-Hilbert problem and yields estimates for $\mathbf{E}^\sharp(\SP;x)-\mathbb{I}$ which are proportional to the product of the above $\bigo{\eps^{1/2}}$ uniform estimate of $\mathbf{V}^{\mathbf{E}^\sharp}(\SP;x) - \mathbb{I}$ and the operator norm of a Cauchy (singular integral) projection operator for the contour $\Sigma''$.  The end result is that a unique solution $\mathbf{E}^\sharp(\SP;x)$ of the error Riemann-Hilbert problem exists and the estimate $\mathbf{E}^\sharp(\SP;x) = \mathbb{I} + \bigo{\eps^{-1/2}}$ holds as $\eps \downarrow 0$ uniformly for $\SP \in \C \setminus \Sigma''$ and $x$ in compact subsets of $(X_-,X_+) \setminus \{ x_0 \}$. Moreover, in the convergent Laurent expansion of $\mathbf{E}^\sharp(\SP;x)$ about $\SP = \infty$,
\begin{equation}
	\mathbf{E}^\sharp(\SP;x) = \mathbb{I} + \sum_{n=0}^\infty \mathbf{E}^{\sharp[n]}(x) \SP^{-n},  \qquad |\SP| > \sup_{\SP' \in \Sigma''} | \SP'|,
	\label{eq:error.matrix.expand}
\end{equation}
all of the coefficients $\mathbf{E}^{\sharp[n]}(x)$, $n \geq 1$, satisfy $\mathbf{E}^{\sharp[n]}(x) = \bigo{\eps^{1/2}}$ as $\eps \downarrow 0$. 

Now, from the definition of \eqref{eq:Edef} and \eqref{eq:Edef-left}, $\mathbf{S}^\sharp(\SP;x) = \mathbf{E}^\sharp(\SP;x) \breve{\mathbf{S}}^\sharp(\SP;x)$. For $|\SP|$  sufficiently large,  $ \breve{\mathbf{S}}^\sharp(\SP;x) = \breve{\mathbf{S}}^{\mathrm{out}}(\SP;x)$ and $\mathbf{S}^\sharp(\SP;x) = \mathbf{M}^\sharp(\SP;x,\mathbf{0}) \ee^{-g^{s}(\SP;x) \sigma_3 }$, where $s = \sgn(x-x_0)$. Therefore for all sufficiently large $\SP$, the solution at $t=0$ of the semiclassical soliton ensemble Riemann-Hilbert Problem~\ref{rhp-meromorphic} for $x \in J^+$ and Riemann-Hilbert Problem~\ref{rhp-meromorphic_renorm} for $x \in J^-$ is given by $\mathbf{M}^\sharp (\SP;x,\mathbf{0}) = \mathbf{E}^\sharp(\SP;x) \breve{\mathbf{S}}^\mathrm{out}(\SP;x) \ee^{g^{s}(\SP;x) \sigma_3 }$.
It then follows from the reconstruction formula \eqref{eq:psitilde-reconstruct}, using Property G5 in Proposition~\ref{prop:g-properties}, \eqref{eq:S.out.expand}, and \eqref{eq:error.matrix.expand}, that
\begin{equation}
	\widetilde{\psi}(x,\mathbf{0}) = A(x) + 2\ii E^{\sharp[1]}_{12}(x)  = A(x) + \bigo{\eps^{1/2}},
\end{equation}
where we recall that $A(x) = \psi_0(x)$ is a Cauchy initial datum of semicircular Klaus-Shaw type. 
This completes the proof of Theorem~\ref{thm-accuracy-t=0} in the case that either $X_-<x<x_0$ or $x_0<x<X_+$.

\section{Proof of Theorem~\ref{thm-accuracy-t=0} for $x<X_-$ and $x>X_+$}
\label{sec-proof-Thm1-outside}
Suppose that $x>X_+$.
Let $\mathscr{D}$ be a bounded, simply connected domain in the upper 
half-plane $\mathbb{C}_+$ such that $\partial\mathscr{D}$ is a 
simple closed loop that starts and ends at the origin 
and encloses all of the points $\SP=\ii\widetilde{s}_j$, $j=0,\dots,N-1$ for 
all $N$.  See Figure~\ref{fig:mathscrDregions}
\begin{figure}[h]
\begin{tikzpicture}[>=stealth]
\draw[fill, color=blue,fill opacity=0.2] (0cm,0cm) ..controls (-3cm,4cm) and (3cm,4cm).. (0cm,0cm);
\draw[fill, color=blue,fill opacity=0.2] (0cm,0cm) ..controls (3cm,-4cm) and (-3cm,-4cm).. (0cm,0cm);
\draw[dashed,thick] (-3.5cm,0cm) node[left] {$\phantom{\Im(\SP)=0}$}-- (3.5cm,0cm) node[right] {$\Im(\SP)=0$};
\draw[->-=0.5,thick,black] (0cm,0cm) ..controls (-3cm,4cm) and (3cm,4cm).. (0cm,0cm);
\draw[->-=0.5,thick,black] (0cm,0cm) ..controls (3cm,-4cm) and (-3cm,-4cm).. (0cm,0cm);
\draw[fill,red] (0cm,0.4cm) circle [radius=0.1cm];
\draw[fill,red] (0cm,-0.4cm) circle [radius=0.1cm];
\draw[fill,red] (0cm,0.7cm) circle [radius=0.1cm];
\draw[fill,red] (0cm,-0.7cm) circle [radius=0.1cm];
\draw[fill,red] (0cm,1cm) circle [radius=0.1cm];
\draw[fill,red] (0cm,-1cm) circle [radius=0.1cm];
\draw[fill,red] (0cm,1.3cm) circle [radius=0.1cm];
\draw[fill,red] (0cm,-1.3cm) circle [radius=0.1cm];
\draw[fill,red] (0cm,1.7cm) circle [radius=0.1cm];
\draw[fill,red] (0cm,-1.7cm) circle [radius=0.1cm];
\draw[fill,red] (0cm,2.1cm) circle [radius=0.1cm];
\draw[fill,red] (0cm,-2.1cm) circle [radius=0.1cm];
\node at (0.4cm,2.5cm) {$\mathscr{D}$};
\node at (0.4cm,-2.5cm) {$\mathscr{D}^*$};
\end{tikzpicture}
\caption{The regions $\mathscr{D}$ and $\mathscr{D}^*$ and oriented boundaries $\partial\mathscr{D}$ and $\partial\mathscr{D}^*$.}
\label{fig:mathscrDregions}
\end{figure}
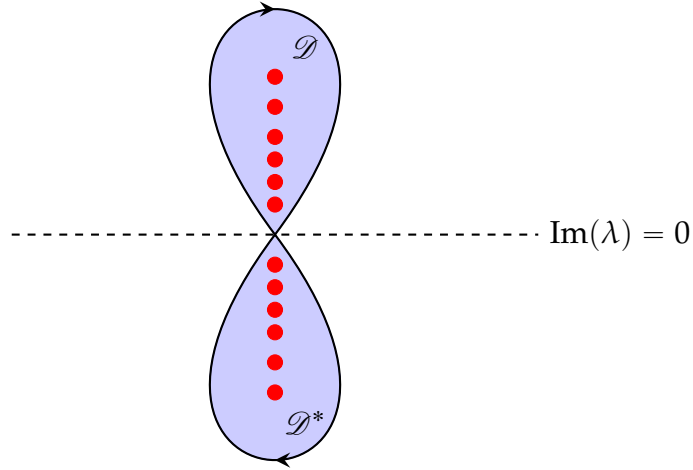

Starting from Riemann-Hilbert Problem~\ref{rhp-meromorphic} with 
solution $\widetilde{\mathbf{M}}(\SP;x,\mathbf{t})$, we define
\begin{equation}
\mathbf{U}(\SP;x):=
\begin{dcases}
\widetilde{\mathbf{M}}(\SP;x,\mathbf{0})\begin{pmatrix} 1 & 0\\
-\ii \widetilde{a}(\SP)^{-1}\ee^{2f_0(\SP;x,\mathbf{0})/\epsilon} & 1\end{pmatrix}, & \SP\in \mathscr{D}, \\
\mathbf{U}(\SP^*;x)^{-\dagger}, & \SP \in \mathscr{D}^*, \\
\widetilde{\mathbf{M}}(\SP;x,\mathbf{0}), & \C \setminus \overline{\mathscr{D} \cup \mathscr{D}^*},
\end{dcases}
\end{equation}
where we recall the definition of $f_K(\SP;x,\mathbf{t})$ in 
\eqref{eq:fK-define}.  The definition $\mathbf{U}(\lambda;x)$ has the effect 
of removing the poles from $\widetilde{\mathbf{M}}(\SP;x,\mathbf{0})$ since, 
for $K=0$,
\begin{equation}
c_n(x,\mathbf{0})=\ii\mathop{\mathrm{Res}}_{\SP=\ii \widetilde{s}_n}\frac{\ee^{2f_0(\SP;x,\mathbf{0})/\epsilon}}{\widetilde{a}(\SP)}.
\end{equation}
The matrix $\mathbf{U}(\lambda;x)$ satisfies the following Riemann-Hilbert problem.

\begin{myrhp}[Problem for $\mathbf{t}=\mathbf{0}$ and $x>X_+$]
Given $\epsilon>0$ and $x \in\mathbb{R}$, seek a $2\times 2$ matrix function 
$\mathbf{U}(\SP)=\mathbf{U}(\SP;x)$ with the following properties.
\begin{itemize}
\item[]\textit{\textbf{Analyticity:}}  $\mathbf{U}(\SP)$ is analytic for 
$\SP\in\mathbb{C}\setminus(\partial\mathscr{D}\cup\partial\mathscr{D}^*)$ 
and satisfies the Schwarz symmetry condition 
$\mathbf{U}(\SP^*)=\mathbf{U}(\SP)^{-\dagger}$.
\item[]\textit{\textbf{Jump conditions:}} $\mathbf{U}(\SP)$ takes continuous 
boundary values on $\partial\mathscr{D}$ and $\partial\mathscr{D}^*$.  
Orienting both loops $\partial\mathscr{D}$ and $\partial\mathscr{D}^*$ 
in the clockwise direction, the jumps are:
\begin{eqnarray}
\mathbf{U}_+(\SP;x) & = & \mathbf{U}_-(\SP;x)\begin{pmatrix}1 & 0\\
\ii \widetilde{a}(\SP)^{-1}\ee^{2f_0(\SP;x,\mathbf{0})/\epsilon} & 1
\end{pmatrix},\quad \SP\in\partial\mathscr{D}, \\
\mathbf{U}_+(\SP;x) & = & \mathbf{U}_-(\SP;x)\begin{pmatrix}1 & \ii \widetilde{a}(\SP^*)^{*-1}\ee^{2f_0(\SP^*; x,\mathbf{0})^*/\epsilon}\\0 & 1\end{pmatrix},\quad\SP\in\partial\mathscr{D}^*.
\end{eqnarray}
\item[]\textit{\textbf{Normalization:}} $\mathbf{U}(\SP)\to\mathbb{I}$ as $\SP\to\infty$.
\end{itemize}
\label{rhp:U}
\end{myrhp}
Using \eqref{eq:ZS-Y-define}, we can rewrite the jump on $\partial\mathscr{D}$ as
\begin{equation}
\mathbf{U}_+(\SP;x) = \mathbf{U}_-(\SP;x)\begin{pmatrix}1 & 0\\
\ii Y_\epsilon(\lambda)\ee^{2F(\SP;x)/\epsilon} & 1
\end{pmatrix},\quad \SP\in\partial\mathscr{D},
\label{U-on-Dboundary}
\end{equation}
where
\begin{equation}
2F(\lambda;x):=2f_0(\lambda;x,\mathbf{0})+L(\lambda).
\label{2F-of-lambda}
\end{equation}
We now show that as $\epsilon\downarrow 0$ the factor $\ee^{2F(\lambda;x)/\eps}$ is exponentially small uniformly for $\lambda\in\partial\mathscr{D}$ bounded away from the origin, provided that the domain $\mathscr{D}$ consists of points sufficiently close to the imaginary interval $0<-\ii\lambda<A_\mathrm{max}$.  From 
\eqref{eq:Lbar-def-3} and \eqref{eq:fK-define},
\begin{equation}
2F(\lambda;x) = \ii\phaseint(\SP) + \tailint(\SP) + 2\ii x\lambda + \overline{L}(\SP) \pm \ii\phaseint(\SP), \quad 0<\imag\{\SP\}<A_\mathrm{max},
\quad\pm\re\{\SP\}>0.
\end{equation}
From Lemma \ref{lem:mu-Lbar}, letting $\lambda$ approach a positive imaginary value $\lambda\to \ii s$ from the domain $\pm\mathrm{Re}\{\lambda\}>0$ gives limiting values
\begin{equation}
2F(\ii s;x) = 2(X_+-x)s - 2\int_{x_+(s)}^{X_+}\sqrt{s^2-A(x)^2}\,\dd x
  +\ii\phaseint(\ii s) \pm \ii\phaseint(\ii s), \quad 0<s<A_\mathrm{max}.
\end{equation}
From \eqref{eq:ZS-phase-integral} we see that $\phaseint(\ii s)$ is purely 
real for $0<s<A_\mathrm{max}$.  Therefore the real part of $2F(\lambda;x)$ is continuous across the imaginary axis, and we deduce that unambiguously
\begin{equation}
\re\{2F(\ii s;x)\} = 2(X_+-x)s - 2\int_{x_+(s)}^{X_+}\sqrt{s^2-A(x)^2}\,\dd x
, \quad 0<s<A_\mathrm{max}.
\end{equation}
Since $x>X_+$ and the integral is manifestly positive, it follows that 
$\re\{2F(\ii s;x)\}<0$ for $0<s<A_\mathrm{max}$, as required.  Moreover, if $s>0$ is bounded away from zero, the upper bound on $\re\{2F(\ii s;x)\}$ can be replaced with a negative constant.  By continuity of $\re\{2F(\lambda;x)\}$, this establishes the desired uniform exponential decay of $\ee^{2F(\lambda;x)/\epsilon}$ as $\epsilon\downarrow 0$.

We now assume further that for some small parameters $\delta>0$ and $\sigma>0$, the part of $\partial\mathscr{D}$ with $|\lambda|>\sigma$ lies within the domain $D_\sigma$ of Proposition~\ref{prop:Y-outside}.  Then it follows that also $Y_\eps(\lambda)$ is uniformly bounded, implying that for $\lambda\in\partial\mathscr{D}$ with $|\lambda|>\sigma$, we have 
$\mathbf{U}_+(\SP;x)=\mathbf{U}_-(\SP;x)(\mathbb{I}+\text{exponentially small})$
holding uniformly for $\SP\in\partial\mathscr{D}$ bounded away from the origin in the limit $\epsilon\downarrow 0$.  
By Schwarz symmetry the same holds for $\SP\in\partial\mathscr{D}^*$ bounded 
away from the origin.  All together, we are assuming that the loop $\partial\mathscr{D}$ is both sufficiently close to the imaginary segment $0<-\ii\lambda<A_\mathrm{max}$ and that there is a fixed minimum distance between this segment and the part of the loop outside a small disk centered at the origin.

To model $\mathbf{U}(\SP;x)$, it then suffices to analyze the jump matrix 
in \eqref{U-on-Dboundary} near the origin.  From \eqref{eq:fK-define} and 
$\tailint_1=-(X_++X_-)$ (see Proposition \eqref{prop:mu-odd-analytic}), the 
function $2f_0(\SP;x,\mathbf{0})$ is analytic at $\SP=0$ with Taylor 
expansion
\begin{equation}
2f_0(\SP;x,\mathbf{0})=\ii\phaseint_0  + 2\ii (x-\overline{X})\SP + \bigo{\SP^2},\quad\overline{X}:=\frac{1}{2}(X_++X_-),\quad \SP\to 0.
\label{2f-near-0}
\end{equation}
Similarly, from \eqref{eq:Lbar-def-3} and \eqref{eq:Lbar-def-1}, $L(\SP)$ 
has the expansions
\begin{equation}
L(\SP) = \pm\ii\phaseint_0-\ii(X_+-X_-)\SP+\bigo{\SP^2}, 
\quad \SP\to 0, \quad \pm\re\{\SP\}>0.
\label{L-near-zero}
\end{equation}
From \eqref{2F-of-lambda}, \eqref{2f-near-0}, and \eqref{L-near-zero}, it 
follows that 
\begin{equation}
2F(\SP;x) = \ii\phaseint_0\pm\ii\phaseint_0+2\ii(x-X_+)\SP + \bigo{\SP^2}, 
\quad \SP\to 0, \quad \pm\re\{\SP\}>0.
\label{eq:2F-Taylor}
\end{equation}
Combining \eqref{eq:ZS-epsilon-assumption} and \eqref{eq:ZS-phase-integral} 
gives $\phaseint_0 = N\epsilon\pi$.  Therefore,
\begin{equation}
\ee^{2F(\lambda;x)/\epsilon} = \ee^{[2\ii(x-X_+)\SP+\bigo{\SP^2}]/\epsilon}
\end{equation}
in a neighborhood of $\lambda=0$ regardless of the sign of $\re\{\SP\}$, which affects the error term but not its order estimate.  
Comparing this with \eqref{U-on-Dboundary} motivates the definition of the 
following global approximation for $\mathbf{U}$:
\begin{equation}
\breve{\mathbf{U}}(\SP;x):=\begin{cases}
\begin{pmatrix}1&0\\-\ii \ee^{2\ii(x-X_+)\SP/\epsilon} & 1\end{pmatrix},&\SP\in \mathscr{D},\\
\begin{pmatrix}1&-\ii \ee^{-2\ii (x-X_+)\SP/\epsilon}\\0 & 1\end{pmatrix},&\SP\in \mathscr{D}^*,\\
\mathbb{I},&\SP\in\mathbb{C}\setminus\overline{\mathscr{D}\cup \mathscr{D}^*}.
\end{cases}
\label{eq:N-parametrix-far-outside}
\end{equation}

Note that $\det(\breve{\mathbf{U}}(\SP;x))=1$ and that $\breve{\mathbf{U}}(\SP;x)=\sigma_2\breve{\mathbf{U}}(\SP^*;x)^*\sigma_2=\breve{\mathbf{U}}(\SP^*;x)^{-\dagger}$.  The error is defined as $\mathbf{E}(\SP;x):=\mathbf{U}(\SP;x)\breve{\mathbf{U}}(\SP;x)^{-1}$.  It is clearly analytic for $\SP\in\mathbb{C}\setminus (\partial \mathscr{D}\cup\partial \mathscr{D}^*)$.  It tends to the identity as $\SP\to\infty$ because $\mathbf{U}(\SP;x)$ does so while $\breve{\mathbf{U}}(\SP;x)^{-1}=\mathbb{I}$ for $|\SP|$ sufficiently large.   Imposing the symmetry $\mathbf{E}(\SP;x)=\sigma_2\mathbf{E}(\SP^*;x)^*\sigma_2=\mathbf{E}(\SP^*;x)^{-\dagger}$, $\mathbf{E}(\SP;x)$ is characterized by its jump condition across $\partial \mathscr{D}$, which according to \eqref{U-on-Dboundary} and \eqref{eq:N-parametrix-far-outside} reads
\begin{multline}
\mathbf{E}_+(\SP;x)=\mathbf{E}_-(\SP;x)\mathbf{V}^\mathbf{E}(\SP;x),\\
\mathbf{V}^\mathbf{E}(\SP;x):=\begin{pmatrix}1&0\\\ii [Y_\epsilon(\SP)\ee^{2F(\SP;x)/\epsilon}-\ee^{2\ii (x-X_+)\SP/\epsilon}] & 1\end{pmatrix},\quad\SP\in\partial \mathscr{D}.
\label{eq:E-jump-partial-D}
\end{multline}
We can write the jump condition across $\partial\mathscr{D}^*$ in the same form $\mathbf{E}_+(\lambda;x)=\mathbf{E}_-(\lambda;x)\mathbf{V}^\mathbf{E}(\lambda;x)$, where $\mathbf{V}^\mathbf{E}(\lambda;x)=\mathbf{V}^\mathbf{E}(\lambda^*;x)^{-\dagger}$ holds for $\lambda\in\partial\mathscr{D}^*$.
With the clockwise orientation of both loops, the jump contour  $\Sigma:=\partial \mathscr{D}\cup\partial \mathscr{D}^*$ is a complete oriented contour that divides the complex plane into complementary regions:  $\mathscr{D}\cup \mathscr{D}^*$ on the right and $\mathbb{C}\setminus\overline{\mathscr{D}\cup \mathscr{D}^*}$ on the left.  

Since $\widetilde{\mathbf{M}}(\SP;x,\mathbf{0})=\mathbf{U}(\SP;x)=\mathbf{E}(\SP;x)$ for $|\SP|$ sufficiently large, using the Plemelj formula we can express $\widetilde{\psi}(x,\mathbf{0})$ exactly in the form
\begin{equation}
\widetilde{\psi}(x,\mathbf{0})=2\ii\lim_{\SP\to\infty}\SP E_{12}(\SP;x) = -\frac{1}{\pi}\int_{\Sigma}(E_{12,+}(\SP;x)-E_{12,-}(\SP;x))\,\dd\SP,
\label{eq:psi-formula-E-1}
\end{equation}
which follows from
\begin{equation}
\mathbf{E}(\SP;x)=\mathbb{I}+\mathcal{C}^{\Sigma}[\mathbf{E}_+(\diamond;x)-\mathbf{E}_-(\diamond;x)](\SP),\quad\lambda\in\mathbb{C}\setminus\Sigma,
\label{eq:E-formula-1}
\end{equation}
in which for an arbitrary oriented contour $\Sigma$, the Cauchy transform of a matrix-valued function $\mathbf{F}(\diamond)$ defined on $\Sigma$ is given by
\begin{equation}
\mathcal{C}^\Sigma[\mathbf{F}(\diamond)](\SP):=\frac{1}{2\pi\ii}\int_\Sigma\frac{\mathbf{F}(\xi)\,\dd\xi}{\xi-\SP},\quad\SP\in\mathbb{C}\setminus \Sigma.
\end{equation}
Letting $\SP$ tend to $\Sigma$ from the right side in \eqref{eq:E-formula-1}, denoting the resulting boundary value by $\boldsymbol{\mu}(\SP;x):=\mathbf{E}_-(\SP;x)$, and using $\mathbf{E}_+(\SP;x)=\mathbf{E}_-(\SP;x)\mathbf{V}^\mathbf{E}(\SP;x)$ yields the singular integral equation
\begin{equation}
\boldsymbol{\mu}(\SP;x)-\mathcal{C}_-^\Sigma[\boldsymbol{\mu}(\diamond;x)(\mathbf{V}^\mathbf{E}(\diamond;x)-\mathbb{I})](\SP)=\mathbb{I},\quad\SP\in\Sigma.
\label{eq:sing-int}
\end{equation}
In terms of the solution $\boldsymbol{\mu}(\SP;x)$ of this integral equation, from \eqref{eq:psi-formula-E-1} we obtain
\begin{equation}
\widetilde{\psi}(x,\mathbf{0})=-\frac{1}{\pi}\int_\Sigma (\boldsymbol{\mu}(\SP;x)(\mathbf{V}^\mathbf{E}(\SP;x)-\mathbb{I}))_{12}\,\dd\SP.
\label{eq:psi-formula-mu}
\end{equation}
We note that since $\Sigma$ may be taken to be a Lipschitz curve independent of $\epsilon$, $\mathcal{C}_-^\Sigma$ is a bounded linear operator on $L^2(\Sigma)$, with fixed operator norm $\|\mathcal{C}_-^\Sigma\|_{L^2(\Sigma)\circlearrowleft}<\infty$.

We claim that $\|\mathbf{V}^\mathbf{E}-\mathbb{I}\|_{L^\infty(\Sigma)}=\bigo{\epsilon^{1/2}}$.  This would be implied by the assertion that
\begin{equation}
\sup_{\SP\in\partial \mathscr{D}}\left|Y_\epsilon(\SP)\ee^{2F(\SP;x)/\epsilon}-\ee^{2\ii (x- X_+)\SP/\epsilon}\right| = \bigo{\epsilon^{1/2}}.
\label{eq:sup-loop}
\end{equation}
Combining Propositions~\ref{prop:Y-outside}, \ref{prop:Y-zeta-small}, and \ref{prop:Y0-asymp} shows that $Y_\epsilon(\SP)$ is uniformly bounded for $\SP\in\partial \mathscr{D}$.
Picking an exponent $p<1$, the Taylor formula \eqref{eq:2F-Taylor} shows that if $x>X_+$, then there is some $C>0$ so that both $\ee^{2F(\SP;x)/\epsilon}$ and $\ee^{2\ii (x-X_+)\SP/\epsilon}$ are uniformly $\bigo{\ee^{-C\epsilon^{p-1}}}$ on $\partial \mathscr{D}$ with $|\SP|>\epsilon^p$.  Therefore,
\begin{equation}
\begin{split}
\mathop{\sup_{\SP\in\partial \mathscr{D}}}_{|\SP|>\epsilon^p}\left|Y_\epsilon(\SP)\ee^{2F(\SP;x)/\epsilon}-\ee^{2\ii (x-X_+)\SP/\epsilon}\right| &= \bigo{\ee^{-C\epsilon^{p-1}}} \\ &= \bigo{\epsilon^{1/2}}.
\end{split}
\label{eq:outer-estimate}
\end{equation}
It therefore remains to prove that if $x>X_+$, then for some $p<1$,
\begin{equation}
\mathop{\sup_{\SP\in \partial \mathscr{D}}}_{|\SP|\le\epsilon^p}\left|\ee^{2\ii (x-X_+)\SP/\epsilon}\right|\cdot\left|Y_\epsilon(\SP)\ee^{[2F(\SP;x)-2\ii (x-X_+)\SP]/\epsilon}-1\right|=\bigo{\epsilon^{1/2}}.
\label{eq:WTS-1}
\end{equation}
For this, we assume now that in a neighborhood of the origin, $\partial\mathscr{D}$ lies within the  sector of opening angle strictly less than $\pi$ symmetric about the positive imaginary axis.  Then, because $x>X_+$, there is some constant $C>0$ such that $|\ee^{2\ii (x-X_+)\SP/\epsilon}|\le \ee^{-C|\SP|/\epsilon}$ holds for all $\SP\in\partial \mathscr{D}$.  If $p>\tfrac{1}{2}$, then $\SP\in \partial \mathscr{D}$ with $|\SP|\le\epsilon^p$ implies, using \eqref{eq:2F-Taylor}, that 
\begin{equation}
\begin{split}
\ee^{[2F(\SP;x)-2\ii (x-X_+)\SP]/\epsilon}&=1+\bigo{\frac{\SP^2}{\epsilon}}\\
&=1+\bigo{\frac{\SP}{\epsilon^{1/2}}}. 
\end{split}
\label{eq:exponential-expand-1}
\end{equation}
Under the same conditions, using Propositions~\ref{prop:Y-zeta-small} and \ref{prop:Y0-asymp} gives
\begin{equation}
\begin{split}
Y_\epsilon(\SP)&=\mathcal{Y}_0\left(\frac{\varphi(\SP)}{\epsilon^{1/2}}\right)\left(1+\bigo{\epsilon^{1/2}\lambda} + \bigo{\epsilon}\right)\\
&=\left(1-2\ii(\sqrt{2}-1)\zeta(\tfrac{1}{2})\frac{\varphi(\SP)}{\epsilon^{1/2}} + \bigo{\frac{\varphi(\SP)^2}{\epsilon}}\right)\left(1+\bigo{\epsilon^{1/2}\lambda}+\bigo{\epsilon}\right)\\
&=\left(1-2\ii(\sqrt{2}-1)\zeta(\tfrac{1}{2})\frac{\varphi'(0)\SP}{\epsilon^{1/2}} + \bigo{\frac{\SP^2}{\epsilon}}\right)\left(1+\bigo{\epsilon^{1/2}\lambda}+\bigo{\epsilon}\right)\\
&=\left(1+k\frac{\lambda}{\epsilon^{1/2}}
+\bigo{\frac{\SP^2}{\epsilon}}\right)\left(1+\bigo{\epsilon^{1/2}\lambda}+\bigo{\epsilon}\right)\\
&=1+k\frac{\lambda}{\epsilon^{1/2}}
+\bigo{\epsilon}+\bigo{\epsilon^{1/2}\lambda}+\bigo{\frac{\lambda^2}{\epsilon}}+\bigo{\frac{\lambda^3}{\epsilon^{1/2}}}\\
&=1+\bigo{\frac{\lambda}{\epsilon^{1/2}}} + \bigo{\epsilon},
\end{split}
\label{eq:Yepsilon-expand-1}
\end{equation}
where $k:=-2\ii(\sqrt{2}-1)\zeta(\frac{1}{2})/v(0)$.
Therefore, if $\SP\in\partial \mathscr{D}$ with $|\SP|\le \epsilon^p$ and $p>\tfrac{1}{2}$,
\begin{equation}
\left|\ee^{2\ii (x-X_+)\SP/\epsilon}\right|\cdot\left|Y_\epsilon(\SP)\ee^{[2F(\SP;x)-2\ii (x-X_+)\SP]/\epsilon}-1\right|
=\bigo{\epsilon^{1/2}\frac{\SP}{\epsilon}\ee^{-C|\SP|/\epsilon}} + \bigo{\epsilon\ee^{-C|\SP|/\epsilon}}.
\label{eq:inner-estimate}
\end{equation}
Because $\ee^{-Cy}$ and $y\ee^{-Cy}$ are both uniformly bounded functions of $y\ge 0$, the result \eqref{eq:WTS-1} follows assuming that $p>\frac{1}{2}$.  Therefore choosing any $p\in (\frac{1}{2},1)$ and combining with \eqref{eq:outer-estimate} yields \eqref{eq:sup-loop} and therefore $\|\mathbf{V}^\mathbf{E}-\mathbb{I}\|_{L^\infty(\Sigma)}=\bigo{\epsilon^{1/2}}$ as desired. 

Since $\|\mathcal{C}_-^\Sigma\|_{L^2(\Sigma)\circlearrowleft}$ is finite and independent of $\epsilon$, it follows that the composition with multiplication on the right by $\mathbf{V}^\mathbf{E}-\mathbb{I}$ yields a bounded linear operator on $L^2(\Sigma)$ with norm $\bigo{\epsilon^{1/2}}$.  Therefore, the singular integral equation \eqref{eq:sing-int} can be solved by Neumann series provided that $\epsilon$ is sufficiently small.  In particular, this implies that (after one explicit iteration)
\begin{equation}
\boldsymbol{\mu}-\mathbb{I}-\mathcal{C}_-^\Sigma[\mathbf{V}^\mathbf{E}-\mathbb{I}] = \bigo{\epsilon}\quad\text{in $L^2(\Sigma)$}.
\end{equation}
Now, the first line of \eqref{eq:outer-estimate} and the estimate \eqref{eq:inner-estimate}, along with the Schwarz symmetry $\mathbf{V}^\mathbf{E}(\lambda;x)=\mathbf{V}^\mathbf{E}(\lambda^*;x)^{-\dagger}$ to obtain similar estimates for $\lambda\in\partial \mathscr{D}^*$, imply that $\mathbf{V}^\mathbf{E}-\mathbb{I}=\bigo{\epsilon}$ in $L^2(\Sigma)$.  Therefore,
by Cauchy-Schwarz \eqref{eq:psi-formula-mu} implies that
\begin{multline}
\widetilde{\psi}(x,\mathbf{0})=-\frac{1}{\pi}\int_\Sigma (\mathbf{V}^\mathbf{E}(\SP;x)-\mathbb{I})_{12}\,\dd\SP \\{}- 
\frac{1}{\pi}\int_\Sigma\left(\mathcal{C}_-^\Sigma[\mathbf{V}^\mathbf{E}(\diamond;x)-\mathbb{I}](\SP)(\mathbf{V}^\mathbf{E}(\SP;x)-\mathbb{I})\right)_{12}\,\dd\SP+
\bigo{\epsilon^2}.
\end{multline}
Since $\mathbf{V}^\mathbf{E}-\mathbb{I}$ is an off-diagonal matrix at each point of $\Sigma$, so is its Cauchy transform $\mathcal{C}_-^\Sigma[\mathbf{V}^\mathbf{E}(\diamond;x)-\mathbb{I}](\SP)$.  Therefore their product is diagonal, so the integral on the second line vanishes and hence
\begin{equation}
\widetilde{\psi}(x,\mathbf{0})=-\frac{1}{\pi}\int_\Sigma (\mathbf{V}^\mathbf{E}(\SP;x)-\mathbb{I})_{12}\,\dd\SP+
\bigo{\epsilon^2}.
\label{eq:psi-leading-term}
\end{equation}
Moreover, $\mathbf{V}^\mathbf{E}(\lambda;x)$ is lower-triangular for $\lambda\in\partial\mathscr{D}$, so there is no contribution to the integral from $\partial \mathscr{D}$:
\begin{equation}
\begin{split}
-\frac{1}{\pi}\int_\Sigma (\mathbf{V}^\mathbf{E}(\SP;x)-\mathbb{I})_{12}\,\dd\SP &=-\frac{1}{\pi}\int_{\partial \mathscr{D}^*}(\mathbf{V}^\mathbf{E}(\SP;x)-\mathbb{I})_{12}\,\dd\SP\\
&=-\frac{\ii}{\pi}\int_{\partial \mathscr{D}^*} \left(Y_\epsilon(\SP^*)^*\ee^{2F(\SP^*;x)^*/\epsilon}-\ee^{-2\ii (x-X_+)\SP/\epsilon}\right)\,\dd\SP\\
&=\left[-\frac{\ii}{\pi}\int_{\partial\mathscr{D}}\left(Y_\epsilon(\SP)\ee^{2F(\SP;x)/\eps}-\ee^{2\ii (x-X_+)\SP/\eps}\right)\,\dd\SP\right]^*,
\end{split}
\label{eq:integral-VminusI-12}
\end{equation}
where in the last line we used Schwarz reflection.
Now, a more refined version of \eqref{eq:inner-estimate} is obtained from using instead the penultimate lines of \eqref{eq:exponential-expand-1} and \eqref{eq:Yepsilon-expand-1}.  For $\SP\in\partial \mathscr{D}$ with $|\SP|\le\epsilon^p$ and $p>\frac{1}{2}$, 
\begin{multline}
Y_\epsilon(\SP)\ee^{2F(\SP;x)/\epsilon}-\ee^{2\ii (x-X_+)\SP/\epsilon}\\
\begin{aligned}
&=\ee^{2\ii (x-X_+)\SP/\epsilon}\left(Y_\epsilon(\SP)\ee^{[2F(\SP;x)-2\ii (x-X_+)\SP]/\epsilon}-1\right) \\{}
&= 
\ee^{2\ii (x-X_+)\SP/\epsilon}\left(k\frac{\lambda}{\epsilon^{1/2}}
+ \bigo{\epsilon}+ \bigo{\epsilon^{3/2}\frac{\SP}{\eps}}+\bigo{\eps\left(\frac{\SP}{\eps}\right)^2}\right.\\
&\qquad{}\left.+\bigo{\eps^{3/2}\left(\frac{\lambda}{\epsilon}\right)^3}+\bigo{\eps^2\left(\frac{\lambda}{\epsilon}\right)^4}+\bigo{\eps^{7/2}\left(\frac{\lambda}{\epsilon}\right)^5}\right).
\end{aligned}
\label{eq:refined-inner-estimate}
\end{multline}
Taking $p\in (\frac{1}{2},1)$ and making use of the first line of \eqref{eq:outer-estimate}, the same left-hand side is exponentially small on the complement of $\partial \mathscr{D}$.  We therefore see that at the cost of an exponentially small error, the integral in \eqref{eq:integral-VminusI-12} of the explicit term on the right-hand side of \eqref{eq:refined-inner-estimate}
can be extended to the closed curve $\partial \mathscr{D}$ where it integrates to zero by Cauchy's theorem.  The remaining six terms on the right-hand side of \eqref{eq:refined-inner-estimate} have unspecified analyticity properties, so to estimate the integral in \eqref{eq:integral-VminusI-12}, the best we can do is $L^1(\partial \mathscr{D})$ estimates, which by scaling are 
all $\bigo{\epsilon^2}$ as $y^p\ee^{-Cy}$ is integrable on $y>0$ for $p=0,\dots,5$.  We conclude from \eqref{eq:psi-leading-term} that $\widetilde{\psi}(x,\mathbf{0})=\bigo{\epsilon^2}$ when $x>X_+$, and clearly the estimate is uniform for $x$ in compact subsets of the indicated interval.

To obtain the corresponding estimate for $x<X_-$, we start instead from the matrix $\mathbf{M}^\updownarrow(\SP;x,\mathbf{0})$ solving Riemann-Hilbert Problem~\ref{rhp-meromorphic_renorm} and proceed similarly, now constructing $F(\SP;x)$ from $f_K(\SP;x,\mathbf{0})$ for a nonzero $K\in\mathbb{Z}$.

This completes the proof of Theorem~\ref{thm-accuracy-t=0}.

\section{Dispersive regularization of Talanov focusing}
\label{sec-Suleimanov}
In this section we prove Theorems~\ref{thm:multi-time}, \ref{thm:mixture}, 
and \ref{thm:pure-flow}.  Our starting point is 
Riemann-Hilbert Problem~\ref{rhp-meromorphic} with solution 
$\widetilde{\mathbf{M}}(\SP;x,\mathbf{t})$.  
Let $D$ denote a half disk in the upper half-plane $\mathbb{C}_+$ with center at the origin and radius $L>0$ sufficiently large to contain the points $\SP=\ii\widetilde{s}_j$, $j=0,\dots,N-1$ for all $N$, and such that the boundary $\partial D$ satisfies $\partial D\cap\mathbb{R}=[-L,L]$.  See Figure~\ref{fig:D-regions}.  
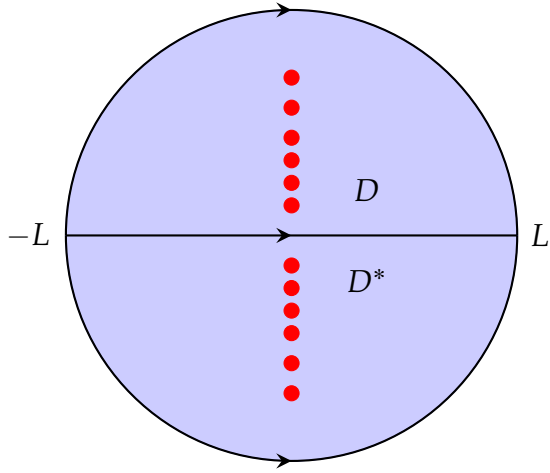
\begin{figure}[h]
\begin{tikzpicture}[>=stealth]
\draw[fill, color=blue,fill opacity=0.2] (0cm,0cm) circle [radius=3cm];
\draw[->-=0.5,thick,black] (-3cm,0cm) -- (3cm,0cm);
\draw[->-=0.5,thick,black] (-3cm,0cm) arc [start angle=180, end angle = 0, radius=3cm];
\draw[->-=0.5,thick,black] (-3cm,0cm) arc [start angle=-180,end angle = 0,radius = 3cm];
\draw[fill,red] (0cm,0.4cm) circle [radius=0.1cm];
\draw[fill,red] (0cm,-0.4cm) circle [radius=0.1cm];
\draw[fill,red] (0cm,0.7cm) circle [radius=0.1cm];
\draw[fill,red] (0cm,-0.7cm) circle [radius=0.1cm];
\draw[fill,red] (0cm,1cm) circle [radius=0.1cm];
\draw[fill,red] (0cm,-1cm) circle [radius=0.1cm];
\draw[fill,red] (0cm,1.3cm) circle [radius=0.1cm];
\draw[fill,red] (0cm,-1.3cm) circle [radius=0.1cm];
\draw[fill,red] (0cm,1.7cm) circle [radius=0.1cm];
\draw[fill,red] (0cm,-1.7cm) circle [radius=0.1cm];
\draw[fill,red] (0cm,2.1cm) circle [radius=0.1cm];
\draw[fill,red] (0cm,-2.1cm) circle [radius=0.1cm];
\node at (1cm,0.6cm) {$D$};
\node at (1cm,-0.6cm) {$D^*$};
\node at (-3.5cm,0cm) {$-L$};
\node at (3.3cm,0cm) {$L$};
\end{tikzpicture}
\caption{The regions $D$ and $D^*$.}
\label{fig:D-regions}
\end{figure}
Fix $K\in\mathbb{Z}$, and let $\widetilde{a}(\SP)$ be defined by the Blaschke product \eqref{eq:ZS-tilde-a}.
Because they are polynomials by assumption, $\phaseint(\SP)$ and $\tailint(\SP)$ are analytic for $\SP\in D$, and 
we define a new unknown related to $\widetilde{\mathbf{M}}(\SP;x,\mathbf{t})$ as follows:
\begin{equation}
\mathbf{N}(\SP;x,\mathbf{t}):=
\begin{dcases}
\widetilde{\mathbf{M}}(\SP;x,\mathbf{t})\begin{pmatrix} 1 & 0\\
-\ii (-1)^K \widetilde{a}(\SP)^{-1}\ee^{2f_K(\SP;x,\mathbf{t})/\epsilon} & 1\end{pmatrix}, & \SP\in D, \\
\mathbf{N}(\SP^*;x,\mathbf{t})^{-\dagger}, & \SP \in D^*, \\
\widetilde{\mathbf{M}}(\SP;x,\mathbf{t}), & \C \setminus \overline{ D \cup D^*},
\end{dcases}
\end{equation}
where we recall the definition of $f_K(\SP;x,\mathbf{t})$ in \eqref{eq:fK-define}.
One then checks from the conditions of Riemann-Hilbert Problem~\ref{rhp-meromorphic} that, since $c_n(x,\mathbf{t})$ can be written in the form
\begin{equation}
c_n(x,\mathbf{t})=\ii (-1)^K\mathop{\mathrm{Res}}_{\SP=\ii \widetilde{s}_n}\frac{\ee^{2f_K(\SP;x,\mathbf{t})/\epsilon}}{\widetilde{a}(\SP)},
\end{equation}
the new unknown $\mathbf{N}(\SP;x,\mathbf{t})$ has removable singularities at all poles of $\widetilde{\mathbf{M}}(\SP;x,\mathbf{t})$ and so is piecewise analytic in $D$, $D^*$, and the unbounded exterior domain, taking continuous boundary values on its jump contour consisting of the three arcs $\partial D\setminus [-L,L]$, $(-L,L)$, and $\partial D^*\setminus[-L,L]$.  The definition also preserves the normalization condition: $\mathbf{N}(\SP;x,\mathbf{t})\to\mathbb{I}$ as $\SP\to\infty$.  Assuming that the three arcs are oriented from $-L$ toward $L$ as shown in Figure~\ref{fig:D-regions}, and that boundary values from the left (right) are indicated with a subscript of ``$+$'' (``$-$'') the jump conditions satisfied by $\mathbf{N}(\SP;x,\mathbf{t})$ are as follows:
\begin{equation}
\mathbf{N}_+(\SP;x,\mathbf{t})=\mathbf{N}_-(\SP;x,\mathbf{t})\begin{pmatrix}1 & 0\\
\ii (-1)^K \widetilde{a}(\SP)^{-1}\ee^{2f_K(\SP;x,\mathbf{t})/\epsilon} & 1
\end{pmatrix},\quad \SP\in\partial D\setminus [-L,L],
\label{eq:jump-N-up}
\end{equation}
\begin{equation}
\mathbf{N}_+(\SP;x,\mathbf{t})=\mathbf{N}_-(\SP;x,\mathbf{t})\begin{pmatrix}1 & -\ii (-1)^K \widetilde{a}(\SP^*)^{*-1}\ee^{2f_K(\SP^*; x,\mathbf{t})^*/\epsilon}\\0 & 1\end{pmatrix},\quad\SP\in\partial D^*\setminus[-L,L],
\end{equation}
and for $\SP\in (-L,L)$,
\begin{multline}
\mathbf{N}_+(\SP;x,\mathbf{t})\\
\begin{aligned}
&=\mathbf{N}_-(\SP;x,\mathbf{t})\begin{pmatrix}1 & \ii(-1)^K\widetilde{a}(\SP)^{*-1}\ee^{2f_K(\SP;x,\mathbf{t})^*/\epsilon}\\0&1\end{pmatrix}\begin{pmatrix}1&0\\-\ii(-1)^K\widetilde{a}(\SP)^{-1}\ee^{2f_K(\SP;x,\mathbf{t})/\epsilon} & 1\end{pmatrix}\\
&=\mathbf{N}_-(\SP;x,\mathbf{t})\begin{pmatrix} 1+\ee^{2[f_K(\SP;x,\mathbf{t})+f_K(\SP;x,\mathbf{t})^*]/\epsilon} & \ii(-1)^K\widetilde{a}(\SP)^{*-1}\ee^{2f_K(\SP;x,\mathbf{t})^*/\epsilon}\\-\ii(-1)^K\widetilde{a}(\SP)^{-1}\ee^{2f_K(\SP;x,\mathbf{t})/\epsilon} & 1\end{pmatrix},
\end{aligned}
\label{eq:N-jump-R}
\end{multline}
where on the last line we used the identity $\widetilde{a}(\SP)\widetilde{a}(\SP^*)^*=1$.  

\begin{rem}
A similar substitution can made in the equivalent Riemann-Hilbert problem for $\widetilde{\mathbf{M}}^\updownarrow(\SP;x,\mathbf{t})$ based on the identity
\begin{equation}
c_n^\updownarrow(x,\mathbf{t})=-\ii (-1)^K\mathop{\mathrm{Res}}_{\SP=\ii\widetilde{s}_n}\frac{\ee^{-2f_K(\SP;x,\mathbf{t})/\epsilon}}{\widetilde{a}(\SP)}.
\end{equation}
One sets 
\begin{equation}
\mathbf{N}^\updownarrow(\SP;x,\mathbf{t}):=\widetilde{\mathbf{M}}^\updownarrow(\SP;x,\mathbf{t})\begin{pmatrix}1 & \ii (-1)^K\widetilde{a}(\SP)^{-1}\ee^{-2f_K(\SP;x,\mathbf{t})/\epsilon}\\0 & 1\end{pmatrix},\quad \SP\in D,
\end{equation}
and then defines $\mathbf{N}^\updownarrow(\SP;x,\mathbf{t}):=\sigma_2\mathbf{N}^\updownarrow(\SP^*;x,\mathbf{t})^*\sigma_2=\mathbf{N}^\updownarrow(\SP;x,\mathbf{t})^{-\dagger}$ for $\SP\in D^*$ to maintain Schwarz symmetry, and sets $\mathbf{N}^\updownarrow(\SP;x,\mathbf{t}):=\widetilde{\mathbf{M}}^\updownarrow(\SP;x,\mathbf{t})$ for $\SP$ outside the closure of $D\cup D^*$.  This defines a matrix function of $\SP$ that is analytic except on the three arcs $\partial D\setminus [-L,L]$, $(-L,L)$, and $\partial D^*\setminus [-L,L]$, and that tends to the identity as $\SP\to\infty$.  The jump conditions across the three arcs read
\begin{equation}
\mathbf{N}^\updownarrow_+(\SP;x,\mathbf{t})=\mathbf{N}^\updownarrow_-(\SP;x,\mathbf{t})\begin{pmatrix}1&-\ii(-1)^K\widetilde{a}(\SP)^{-1}\ee^{-2f_K(\SP;x,\mathbf{t})/\epsilon}\\0 & 1\end{pmatrix},\quad\SP\in\partial D\setminus[-L,L],
\end{equation}
\begin{equation}
\mathbf{N}^\updownarrow_-(\SP;x,\mathbf{t})=\mathbf{N}^\updownarrow_-(\SP;x,\mathbf{t})\begin{pmatrix}1 & 0\\\ii (-1)^K\widetilde{a}(\SP^*)^{*-1}\ee^{-2f_K(\SP^*;x,\mathbf{t})^*/\epsilon} & 1\end{pmatrix},\quad\SP\in\partial D^*\setminus[-L,L],
\end{equation}
and, for $\SP\in (-L,L)$,
\begin{multline}
\mathbf{N}^\updownarrow_+(\SP;x,\mathbf{t})\\
\begin{aligned}
&=\mathbf{N}^\updownarrow_-(\SP;x,\mathbf{t})\begin{pmatrix}1&0\\-\ii (-1)^K\widetilde{a}(\SP)^{*-1}\ee^{-2f_K(\SP;x,\mathbf{t})^*/\epsilon} & 1\end{pmatrix}\begin{pmatrix}1&\ii (-1)^K\widetilde{a}(\SP)^{-1}\ee^{-2f_K(\SP;x,\mathbf{t})/\epsilon} \\0 & 1\end{pmatrix}\\
&=\mathbf{N}^\updownarrow_-(\SP;x,\mathbf{t})\begin{pmatrix}1 & \ii (-1)^K\widetilde{a}(\SP)^{-1}\ee^{-2f_K(\SP;x,\mathbf{t})/\epsilon}\\
-\ii(-1)^K\widetilde{a}(\SP)^{*-1}\ee^{-2f_K(\SP;x,\mathbf{t})^*/\epsilon} & 1+\ee^{-2[f_K(\SP;x,\mathbf{t})+f_K(\SP;x,\mathbf{t})^*]/\epsilon}\end{pmatrix}.
\end{aligned}
\end{multline}
\end{rem}

Note that according to Propositions~\ref{prop:Psi-even-analytic} and \ref{prop:mu-odd-analytic}, the polynomials $\phaseint(\SP)$ and $\tailint(\SP)$ have coefficients that are real and imaginary, respectively, and it therefore follows that
\begin{equation}
f_K(\SP;x,\mathbf{t})+f_K(\SP;x,\mathbf{t})^*=0,\quad\SP\in (-L,L).
\label{eq:fK-imaginary}
\end{equation}
The jump matrix in the jump condition \eqref{eq:N-jump-R} naturally arises as a ``UL'' product; however it also admits a factorization of ``LU'' type.  Indeed, using \eqref{eq:fK-imaginary} and again taking into account that $\widetilde{a}(\SP)\widetilde{a}(\SP^*)^*=1$ it follows that for $\SP\in (-L,L)$,
\begin{multline}
\begin{pmatrix} 1+\ee^{2[f_K(\SP;x,\mathbf{t})+f_K(\SP;x,\mathbf{t})^*]/\epsilon} & \ii(-1)^K\widetilde{a}(\SP)^{*-1}\ee^{2f_K(\SP;x,\mathbf{t})^*/\epsilon}\\-\ii(-1)^K\widetilde{a}(\SP)^{-1}\ee^{2f_K(\SP;x,\mathbf{t})/\epsilon} & 1\end{pmatrix}\\{}=\begin{pmatrix}2^{1/2} & 0\\-2^{-1/2}\ii(-1)^K\widetilde{a}(\SP)^{-1}\ee^{2f_K(\SP;x,\mathbf{t})/\epsilon} & 2^{-1/2}\end{pmatrix}\begin{pmatrix}2^{1/2} & 2^{-1/2}\ii (-1)^K\widetilde{a}(\SP)^{*-1}\ee^{2f_K(\SP;x,\mathbf{t})^*/\epsilon}\\0 & 2^{-1/2}\end{pmatrix}.
\label{eq:N-jump-R-refactor}
\end{multline}
To exploit this alternate factorization, note that $\widetilde{a}(\SP)^{-1}$ is analytic for $\SP\in D^*$, as its poles lie in the domain $D$, and $f(\SP;x,\mathbf{t})$ and $f(\SP^*;x,\mathbf{t})^*$ are polynomials in $\SP$.  Therefore, defining a matrix function $\mathbf{O}(\SP;x,t)$ by 
\begin{equation}
\mathbf{O}(\SP;x,\mathbf{t}):=\mathbf{N}(\SP;x,\mathbf{t})\begin{pmatrix}2^{1/2} & 2^{-1/2}\ii (-1)^K\widetilde{a}(\SP^*)^{*-1}\ee^{2f_K(\SP^*;x,\mathbf{t})^*/\epsilon}\\0 & 2^{-1/2}\end{pmatrix}^{-1},\quad\SP\in D,
\end{equation}
\begin{equation}
\mathbf{O}(\SP;x,\mathbf{t}):=\mathbf{N}(\SP;x,\mathbf{t})\begin{pmatrix}2^{1/2} & 0\\-2^{-1/2}\ii(-1)^K\widetilde{a}(\SP)^{-1}\ee^{2f_K(\SP;x,\mathbf{t})/\epsilon} & 2^{-1/2}\end{pmatrix},\quad\SP\in D^*,
\end{equation}
and by $\mathbf{O}(\SP;x,\mathbf{t}):=\mathbf{N}(\SP;x,\mathbf{t})$ in the exterior domain, we see easily that $\mathbf{O}(\SP;x,\mathbf{t})$ is analytic where defined, and by comparing with \eqref{eq:N-jump-R} and \eqref{eq:N-jump-R-refactor} we see that $\mathbf{O}_+(\SP;x,\mathbf{t})=\mathbf{O}_-(\SP;x,\mathbf{t})$ for $\SP\in (-L,L)$.  Therefore an argument based on Morera's theorem shows that $\mathbf{O}(\SP;x,\mathbf{t})$ may be considered to be analytic in the interior of the closure of $D\cup D^*$, so its only jump occurs across the non-real arcs of $\partial D$ and $\partial D^*$, which form a circle $C$ containing all of the points $\SP=\pm\ii \widetilde{s}_j$, $j=0,\dots,N-1$ in its interior.  Taking the circle $C$ to have clockwise orientation, a computation shows that the jump of $\mathbf{O}(\SP;x,\mathbf{t})$ across $C$ takes the same analytic form regardless of whether $\SP$ is in the lower or upper half-plane, specifically 
\begin{multline}
\mathbf{O}_+(\SP;x,\mathbf{t})\\
\begin{aligned}
&=\mathbf{O}_-(\SP;x,\mathbf{t})\begin{pmatrix}2^{1/2} & 2^{-1/2}\ii (-1)^K\widetilde{a}(\SP^*)^{*-1}\ee^{2f_K(\SP^*;x,\mathbf{t})^*/\epsilon}\\0 & 2^{-1/2}\end{pmatrix}\begin{pmatrix}1 & 0\\
\ii (-1)^K \widetilde{a}(\SP)^{-1}\ee^{2f_K(\SP;x,\mathbf{t})/\epsilon} & 1
\end{pmatrix}\\
&=\mathbf{O}_-(\SP;x,\mathbf{t})\begin{pmatrix}2^{-1/2} & 2^{-1/2}\ii(-1)^K\widetilde{a}(\SP^*)^{*-1}\ee^{2f_K(\SP^*;x,\mathbf{t})^*/\epsilon}\\
2^{-1/2}\ii (-1)^K\widetilde{a}(\SP)^{-1}\ee^{2f_K(\SP;x,\mathbf{t})/\epsilon} & 2^{-1/2}\end{pmatrix}
\end{aligned}
\label{eq:Ojump}
\end{multline}
for $\SP\in C$.

Now fix a positive scaling factor $\nu>0$ to be determined, and consider the asymptotic behavior of the function $\widetilde{a}(\nu\epsilon^{-1}\Lambda)$ as $\epsilon=\epsilon_N\to 0$ with $|\Lambda|=1$ fixed.  We obtain
\begin{equation}
\begin{split}
\widetilde{a}(\nu\epsilon^{-1}\Lambda)&=\prod_{j=0}^{N-1}\frac{1-\ii\nu^{-1}\Lambda^{-1}\epsilon\widetilde{s}_j}{1+\ii\nu^{-1}\Lambda^{-1}\epsilon \widetilde{s}_j}\\
&=\exp\left(\sum_{j=0}^{N-1}\log\left(\frac{1-\ii\nu^{-1}\Lambda^{-1}\epsilon\widetilde{s}_j}{1+\ii\nu^{-1}\Lambda^{-1}\epsilon \widetilde{s}_j}\right)\right)\\
&=\exp\left(-\sum_{j=0}^{N-1}\left[2\ii\nu^{-1}\Lambda^{-1}\epsilon\widetilde{s}_j+\mathcal{O}(\epsilon^3\widetilde{s}_j^3)\right]\right)\\
&=\exp\left(-2\ii\nu^{-1}\Lambda^{-1}\sum_{j=0}^{N-1}\epsilon\widetilde{s}_j\right)(1+\mathcal{O}(\epsilon^2)),
\end{split}
\label{eq:a-expand}
\end{equation}
where we used only the facts that $0<\widetilde{s}_j<A_\mathrm{max}$ for all $j$ and $\epsilon$, and that $N\epsilon=\mathcal{O}(1)$.  Recalling \eqref{eq:Riemann-Sum}--\eqref{eq:Rewrite-L2-norm}, we substitute into \eqref{eq:a-expand} with the choice of $\nu$ given in \eqref{eq:nu-formula}
to obtain that 
\begin{equation}
\widetilde{a}(\nu\epsilon^{-1}\Lambda)=\ee^{-4\ii\Lambda^{-1}}\left(1+ \bigo{\eps^2} \right), 
\quad\epsilon\to 0
\end{equation}
holds uniformly for $|\Lambda|=1$.  
Next, observe that
\begin{equation}
\begin{split}
2f_K(\SP;x,\mathbf{t})&=\ii (2K+1)\phaseint(\SP)+\tailint(\SP) + 2\ii\left(\SP x + \sum_{n=2}^M\SP^nt_n\right)\\
&=\ii\left(\sum_{p=0}^{\mathcal{P}}(2K+1)\phaseint_p\SP^{2p} +\sum_{q=1}^{\mathcal{Q}}\tailint_q\SP^{2q-1} + 2 x\SP +\sum_{m=2}^M2t_m\SP^m\right).
\end{split}
\end{equation}
Consequently, setting $\SP=\nu\eps^{-1}\Lambda$ and replacing $x$ and $\mathbf{t}$ according to the left-hand side of \eqref{eq:hierarchy-focus-approx}, we obtain
\begin{multline}
2f_K\left(\frac{\nu\Lambda}{\eps};x^\circ+\frac{\eps^2}{\nu}X,\mathbf{t}^\circ+\left(\frac{\eps^3}{\nu^2}T_2,\frac{\eps^4}{\nu^3}T_3,\dots,\frac{\eps^{M+1}}{\nu^M}T_M\right)\right)\\{}=
\ii(2K+1)\phaseint_0
+2\ii\eps\left(X\Lambda+\sum_{m=2}^MT_m\Lambda^m\right).
\label{eq:exponent-expand}
\end{multline}

We deal with the constant term $\ii (2K+1)\phaseint_0$ in \eqref{eq:exponent-expand} along with the factors $\ii (-1)^K$ in the jump condition \eqref{eq:Ojump} by introducing one more transformation, a diagonal constant conjugation:
\begin{equation}
\mathbf{P}(\SP;x,\mathbf{t}):=\ee^{-\ii (2K+1)(\pi-2\phaseint_0/\eps)\sigma_3/4}\mathbf{O}(\SP;x,\mathbf{t})\ee^{\ii (2K+1)(\pi-2\phaseint_0/\eps)\sigma_3/4}.
\end{equation}
Note that $\mathbf{P}(\SP;x,\mathbf{t})$ is analytic for $\SP\in\mathbb{C}\setminus C$ and $\mathbf{P}(\SP;x,\mathbf{t})\to \mathbb{I}$ as $\SP\to\infty$.
Next, we specify the radius $L$ of the circle $C$ as $L=\nu/\epsilon$.  In terms of the variable $\Lambda$, this corresponds to $|\Lambda|=1$.
The jump condition across this circle satisfied by $\mathbf{P}(\SP;x,\mathbf{t})$ then takes the form
\begin{equation}
\mathbf{P}_+(\SP;x,\mathbf{t})=\mathbf{P}_-(\SP;x,\mathbf{t})\left[\mathbf{R}_-(\Lambda)^{-1}\mathbf{R}_+(\Lambda) + \bigo{\eps}\right],\quad |\Lambda|=1,
\end{equation}
where the matrix $\mathbf{R}_-(\Lambda)^{-1}\mathbf{R}_+(\Lambda)$ is defined in \eqref{eq:RWIO-jump}, and the error estimate is uniform for $|\Lambda|=1$ and for $(X,T_2,T_3,\dots,T_M)$ bounded in $\mathbb{R}^M$.
The conclusion of small-norm theory is then that 
\begin{equation}
\mathbf{P}\left(\frac{\nu\Lambda}{\eps};x^\circ+\frac{\eps^2}{\nu}X,\mathbf{t}^\circ + \left(\frac{\eps^3}{\nu^2}T_2,\frac{\eps^4}{\nu^3}T_3,\dots,\frac{\eps^{M+1}}{\nu^M}T_M\right)\right) = 
\mathbf{R}(\Lambda;X,T_2,T_3,\dots,T_M) + \bigo{\eps}
\label{eq:P-R-compare}
\end{equation}
as $\eps\to 0$, where $\mathbf{R}(\Lambda;X,T_2,T_3,\dots,T_M)$ is the solution of Riemann-Hilbert Problem~\ref{rhp:RWIO} and the error is uniform for bounded $(X,T_2,T_3,\dots,T_M)$.  Analogous formul\ae\ hold for each of the coefficients of the Laurent expansion of both sides in descending powers of $\Lambda$.
Since the semiclassical soliton ensemble $\widetilde{\psi}(x,\mathbf{t})$ is encoded in $\widetilde{\mathbf{M}}(\SP;x,\mathbf{t})$ by \eqref{eq:psitilde-reconstruct}, since $\mathbf{O}(\SP;x,\mathbf{t})$ agrees with $\widetilde{\mathbf{M}}(\SP;x,\mathbf{t})$ in a neighborhood of $\SP=\infty$, 
and since $O_{12}(\SP;x,\mathbf{t})=(-1)^K\ii\ee^{-\ii\phaseint_0/\eps}P_{12}(\SP;x,\mathbf{t})$, we get that
\begin{equation}
\widetilde{\psi}(x,\mathbf{t})=(-1)^K\ii\ee^{-\ii\phaseint_0/\epsilon}2\ii\lim_{\SP\to\infty}\SP P_{12}(\SP;x,\mathbf{t})=(-1)^K\ii\ee^{-\ii\phaseint_0/\eps}\frac{\nu}{\eps}2\ii\lim_{\Lambda\to\infty}\Lambda P_{12}\left(\frac{\nu\Lambda}{\eps};x,\mathbf{t}\right).
\end{equation}
Therefore, using the Laurent expansion of \eqref{eq:P-R-compare} and combining with \eqref{eq:RWIO-extract} yields 
\eqref{eq:hierarchy-focus-approx} and completes the proof of Theorem~\ref{thm:multi-time}.

The proof of Theorem~\ref{thm:mixture} also follows the same reasoning, except that since the time variables $t_2,t_3,\dots,t_M$ are in proportion by $t_m=a_mt$, taking $t=\epsilon^{M+1}T_M/(a_M\nu^M)$ forces the lower-indexed rescaled times $T_m$ for $m=2,\dots,M-1$ to be small of order $T_m=\bigo{\epsilon^{M-m}}$.  Therefore we may replace $\mathbf{R}(\Lambda;X,T_2,T_3,\dots,T_M)$ by $\mathbf{R}(\Lambda;X,0,0,\dots,0,T_M)$ at the cost of an additional error term proportional to $\epsilon$.  The only remaining part of the proof is to observe the conditions on the flow mixture coefficients $a_2,a_3,\dots,a_M$ such that the line parametrized by $(a_2t,a_3t,\dots,a_Mt)\in\mathbb{R}^{M-1}$ meets all (in the case that $\tailint(\SP)$ is a linear function and the even coefficients are correctly chosen) or one (in the case that $\tailint(\SP)$ has nonlinear terms and all the coefficients are correctly chosen given $K\in\mathbb{Z}$) of the focal points.

\appendix

\section{Proofs of asymptotic properties of $Y_\epsilon(\SP)$ and $T_\epsilon(\SP)$.}
\label{sec-proofs}


Here we collect proofs of the results stated in Section~\ref{sec-prop-YT} that describe the local asymptotic behavior of the functions $Y_\epsilon(\SP)$ and $T_\epsilon(\SP)$ in different regions of the complex plane in the limit $\eps \downarrow 0$. For the convenience of the reader the statements of each result proved below is preceded by its (re)statement.

\Yexterior*

\begin{proof}[Proof of Proposition~\ref{prop:Y-outside}]
\label{proof:Y-outside}
\hypertarget{proof-Y-outside}{For} $\SP$ in the indicated domain, in particular we have $\imag\{\SP\}>0$ with $-\ii\SP\not\in [0,A_\mathrm{max}]$, in which case
\begin{equation}
\epsilon\log(Y_\epsilon(\SP)) = \int_0^{A_\mathrm{max}}\log\left(\frac{\SP-\ii s}{\SP+\ii s}\right)\rho(s)\,\dd s-\epsilon\sum_{n=0}^{N-1}
\log\left(\frac{\SP-\ii s_n}{\SP+\ii s_n}\right).
\label{eq:epsilon-log-Y-pre}
\end{equation}
Making a substitution $s\mapsto \rr$ by using \eqref{eq:t-and-s-defs}, this becomes:
\begin{equation}
\epsilon\log(Y_\epsilon(\SP))=\int_0^{\phaseint(0)/\pi}\log\left(\frac{\SP-\ii s(\rr)}{\SP+\ii s(\rr)}\right)\,\dd \rr-\epsilon\sum_{n=0}^{N-1}\log\left(\frac{\SP-\ii s(\rr_n)}{\SP+\ii s(\rr_n)}\right),\quad \rr_n=(n+\tfrac{1}{2})\epsilon.
\label{eq:epsilon-log-Y}
\end{equation}
Note that for convenience, in \eqref{eq:epsilon-log-Y-pre}--\eqref{eq:epsilon-log-Y} we are labeling the sample points $s_n$ and $\rr_n$ in reverse order, increasing with $n$.  The main thrust of the proof is to express the integrand in \eqref{eq:epsilon-log-Y} as a sum of a function with an integrable second derivative, to which a standard theorem concerning Riemann sum approximation applies, and a more singular term that needs special treatment.  The latter term is more singular because unlike in \cite{BaikKMM07} where the analogue of $s(\rr)$ vanishes linearly at $\rr=0$, here $s(\rr)$ vanishes like $\rr^{1/2}$ by \eqref{eq:s-in-terms-of-v}.

To this end, we begin with the convergent series
\begin{equation}
\log\left(\frac{\SP-\ii s(\rr)}{\SP+\ii s(\rr)}\right)=\sum_{k=0}^\infty \kappa_k\left(\frac{s(\rr)}{\SP}\right)^{2k+1},\quad 0<s(\rr)<|\SP|,\quad\kappa_0=-2\ii.
\label{eq:log-series}
\end{equation}
Of course, although this series on the right-hand side is only convergent for $s(\rr)<|\SP|$, the sum of the series appearing on the left-hand side is analytic 
for $0<\rr<\phaseint(0)/\pi$.  We claim that the function $F(\rr;\SP)$ defined by
\begin{equation}
F(\rr;\SP):=\log\left(\frac{\SP-\ii s(\rr)}{\SP+\ii s(\rr)}\right)-\kappa_0v_0\frac{\sqrt{\rr}}{\SP}
\label{eq:F-t-zeta}
\end{equation}
has a second derivative with respect to $\rr$ that is absolutely integrable on $(0,\phaseint(0)/\pi)$ whenever $\imag\{\SP\}>0$ and $-\ii \SP\not\in [0,A_\mathrm{max}]$.   Indeed, comparing with \eqref{eq:log-series}, we can write $F(\rr;\SP)$ in the form
\begin{equation}
F(\rr;\SP)=\rr^{3/2}G(\rr;\SP),
\end{equation}
where $G(\rr;\SP)$ is analytic at $\rr=0$ with $G(0;\SP)\neq 0$, and $G(\diamond;\SP)$ has an analytic continuation to $0\le \rr<\phaseint(0)/\pi$.  Moreover, the condition $A''(x_0)<0$ implies that the analyticity of $G(\rr;\SP)$
extends to the endpoint $\rr=\phaseint(0)/\pi$.  The integrability of $F_{\rr\rr}(\rr;\SP)$ on $(0,\phaseint(0)/\pi)$ then follows from that of $\rr^{-1/2}$ at $\rr=0$.  In more detail, 
we compute explicitly
\begin{equation}
\frac{\dd^2F}{\dd \rr^2}(\rr;\SP)=\frac{4\ii z(\rr)z'(\rr)^2}{(1+z(\rr)^2)^2}+\frac{\kappa_0v_0}{4\SP \rr^{3/2}}-\frac{2\ii z''(\rr)}{1+z(\rr)^2},\quad z(\rr):=\frac{s(\rr)}{\SP}.
\end{equation}
Since from \eqref{eq:s-in-terms-of-v} we have
\begin{gather}
s(\rr)=\rr^{1/2}v(\rr),\quad s'(\rr)=\tfrac{1}{2}\rr^{-1/2}v(\rr)+\rr^{1/2}v'(\rr),\\
s''(\rr)=-\tfrac{1}{4}\rr^{-3/2}v(\rr)+\rr^{-1/2}v'(\rr)+\rr^{1/2}v''(\rr),  \nonumber
\end{gather}
using the fact that $v$ is analytic on $[0,\phaseint(0)/\pi]$ with $v(0)>0$, we get
\begin{equation}
\begin{split}
\frac{\kappa_0v_0}{4\SP \rr^{3/2}}-\frac{2\ii z''(\rr)}{1+z(\rr)^2}&=
\frac{\ii}{2\SP \rr^{1/2}}\frac{\rr^{-1}(v(\rr)-v_0)-4v'(\rr)-4\rr v''(\rr)}{1+z(\rr)^2}-\frac{\ii}{2\SP^3\rr^{1/2}}\frac{v_0v(\rr)^2}{1+z(\rr)^2}\\
&=\bigo{\rr^{-1/2}}.
\end{split}
\label{eq:some-terms}
\end{equation}
Similarly,
\begin{equation}
\begin{split}
\frac{4\ii z(\rr)z'(\rr)^2}{(1+z(\rr)^2)^2}&=\frac{\ii}{\SP^3\rr^{1/2}}\frac{v(\rr)^3 + 4\rr v(\rr)^2v'(\rr) +4\rr^2v(\rr)v'(\rr)^2}{(1+z(\rr)^2)^2}\\
&=\bigo{\rr^{-1/2}}.
\end{split}
\label{eq:some-more-terms}
\end{equation}
The estimates in \eqref{eq:some-terms}--\eqref{eq:some-more-terms} hold uniformly as $\SP$ varies in $D_\sigma$.
It follows that 
\begin{equation}
\int_0^{\phaseint(0)/\pi}|F_{\rr\rr}(\rr;\SP)|\,\dd \rr=\bigo{|\SP|^{-1}},\quad\SP\in D_\sigma.
\label{eq:Ftt-L1-outside}
\end{equation}  

Rewriting \eqref{eq:epsilon-log-Y} using \eqref{eq:F-t-zeta}
\begin{equation}
\epsilon\log(Y_\epsilon(\SP))=\frac{\kappa_0v_0}{\SP}\left[\int_0^{\phaseint(0)/\pi} \rr^{1/2}\,\dd \rr-
\epsilon\sum_{n=0}^{N-1}\rr_n^{1/2}\right] +\left[\int_0^{\phaseint(0)/\pi}F(\rr;\SP)\,\dd \rr - 
\epsilon\sum_{n=0}^{N-1}F(\rr_n;\SP)\right]
\label{eq:epsLogY-ito-F}
\end{equation}
and applying the following basic inequality\footnote{The inequality \eqref{eq:midpoint-rule} also holds if the function $f$ is defined on $(0,\infty)$ with the limits of the integrals adjusted accordingly and with the upper limit of the summation taken to be $k=\infty$.} from the theory of ``midpoint rule'' Riemann sums:
\begin{equation}
\left|\int_0^1 f(x)\,\dd x -\frac{1}{N}\sum_{k=1}^N f\left(\frac{k-\tfrac{1}{2}}{N}\right)\right|\le
\frac{1}{2N^2}\int_0^1|f''(x)|\,\dd x,
\label{eq:midpoint-rule}
\end{equation}
now allows the second term on the right-hand side of \eqref{eq:epsLogY-ito-F} to be easily estimated from \eqref{eq:Ftt-L1-outside}.
Here we use the fact that the sample points $\rr_n$ are equally spaced with spacing $\epsilon$ and centered as midpoints of $N$ equal-length subintervals of $(0,\phaseint(0)/\pi)$; thus
the second term 
on the right-hand side of \eqref{eq:epsLogY-ito-F} is $\bigo{N^{-2}}=\bigo{\epsilon^2}$ as a consequence of \eqref{eq:Ftt-L1-outside}.
It therefore only remains to approximate the first term on the right-hand side of \eqref{eq:epsLogY-ito-F}, for which we calculate directly:
\begin{equation}
\begin{split}
\int_0^{\phaseint(0)/\pi}\rr^{1/2}\,\dd \rr-\epsilon\sum_{n=0}^{N-1}\rr_n^{1/2}&=\int_0^{N\epsilon}\rr^{1/2}\,\dd \rr -\epsilon\sum_{n=0}^{N-1}\rr_n^{1/2}\\
&=\epsilon^{3/2}\left(\tfrac{2}{3}N^{3/2}-\sum_{n=0}^{N-1}(n+\tfrac{1}{2})^{1/2}\right).
\end{split}
\label{eq:first-term}
\end{equation}
Now, for a function $f(x)$ smooth on $[0,N-1]$, the first-order Euler-Maclaurin summation formula reads
\begin{equation}
\sum_{n=0}^{N-1}f(n)=\int_0^{N-1}f(x)\,\dd x +\frac{f(N-1)+f(0)}{2}+\int_0^{N-1}f'(x)(x-\lfloor x\rfloor-\tfrac{1}{2})\,\dd x.
\label{eq:zeta-first}
\end{equation}
Applying this to the function $f(x):=(x+\tfrac{1}{2})^{1/2}$ which is smooth for $x>0$ gives
\begin{equation}
\begin{split}
\sum_{n=0}^{N-1}(n+\tfrac{1}{2})^{1/2}&=\int_0^{N-1}(x+\tfrac{1}{2})^{1/2}\,\dd x +\frac{(N-\tfrac{1}{2})^{1/2}-(\tfrac{1}{2})^{1/2}}{2}+\frac{1}{2}\int_0^{N-1}\frac{x-\lfloor x\rfloor-\tfrac{1}{2}}{(x+\tfrac{1}{2})^{1/2}}\,\dd x\\
&=\tfrac{2}{3}N^{3/2}+\frac{1}{6\sqrt{2}}+\bigo{N^{-1/2}}+\frac{1}{2}\int_0^{N-1}\frac{x-\lfloor x\rfloor-\tfrac{1}{2}}{(x+\tfrac{1}{2})^{1/2}}\,\dd x ,\quad N\to\infty.
\end{split}
\label{eq:sum-of-square-roots}
\end{equation}
The last integral has a limit as $N\to\infty$ because for any positive integer $m\in\mathbb{Z}$,
\begin{equation}
\begin{split}
\int_{m-1}^m\frac{x-\lfloor x\rfloor -\tfrac{1}{2}}{(x+\tfrac{1}{2})^{1/2}}\dd x &= \int_{m-1}^m\frac{x+\tfrac{1}{2}-m}{(x+\tfrac{1}{2})^{1/2}}\,\dd x \\ 
&= \tfrac{2}{3}\left((m+\tfrac{1}{2})^{3/2}-(m-\tfrac{1}{2})^{3/2}\right)-2m\left((m+\tfrac{1}{2})^{1/2}-(m-\tfrac{1}{2})^{1/2}\right)\\
&=\bigo{m^{-3/2}},\quad m\to\infty.
\end{split}
\end{equation}
To identify the limit, we use this result to write
\begin{equation}
\begin{split}
\frac{1}{2}\int_0^{N-1}\frac{x-\lfloor x\rfloor-\tfrac{1}{2}}{(x+\tfrac{1}{2})^{1/2}}\,\dd x&=- \frac{1}{2}\int_{-1/2}^0\frac{x-\lfloor x\rfloor-\tfrac{1}{2}}{(x+\tfrac{1}{2})^{1/2}}\,\dd x+\frac{1}{2}\int_{-1/2}^{\infty}\frac{x-\lfloor x\rfloor-\tfrac{1}{2}}{(x+\tfrac{1}{2})^{1/2}}\,\dd x +\bigo{N^{-1/2}}\\
&=-\frac{1}{2}\int_{-1/2}^0(x+\tfrac{1}{2})^{1/2}\,\dd x +\frac{1}{2}\int_{-1/2}^{\infty}\frac{x-\lfloor x\rfloor-\tfrac{1}{2}}{(x+\tfrac{1}{2})^{1/2}}\,\dd x+\bigo{N^{-1/2}}\\
&=-\frac{1}{6\sqrt{2}}+\frac{1}{2}\int_{-1/2}^{\infty}\frac{x-\lfloor x\rfloor-\tfrac{1}{2}}{(x+\tfrac{1}{2})^{1/2}}\,\dd x+\bigo{N^{-1/2}}
,\quad N\to\infty.
\end{split}
\label{eq:sawtooth-integral}
\end{equation}
According to \cite[25.11.26]{DLMF}, the remaining integral is a Hurwitz zeta function:
\begin{equation}
\frac{1}{2}\int_{-1/2}^{\infty}\frac{x-\lfloor x\rfloor-\tfrac{1}{2}}{(x+\tfrac{1}{2})^{1/2}}\,\dd x=\zeta(-\tfrac{1}{2},\tfrac{1}{2}),
\label{eq:Hurwitz-Zeta}
\end{equation}
which, by \cite[25.11.11]{DLMF} can be written explicitly in terms of a Riemann zeta value:
\begin{equation}
\zeta(-\tfrac{1}{2},\tfrac{1}{2})=\left(\frac{1}{\sqrt{2}}-1\right)\zeta(-\tfrac{1}{2}).
\label{eq:Riemann-Zeta}
\end{equation}
Using \eqref{eq:sum-of-square-roots} with \eqref{eq:sawtooth-integral}--\eqref{eq:Riemann-Zeta} in \eqref{eq:first-term} gives
\begin{equation}
\int_0^{\phaseint(0)/\pi}\rr^{1/2}\,\dd \rr-\epsilon\sum_{n=0}^{N-1}\rr_n^{1/2}=\epsilon^{3/2}\left(1-\frac{1}{\sqrt{2}}\right)\zeta(-\tfrac{1}{2}) +\bigo{\epsilon^2},\quad \epsilon\to 0,
\label{eq:RS-square-root}
\end{equation}
because $N$ is inversely proportional to $\epsilon$.  Note that $\zeta(-\tfrac{1}{2})\approx -0.207886$.

Then, the first term on the right-hand side of \eqref{eq:epsLogY-ito-F} is $\kappa_0v_0(1-1/\sqrt{2})\zeta(-\frac{1}{2})\epsilon^{3/2}/\SP + \bigo{\epsilon^2}$ where the error estimate holds in the limit $\epsilon\to 0$ uniformly for $\SP\in D_\sigma$.  
Now we divide by $\epsilon$ in \eqref{eq:epsLogY-ito-F} and exponentiate to obtain
\[
Y_\epsilon(\SP)=1+\frac{\kappa_0v_0}{\SP}\left(1-\frac{1}{\sqrt{2}}\right)\zeta(-\tfrac{1}{2})\epsilon^{1/2}+\bigo{\epsilon}.
\]
The result then follows by recalling that $\kappa_0=-2\ii$.
\end{proof}

\Yzetasmall*

\begin{proof}
\hypertarget{proof:Yzetasmall}{Let} $\xi$ be related to $\SP$ by the relations $\SP=\ii s(\xi)$ or $\xi=(\phaseint(0)-\phaseint(\SP))/\pi$.  Thus, $\xi$ is an even analytic function of $\SP$ near $\SP=0$ with $\xi=\bigo{\SP^2}$ as $\SP\to 0$.  We may therefore define $\xi^{1/2}$ as an odd conformal mapping on a neighborhood of the origin with the property that it maps small positive imaginary values of $\SP$ to small positive values of $\xi^{1/2}$, and the conformal map defined in \eqref{eq:z-zero-define} is related by a rotation:  $\varphi_0(\SP)=\ii\xi^{1/2}$.
Consider the function
\begin{equation}
G^\pm(\rr;\SP):=\log\left(\frac{\ii s(\xi)\pm \ii s(\rr)}{\xi^{1/2}\pm \rr^{1/2}}\right),\quad 0<\rr<\phaseint(0)/\pi.
\label{Gpm-def}
\end{equation}
If $|\SP|$ is sufficiently small, $G^\pm(\rr;\SP)$ will be an analytic function of $\rr$ in the indicated open interval; indeed, since from \eqref{eq:s-in-terms-of-v} $s(\rr)=\rr^{1/2}v(\rr)$ with $v$ analytic at $\rr=0$ and $v(0)>0$, $\ee^{G^\pm(\rr;\SP)}$
is uniformly bounded and bounded away from zero, and the same is true of $\ee^{G^-(\rr;\SP)}$ even at $\rr=\xi$ should $\xi$ be positive real.  Note that
\begin{equation}
G^\pm(0;\SP)=\log\left(\frac{\ii s(\xi)}{\xi^{1/2}}\right) = \log(\ii v(\xi)),
\end{equation}
which is well-defined with imaginary part close to $\pi/2$ for $\SP$ small.  Then
\begin{equation}
\begin{split}
G^\pm(\rr;\SP)-G^\pm(0;\SP)&=\log\left(1+\left[\frac{\xi^{1/2}v(\xi)\pm \rr^{1/2}v(\rr)}{v(\xi)(\xi^{1/2}\pm \rr^{1/2})}-1\right]\right)\\
&=\log\left(1\mp \frac{\rr^{1/2}}{v(\xi)}\frac{v(\xi)-v(\rr)}{\xi^{1/2}\pm \rr^{1/2}}\right)\\
&=\log\left(1\mp \frac{\xi^{1/2}}{v(\xi)}\rr^{1/2}\frac{v(\xi)-v(\rr)}{\xi-\rr} + \frac{1}{v(\xi)}\rr\frac{v(\xi)-v(\rr)}{\xi-\rr}\right).
\end{split}
\end{equation}
This shows that $G^\pm(\rr;\SP)-G^\pm(0;\SP)$ has a convergent power series expansion about $\rr=0$ in integer powers of $\rr^{1/2}$.  If we isolate the leading term proportional to $\rr^{1/2}$, the remainder will have a second derivative that is absolutely integrable at $\rr=0$.  Therefore, we write
\begin{equation}
G^-(\rr;\SP)-G^+(\rr;\SP)=\frac{2\xi^{1/2}}{v(\xi)}\frac{v(\xi)-v(0)}{\xi}\rr^{1/2} + R(\rr;\SP)
\label{eq:Gminus-plus}
\end{equation}
and 
\begin{equation}
\int_0^{\phaseint(0)/\pi}|R_{\rr\rr}(\rr;\SP)|\,\dd \rr = \bigo{1},\quad \SP\to 0.
\end{equation}
It follows by Riemann sum approximation (cf. \eqref{eq:midpoint-rule} and \eqref{eq:RS-square-root}) that
\begin{multline}
\int_0^{\phaseint(0)/\pi}[G^-(\rr;\SP)-G^+(\rr;\SP)]\,\dd \rr-\epsilon\sum_{n=0}^{N-1}[G^-(\rr_n;\SP)-G^+(\rr_n;\SP)] \\
\begin{aligned}
{}&= \frac{2\xi^{1/2}}{v(\xi)}\frac{v(\xi)-v(0)}{\xi}\left(1-\frac{1}{\sqrt{2}}\right)\zeta(-\tfrac{1}{2})\epsilon^{3/2}+\bigo{\epsilon^2}\\
{}&=\mathcal{E}_0(\SP)\epsilon^{3/2} + \bigo{\epsilon^2}\\
\end{aligned}
\label{eq:RS-approximation}
\end{multline}
holds uniformly for $|\SP|$ sufficiently small.
Now we use these observations to rewrite $\log(Y_\epsilon(\SP))$ as follows (cf. \eqref{eq:epsilon-log-Y}):
\begin{multline}
\log(Y_\epsilon(\SP))=\frac{1}{\epsilon}\int_0^{\phaseint(0)/\pi}\log\left(\frac{\xi^{1/2}-\rr^{1/2}}{\xi^{1/2}+\rr^{1/2}}\right)\,\dd \rr -\sum_{n=0}^{N-1}\log\left(\frac{\xi^{1/2}-\rr_n^{1/2}}{\xi^{1/2}+\rr_n^{1/2}}\right)\\
{} + \mathcal{E}_0(\SP)\epsilon^{1/2} + \bigo{\epsilon},
\label{eq:logY}
\end{multline}
which holds as $\epsilon\downarrow 0$ uniformly for sufficiently small $|\SP|$.
Note that 
\begin{equation}
\log\left(\frac{\xi^{1/2}-\rr^{1/2}}{\xi^{1/2}+\rr^{1/2}}\right)=
\log\left(\frac{\rr^{1/2}-\xi^{1/2}}{\rr^{1/2}+\xi^{1/2}}\right) +
\begin{cases}
-\pi \ii, & 0<\arg(\SP)<\pi/2 \;\text{so}\;-\pi<\arg(\xi)<0,\\
\pi \ii, & \pi/2<\arg(\SP)<\pi\;\text{so}\;0<\arg(\xi)<\pi.
\end{cases}
\end{equation}
Therefore, since by \eqref{eq:ZS-epsilon-assumption} we have $\phaseint(0)=N\pi\epsilon$, \eqref{eq:logY} can be rewritten in both cases of $0<\arg(\SP)<\pi/2$ and $\pi/2<\arg(\SP)<\pi$ the same way:
\begin{multline}
\log(Y_\epsilon(\SP))=\frac{1}{\epsilon}\int_0^{\phaseint(0)/\pi}\log\left(\frac{\rr^{1/2}-\xi^{1/2}}{\rr^{1/2}+\xi^{1/2}}\right)\,\dd \rr - \sum_{n=0}^{N-1}\log\left(\frac{\rr_n^{1/2}-\xi^{1/2}}{\rr_n^{1/2}+\xi^{1/2}}\right)\\
{}+\mathcal{E}_0(\SP)\epsilon^{1/2} + \bigo{\epsilon}.
\label{eq:logY-again}
\end{multline}
Observing that adding $2\xi^{1/2}\rr^{-1/2}$ to the integrand results in absolute convergence both at $\rr=0$ and $\rr=\infty$, 
\begin{multline}
\frac{1}{\epsilon}\int_0^{\phaseint(0)/\pi}\log\left(\frac{\rr^{1/2}-\xi^{1/2}}{\rr^{1/2}+\xi^{1/2}}\right)\,\dd \rr
\\
\begin{aligned}
&= -\frac{2\xi^{1/2}}{\epsilon}\int_0^{\phaseint(0)/\pi}\frac{\dd \rr}{\rr^{1/2}} + 
\frac{1}{\epsilon}\int_0^\infty\left[\log\left(\frac{\rr^{1/2}-\xi^{1/2}}{\rr^{1/2}+\xi^{1/2}}\right) +\frac{2\xi^{1/2}}{\rr^{1/2}}\right]\,\dd \rr \\
&\quad\quad\quad{}-\frac{1}{\epsilon}\int_{\phaseint(0)/\pi}^\infty
\left[\log\left(\frac{\rr^{1/2}-\xi^{1/2}}{\rr^{1/2}+\xi^{1/2}}\right) +\frac{2\xi^{1/2}}{\rr^{1/2}}\right]\,\dd \rr
\\
&=-\frac{4\phaseint(0)^{1/2}\xi^{1/2}}{\pi^{1/2}\epsilon}+\int_0^\infty\left[\log\left(\frac{\tau^{1/2}+\ii Z}{\tau^{1/2}-\ii Z}\right)-\frac{2\ii Z}{\tau^{1/2}}\right]\,\dd\tau\\
&\quad\quad\quad{}-\frac{1}{\epsilon}\int_{\phaseint(0)/\pi}^\infty
\left[\log\left(\frac{\rr^{1/2}-\xi^{1/2}}{\rr^{1/2}+\xi^{1/2}}\right) +\frac{2\xi^{1/2}}{\rr^{1/2}}\right]\,\dd \rr,
\end{aligned}
\end{multline}
where in the integral over $\rr\in (0,\infty)$ we have rescaled by $\rr=\epsilon\tau$ and we set $Z:=\ii\xi^{1/2}/\epsilon^{1/2}=\varphi_0(\SP)/\epsilon^{1/2}$.  Similarly, recalling $\rr_n:=\epsilon(n+\tfrac{1}{2})$,
\begin{multline}
\sum_{n=0}^{N-1}\log\left(\frac{\rr_n^{1/2}-\xi^{1/2}}{\rr_n^{1/2}+\xi^{1/2}}\right)
=-\frac{2\xi^{1/2}}{\epsilon^{1/2}}\sum_{n=0}^{N-1}\frac{1}{\sqrt{n+\tfrac{1}{2}}} 
+ \sum_{n=0}^\infty\left[\log\left(\frac{\sqrt{n+\tfrac{1}{2}}+\ii Z}{\sqrt{n+\tfrac{1}{2}}-\ii Z}\right)-\frac{2\ii Z}{\sqrt{n+\tfrac{1}{2}}}\right]
\\
{}- \sum_{n=N}^\infty\left[\log\left(\frac{\rr_n^{1/2}-\xi^{1/2}}{\rr_n^{1/2}+\xi^{1/2}}\right)+\frac{2\xi^{1/2}}{\rr_n^{1/2}}\right],
\end{multline}
so using
\begin{equation}
\begin{split}
\sum_{n=0}^{N-1}\frac{1}{\sqrt{n+\tfrac{1}{2}}}&=\int_0^N\frac{\dd x}{\sqrt{x}}+
\sum_{n=0}^{N-1}\left[\frac{1}{\sqrt{n+\tfrac{1}{2}}}-\int_n^{n+1}\frac{\dd x}{\sqrt{x}}\right]\\
&=2\sqrt{N}+\sum_{n=0}^{N-1}\left[\frac{1}{\sqrt{n+\tfrac{1}{2}}}-2\sqrt{n+1}+2\sqrt{n}\right]\\
&=2\sqrt{N} + \sum_{n=0}^\infty\left[\frac{1}{\sqrt{n+\tfrac{1}{2}}}-2\sqrt{n+1}+2\sqrt{n}\right] +
\bigo{N^{-3/2}}
\end{split}
\end{equation}
with $\phaseint(0)=N\pi\epsilon$ and $Z=\ii\xi^{1/2}/\epsilon^{1/2}$ we have that
\begin{multline}
\sum_{n=0}^{N-1}\log\left(\frac{\rr_n^{1/2}-\xi^{1/2}}{\rr_n^{1/2}+\xi^{1/2}}\right)=-\frac{4\phaseint(0)^{1/2}\xi^{1/2}}{\pi^{1/2}\epsilon} +
\sum_{n=0}^\infty\left[\log\left(\frac{\sqrt{n+\tfrac{1}{2}}+\ii Z}{\sqrt{n+\tfrac{1}{2}}-\ii Z}\right)-4\ii Z(\sqrt{n+1}-\sqrt{n})\right]\\
{}- \sum_{n=N}^\infty
\left[\log\left(\frac{\rr_n^{1/2}-\xi^{1/2}}{\rr_n^{1/2}+\xi^{1/2}}\right)+\frac{2\xi^{1/2}}{\rr_n^{1/2}}\right]+\bigo{\epsilon}
\end{multline}
holds in the limit $\epsilon\downarrow 0$ uniformly for bounded $\xi$.  Therefore, defining $k(Z):=k_1(Z)+k_2(Z)$, where
\begin{equation}
\begin{split}
k_1(Z)&:=\int_0^\infty\left[\log\left(\frac{\tau^{1/2}+\ii Z}{\tau^{1/2}-\ii Z}\right)-\frac{2\ii Z}{\tau^{1/2}}\right]\,\dd\tau, \\
k_2(Z)&:=- 
\sum_{n=0}^\infty\left[\log\left(\frac{\sqrt{n+\tfrac{1}{2}}+\ii Z}{\sqrt{n+\tfrac{1}{2}}-\ii Z}\right)-4\ii Z(\sqrt{n+1}-\sqrt{n})\right],
\end{split}
\label{eq:k1k2}
\end{equation}
we have 
\begin{multline}
\frac{1}{\epsilon}\int_0^{\phaseint(0)/\pi}\log\left(\frac{\rr^{1/2}-\xi^{1/2}}{\rr^{1/2}+\xi^{1/2}}\right)\,\dd \rr-\sum_{n=0}^{N-1}\log\left(\frac{\rr_n^{1/2}-\xi^{1/2}}{\rr_n^{1/2}+\xi^{1/2}}\right)=k(Z)+\bigo{\epsilon}\\
{}- \frac{1}{\epsilon}\int_{\phaseint(0)/\pi}^\infty\left[\log\left(\frac{\rr^{1/2}-\xi^{1/2}}{\rr^{1/2}+\xi^{1/2}}\right)+\frac{2\xi^{1/2}}{\rr^{1/2}}\right]\,\dd \rr +
\sum_{n=N}^\infty\left[\log\left(\frac{\rr_n^{1/2}-\xi^{1/2}}{\rr_n^{1/2}+\xi^{1/2}}\right)+\frac{2\xi^{1/2}}{\rr_n^{1/2}}\right].
\end{multline}
Assuming only that $\xi$ is sufficiently small independent of $\epsilon$, the terms on the second line are $\bigo{\epsilon}$ by Riemann sum approximation (cf. \eqref{eq:midpoint-rule}) because the integrand has a second derivative that is absolutely integrable on $(\phaseint(0)/\pi,\infty)$.  Combining this result with \eqref{eq:logY-again} and exponentiating yields
\begin{equation}
Y_\epsilon(\SP)=\ee^{k(Z)}\left(1+\mathcal{E}_0(\SP)\epsilon^{1/2}+\bigo{\epsilon}\right),\quad\epsilon\downarrow 0,
\quad Z:=\ii\frac{\xi^{1/2}}{\epsilon^{1/2}}=\frac{\varphi_0(\SP)}{\epsilon^{1/2}},
\end{equation}
holding only under the assumption that $|\SP|$ (and hence also $|\xi|$) is sufficiently small.
Note that $\mathcal{E}_0(\SP)$ is analytic in $\SP$ at $\SP=0$ and vanishes to first order at the origin (in fact, it is an odd function of $\SP$), because $v$ is analytic and nonvanishing.

Finally, note that $k_1(0)=0$ and, differentiating under the integral sign,
\begin{equation}
k_1'(Z)=-2\ii Z^2\int_0^\infty\frac{\dd\tau}{(\tau+Z^2)\sqrt{\tau}} = -2\ii Z^2\int_\mathbb{R}\frac{\dd w}{w^2+Z^2}=
-2\pi \ii Z\,\mathrm{sgn}(\re\{Z\}).
\end{equation}
Therefore $k_1(Z)=-\pi \ii Z^2\,\mathrm{sgn}(\re\{Z\})$.   
The proof is complete upon identifying $\mathcal{Y}_0(Z)$ with $\ee^{k(Z)}$ and observing that $Y_\epsilon(\SP)$ and $\mathcal{Y}_0(Z)$ take the common value of $1$ in the coincident limits $\SP\to 0$ and $Z\to 0$ respectively.
\end{proof}

\Yzeroasymp*

\begin{proof}
\hypertarget{proof:Yzeroasymp}{Using} the representation $\mathcal{Y}_0(Z)=\ee^{k_1(Z)}\ee^{k_2(Z)}=\ee^{-\ii\pi Z^2\mathrm{sgn}(\mathrm{Re}\{Z\})}\ee^{k_2(Z)}$, to obtain the claimed behavior as $Z\to 0$, it only remains to expand the function $k_2(Z)=-k_2(-Z)$ analytic at $Z=0$ in a Taylor series.  In particular,
by differentiation under the sum in \eqref{eq:k1k2},
\begin{equation}
k_2'(0)=-2\ii\sum_{n=0}^{\infty}\left[\frac{1}{\sqrt{n+\tfrac{1}{2}}}-2(\sqrt{n+1}-\sqrt{n})\right]=-2\ii\lim_{N\to\infty}\left[\sum_{n=0}^{N-1}\frac{1}{\sqrt{n+\frac{1}{2}}}-2\sqrt{N}\right],
\end{equation}
and then a computation along the lines of \eqref{eq:zeta-first}--\eqref{eq:Riemann-Zeta} gives $k_2'(0)=-2\ii (\sqrt{2}-1)\zeta(\tfrac{1}{2})$.  

To prove the large-$Z$ asymptotic behavior, let $\gamma>0$ be fixed, and suppose that $\epsilon^{1/2}|Z|\le\gamma$.  Then Proposition~\ref{prop:Y-zeta-small} implies that
\begin{equation}
\mathcal{Y}_0(Z)=Y_\epsilon(\varphi_0^{-1}(\epsilon^{1/2}Z))\left(1-\mathcal{E}_0(\varphi_0^{-1}(\epsilon^{1/2}Z))\epsilon^{1/2} + \bigo{\epsilon}\right),\quad\epsilon\to 0,
\end{equation}
where $\varphi_0^{-1}$ denotes the inverse conformal mapping to $\varphi_0$.  If we also assume that $\frac{1}{2}\gamma\le \epsilon^{1/2}|Z|$ and that $Z$ satisfies the indicated sectorial condition, then Proposition~\ref{prop:Y-outside} applies to $Y_\epsilon(\varphi_0^{-1}(\epsilon^{1/2}Z))$ and we obtain
\begin{multline}
\mathcal{Y}_0(Z)=\left(1-\frac{2\ii v_0}{\varphi_0^{-1}(\epsilon^{1/2}Z)}\left(1-\frac{1}{\sqrt{2}}\right)\zeta(-\tfrac{1}{2})\epsilon^{1/2}+\bigo{\epsilon}\right)\\
{}\cdot\left(1-\mathcal{E}_0(\varphi_0^{-1}(\epsilon^{1/2}Z))\epsilon^{1/2} + \bigo{\epsilon}\right),\quad\epsilon\to 0.
\end{multline}
By the inequality $\epsilon^{1/2}|Z|\le\gamma$ we can clearly write the right-hand side in the form $1+\bigo{Z^{-1}}$, and together with the inequality $\frac{1}{2}\gamma\le\epsilon^{1/2}|Z|$ we obtain that $\epsilon\to 0$ if and only if $|Z|\to\infty$.
\end{proof}

\propYtop*

\begin{proof}
\hypertarget{proof:Ytop}{Starting} from \eqref{eq:epsilon-log-Y}, we note the only difficulty in applying the sort of Riemann-sum arguments employed in the proof of Proposition~\ref{prop:Y-outside} 
arises when $\ii s(\rr)$ is close to $\SP\approx \ii A_\mathrm{max}$, or equivalently, for $\rr$ in a neighborhood of 
$\phaseint(0)/\pi$.  We therefore define a cutoff value $\rr_c\approx \phaseint(0)/(2\pi)$ by
\eq
\rr_c:=N_c\epsilon, \quad \text{where} \quad N_c:=\left[\frac{\phaseint(0)}{2\pi\epsilon}\right]=\left[\frac{N}{2}\right] \quad ([\cdot]=\text{integer part}),
\label{eq:Nc-define}
\endeq
and use it to rewrite \eqref{eq:epsilon-log-Y} in the form
\eq
\epsilon\log(Y_\epsilon(\SP)) = \Delta_{\text{down}}(\SP) + \Delta_{\text{up}}(\SP),
\endeq
where
\eq
\Delta_{\text{down}}(\SP):=\int_0^{\rr_c}\log\left(\frac{\SP-\ii s(\rr)}{\SP+\ii s(\rr)}\right)\,\dd \rr - \epsilon\sum_{n=0}^{N_c-1}\log\left(\frac{\SP-\ii s(\rr_n)}{\SP+\ii s(\rr_n)}\right).
\endeq
Recalling the definition \eqref{eq:F-t-zeta} of $F(\rr;\SP)$, 
we can rewrite this as
\eq
\Delta_{\text{down}}(\SP) = \frac{\kappa_0 v_0}{\SP}\left[\int_0^{\rr_c}\rr^{1/2}\,\dd \rr - \epsilon\sum_{n=0}^{N_c-1}\rr_n^{1/2}\right] + \left[\int_0^{\rr_c}F(\rr;\SP)\,\dd \rr - \epsilon\sum_{n=0}^{N_c-1}F(\rr_n;\SP)\right].
\endeq
Exactly as in  
\eqref{eq:epsLogY-ito-F}--\eqref{eq:RS-square-root}, we obtain after recalling $\kappa_0=-2\ii$ (see \eqref{eq:log-series}) and the definition \eqref{eq:E1-def},
 \eq
\Delta_\text{down}(\SP)=\mathcal{E}_1(\SP)\epsilon^{3/2} +\bigo{\epsilon^2}.
\endeq
From here we have 
\begin{multline}
\log(Y_\epsilon(\SP))=\frac{1}{\epsilon}\int_{\rr_c}^{\phaseint(0)/\pi}\log\left(\frac{\SP-\ii s(\rr)}{\SP+\ii s(\rr)}\right)\,\dd \rr - \sum_{n=N_c}^{N-1}\log\left(\frac{\SP-\ii s(\rr_n)}{\SP+\ii s(\rr_n)}\right)\\
{}+\mathcal{E}_1(\SP)\epsilon^{1/2}+\bigo{\epsilon},
\end{multline}
where the terms on the first line of the right-hand side are exactly $\Delta_\text{up}(\SP)/\epsilon$.
Next, we note
\eq
\log\left(\frac{\SP-\ii s(\rr)}{\SP+\ii s(\rr)}\right) = \log(-\ii(\SP-\ii s(\rr))) - \log(-\ii(\SP+\ii s(\rr))),
\endeq
where all branches are principal.  From \eqref{eq:midpoint-rule}, we 
see 
\eq
-\frac{1}{\epsilon}\int_{\rr_c}^{\phaseint(0)/\pi}\log(-\ii (\SP+\ii s(\rr)))\,\dd \rr + \sum_{n=N_c}^{N-1}\log(-\ii(\SP+\ii s(\rr_n))) = \bigo{\epsilon}
\endeq
holds uniformly for $\SP$ in a neighborhood of $\ii A_{\max}$.  Combining the last three 
equations gives
\begin{multline}
\log(Y_\epsilon(\SP))=\frac{1}{\epsilon}\int_{\rr_c}^{\phaseint(0)/\pi}\log\left(-\ii (\SP-\ii s(\rr))\right)\,\dd \rr - \sum_{n=N_c}^{N-1}\log\left(-\ii (\SP-\ii s(\rr_n))\right)\\
{}+\mathcal{E}_1(\SP)\epsilon^{1/2}+\bigo{\epsilon}.
\end{multline}

As in the proof of Proposition~\ref{prop:Y-zeta-small}, let $\xi$ be related to $\SP$ by the equivalent relations $\SP=\ii s(\xi)$ or $\xi=(\phaseint(0)-\phaseint(\SP))/\pi$, implying that the conformal map $\SP\mapsto\varphi_1(\SP)$ can be represented as $\varphi_1(\SP)=\xi-\phaseint(0)/\pi$.  Therefore,
\begin{multline}
\log(Y_\epsilon(\SP))=\frac{1}{\epsilon}\int_{\rr_c}^{\phaseint(0)/\pi}\log\left(s(\xi)-s(\rr)\right)\,\dd \rr - \sum_{n=N_c}^{N-1}\log\left(s(\xi)-s(\rr_n)\right)\\
{}+\mathcal{E}_1(\SP)\epsilon^{1/2}+\bigo{\epsilon}.
\end{multline}
Analogous to the functions $G^\pm(\rr;\xi)$ defined in \eqref{Gpm-def}, we 
introduce 
\eq
H(\rr;\xi):=\log(\xi-\rr)-\log(s(\xi)-s(\rr)).
\endeq
As $H(\rr;\xi)$ is analytic in $\rr$ for $\xi$ near $\phaseint(0)/\pi$ due to the assumption that $A''(x_0)<0$,  
applying \eqref{eq:midpoint-rule} again gives
\eq
\frac{1}{\epsilon}\int_{\rr_c}^{\phaseint(0)/\pi}H(\rr;\xi)\,\dd \rr - \sum_{n=N_c}^{N-1}H(\rr_n;\xi) = \bigo{\epsilon}.
\endeq  
This allows us to replace $s(\cdot)$ with the identity with no additional error:
\begin{equation}
\log(Y_\epsilon(\SP))=\frac{1}{\epsilon}\int_{\rr_c}^{\phaseint(0)/\pi}\log\left(\xi-\rr\right)\,\dd \rr - \sum_{n=N_c}^{N-1}\log\left(\xi-\rr_n\right)+\mathcal{E}_1(\SP)\epsilon^{1/2}+\bigo{\epsilon}.
\end{equation}
Elementary calculations give
\begin{multline}
\frac{1}{\epsilon}\int_{\rr_c}^{\phaseint(0)/\pi}\log\left(\xi-\rr\right)\,\dd \rr \\
= N_c-N + (W+N-N_c)\log(W+N-N_c) - W\log(W) + (N-N_c)\log(\epsilon),
\end{multline}
where $W=(\xi-\phaseint(0)/\pi)/\epsilon = \varphi_1(\SP)/\epsilon$, and
\eq
-\sum_{n=N_c}^{N-1}\log\left(\xi-\rr_n\right) = -\sum_{m=0}^{N-N_c-1}\log\left(W+\frac{1}{2}+m\right)-(N-N_c)\log(\epsilon).
\endeq
Therefore
\begin{multline}
\log(Y_\epsilon(\SP)) =  N_c-N + (W+N-N_c)\log(W+N-N_c) \\ 
{} - W\log(W) -\sum_{m=0}^{N-N_c-1}\log\left(W+\frac{1}{2}+m\right) +\mathcal{E}_1(\SP)\epsilon^{1/2}+\bigo{\epsilon},
\end{multline}
and so
\eq
\begin{split}
Y_\epsilon(\SP) & = \ee^{N_c-N}(W+N-N_c)^{W-N-N_c}W^{-W}\prod_{m=0}^{N-N_c-1}\frac{1}{W+m+\frac{1}{2}}\left(1+\mathcal{E}_1(\SP)\epsilon^{1/2}+\bigo{\epsilon}\right)\\
 & = \ee^{N_c-N}(W+N-N_c)^{W-N-N_c}W^{-W}\prod_{m=0}^{N-N_c-1}\frac{\Gamma(W+m+\frac{1}{2})}{\Gamma(W+m+\frac{3}{2})}\left(1+\mathcal{E}_1(\SP)\epsilon^{1/2}+\bigo{\epsilon}\right)\\
 & = \ee^{N_c-N}(W+N-N_c)^{W-N-N_c}W^{-W}\frac{\Gamma(W+\frac{1}{2})}{\Gamma(W+N-N_c+\frac{1}{2})}\left(1+\mathcal{E}_1(\SP)\epsilon^{1/2}+\bigo{\epsilon}\right).
\end{split}
\endeq
Stirling's approximation of the gamma function \cite[Eq.~5.11.3]{DLMF} gives
\begin{multline}
\frac{1}{\Gamma(W+N-N_c+\frac{1}{2})} =  \frac{1}{\sqrt{2\pi}}\left(W+N-N_c+\frac{1}{2}\right)^{-(W+N-N_c)}\exp\left(W+N-N_c+\frac{1}{2}\right) \\
 \times \left(1+\bigo{\frac{1}{W+N-N_c+\frac{1}{2}}}\right),
\end{multline}
provided that $W+N-N_c+\tfrac{1}{2}$ lies outside a thin sector centered on the negative imaginary axis.  Recalling the definition \eqref{eq:Nc-define} of $N_c$ as well as the identity $\phaseint(0)=N\pi\epsilon$, we see that the condition that $|\SP-\ii A_\mathrm{max}|$ is sufficiently small guarantees that $|\epsilon W|\le\delta$ holds for some $\delta>0$ small, which in turn implies both the desired sectorial condition as well as the estimate
$(W+N-N_c+\frac{1}{2})^{-1}=\bigo{\epsilon}$.  Therefore,
\begin{multline}
Y_\epsilon(\SP) = \frac{1}{\sqrt{2\pi}}\left(\frac{W+N-N_c}{W+N-N_c+\frac{1}{2}}\right)^{W-N-N_c}W^{-W}\Gamma(W+\tfrac{1}{2})\ee^{W+\frac{1}{2}}\\
{}\cdot\left(1+\mathcal{E}_1(\SP)\epsilon^{1/2}+\bigo{\epsilon}\right).
\label{eq:top-improvement}
\end{multline}
Finally, using 
\eq
\left(\frac{n}{n+\frac{1}{2}}\right)^n = \left(1+\frac{1/2}{n}\right)^{-n} = \ee^{-1/2}\left(1+\bigo{n^{-1}}\right)
\endeq
with $n=W+N-N_c$ gives \eqref{Y-near-iAmax} and completes the proof.
\end{proof}

\Tbasicasymp*

\begin{proof}
\hypertarget{proof:Tbasicasymp}{We begin} by suitably modifying the analysis of $Y_\epsilon(\SP)$ as in the proof of Proposition~\ref{prop:Y-outside}.  We write $Y_\epsilon(\SP)=Y_\epsilon^+(\SP)/Y_\epsilon^-(\SP)$, where
\begin{equation}
Y_\epsilon^\pm(\SP):=
\exp\left(\frac{1}{\epsilon}\int_0^{\phaseint(0)/\pi}\log(-\ii(\SP\mp \ii s(\rr)))\,\dd \rr-\sum_{n=0}^{N-1}\log(-\ii(\SP \mp \ii s(\rr_n)))\right).
\end{equation}
We apply Riemann-sum arguments to $\epsilon\log(Y_\epsilon^-(\SP))$ by defining
\begin{equation}
F^-(\rr;\SP):=\log(-\ii(\SP+\ii s(\rr)))-\frac{\ii v_0\rr^{1/2}}{\SP}.
\end{equation}
By direct calculation using \eqref{eq:s-in-terms-of-v} with $v$ analytic on $[0,\phaseint(0)/\pi]$,
\begin{equation}
\begin{split}
-4\SP^{-1} (\ii s(\rr)+\SP)^2F_{\rr\rr}^-(\rr;\SP)&=
\ii \rr^{-3/2}(v(\rr)-v_0)-2\SP^{-1}v(\rr)\rr^{-1}(v(\rr)-v_0) - 4\ii \rr^{-1/2}v'(\rr)\\
&\quad{}-4\ii \rr^{1/2}v''(\rr)+4\SP^{-1}\rr(v(\rr)v''(\rr)-v'(\rr)^2)+\ii \SP^{-2}\rr^{-1/2}v_0v(\rr)^2\\
&=\bigo{\rr^{-1/2}}
\end{split}
\end{equation}
holds uniformly for $\SP\in\Lambda_\sigma$ (which in particular bounds $\SP$ away from zero) and $0<\rr<\phaseint(0)/\pi$.  Since $\ii s(\rr)+\SP$ is bounded away from zero for $\SP\in\Lambda_\sigma$ and $0<\rr<\phaseint(0)/\pi$, therefore
also $F_{\rr\rr}^-(\rr;\SP)=\bigo{\rr^{-1/2}}$, and so
\begin{equation}
\int_0^{\phaseint(0)/\pi}|F_{\rr\rr}^-(\rr;\SP)|\,\dd \rr = \bigo{1}
\end{equation}
holds uniformly for $\SP\in\Lambda_\sigma$.
Therefore, applying \eqref{eq:midpoint-rule} gives
\begin{equation}
\epsilon \log(Y_\epsilon^-(\SP))=\frac{\ii v_0}{\SP}\left[\int_0^{\phaseint(0)/\pi}\rr^{1/2}\,\dd \rr - 
\epsilon\sum_{n=0}^{N-1}\rr_n^{1/2}\right] + 
\bigo{\epsilon^2}
\end{equation}
as $\epsilon\downarrow 0$ 
uniformly for $\SP\in \Lambda_\sigma$.
Then using \eqref{eq:RS-square-root}, dividing by $\epsilon$ and exponentiating, we get 
\begin{equation}
Y_\epsilon^-(\SP)=1+\frac{\ii v_0}{\SP}\left(1-\frac{1}{\sqrt{2}}\right)\zeta(-\tfrac{1}{2})\epsilon^{1/2}+\bigo{\epsilon}.
\label{eq:Y-minus-estimate}
\end{equation}

To analyze $Y_\epsilon^+(\SP)$, we introduce the function
\begin{equation}
F^+(\rr;\SP):=\log(-\ii (\SP-\ii s(\rr)))-\log(\phaseint(0)-\phaseint(\SP)-\pi \rr)+\frac{\ii v_0\rr^{1/2}}{\SP}.
\end{equation}
Here the purpose of the second term is to compensate for the singularity of the first term that will occur should it be the case that $-\ii\SP\in (0,A_\mathrm{max})$.  By direct calculation, we have
\begin{equation}
F^+_{\rr\rr}(\rr;\SP)=\frac{s(\rr)s''(\rr)-s(\xi)s''(\rr)-s'(\rr)^2}{(s(\rr)-s(\xi))^2}+\frac{1}{(\rr-\xi)^2}-\frac{v_0}{4s(\xi)\rr^{3/2}},
\label{eq:F-plus-tt}
\end{equation}
where for $\SP\in\Lambda$ we determine a unique value $\xi$ by the relations $\SP=\ii s(\xi)$ or $\xi=(\phaseint(0)-\phaseint(\SP))/\pi$.  It is straightforward to check by Taylor expansion of $s(\rr)$ about $\rr=\xi$ that this function has a removable singularity at $\rr=\xi$.  It is also easy to check that for each $\SP\in\Lambda_\sigma$, $\rr^{1/2}F_{\rr\rr}^+(\rr;\SP)$ is an analytic function of $\rr$ for $0<\rr\le \phaseint(0)/\pi$ that has a finite 
limit as $\rr\downarrow 0$.  
Moreover $\rr^{1/2}F_{\rr\rr}^+(\rr;\SP)$ is uniformly bounded for $\SP\in\Lambda_\sigma$ and $0<\rr<\phaseint(0)/\pi$, and consequently
\begin{equation}
\int_0^{\phaseint(0)/\pi}|F_{\rr\rr}^+(\rr;\SP)|\,\dd \rr = \bigo{1}\quad\text{uniformly for $\SP\in \Lambda_\sigma$}.
\end{equation}
Applying \eqref{eq:midpoint-rule} then gives
\begin{multline}
\epsilon\log (Y_\epsilon^+(\SP))=\int_0^{\phaseint(0)/\pi}\log(\phaseint(0)-\phaseint(\SP)-\pi \rr)\,\dd \rr - 
\epsilon\sum_{n=0}^{N-1}\log(\phaseint(0)-\phaseint(\SP)-\pi \rr_n) \\
- \frac{\ii v_0}{\SP}\left[\int_0^{\phaseint(0)/\pi}\rr^{1/2}\,\dd \rr - \epsilon\sum_{n=0}^{N-1}\rr_n^{1/2}\right]
+\bigo{\epsilon^2}.
\end{multline}
Using \eqref{eq:RS-square-root} we may write this in the form
\begin{multline}
\epsilon\log (Y_\epsilon^+(\SP))=\int_0^{\phaseint(0)/\pi}\log(\phaseint(0)-\phaseint(\SP)-\pi \rr)\,\dd \rr - 
\epsilon\sum_{n=0}^{N-1}\log(\phaseint(0)-\phaseint(\SP)-\pi \rr_n) \\
-\frac{\ii v_0}{\SP}\left(1-\frac{1}{\sqrt{2}}\right)\zeta(-\tfrac{1}{2})\epsilon^{3/2} + \bigo{\epsilon^2}.
\end{multline}
Therefore, 
\begin{equation}
Y_\epsilon^+(\SP)=\frac{\displaystyle\exp\left(\frac{1}{\epsilon}\int_0^{\phaseint(0)/\pi}\log(\phaseint(0)-\phaseint(\SP)-\pi \rr)\,\dd \rr\right)}{\displaystyle \prod_{n=0}^{N-1}(\phaseint(0)-\phaseint(\SP)-\pi \rr_n)} 
\left(1-\frac{\ii v_0}{\SP}\left(1-\frac{1}{\sqrt{2}}\right)\zeta(-\tfrac{1}{2})\epsilon^{1/2}+\bigo{\epsilon}\right)
\label{eq:Y-plus-formula}
\end{equation}
holds in the limit $\epsilon\downarrow 0$ uniformly for $\SP\in\Lambda_\sigma$.  

Combining \eqref{eq:Y-minus-estimate} and \eqref{eq:Y-plus-formula} with $Y_\epsilon(\SP)=Y_\epsilon^+(\SP)/Y_\epsilon^-(\SP)$ and \eqref{eq:T-Y-relation} shows that
\begin{equation}
T_\epsilon(\SP)=\widetilde{T}_\epsilon(\phaseint(\SP))
\left(1-\frac{2\ii v_0}{\SP}\left(1-\frac{1}{\sqrt{2}}\right)\zeta(-\tfrac{1}{2})\epsilon^{1/2}+\bigo{\epsilon}\right),
\quad\epsilon\downarrow 0
\label{eq:T-tildeT}
\end{equation}
holds uniformly for $\SP\in\Lambda_\sigma$, where, using also $\rr_n=\epsilon(n+\tfrac{1}{2})$,
\begin{equation}
\widetilde{T}_\epsilon(\phaseint):=2\cos(\phaseint/\epsilon)\ee^{\pm \ii\phaseint/\epsilon}\frac{\displaystyle
\exp\left(\frac{1}{\epsilon}\int_0^{\phaseint(0)/\pi}\log(\phaseint(0)-\phaseint-\pi \rr)\,\dd \rr\right)}{\displaystyle \prod_{n=0}^{N-1}(\phaseint(0)-\phaseint-\pi \epsilon(n+\tfrac{1}{2}))}, \quad \pm\imag\{\phaseint\}>0.
\end{equation}
Evaluating the integral explicitly, using the identities $\Gamma(z+1)=z\Gamma(z)$ and $\Gamma(\tfrac{1}{2}-w)\Gamma(\tfrac{1}{2}+w)\cos(\pi w)=\pi$ (cf. \cite[Eq.~5.5.3]{DLMF}) with $N=\phaseint(0)/(\pi\epsilon)$, and also taking into account that $\log(-\phaseint)=\log(\phaseint)\mp \ii\pi$ for $\pm\imag\{\phaseint\}>0$ gives the explicit formula
\begin{equation}
\widetilde{T}_\epsilon(\phaseint)=\frac{\displaystyle 2\pi\left(\frac{\phaseint}{\pi\epsilon}\right)^{\phaseint/(\pi\epsilon)}\ee^{-\phaseint/(\pi\epsilon)}\left(\frac{\phaseint(0)-\phaseint}{\pi\epsilon}\right)^{(\phaseint(0)-\phaseint)/(\pi\epsilon)}\ee^{-(\phaseint(0)-\phaseint)/(\pi\epsilon)}}{\displaystyle\Gamma(\tfrac{1}{2}+\phaseint/(\pi\epsilon))\Gamma(\tfrac{1}{2}+(\phaseint(0)-\phaseint)/(\pi\epsilon))}.
\end{equation}
It then follows from Stirling's formula (cf. \cite[Eq.~5.11.3]{DLMF}) that for $\SP\in\Lambda_\sigma$,
\begin{equation}
\begin{split}
\widetilde{T}_\epsilon(\phaseint(\SP)) &= 1+\bigo{\epsilon\phaseint(\SP)^{-1}}+\bigo{\epsilon(\phaseint(0)-\phaseint(\SP))^{-1}} \\ &= 1+\bigo{\epsilon},
\end{split}
\end{equation}
where to get the second line we use the fact that 
$\phaseint(\SP)$ and $\phaseint(0)-\phaseint(\SP)$ are both bounded away from zero for $\SP\in\Lambda_\sigma$.
Combining this result with \eqref{eq:T-tildeT} then proves the proposition.
\end{proof}

\TatZero*

\begin{proof}
\hypertarget{proof:TatZero}{Noting} that $\re\{\varphi_0(\SP)\}$ has the same sign as $\re\{\SP\}$,
combining \eqref{eq:T-Y-relation} with Proposition~\ref{prop:Y-zeta-small} and using $\phaseint(0)=N\pi\epsilon$ for $N\in\mathbb{Z}$ 
yields \eqref{eq:T-zeta-small} with
\begin{equation}
\mathcal{T}_0(Z)=2\mathcal{Y}_0(Z)\cos(\pi Z^2)\ee^{\ii\pi Z^2\mathrm{sgn}(\mathrm{Re}\{Z\})} = 2\cos(\pi Z^2)\prod_{n=0}^\infty\frac{\sqrt{n+\tfrac{1}{2}}-\ii Z}{\sqrt{n+\tfrac{1}{2}}+\ii Z}\ee^{4\ii Z(\sqrt{n+1}-\sqrt{n})}.
\label{eq:T0-first}
\end{equation}
Then, to obtain \eqref{eq:T0-def} one simply uses the infinite product expansion of $\cos(\pi Z^2)$, cf. \cite[Eq.~4.22.2]{DLMF}.  It is straightforward to check that $T_\epsilon(\SP)$ and $\mathcal{T}_0(Z)$ take the common value of $2$ in the coincident limits $\SP\to 0$ and $Z\to 0$ respectively.
\end{proof}

\TSmallLargeZ*

\begin{proof}
\hypertarget{proof:TSmallLargeZ}{We} adapt the proof of Proposition~\ref{prop:Y0-asymp}.  To consider $Z$ small, we get from the first equality in \eqref{eq:T0-first} that $\mathcal{T}_0(Z)=2\ee^{k_2(Z)}\cos(\pi Z^2)$ where $k_2(Z)$ is the odd function analytic at the origin defined by \eqref{eq:k1k2}.  In the proof of Proposition~\ref{prop:Y0-asymp} it was shown that $k_2'(0)=-2\ii (\sqrt{2}-1)\zeta(\tfrac{1}{2})$.  This proves the claimed behavior of $\mathcal{T}_0(Z)$ near $Z=0$.  

For the behavior as $Z\to\infty$, let $\gamma>0$ be fixed and assume that $\epsilon^{1/2}|Z|\le\gamma$.  Then by Proposition~\ref{prop:T-zeta-small}
\begin{equation}
\mathcal{T}_0(Z)=T_\epsilon(\varphi_0^{-1}(\epsilon^{1/2}Z))\left(1-\mathcal{E}_0(\varphi_0^{-1}(\epsilon^{1/2}Z))\epsilon^{1/2}+\bigo{\epsilon}\right),\quad\epsilon\to 0,
\end{equation}
where $\varphi_0^{-1}$ is the inverse of the conformal map $\varphi_0$.  Assuming also that $\tfrac{1}{2}\gamma\le\epsilon^{1/2} |Z|$ and applying Proposition~\ref{prop:T-outside} to $T_\epsilon(\varphi^{-1}(\epsilon^{1/2}Z))$ gives 
\begin{multline}
\mathcal{T}_0(Z)=\left(1-\frac{2\ii v_0}{\varphi_0^{-1}(\epsilon^{1/2}Z)}\left(1-\frac{1}{\sqrt{2}}\right)\zeta(-\tfrac{1}{2})\epsilon^{1/2}+\bigo{\epsilon}\right)\\
{}\cdot\left(1-\mathcal{E}_0(\varphi_0^{-1}(\epsilon^{1/2}Z))\epsilon^{1/2}+\bigo{\epsilon}\right),\quad\epsilon\to 0.
\end{multline}
Then using the inequalities $\tfrac{1}{2}\gamma\le\epsilon^{1/2}|Z|\le\gamma$ as in the proof of Proposition~\ref{prop:Y0-asymp} gives $\mathcal{T}_0(Z)=1+\bigo{Z^{-1}}$ as $Z\to\infty$ in the indicated sector.
\end{proof}

\Ttop*

\begin{proof}
\hypertarget{proof:Ttop}{Combining} \eqref{eq:T-Y-relation} with Proposition~\ref{prop:Y-zeta-near-iAmax}, taking into account that $\pm\re\{\SP\}>0$ corresponds to $\mp\imag\{\varphi_1(\SP)\}>0$ yields \eqref{T-near-iAmax} with
\begin{equation}
\mathcal{T}_1(W)=\sqrt{\frac{2}{\pi}}\cos(\pi W)\Gamma(W+\tfrac{1}{2})e^WW^{-W}\ee^{\ii\pi W\mathrm{sgn}(\imag\{W\})}.
\end{equation}
But in terms of principal branches, $W^{-W}\ee^{\ii\pi W\mathrm{sgn}(\imag\{W\})}=(-W)^{-W}$, so using $\Gamma(\tfrac{1}{2}-W)\Gamma(\tfrac{1}{2}+W)\cos(\pi W)=\pi$ (cf. \cite[Eq.~5.5.3]{DLMF}) the formula \eqref{eq:T1-def} follows.
\end{proof}

\section{Proofs of the properties of $g(\SP;x)$ and $h(\SP;x)$}
\label{sec-g-prop}
\gprop*

\begin{proof}
\hypertarget{proof-g-prop}{To prove} G1, the analyticity of $g^\pm(\SP;x)$ in the indicated domain is obvious from the definition \eqref{eq:g-functions-zeta}; the same formula shows that $g^\pm(\SP;x)$ is uniformly bounded at least for $\SP$ bounded away from the branch cut $-A(x)\le -\ii\SP\le A(x)$.  On the other hand, using \eqref{eq:g-functions-zeta-inside} shows that $g^\pm(\SP;x)$ is continuous up to its branch cut, so G1 is established.  

To prove G2, note that oddness of $g^\pm(\SP;x)$ is obvious from the first line of \eqref{eq:g-functions-zeta} given that $R(\SP;x)$ is an odd function of $\SP$.  The Schwarz symmetry property G3 also follows immediately from \eqref{eq:g-functions-zeta} given that $R(\SP^*;x)=R(\SP;x)^*$.

Property G4 is a direct consequence of the formula \eqref{eq:g-functions-zeta-inside}, upon taking into account \eqref{eq:phi-pm-define} and
the fact that $R(\SP;x)$ changes sign across the branch cut.  Combining G1 and G2 proves that $g^\pm(\SP;x)=O(\SP^{-1})$ as $\SP\to\infty$, i.e., property G5.  

To prove property G6, we take $g^\pm(\SP;x)$ in the form \eqref{eq:g-functions-zeta-inside} to allow $\SP$ near $\ii A(x)$ and identify $G_1^\pm(\SP;x)$ with $\phi^\pm(-\ii\SP;x)/(4\ii\SP)$ analytic at $\SP=\ii A(x)$.  Since the loop integral over $L$ in \eqref{eq:g-functions-zeta-inside} is analytic near $\SP=\ii A(x)$ and since $R(\SP;x)$ vanishes to order $1/2$ at $\SP=\ii A(x)$, it remains to show that the integral vanishes for $\SP=\ii A(x)$ under the condition $\pm (x-x_0)>0$ (and $x\in (X_-,X_+)$; otherwise $g^\pm(\SP;x)=0$ and the result holds trivially).  That is, we need to show that $\pm(x-x_0)>0$ and $x\in (X_-,X_+)$ implies $M_0^\pm(x)=0$, where
\begin{equation}
M_0^\pm(x):=\oint_L\frac{\phi^\pm(s;x)\,\dd s}{R(\ii s;x)(s^2-A(x)^2)}=-\oint_L\frac{\phi^\pm(s;x)\,\dd s}{R(\ii s;x)^3}.
\end{equation}
Recall $L$ is positively oriented and surrounds the branch cut of $R$.
Noting the identity 
\begin{equation}
\frac{\dd}{\dd s}\frac{1}{R(\ii s;x)}=\frac{s}{R(\ii s;x)^3},
\end{equation}
we integrate by parts to obtain
\begin{equation}
M_0^\pm(x)=\oint_L\frac{\dd}{\dd s}\left(\frac{\phi^\pm(s;x)}{s}\right)\frac{\dd s}{R(\ii s;x)}.
\end{equation}
The integrand is now integrable at $s=\pm A(x)$, so the loop $L$ can be contracted to the interval $[-A(x),A(x)]$.  Using also that $\phi^\pm(s;x)$ is an even function of $s$ yields
\begin{equation}
M_0^\pm(x)=2\int_{-A(x)}^{A(x)}\frac{\dd}{\dd s}\left(\frac{\phi^\pm(s;x)}{s}\right)\frac{\dd s}{\sqrt{A(x)^2-s^2}} = 4\int_0^{A(x)}\frac{\dd}{\dd s}\left(\frac{\phi^\pm(s;x)}{s}\right)\frac{\dd s}{\sqrt{A(x)^2-s^2}},
\end{equation}
so from \eqref{eq:phi-pm-define} we get
\begin{equation}
M_0^\pm(x)=8\int_0^{A(x)}\frac{2x-\ii\tailint'(\ii s)\mp \ii\overline{L}'(\ii s)}{\sqrt{A(x)^2-s^2}}\,\dd s.
\end{equation}
Then, using \eqref{eq:mu-plus-Lbar-cpt},
\begin{equation}
\begin{split}
M_0^+(x)&=16\int_0^{A(x)}\left[x-X_++\int_{x_+(s)}^{X_+}\frac{s\,\dd y}{\sqrt{s^2-A(y)^2}}\right]\frac{\dd s}{\sqrt{A(x)^2-s^2}}\\
&=16\int_0^{A(x)}\left[-\int_x^{X_+}\,\dd y +\int_{x_+(s)}^{X_+}\frac{s\,\dd y}{\sqrt{s^2-A(y)^2}}\right]\frac{\dd s}{\sqrt{A(x)^2-s^2}}.
\end{split}
\end{equation}
If $x_0<x<X_+$, then exchanging the order of integration yields
\begin{equation}
M_0^+(x)=16\int_x^{X_+}\left[\int_{A(y)}^{A(x)}\frac{s\,\dd s}{\sqrt{s^2-A(y)^2}\sqrt{A(x)^2-s^2}}
-\int_0^{A(x)}\frac{\dd s}{\sqrt{A(x)^2-s^2}}\right]\,\dd y.
\end{equation}
Both of the inner integrals can be computed exactly and they are both equal to $\pi/2$, hence if $x_0<x<X_+$ we deduce that $M_0^+(x)=0$.
Similarly,
\begin{equation}
\begin{split}
M_0^-(x)&=16\int_0^{A(x)}\left[x-X_--\int_{X_-}^{x_-(s)}\frac{s\,\dd y}{\sqrt{s^2-A(y)^2}}\right]\frac{\dd s}{\sqrt{A(x)^2-s^2}}\\
&= 16\int_0^{A(x)}\left[\int_{X_-}^x\,\dd y-\int_{X_-}^{x_-(s)}\frac{s\,\dd y}{\sqrt{s^2-A(y)^2}}\right]\frac{\dd s}{\sqrt{A(x)^2-s^2}}.
\end{split}
\end{equation}
Under the assumption that $X_-<x<x_0$, exchanging the integration order gives
\begin{equation}
M_0^-(x)=16\int_{X_-}^x\left[\int_0^{A(x)}\frac{\dd s}{\sqrt{A(x)^2-s^2}}-\int_{A(y)}^{A(x)}\frac{s\,\dd s}{\sqrt{s^2-A(y)^2}\sqrt{A(x)^2-s^2}}\right]\,\dd y
\end{equation}
and again the inner integrals cancel, yielding $M_0^-(x)=0$ for $X_-<x<x_0$.

To prove G7, we may start with \eqref{eq:g-functions-zeta-outside} and simply observe that as $x$ tends to either support endpoint from within $(X_-,X_+)$, $R(\SP;x)\to \SP$ uniformly for $\SP$ bounded away from the origin.  Hence for each given $\SP\neq 0$ we fix an integration contour $L$ surrounding the interval $[-A(x),A(x)]$ with $\pm \ii\SP$ on the exterior, and observe that the integrand in \eqref{eq:g-functions-zeta-outside} converges uniformly on $L$ to a function analytic on the interior of $L$.  Hence the integral converges to zero by Cauchy's theorem and the prefactor $R(\SP;x)/(8\pi \ii)$ remains bounded in the limit, which proves that $g^\pm(\SP;x)\to 0$.

To establish property G8, we differentiate the formula \eqref{eq:g-zeta-plus-minus} with respect to $x$, noting that $\tailint(\ii s)\pm \overline{L}(\ii s)$ is independent of $x$.  Thus $g^\pm_x(\SP;x)$ is a function of $\SP$ analytic in the same domain as $g(\SP;x)$ itself, that is bounded, that satisfies a natural analogue of the Schwarz symmetry property G3, and that satisfies the differentiated boundary condition
\begin{equation}
g_{x+}^\pm(\ii s;x)+g_{x-}^\pm(\ii s;x)=-2s,\quad 0<s<A(x).
\end{equation}
Therefore, $g_x^\pm(\SP;x)$ necessarily has the form
\begin{equation}
\begin{split}
g_x^\pm(\SP;x)&=\frac{R(\SP;x)}{2\pi \ii}\left[\int_0^{A(x)}\frac{2s\,\dd s}{\sqrt{A(x)^2-s^2}(s+\ii\SP)}
+\int_0^{A(x)}\frac{2s\,\dd s}{\sqrt{A(x)^2-s^2}(s-\ii\SP)}\right]\\
&=\frac{R(\SP;x)}{\pi \ii}\int_{-A(x)}^{A(x)}\frac{s^2\,\dd s}{\sqrt{A(x)^2-s^2}(s^2+\SP^2)}\\
&=\frac{R(\SP;x)}{2\pi \ii}\oint_L\frac{s^2\,\dd s}{R(\ii s;x)(s^2+\SP^2)},\quad\text{$\ii\SP$ and $-\ii\SP$ exterior to $L$.}
\end{split}
\end{equation}
Evaluating this latter integral by residues at $s=\pm \ii\SP$ and $s=\infty$ shows that $g_x^\pm(\SP;x)=\ii(\SP-R(\SP;x))$, giving the claimed result.

Finally, to prove property G9, note that combining properties G7 and G8 gives 
\eqref{eq:gplus-gminus-x-integral}.  
Since for a semicircular Klaus-Shaw potential with support $[X_-,X_+]$ the formul\ae \eqref{eq:Lfunc-def}--\eqref{eq:mu-plus-minus-define} together with the definition of $R(\SP;x)$ yield
\begin{equation}
L(\SP)=-\ii\int_{X_-}^{X_+}(\SP-R(\SP;y))\,\dd y,
\end{equation}
we obtain the identity \eqref{eq:gplus-gminus-L-identity}.
\end{proof}

\hprop*

\begin{proof}
\hypertarget{proof-h-prop}{To prove} property H1, 
note that from the second line of \eqref{eq:h-def-zeta}, $h^\pm(\SP;x)$ can be written in the form 
\begin{equation}
h^\pm(\SP;x)=\frac{R(\SP;x)}{2\pi \ii}H^\pm(\SP;x),
\label{eq:hH}
\end{equation}
where 
$H^\pm(\SP;x)$ is analytic in $\SP$ for $-A_\mathrm{max}<-\ii\SP<A_\mathrm{max}$.  Observe that for fixed $x\neq x_0$, $H^\pm(\ii A(x);x)=0$ when $\pm(x-x_0)>0$ (by property G6 of Proposition~\ref{prop:g-properties}).  We now show that 
\begin{equation}
\ii\frac{\partial H^\pm}{\partial \SP}(\SP;x)>0,\quad \pm(x-x_0)>0,\quad 0<-\ii\SP<A_\mathrm{max},
\label{eq:Hpm-derivative-inequality}
\end{equation}
that is, $H^\pm(\SP;x)$ is real and strictly increasing upwards along the imaginary $\SP$-axis provided $\pm(x-x_0)>0$.  In particular, the root of $H^\pm(\SP;x)$ at $\SP=\ii A(x)$ is a simple zero.  To prove this, we first obtain a simple formula for $H^\pm(\SP;x)$ by integrating with respect to $x$ the identity
\begin{equation}
h^\pm_x(\SP;x)=\pm\left(g^\pm_x(\SP;x)-\ii\SP\right)=\mp\ii R(\SP;x),
\label{eq:h-x-derivative}
\end{equation}
which follows from the first line of \eqref{eq:h-def-zeta} and property G8 of Proposition~\ref{prop:g-properties}.
Since we have $H^\pm(\SP;x_\pm(-\ii\SP))=0$ (as an equivalent way of writing $H^\pm(\ii A(x);x)=0$ for $\pm (x-x_0)>0$), and also $R(\SP;x_\pm(-\ii \SP))=0$ for $0<-\ii\SP<A_\mathrm{max}$, it follows that $h^\pm(\SP;x_\pm(-\ii\SP))=0$ for $\pm(x-x_0)>0$.  Therefore, for $x\in [X_-,X_+]$,
\begin{equation}
h^\pm(\SP;x)=\mp \ii\int_{x_\pm(-\ii\SP)}^x R(\SP;y)\,\dd y,\quad \pm (x-x_0)>0,\quad A(x)<-\ii\SP<A_\mathrm{max},
\label{eq:h-integral}
\end{equation}
from which it follows that
\begin{equation}
H^\pm(\SP;x)=\pm\frac{2\pi}{R(\SP;x)}\int_{x_\pm(-\ii\SP)}^x R(\SP;y)\,\dd y,\quad\pm (x-x_0)>0,\quad A(x)<-\ii\SP<A_\mathrm{max}.
\label{eq:H-integral}
\end{equation}
In these formul\ae, the lower limit of integration is a real value between $x_0$ and $x$ under the indicated assumption that $A(x)<-\ii\SP<A_\mathrm{max}$.
However, for semicircular Klaus-Shaw potentials $A$, the turning points (inverse function branches) $x_\pm(s)$ are analytic on the interval $0<s<A_\mathrm{max}$, and hence the formula \eqref{eq:h-integral} for $h^\pm(\SP;x)$ can be analytically continued to a domain of the form $\delta<\imag\{\SP\}<A_\mathrm{max}-\delta$ and $|\re\{\SP\}|<\delta$ omitting the vertical branch cut connecting $\pm \ii A(x)$; it only becomes necessary to replace the real integration with a complex contour connecting $x_\pm(-\ii\SP)$ with the real value $x$.  In the case of the formula \eqref{eq:H-integral}, the two boundary values taken on the cut necessarily agree as it has already been shown that $H^\pm(\SP;x)$ is analytic at $\SP=\ii A(x)$ for $\pm(x-x_0)>0$, making $H^\pm(\SP;x)$ an analytic function of $\SP$ in the domain $\delta<\imag\{\SP\}<A_\mathrm{max}-\delta$ and $|\re\{\SP\}|<\delta$.
Differentiation with respect to $\SP$ using Leibniz' rule yields
\begin{equation}
\ii\frac{\partial H^\pm}{\partial\SP}(\SP;x)=\mp\frac{2\pi \ii\SP}{R(\SP;x)^3}\int_{x_\pm(-\ii\SP)}^x
\frac{A(y)^2-A(x)^2}{R(\SP;y)}\,\dd y,\quad \pm (x-x_0)>0.
\label{eq:Hpm-derivative-expression}
\end{equation}
The strict inequality \eqref{eq:Hpm-derivative-inequality} now follows from \eqref{eq:Hpm-derivative-expression}.  For example, consider the case $x>x_0$.  If also $A(x)<-\ii\SP<A_\mathrm{max}$, then $x_0<x_+(-\ii\SP)<y<x$ for the integral in \eqref{eq:Hpm-derivative-expression}, so $A(y)>A(x)>0$ since $A'(x)<0$ for $x>x_0$ and also, $\SP$, $R(\SP;x)$, and $R(\SP;y)$ are all positive imaginary, confirming \eqref{eq:Hpm-derivative-inequality}.  On the other hand, if $0<-\ii\SP<A(x)$, then instead $x_0<x<y<x_+(-\ii\SP)$ for the integral in \eqref{eq:Hpm-derivative-expression}, so $A(x)>A(y)>0$ and taking the boundary value from the right half-plane (arbitrarily, since $H^\pm(\SP;x)$ has no jump discontinuity on the imaginary axis) we see that $R(\SP;x)$ and $R(\SP;y)$ are positive real while $\SP$ remains positive imaginary, confirming \eqref{eq:Hpm-derivative-inequality} again.  Finally, taking the limit $-\ii\SP\downarrow A(x)$ gives
\begin{equation}
\ii\frac{\partial H^+}{\partial\SP}(\ii A(x);x)=-\frac{2}{3A'(x)}>0,\quad x>x_0
\end{equation}
confirming \eqref{eq:Hpm-derivative-inequality} in the (most important for our purposes) boundary case.  The argument for $x<x_0$ is similar.  Since $H^\pm(\SP;x)$ is analytic in a suitable neighborhood $D(x)$ of $\SP=\ii A(x)$ at which point it has a simple zero, property H1 is confirmed.

Property H2 also follows from the representation \eqref{eq:hH}, the fact that $H^\pm(\ii A(x);x)=0$, and the inequality \eqref{eq:Hpm-derivative-inequality}.  These show immediately that $h^\pm(\SP;x)$ is positive and strictly increasing in the positive imaginary direction along the imaginary axis above $\SP=\ii A(x)$.  To obtain the corresponding inequalities on $\re\{h^\pm(\SP;x)\}$, one notes that the boundary values taken by $h^\pm(\SP;x)$ on the imaginary branch cut below $\SP=\ii A(x)$ are themselves purely imaginary and monotone, from which the desired inequalities are consequences of the Cauchy-Riemann equations.  Uniformity for $x\in J_c^\pm$ holds by continuity of $h^\pm(\SP;x)$ as $J_c^\pm$ is a compact subset of $J^\pm$.

To prove property H3, note first that the formul\ae \eqref{eq:gplus-gminus-x-integral} following from properties G7 and G8 of Proposition~\ref{prop:g-properties} allow us to characterize the difference of boundary values taken by $g^\pm(\SP;x)$ when $0<-\ii \SP<A(x)$, assuming that $\pm (x-x_0)>0$.  Indeed, if we use the subscript $+$ (resp., $-$) to denote the boundary value taken from the right (resp., left) half-plane, we can derive the following formula:
\begin{equation}
\begin{split}
g^\pm_+(\SP;x)-g^\pm_-(\SP;x)&=2\ii\int_x^{x_\pm(-\ii\SP)}R_+(\SP;y)\,\dd y \\ &= 2\ii\int_x^{x_\pm(-\ii\SP)}\sqrt{\SP^2+A(y)^2}\,\dd y,\\ &\qquad\qquad \pm (x-x_0)>0,\quad 0<-\ii\SP<A(x).
\end{split}
\end{equation}
Comparing with \eqref{eq:ZS-phase-integral}, we see that
\begin{multline}
2\phaseint(\SP)\mp\left(-\ii[g^\pm_+(\SP;x)-g^\pm_-(\SP;x)]\right)=\pm 2\int_{x_\mp(-i\SP)}^x\sqrt{\SP^2+A(y)^2}\,\dd y >0,\\ \pm (x-x_0)>0,\quad 0<-\ii\SP<A(x).
\end{multline}
Applying Leibniz' rule to differentiate this formula gives
\begin{multline}
\ii\frac{\partial}{\partial\SP}\left[2\phaseint(\SP)\mp\left(-\ii[g^\pm_+(\SP;x)-g^\pm_-(\SP;x)]\right)
\right]
= \pm 2\ii\SP\int_{x_\mp(-\ii\SP)}^x\frac{\dd y}{\sqrt{\SP^2+A(y)^2}}<0,\\ \pm (x-x_0)>0,\quad 0<-\ii\SP<A(x),
\label{eq:gdiff-derivative-inequality}
\end{multline}
indicating that the positive quantity in square brackets is strictly decreasing as $-\ii\SP$ increases from $0$ to $A(x)$.  The derivative is strictly negative even in the limit $-\ii\SP\uparrow A(x)$.

Now note that \eqref{eq:g-zeta-plus-minus} and the first line of \eqref{eq:h-def-zeta} immediately imply property H6.  
In turn, this implies the boundary values taken by $h^\pm(\SP;x)$ on the cut are purely imaginary, and \eqref{eq:gdiff-derivative-inequality} can be equivalently written in two ways:
\begin{multline}
\ii\frac{\partial}{\partial\SP}\left[2\phaseint(\SP)-(-2\ii h_+^\pm(\SP;x))\right]<0\quad\text{and}\quad
\ii\frac{\partial}{\partial\SP}\left[2\phaseint(\SP)+(-2\ii h_-^\pm(\SP;x))\right]<0,\\ \pm(x-x_0)>0,\quad 0<-\ii\SP<A(x),
\label{eq:upper-constraint}
\end{multline}
with the inequalities being strict even in the limit $-\ii\SP\uparrow A(x)$.  
Property H3 then follows from H2 and a Cauchy-Riemann argument applied to \eqref{eq:upper-constraint}.  Again, uniformity of the estimates for $x\in J_c^\pm$ follows from continuity.  

To prove property H4, firstly note that the analyticity of the boundary values and the fact that they sum to zero both follow immediately from \eqref{eq:hH} because $R(\SP;x)$ changes sign across the branch cut.  Now by the second line of \eqref{eq:h-def-zeta} it is obvious that $H^\pm(\SP;x)$ is an even analytic function of $\SP$ and hence its power series at $\SP=0$ consists of only even powers of $\SP$.  While $R(\SP;x)$ is an odd function of $\SP$, its boundary value $R_+(\SP;x)$ taken from the right half-plane can be written in terms of the principal branch square root as $R_+(\SP;x)=(A(x)^2+\SP^2)^{1/2}$, and hence is an even analytic function of $\SP$ as well.    It remains to calculate the first two terms of the Taylor expansion about $\SP=0$ of $h^\pm_+(\SP;x)$.  Clearly 
\begin{equation}
R_+(\SP;x)=A(x) +\SP^2/(2A(x)) + \bigo{\SP^4},\quad \SP\to 0.
\label{eq:R-plus-expand}
\end{equation}
For $H^\pm(\SP;x)$ we use the second line of \eqref{eq:h-def-zeta} to get 
\begin{equation}
\begin{split}
H^\pm(\SP;x)&=\pm\frac{1}{4}\oint_L\frac{\phi^\pm(s;x)\,\dd s}{R(\ii s;x)(s^2+\SP^2)} \\ &= 
\pm\frac{1}{4}\oint_L\frac{\phi^\pm(s;x)\,\dd s}{s^2R(\ii s;x)} \mp \frac{\SP^2}{4}\oint_L\frac{\phi^\pm(s;x)\,\dd s}{s^4R(\ii s;x)} +\bigo{\SP^4},\quad \SP\to 0.
\end{split}
\end{equation}
Therefore, using \eqref{eq:hH}, 
\begin{equation}
\begin{split}
h_+^\pm(\SP;x)&=\pm\frac{A(x)}{8\pi \ii}\oint_L\frac{\phi^\pm(s;x)\,\dd s}{s^2R(\ii s;x)} \\&\quad\quad{}\pm\frac{\SP^2}{8\pi \ii}\left[\frac{1}{2A(x)}\oint_L\frac{\phi^\pm(s;x)\,\dd s}{s^2R(\ii s;x)} - A(x)\oint_L\frac{\phi^\pm(s;x)\,\dd s}{s^4R(\ii s;x)}\right] + \bigo{\SP^4}\\
&=\pm\frac{A(x)}{8\pi \ii}\oint_L\frac{\phi^\pm(s;x)\,\dd s}{s^2R(\ii s;x)} \\&\quad\quad{}\mp\frac{\SP^2}{16\pi \ii A(x)}\left[
\oint_L\frac{\phi^\pm(s;x)\,\dd s}{s^2R(\ii s;x)} + 2\oint_L\frac{R(\ii s;x)\phi^\pm(s;x)}{s^4}\,\dd s\right] + \bigo{\SP^4},\quad\SP\to 0.
\end{split}
\end{equation}
Now we recall the definition \eqref{eq:phi-pm-define}, into which we may substitute from the right-hand side of \eqref{eq:mu-pm-Lbar}.  Therefore,
\begin{equation}
\begin{split}
\oint_L\frac{\phi^\pm(s;x)\,\dd s}{s^2R(\ii s;x)}&= 4(x-X_\pm)\oint_L\frac{\dd s}{R(\ii s;x)} -4\oint_L \frac{s^2}{R(\ii s;x)} \int_0^1\sqrt{1-z}\,x'(s^2z)\,\dd z\,\dd s\\
&= 8\pi (x-X_\pm) -8\int_0^1\sqrt{1-z}\int_{-A(x)}^{A(x)}\frac{s^2x'(s^2z)}{\sqrt{A(x)^2-s^2}}\,\dd s\,\dd z,
\end{split}
\end{equation}
a purely real expression in which $x'(y)$ denotes the derivative of the branch of the inverse function $y=A(x)^2$ for which $\pm(x-x_0)>0$.  We observe that $\pm x'(y)<0$ holds strictly for $0\le y<A_\mathrm{max}^2$.  Similarly,
\begin{multline}
\oint_L\frac{R(\ii s;x)\phi^\pm(s;x)}{s^4}\,\dd s \\
\begin{aligned}
&= 4(x-X_\pm)\oint\frac{R(\ii s;x)\,\dd s}{s^2} -4\oint_L R(\ii s;x)\int_0^1\sqrt{1-z}\,x'(s^2z)\,\dd z\,\dd s\\
&=-8\pi (x-X_\pm)-8\int_0^1\sqrt{1-z}\int_{-A(x)}^{A(x)}\sqrt{A(x)^2-s^2}\,x'(s^2z)\,\dd s\,\dd z.
\end{aligned}
\end{multline}
Therefore,
\begin{multline}
\oint_L\frac{\phi^\pm(s;x)\,\dd s}{s^2R(\ii s;x)}+2\oint_L\frac{R(\ii s;x)\phi^\pm(s;x)}{s^4}\,\dd s = \\
-8\pi(x-X_\pm)-8\int_0^1\sqrt{1-z}\int_{-A(x)}^{A(x)}\left[\frac{s^2}{\sqrt{A(x)^2-s^2}}+2\sqrt{A(x)^2-s^2}\right]x'(s^2z)\,\dd s\,\dd z,
\end{multline}
an expression in which both terms are real and nonzero and have exactly the same sign, namely that of $x-x_0$.  Therefore property H4 holds pointwise for $x\in J_c^\pm$, and the uniformity of the inequality $\beta^\pm(x)>0$ follows by continuity.  

Finally, all of the statements in property H5 follow from H4, with the exception of the inequality $\beta^\pm(x)-\phaseint_1<0$.  To prove this, first note that the opposite inequality $\beta^\pm(x)-\phaseint_1>0$ would be in contradiction with \eqref{eq:upper-constraint} taken in a neighborhood of $\SP=0$ on the positive imaginary axis.  Therefore $\beta^\pm(x)-\phaseint_1\le 0$ and it remains to rule out the possibility of zeros.  For this purpose, it is sufficient to show that $\beta^\pm(x)-\phaseint_1$ is monotone for $\pm(x-x_0)>0$.  Using \eqref{eq:h-x-derivative}
and \eqref{eq:R-plus-expand} we get
\begin{equation}
\frac{\partial}{\partial x}\left[h^\pm_+(\SP;x)-\ii\phaseint(\SP)\right]=\frac{\partial h^\pm_+}{\partial x}(\SP;x)=\mp \ii R_+(\SP;x)=\mp \ii A(x)\mp\frac{\ii\SP^2}{2A(x)} + \bigo{\SP^4},\quad\SP\to 0.
\end{equation}
Furthermore,
\begin{equation}
\frac{\partial}{\partial x}\left[h^\pm_+(\SP;x)-\ii\phaseint(\SP)\right]=\frac{\partial}{\partial x}\left[\ii(\alpha^\pm(x)-\phaseint_0) + \ii(\beta^\pm(x)-\phaseint_1)\SP^2 + \bigo{\SP^4}\right],
\end{equation}
from which we deduce that $(\beta^\pm(x)-\phaseint_1)_x=\mp 1/(2A(x))\neq 0$, and the proof is complete.  The pointwise strict inequality $\beta^\pm(x)-\phaseint_1<0$ is uniform for $x$ in the compact set $J_c^\pm$ by continuity.
\end{proof}

\end{document}